\newtheorem{theorem}{Theorem}[section]
\newtheorem{lemma}[theorem]{Lemma}
\newtheorem{proposition}[theorem]{Proposition}
\theoremstyle{definition}
\newtheorem{definition}[theorem]{Definition}
\newtheorem{corollary}[theorem]{Corollary}
\newtheorem{remark}[theorem]{Remark}
\newtheorem{notation}[theorem]{Notation}
    \newcommand{\Rmnum}[1]{\expandafter\@slowromancap\romannumeral #1@}
\theoremstyle{definition}
\theoremstyle{definition}
\theoremstyle{definition}
\newcommand{\pf}{{\operatorname{pf}}}
\newcommand{\K}{\mathrm{K}} 
\newcommand{\KK}{\mathrm{KK}} 
\newcommand{\KL}{\mathrm{KL}} 
\newcommand{\Z}{\mathbb{Z}}
\newcommand{\Q}{\mathbb{Q}}
\newcommand{\R}{\mathbb{R}}
\newcommand{\T}{\mathbb{T}}
\newcommand{\Hom}{\operatorname{Hom}}
\newcommand{\ep}{\varepsilon}
\newcommand{\af}{{\alpha}}
\newcommand{\beq}{\begin{eqnarray}}
\newcommand{\eneq}{\end{eqnarray}}
\title{\uppercase{Higher dimensional Bott classes and the stability of rotation relations}} 
\author{Sayan Chakraborty\footnote{Stat-Math unit, Indian Statistical Institute, 203 Barrackpore Trunk Road, Kolkata, 700108, India. Email: sayan2008@gmail.com}
 \,\, and Jiajie Hua\footnote{College of data science, Jiaxing University,  899 Guangqiong Road, Jiaxing, Zhejiang, 314001,
 China.  Email: jiajiehua@zjxu.edu.cn}}
\date{}
\begin{document}
\maketitle

\begin{abstract}Let $\Theta=(\theta_{jk})_{n\times n}$ be a real skew-symmetric $n\times n$ matrix for $n\geq 2$. Under some mild non-integrality conditions on $\Theta,$ we construct Rieffel-type projections as higher dimensional Bott classes in the $n$-dimensional noncommutative torus $\mathcal{A}_{\Theta}.$ These projections generate $\K_0(\mathcal{A}_\Theta)$ when $\Theta$ is strongly totally irrational.  As an application, when $\Theta$ is strongly totally irrational, we show that:
For any $\varepsilon>0,$  there exists $\delta>0$ (depending only on $\varepsilon$ and $\Theta$) satisfying the following: For any unital simple separable $C^*$-algebra $\mathcal{A}$ with tracial rank at most one, and for any $n$-tuple of unitaries $u_1,u_2,\dots,u_n$ in $\mathcal{A}$, if  $u_1,u_2,\dots,u_n$ satisfy certain trace conditions and
 \begin{eqnarray*}\|u_ku_j-e^{2\pi i\theta_{jk}}u_ju_k\|<\delta,\,j,k=1,2,\dots,n,
 \end{eqnarray*}
   then there exists an $n$-tuple of unitaries $\tilde{u}_1,\tilde{u}_2,\dots,\tilde{u}_n$ in $\mathcal{A}$ such that
 \begin{eqnarray*}\tilde{u}_k\tilde{u}_j=e^{2\pi i\theta_{jk}}\tilde{u}_j\tilde{u}_k\, {\rm and}\, \|\tilde{u}_j-u_j\|<\varepsilon,\, j,k=1,2,\dots,n.
 \end{eqnarray*}
 We also show that these trace conditions are also  necessary in the above application.
\end{abstract}

\

\textbf{Keywords:} $C^*$-algebras; stability; noncommutative tori; unitary; Bott class

\

\textbf{Mathematics Subject Classification 2010:} 46L05, 46L35
\section{Introduction}

The $n$-dimensional noncommutative torus $\mathcal{A}_{\Theta}$ is the universal $C^*$-algebra generated by unitaries $\mathfrak{u}_1,\mathfrak{u}_2,\dots,\mathfrak{u}_n$ subject to the relations
\begin{eqnarray}
\mathfrak{u}_k \mathfrak{u}_j=e^{2\pi i\theta_{jk}}\mathfrak{u}_j\mathfrak{u}_k,\nonumber
\end{eqnarray}
for $j,k=1,2,3,\dots,n,$ where $\Theta=(\theta_{jk})$ is a real skew-symmetric $n\times n$ matrix. The noncommutative tori play a major role in the theory of operator algebras and noncommutative geometry, and serve as key examples of noncommutative spaces.
Rieffel (in \cite{Rieffel-1988}) had constructed projective modules over $n$-dimensional noncommutative tori while in \cite{Elliott}, Elliott computed the $\K$-theory and the range of tracial states of these algebras. Motivated by the work of \cite{ELPW}, in \cite{Chakraborty} the first named author constructed projective modules over a certain field of $C^*$-algebras to provide explicit bases for $\K_0(\mathcal{A}_{\Theta})$ for any higher dimensional noncommutative torus $\mathcal{A}_{\Theta}$. The explicit generators of the $\K_0$-group in \cite{Chakraborty} was given using the projective modules over the $n$-dimensional noncommutative torus $\mathcal{A}_{\Theta}.$ However, for some practical purposes it is desirable to have a description of the explicit generators of the $\K_0$-group in terms of the projections in $\mathcal{A}_{\Theta}.$ Here in this paper, under some  non-integrality conditions (satisfied by many examples) on $\Theta,$ we construct projections in $\mathcal{A}_{\Theta},$ which in turn provides a set of explicit generators of $\K_0(\mathcal{A}_{\Theta}),$ when $\Theta$ is totally irrational.

For a skew-symmetric $n\times n$ matrix $A,$ if we denote the sub-matrix of $A$ consisting of rows and columns indexed by $i_1,i_2,\dots,i_{2p},$ for some numbers
  $1\leq i_1<i_2<\cdots<i_{2p}\leq n,$ $I:=(i_1,i_2,\dots,i_{2p}), |I|:=2p,$ by $M_{I}^A,$ then Elliott's result on the range of the canonical tracial state $\tau_{\Theta}$ of $\mathcal{A}_{\Theta}$ may be given as follows:

  \begin{eqnarray}\label{range_intro}
  (\tau_{\Theta})_*(\K_0(\mathcal{A}_{\Theta}))=\mathbb{Z}+\sum\limits_{0<|I|\leq n}{\rm pf}(M_{I}^{\Theta})\mathbb{Z},
\end{eqnarray}
where $\mathrm{pf}(M_{I}^{\Theta})$  denotes the pfaffian (see Definition~\ref{def:pf}) of the matrix $M_{I}^{\Theta}.$ However, it is not clear whether the numbers $\mathrm{pf}(M_{I}^{\Theta})$ can be realized as traces of projections (which we call Rieffel-type projections, if exist) inside $\mathcal{A}_{\Theta}$ or not. This question, for $n=4$, was raised by Elliott himself in \cite[1.2]{Elliott}. For $n=2,$ under non-integrality assumption on the non-zero entries in $\Theta,$ the well known Rieffel projection (see Definition \ref{def of Rieffel P}) serves the purpose. It is not hard to see that this also works for $n=3,$ also, as for any $n$ and $|I|=2,$ the Rieffel projections give elements in $\K_0(\mathcal{A}_{\Theta}),$ whose traces are $\mathrm{pf}(M_{I}^{\Theta}).$ For $n=4,$ and $|I|=4,$ Boca (see \cite[Remark on page 198]{Boca}) constructed a projection of trace $\mathrm{pf}(M_{I}^{\Theta})$ inside $\mathcal{A}_{\Theta}.$  In this paper, we generalize Boca's idea to construct Rieffel-type projections inside $\mathcal{A}_{\Theta}$ under some mild non-integrality conditions on $\Theta.$ Explicitly we prove,

 \begin{theorem}[cf. Corollary~\ref{even n projection explicit_m=0}]
  For any even number $n=2l\geq2,$ if $\Theta\in \mathcal{T}_n$ satisfies \begin{eqnarray}\label{pf2}\frac{{\rm pf}\left(M^{\Theta}_{(1,2,\dots,n-p-1,n-p)}\right)}{{\rm pf}\left(M^{\Theta}_{(1,2,\dots,n-p-2)}\right)}\in (0,1)\quad {\rm for}\,\, p=n-2j-2,\quad j=1,\dots,l-2,
\end{eqnarray} and  $
 \frac{{\rm pf}\left(M^{\Theta}_{(1,2,\dots,n-1,n)}\right)}{{\rm pf}\left(M^{\Theta}_{(1,2,\dots,n-2)}\right)}\notin \mathbb{Z},$ then
  there exist a (Rieffel-type) projection $p$ inside $\mathcal{A}_{\Theta}$ such that
  \begin{eqnarray}\tau_{\Theta}(p)={\rm pf}(\Theta)+k_{(1,2)}{\rm pf}(M^{\Theta}_{(1,2)})+\cdots+k_{(1,2,\dots,n-2)}{\rm pf}(M^{\Theta}_{(1,2,\dots,n-2)})+k_0\nonumber
   \end{eqnarray}
   for some $k_I,k_0\in \mathbb{Z},$ where $I=(1,2,\dots,n-2s)$ and $s=1,2,\dots,l-1,$ $\tau_{\Theta}$ is the canonical tracial state on $\mathcal{A}_{\Theta}.$
\end{theorem}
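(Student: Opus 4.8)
The plan is to construct $p$ by induction on $l$, adjoining the generators of $\mathcal{A}_{\Theta}$ two at a time and applying, at each stage, a version of Boca's Rieffel-projection construction carried out over a lower-dimensional noncommutative torus. Write $\Theta^{(m)}:=M^{\Theta}_{(1,2,\dots,m)}$ and $\mathcal{A}_{\Theta^{(m)}}:=C^{*}(\mathfrak{u}_1,\dots,\mathfrak{u}_m)\subseteq\mathcal{A}_{\Theta}$, so that $\mathcal{A}_{\Theta^{(0)}}=\C$, $\mathcal{A}_{\Theta^{(2l)}}=\mathcal{A}_{\Theta}$, and each inclusion is unital and preserves the canonical trace $\tau:=\tau_\Theta$. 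For $0\le k\le l-1$ use the presentation
\[
\mathcal{A}_{\Theta^{(2k+2)}}=\mathcal{A}_{\Theta^{(2k)}}\rtimes_{\mathrm{Ad}\,\mathfrak{u}_{2k+1}}\Z\rtimes_{\mathrm{Ad}\,\mathfrak{u}_{2k+2}}\Z ,
\]
in which $\mathrm{Ad}\,\mathfrak{u}_{2k+1}$ and $\mathrm{Ad}\,\mathfrak{u}_{2k+2}$ act on $\mathcal{A}_{\Theta^{(2k)}}$ as the gauge automorphisms $\mathfrak{u}_i\mapsto e^{2\pi i\theta_{i,2k+1}}\mathfrak{u}_i$ and $\mathfrak{u}_i\mapsto e^{2\pi i\theta_{i,2k+2}}\mathfrak{u}_i$ ($i\le 2k$). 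The quantity that will play the role of the rotation angle at this stage is $\vartheta_k:=\pf(\Theta^{(2k+2)})/\pf(\Theta^{(2k)})$; writing $\Theta^{(2k+2)}=\left(\begin{smallmatrix}A&B\\ -B^{T}&C\end{smallmatrix}\right)$ with $A=\Theta^{(2k)}$ invertible (since $\pf(\Theta^{(2k)})\neq 0$), the Pfaffian Schur-complement identity $\pf\!\left(\begin{smallmatrix}A&B\\ -B^{T}&C\end{smallmatrix}\right)=\pf(A)\,\pf\!\left(C+B^{T}A^{-1}B\right)$ together with the value of $\pf$ on a $2\times2$ skew matrix gives $\vartheta_k=\theta_{2k+1,2k+2}+(B^{T}A^{-1}B)_{12}$; that is, $\vartheta_k$ is the ``effective'' commutation phase between $\mathfrak{u}_{2k+1}$ and $\mathfrak{u}_{2k+2}$ once their interaction with $\mathfrak{u}_1,\dots,\mathfrak{u}_{2k}$ has been absorbed.

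The inductive engine is a relative Rieffel construction, which specializes to Rieffel's original projection in $\mathcal{A}_{\theta_{12}}$ when $k=0$ (with $q=1$) and to Boca's projection when $k=1$, $n=4$: \emph{if $\vartheta_k\notin\Z$, then from any projection $q\in\mathcal{A}_{\Theta^{(2k)}}$ with $\tau(q)>0$ one produces a projection}
\[
p=a\,\mathfrak{u}_{2k+2}+b+\mathfrak{u}_{2k+2}^{*}a^{*}\in\mathcal{A}_{\Theta^{(2k+2)}},\qquad a,b\in\mathcal{A}_{\Theta^{(2k+1)}},\ \ b=b^{*},
\]
\emph{with} $\tau(p)\equiv\tau(q)\,\vartheta_k\pmod{\Z\,\tau(q)}$, \emph{and} $\tau(p)=\tau(q)\,\vartheta_k$ \emph{when in addition} $\vartheta_k\in(0,1)$. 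Expanding the projection relations $p=p^{*}=p^{2}$ with $\gamma:=\mathrm{Ad}\,\mathfrak{u}_{2k+2}$ reduces them to equations of the shape $a\gamma(a)=0$, $ba+a\gamma(b)=a$, and $b-b^{2}=aa^{*}+\gamma^{-1}(a^{*}a)$; these are to be solved by an ansatz in which $q$ (together with its translates under $\mathrm{Ad}\,\mathfrak{u}_{2k+1}$) and the generator $\mathfrak{u}_{2k+1}$ of the intermediate crossed product take over the role played in the classical case by characteristic functions of short intervals, with $a$ and $b$ assembled from continuous functional calculus of $q$ and low powers of $\mathfrak{u}_{2k+1}$ in a way that mimics Rieffel's bump-function solution. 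The hypothesis $\vartheta_k\notin\Z$ is exactly what makes the resulting ``bump'' equations solvable, and tracking the trace through the construction yields the stated value of $\tau(p)$.

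Granting this, the induction starts from $q_0:=1\in\mathcal{A}_{\Theta^{(0)}}=\C$: since $\theta_{12}\in(0,1)$ (a consequence of $\Theta\in\mathcal{T}_n$), the $k=0$ case gives $q_1\in\mathcal{A}_{\Theta^{(2)}}$ with $\tau(q_1)=\theta_{12}=\pf(\Theta^{(2)})\in(0,1)$. Inductively, if $\tau(q_k)=\pf(\Theta^{(2k)})\in(0,1)$, then for $1\le k\le l-2$ hypothesis \eqref{pf2} gives $\vartheta_k\in(0,1)$, so the construction yields $q_{k+1}\in\mathcal{A}_{\Theta^{(2k+2)}}$ with $\tau(q_{k+1})=\pf(\Theta^{(2k)})\,\vartheta_k=\pf(\Theta^{(2k+2)})$, again in $(0,1)$, and the induction proceeds. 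At the last step $k=l-1$ we know only that $\vartheta_{l-1}=\pf(\Theta)/\pf(\Theta^{(n-2)})\notin\Z$, and we obtain $p:=q_l\in\mathcal{A}_{\Theta}$ with
\[
\tau(p)=\pf(\Theta^{(n-2)})\,\{\vartheta_{l-1}\}=\pf(\Theta)-\lfloor\vartheta_{l-1}\rfloor\,\pf(\Theta^{(n-2)}),
\]
which is of the asserted form with $k_{(1,2,\dots,n-2)}=-\lfloor\vartheta_{l-1}\rfloor$ and all remaining $k_I=k_0=0$; more generally, integer corrections collected at earlier stages (for instance if one only arranges $\vartheta_k\notin\Z$, rather than $\vartheta_k\in(0,1)$, at the intermediate steps, or if the relative construction introduces such corrections) turn $\tau(p)$ into $\pf(\Theta)$ plus a genuine $\Z$-combination of $1,\pf(\Theta^{(2)}),\dots,\pf(\Theta^{(n-2)})$, which is precisely the freedom allowed in the statement.

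The step I expect to be the main obstacle is the relative Rieffel construction itself. Over the noncommutative base $\mathcal{A}_{\Theta^{(2k)}}$ the automorphism $\gamma=\mathrm{Ad}\,\mathfrak{u}_{2k+2}$ is a nontrivial gauge automorphism, so $q$ and $\gamma^{\pm1}(q)$ are in general neither equal nor orthogonal, whereas the projection relations demand $a\gamma(a)=0$ together with delicate partial-isometry identities relating corners of $q$ to their $\gamma$-translates. Adapting Boca's device from the $n=4$ case — absorbing the generator $\mathfrak{u}_{2k+1}$, using a sufficiently ``translated'' ansatz, and exploiting theta-function type identities — one must verify that these relations can be met over $\mathcal{A}_{\Theta^{(2k)}}$, that the effective rotation angle is exactly $\vartheta_k$ (this is where the Schur-complement Pfaffian identity is used), and that $\tau(p)$ comes out as claimed. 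Checking these relations and this trace computation is the bulk of the work; the inductive bookkeeping above is then routine.
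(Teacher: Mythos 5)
Your overall strategy is genuinely different from the paper's. The paper works with the chain $\Theta, F(\Theta), \dots, F^{l-1}(\Theta)$ of Schur complements, each a smaller skew-symmetric matrix, and descends from $m=l-1$ (a $2\times 2$ block) to $m=0$: at each step it uses the strong Morita equivalence $\mathcal{A}_{F^{m-1}(\Theta)} \sim \mathcal{A}_{F^{m-1}(\Theta)'}$, realizes this as an isomorphism $\psi:\mathcal{A}_{F^{m-1}(\Theta)'}\cong e\mathcal{A}_{F^{m-1}(\Theta)}e$ with $e$ a classical Rieffel projection of trace $\operatorname{pf}(F^{m-1}(\Theta)_{11})$, and pushes the inductively given projection across $\psi$. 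You instead work bottom-up along the leading principal submatrices $\Theta^{(0)},\Theta^{(2)},\dots,\Theta^{(n)}$ and propose, at each step, to solve the projection equations $a\gamma(a)=0$, $ba+a\gamma(b)=a$, $b-b^2 = aa^*+\gamma^{-1}(a^*a)$ directly over the noncommutative base $\mathcal{A}_{\Theta^{(2k)}}$.

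The gap is that this ``relative Rieffel construction'' — the sole engine of your induction — is only asserted, not established, and you yourself flag it as the obstacle. There is a real reason to doubt it in the stated generality: when the base is noncommutative, $q$ and $\gamma^{\pm1}(q)$ are typically neither equal nor orthogonal, and the Rieffel bump-function trick relies on manipulating commuting continuous functions of a single unitary. It is precisely this difficulty that forces Boca (and the paper generalizing him) to work with the Heisenberg module/Morita-equivalence picture rather than the direct ansatz: the hard algebraic identities are absorbed into the statement that $\mathcal{S}(\mathbb{R}\times\mathbb{Z}^q)$ is an equivalence bimodule, and the projection $p_{m-1}$ is obtained for free by transport, not by solving the corner-projection equations by hand. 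Your sketch also contains no argument that the trace of the putative $p$ actually equals $\tau(q)\vartheta_k$; in the paper this follows from the identity $\tau_{F^{m-1}(\Theta)}\circ\psi = \operatorname{pf}(F^{m-1}(\Theta)_{11})\cdot\tau_{F^{m-1}(\Theta)'}$ (uniqueness of the trace on the simple corner), which is a Morita-equivalence fact with no counterpart in your setup. Finally, a minor but real slip: $\theta_{12}\in(0,1)$ is not ``a consequence of $\Theta\in\mathcal{T}_n$'' ($\mathcal{T}_n$ is just the space of real skew-symmetric matrices); you need to either invoke the $j=0$ pfaffian-ratio hypothesis or normalize $\theta_{12}$ modulo $1$, and as written your base case does not start.

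In short: the plan follows a different route than the paper (crossed-product induction with an ad hoc projection ansatz, versus Schur-complement descent plus Morita equivalence), but it is not a proof — the central lemma is unproved, and there is no reason given to believe the projection equations are solvable over $\mathcal{A}_{\Theta^{(2k)}}$ for $k\geq 1$. If you want to salvage this direction, the natural fix is to replace the direct ansatz by the Morita-equivalence step, which is exactly what the paper does.
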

With the above theorem in hand, any tracial value in Equation~(\ref{range_intro}) is realized by a projection in $\mathcal{A}_{\Theta}$ under certain assumptions on $\Theta$ (see Theorem~\ref{PI1}). This provides an (generalized) answer to the question of Elliott mentioned before. The assumptions are quite natural to expect for such projections to exist in $\mathcal{A}_\Theta$ as these projections are constructed using an induction argument from the 2-dimensional case.

 When $(\tau_{\Theta})_*$ is injective on $\K_0(A_{\Theta})$, we call such a $\Theta$ totally irrational. Moreover,  when any even dimensional skew-symmetric submatrix of $\Theta$ satisfies (\ref{pf2}), we call it strongly totally irrational (see Definition \ref{a3} for the explicit definition of strongly total irrationality). For a strongly totally irrational $\Theta,$ the above Rieffel-type projections also provide a basis for $\K_0(\mathcal{A}_\Theta)$ (see Theorem~\ref{PI}). In the second part of the paper, we use this fact to the study of the stability of rotation relations, which we describe below.

The notion of stability appears in many forms throughout mathematics, where it
often expresses how much being a little bit wrong matters. Following Hyers and
Ulam (\cite{Hyers}), a general sense of this notion can be expressed as follows: Are
elements that ``almost" satisfy some equation ``close" to some elements that exactly
satisfy the equation? In applied mathematics such a question is important since
we should not expect computer models, or indeed the real world, to give answers
that precisely match what our theory predicts.

An example of a concrete stability problem is the following:
Given $\varepsilon>0$, is there a $\delta>0,$ depending only on $\varepsilon,$ such that if $a$ and $b$ are two $n\times n$ self-adjoint  matrices with $\|a\|,\|b\|\leq 1$ satisfying
$$\|ab-ba\|<\delta,$$
then there exists a pair of $n\times n$ self-adjoint matrices $\tilde{a}$ and $\tilde{b}$  such that
$$\tilde{a}\tilde{b}=\tilde{b}\tilde{a},\,\,\,\,\|a-\tilde{a}\|<\varepsilon\,\,{\rm and}\,\,\|b-\tilde{b}\|<\varepsilon?$$
This is an old and famous question in matrix and operator theory, namely whether any pair of almost commuting self-adjoint matrices are norm close to a pair of exactly commuting self-adjoint matrices (\cite{Berg,Davidson,Rosenthal}), which is popularized by Halmos (\cite{Halmos}).

In the above question, it is important that $\delta$ is a universal constant independent of  the matrix size $n.$
This question was solved affirmatively by Lin in the 1990's (see \cite{Halmos,Lin2}).
The corresponding questions for a pair of unitary matrices and for a triple of self-adjoint matrices are all false, as pointed out by Voiculescu in \cite{Voiculescu1} and \cite{Voiculescu}. However the story does not end here. An obstruction has been found by Exel and Loring in \cite{EL} in the corresponding question for a pair of unitary matrices. The answer becomes yes if this obstruction vanishes (see  \cite{EL,ELP1,ELP,ExL1,ExL2}).

A natural generalization for pairs of almost commuting unitary matrices is to see what happens for pairs of unitaries that almost commute up to a scalar with norm one. It turns out that similar conclusion holds, and in fact one can deal with more general ambient $C^*$-algebras rather than just matrix algebras. More precisely, in \cite{Hua-Lin}, the second named author and  Lin proved the following:

\begin{theorem} \label{HH} Let $\theta$ be a real number in $(-\frac{1}{2},\frac{1}{2})$. For any $\varepsilon>0,$  there exists $\delta>0$, depending only on $\varepsilon$ and $\theta$, such that if $u$ and $v$ are two unitaries in any unital simple separable $C^*$-algebra $\mathcal{A}$ with tracial rank zero satisfying
\beq \|vu-e^{2\pi i\theta}uv\|<\delta\,\,{\rm and}\nonumber \\
\tau(\log(vuv^*u^*))=2\pi i\theta\quad \label{tc}
\eneq
for all tracial states $\tau$ on $\mathcal{A}$, then there exists a pair of unitaries $\tilde{u}$ and $\tilde{v}$ in $\mathcal{A}$ such that
\[
\tilde{v}\tilde{u}=e^{2\pi i\theta}\tilde{u}\tilde{v},\,\,\|u-\tilde{u}\|<\varepsilon\, \,{\rm and}\,\,\|v-\tilde{v}\|<\varepsilon.
\]
\end{theorem}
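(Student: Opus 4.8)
\emph{Proof strategy.} I would regard the pair $(u,v)$ with small commutation defect as an almost-multiplicative unital map out of the rotation algebra $A_\theta$ --- the universal $C^*$-algebra generated by two unitaries $z_1,z_2$ with $z_2z_1=e^{2\pi i\theta}z_1z_2$, i.e.\ $\mathcal{A}_\Theta$ with $n=2$ --- and then invoke Lin's classification of unital homomorphisms from $A_\theta$ into $C^*$-algebras of tracial rank zero to straighten this almost-representation into an honest unital homomorphism $\psi\colon A_\theta\to\mathcal{A}$ whose values on $z_1,z_2$ lie within $\ep$ of $u,v$. The desired $\tilde u,\tilde v$ are then obtained from $\psi(z_1),\psi(z_2)$ by conjugating with a suitable unitary of $\mathcal{A}$, and the rotation relation holds exactly because $\psi$ is a homomorphism. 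For irrational $\theta$, $A_\theta$ is a unital simple separable nuclear $C^*$-algebra with $\tr(A_\theta)=0$ satisfying the UCT, with torsion-free $\K$-theory $\K_0(A_\theta)=\Z[1]\oplus\Z[p_\theta]$ (where $p_\theta$ is a Rieffel projection with $\tau_\theta(p_\theta)=\theta$), $\K_1(A_\theta)=\Z[z_1]\oplus\Z[z_2]$, and a unique tracial state $\tau_\theta$; the case $\theta=0$ is the Halmos problem with the Exel--Loring obstruction, already settled by Lin, and the remaining rational $\theta$ are handled by the corresponding classical results, so I would concentrate on irrational $\theta$.

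\emph{Extracting the constants and building the data.} Given $\ep>0$, I would invoke the stable uniqueness theorem for unital (approximately multiplicative) maps from $A_\theta$ into unital simple separable $C^*$-algebras of tracial rank zero: it produces $\ep_1\in(0,\ep)$, a finite set $\mathcal{F}\subset A_\theta$ containing $z_1,z_2$, a finite set $\mathcal{P}\subset\underline{\K}(A_\theta)$, a finite set $\mathcal{H}\subset A_\theta$ of self-adjoints, and $\sigma>0$, all depending only on $\ep$ and $\theta$, such that whenever $\varphi,\psi\colon A_\theta\to\mathcal{A}$ are unital homomorphisms into such an $\mathcal{A}$ with $[\varphi]|_{\mathcal{P}}=[\psi]|_{\mathcal{P}}$ and $|\tau\varphi(h)-\tau\psi(h)|<\sigma$ for all $h\in\mathcal{H}$ and $\tau\in T(\mathcal{A})$, there is a unitary $W\in\mathcal{A}$ with $\|W\varphi(a)W^*-\psi(a)\|<\ep_1$ for all $a\in\mathcal{F}$. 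I would then choose $\dt>0$, a function of $\ep_1,\mathcal{F},\mathcal{P},\mathcal{H},\sigma$ only (hence of $\ep,\theta$ only), small enough that $\|vu-e^{2\pi i\theta}uv\|<\dt$ forces the map $L\colon A_\theta\to\mathcal{A}$ sending $z_1\mapsto u$, $z_2\mapsto v$ to be $\mathcal{F}$-$\ep_1/2$-multiplicative, to induce a well-defined partial homomorphism $[L]$ on $\underline{\K}(A_\theta)|_{\mathcal{P}}$, and to send $p_\theta$ to within a controlled distance of a genuine projection $p(u,v)\in\mathcal{A}$.

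\emph{The role of the trace condition.} The partial map $[L]$ sends $[1]\mapsto[1_{\mathcal{A}}]$, sends $[z_1],[z_2]$ to the $\K_1$-classes of $u,v$, and sends $[p_\theta]\mapsto\beta(u,v):=[p(u,v)]\in\K_0(\mathcal{A})$, where $p(u,v)$ is the Rieffel-type projection obtained by substituting $u,v$ into the defining formula for $p_\theta$. The crucial point is that the Exel trace formula expresses $\tfrac{1}{2\pi i}\tau(\log(vuv^*u^*))$ as $\tau(\beta(u,v))$ modulo an integer that is locally constant in $(u,v)$ and hence vanishes once $\dt$ is small; so the hypothesis $\tau(\log(vuv^*u^*))=2\pi i\theta$ becomes, for small $\dt$, the equality $\tau(\beta(u,v))=\theta=\tau_\theta(p_\theta)$ for every $\tau\in T(\mathcal{A})$. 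Moreover, since $A_\theta$ has a unique tracial state and $L$ is nearly multiplicative, a weak-$*$ compactness argument shows that every $\tau\circ L$ is $\sigma$-close to $\tau_\theta$ on $\mathcal{H}$ once $\dt$ is small. Finally, since $A_\theta$ satisfies the UCT and $\K_*(A_\theta)$ is free, $[L]|_{\mathcal{P}}$ extends to an element $\kappa\in\KL(A_\theta,\mathcal{A})$ with $\kappa[1]=[1_{\mathcal{A}}]$, $\kappa[p_\theta]=\beta(u,v)$, and the prescribed values on $\K_1$; together with the forced affine map $\lambda\colon T(\mathcal{A})\to T(A_\theta)=\{\tau_\theta\}$, the pair $(\kappa,\lambda)$ is compatible precisely because $\tau(\beta(u,v))=\theta$.

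\emph{Conclusion and main obstacle.} Lin's existence theorem then yields a unital homomorphism $\psi\colon A_\theta\to\mathcal{A}$ with $[\psi]=\kappa$ and $\tau\circ\psi=\tau_\theta$ for all $\tau$, so that $[\psi]|_{\mathcal{P}}=[L]|_{\mathcal{P}}$ and $\tau(\psi(h))=\tau_\theta(h)$ on $\mathcal{H}$; feeding $\psi$ and $L$ into the stable uniqueness theorem --- in the version permitting one of the two arguments to be merely approximately multiplicative --- produces a unitary $W\in\mathcal{A}$ with $\|W\psi(z_j)W^*-L(z_j)\|<\ep_1<\ep$ for $j=1,2$. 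Putting $\tilde u=W\psi(z_1)W^*$ and $\tilde v=W\psi(z_2)W^*$, we get $\tilde v\tilde u=W\psi(z_2z_1)W^*=e^{2\pi i\theta}W\psi(z_1z_2)W^*=e^{2\pi i\theta}\tilde u\tilde v$ exactly, with $\|u-\tilde u\|<\ep$ and $\|v-\tilde v\|<\ep$; and $\dt$ depends only on $\ep$ and $\theta$ since every constant above was extracted from classification theorems whose conclusions are uniform over the class of unital simple separable $C^*$-algebras of tracial rank zero. I expect the main obstacle to be the trace computation of the third paragraph --- converting the logarithmic hypothesis into $\tau(\beta(u,v))=\theta$ and discarding the integer ambiguity for small $\dt$ --- because this is exactly what renders the existence theorem applicable; a secondary technical point is checking that the merely $\mathcal{F}$-$\ep_1/2$-multiplicative map $L$, rather than an honest homomorphism, may legitimately be used in the uniqueness theorem, which is arranged through its approximately multiplicative formulation.
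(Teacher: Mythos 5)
The paper does not prove Theorem \ref{HH} itself but cites it from \cite{Hua-Lin}; your strategy---treating $(u,v)$ as an almost-multiplicative c.p.c.\ map out of $A_\theta$, converting the logarithmic hypothesis into the $\K_0$-statement $\tau(\beta(u,v))=\theta$ via the Exel trace formula, and then invoking Lin's existence and stable uniqueness theorems for tracial rank zero---is precisely the approach of that reference and mirrors the paper's own proof of its generalization, Theorem \ref{main result}. One step you should spell out: the existence theorem needs $\kappa\in\KL_e(A_\theta,\mathcal{A})^{++}$, and this positivity is not automatic from compatibility with $\lambda$ alone; it is obtained from the equalities $\tau(\beta(u,v))=\theta=\tau_\theta(p_\theta)>0$ for all $\tau\in T(\mathcal{A})$ together with strict comparison in $\mathcal{A}$ (which holds since ${\rm TR}(\mathcal{A})=0$), exactly as in the positivity claim for $\kappa_0$ in the paper's proof of Theorem \ref{main result}.
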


Note that the trace condition (\ref{tc}) is also necessary.

Let $\theta\in\mathbb{R}$. We call a pair of unitaries $u, v$ with $ vu = e^{2\pi i \theta} uv$ to satisfy the \emph{rotation relation} with respect to $\theta$,
since the universal $C^*$-algebra generated by such unitaries is the rotation algebra.
So  another way to phrase Theorem \ref{HH} is to say that the
rotation relation is stable in a unital simple separable $C^*$-algebras with tracial rank zero, provided that the trace condition (\ref{tc}) is satisfied.

In \cite{Hua-Wang}, the second named author and Wang further studied the stability of the rotation relations
for three unitaries in unital simple separable $C^*$-algebras with tracial rank at most one, and proved the following theorem:
\begin{theorem}[\cite{Hua-Wang}]\label{nondegenerate stability}
Let $\Theta = (\theta_{jk})_{3 \times 3}$ be a nondegenerate real skew-symmetric $3\times3$ matrix (here non-degeneracy is equivalent to $\dim_{\mathbb{Q}}(\rm{span}_{\mathbb{Q}}(1,\theta_{12},
\theta_{13},\theta_{23}))\geq 3,$ see Lemma 3.1 of \cite{Itza-Phillips}), where
$\theta_{jk}\in [0,1)$ for $j,k=1,2,3.$
Then for any $\varepsilon>0$, there exists $\delta > 0$ satisfying the following:
For any unital simple separable $C^*$-algebra $\mathcal{A}$ with tracial rank at most one,
any three unitaries $u_1,u_2,u_3$ in $\mathcal{A}$ such that
\[
\|u_ku_j-e^{2\pi i\, \theta_{jk}}u_ju_k\|<\delta,\,\, j, k = 1, 2, 3,
\]
 there exists a triple of unitaries $\tilde{u}_1,\tilde{u}_2,\tilde{u}_3$ in $\mathcal{A}$ such that
\[
\tilde{u}_k\tilde{u}_j=e^{2\pi i \, \theta_{jk}}\tilde{u}_j\tilde{u}_k\,\,\,\,\mbox{and}\,\,\,\,\|\tilde{u}_j-u_j\|<\varepsilon,\,\,\,j,k=1,2,3
\]
if and only if
\begin{eqnarray}\label{3obs}
\tau(\log_{\theta_{jk}}(u_ku_ju_k^*u_j^*))=2\pi i\theta_{jk}\,\, for\,\, j,k = 1,2,3\,\, and\,\, all\,\, tracial\,\, states\,\, \tau\,\,on\,\, \mathcal{A},\nonumber
\end{eqnarray}
where $\log_{\theta_{jk}}(u_ku_ju_k^*u_j^*)$ is defined as in Definition \ref{logarithm}.
\end{theorem}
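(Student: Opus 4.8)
\smallskip
\noindent\textbf{Proof proposal.} I would establish the two directions of the biconditional separately, with almost all of the work going into the sufficiency of the trace condition. Necessity is, in effect, the two-unitary statement applied to each pair in the triple: the pair $(u_j,u_k)$, and likewise $(\tilde u_j,\tilde u_k)$, satisfies the rotation relation for the single parameter $\theta_{jk}$ up to the small defect $\delta$, so the trace-theoretic argument that gives the necessity of the condition \eqref{tc} in Theorem~\ref{HH} (due to \cite{Hua-Lin}) applies without change. Concretely, $\tau(\log_{\theta_{jk}}(w_kw_jw_k^*w_j^*))$ is, up to the factor $2\pi i$, the pairing of $\tau$ with the $\K_0$-class of a Rieffel-type approximate projection manufactured from the pair $(w_j,w_k)$ of small defect, hence is locally constant along homotopies of the pair through pairs of small defect; since on the exact pair $(\tilde u_j,\tilde u_k)$ one has $\tilde u_k\tilde u_j\tilde u_k^*\tilde u_j^*=e^{2\pi i\theta_{jk}}\cdot 1$ and the value is visibly $2\pi i\theta_{jk}$, and $\|u_j-\tilde u_j\|<\varepsilon$ (with $\varepsilon$, hence $\delta$, below a threshold depending only on $\Theta$, which may be assumed since the statement for small $\varepsilon$ subsumes the rest) supplies such a homotopy, the value at $(u_j,u_k)$ is $2\pi i\theta_{jk}$ as well.

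For sufficiency the plan is to argue by contradiction through a sequence algebra, exactly as in the two-unitary proof of \cite{Hua-Lin}. If the conclusion fails for some $\varepsilon_0>0$, there are $\delta_m\downarrow 0$, unital simple separable $C^*$-algebras $\mathcal{A}_m$ of tracial rank at most one, and triples $u_1^{(m)},u_2^{(m)},u_3^{(m)}$ of unitaries in $\mathcal{A}_m$ with $\|u_k^{(m)}u_j^{(m)}-e^{2\pi i\theta_{jk}}u_j^{(m)}u_k^{(m)}\|<\delta_m$ that satisfy the trace condition but admit no exact triple within $\varepsilon_0$. Passing to $B=\prod_m\mathcal{A}_m/\bigoplus_m\mathcal{A}_m$, the classes $\bar u_j=[(u_j^{(m)})_m]$ are unitaries obeying $\bar u_k\bar u_j=e^{2\pi i\theta_{jk}}\bar u_j\bar u_k$ exactly, so there is a unital homomorphism $\Phi\colon\mathcal{A}_\Theta\to B$ with $\Phi(\mathfrak{u}_j)=\bar u_j$; equivalently, the $*$-linear extensions $\psi_m\colon\mathcal{A}_\Theta\to\mathcal{A}_m$ of $\mathfrak{u}_j\mapsto u_j^{(m)}$ are $\mathcal{G}$-$\eta_m$-multiplicative with $\eta_m\to0$ for every finite $\mathcal{G}\subset\mathcal{A}_\Theta$.

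The heart of the argument is to promote $\psi_m$, for all large $m$, to a genuine unital homomorphism $\phi_m\colon\mathcal{A}_\Theta\to\mathcal{A}_m$ with $\|\phi_m(\mathfrak{u}_j)-u_j^{(m)}\|<\varepsilon_0$, which contradicts the choice of the $u_j^{(m)}$. The structural input I would use is that, for non-degenerate $\Theta$, the three-dimensional $\mathcal{A}_\Theta$ is a unital simple $C^*$-algebra of tracial rank zero (indeed an AT algebra of real rank zero) satisfying the UCT, with a unique trace $\tau_\Theta$, with $\K_*(\mathcal{A}_\Theta)$ finitely generated and free, and with $\K_0(\mathcal{A}_\Theta)$ generated by $[1]$ and the three Rieffel projections $p_{12},p_{13},p_{23}$ of traces $\theta_{12},\theta_{13},\theta_{23}$. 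By Lin's existence and uniqueness theorems for approximately multiplicative maps into $C^*$-algebras of tracial rank at most one, the sequence $(\psi_m)$ is asymptotically unitarily close to a sequence of honest homomorphisms provided the induced invariant -- an element of $\KL(\mathcal{A}_\Theta,\mathcal{A}_m)$ together with the tracial data and the rotation (de la Harpe--Skandalis) map -- is realized by an honest homomorphism into $\mathcal{A}_m$. The tracial and $\K_1$-side rotation data pose no obstruction, since any homomorphism out of $\mathcal{A}_\Theta$ pulls every trace of $\mathcal{A}_m$ back to $\tau_\Theta$, and $\phi_m$ and $\Phi$ both send $[\mathfrak{u}_j]$ to the class of $u_j^{(m)}$, respectively $\bar u_j$; what remains is that the approximate projections $\psi_m(p_{jk})$ round off to genuine projections of trace \emph{exactly} $\theta_{jk}$ against every trace of $\mathcal{A}_m$, which is precisely the content of $\tau(\log_{\theta_{jk}}(u_k^{(m)}u_j^{(m)}u_k^{(m)*}u_j^{(m)*}))=2\pi i\theta_{jk}$ by the identification used for necessity. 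Thus the trace condition is exactly the vanishing of the Exel--Loring-type obstruction (cf. \cite{EL}) to matching the invariant; granting it, the $\phi_m$ exist, and the uniqueness theorem lets one conjugate by an inner automorphism so that $\|\phi_m(\mathfrak{u}_j)-u_j^{(m)}\|<\varepsilon_0$ for all large $m$, the desired contradiction.

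I anticipate the principal obstacle to be this existence/uniqueness package: one must verify that the approximate multiplicativity of $\psi_m$, together with the vanishing of the trace obstruction, really does allow the \emph{entire} classification invariant of $\Phi$ to be realized -- with estimates uniform in $m$ -- by honest homomorphisms into the $\mathcal{A}_m$, and one must confirm that ``realizing the obstruction'' is genuinely equivalent to the stated logarithm condition. A second essential ingredient is the structural fact that a non-degenerate three-dimensional noncommutative torus has tracial rank zero, which both qualifies $\mathcal{A}_\Theta$ as a domain for this machinery and underlies the description of $\K_0(\mathcal{A}_\Theta)$ through the Rieffel projections. I would expect the same scheme to carry over to $n$-tuples of unitaries once explicit generators of $\K_0(\mathcal{A}_\Theta)$ are available -- which is exactly what the Rieffel-type projections constructed earlier in this paper provide.
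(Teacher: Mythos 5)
Theorem~\ref{nondegenerate stability} is quoted from \cite{Hua-Wang}, not proved in the present paper, so there is no ``paper's own proof'' of this exact statement; the closest internal comparison is the paper's proof of its generalization Theorem~\ref{main result}. Your proposal matches that proof's essential machinery: Rieffel/Bott classes furnishing explicit generators of $\K_0(\mathcal{A}_\Theta)$, the generalized Exel trace formula (Theorem~\ref{trace formula}) converting the logarithm conditions into tracial values of those $\K_0$-classes, a Choi--Effros/sequence-algebra argument to extract approximately multiplicative cpc lifts (Proposition~\ref{L-exist}), and Lin's existence and uniqueness theorems (Theorems~\ref{T_exist} and~\ref{T_unique}) to produce and then stitch a genuine homomorphism. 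The packaging differs slightly --- you run a single global proof-by-contradiction through $\prod_m\mathcal{A}_m/\bigoplus_m\mathcal{A}_m$, whereas the paper confines the sequence-algebra device to one sub-step (Proposition~\ref{L-exist}) and then fixes $\delta$ directly from the quantitative data of the uniqueness theorem. Both packagings work, but only because Theorem~\ref{T_unique} supplies a single threshold $\delta$ valid uniformly over \emph{all} targets with $\mathrm{TR}\le 1$; this is what your caveat ``with estimates uniform in $m$'' must be invoking, and without it the contradiction set up across distinct $\mathcal{A}_m$ would not close. One further point deserves explicit justification in your sketch: it relies on $[1],[p_{12}],[p_{13}],[p_{23}]$ generating $\K_0(\mathcal{A}_\Theta)\cong\Z^4$ under mere nondegeneracy (and one of the $\theta_{jk}$ may be $0$, in which case the Rieffel projection must be replaced by the Bott class as in Definition~\ref{D_RieffelProj}). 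This is indeed true in dimension $3$ (via the natural identification of $\K_0(\mathcal{A}_\Theta)$ with $\K^0(\T^3)$ carrying the Bott classes to the Rieffel classes), but the present paper establishes the analogous generating statement (Theorem~\ref{PI}) only under strong total irrationality, so the $n=3$ case needs a separate citation; this is precisely why Theorem~\ref{nondegenerate stability} is strictly more general than Theorem~\ref{main result} specialized to $n=3$.
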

In \cite{Hua-IJM} and \cite{Hua-JMAA}, the second named author has also considered the stability of some other relations of  three unitaries in certain classes of  $C^*$-algebras.

In this paper, we will generalize the stability of rotation relations of three unitaries (Theorem \ref{nondegenerate stability}) to the stability of rotation relations of $n$ unitaries for any integer $n\geq 2$ in a unital simple separable $C^*$-algebra $\mathcal{A}$ with tracial rank at most one, when $\Theta$ is strongly totally irrational.

\begin{theorem}[cf. Theorem~\ref{main result}] \label{the1}Let $\Theta=(\theta_{jk})_{n\times n}$ be a strongly totally irrational real skew-symmetric $n\times n$ matrix for $n\geq 2.$ Then for any $\varepsilon>0,$ there exists $\delta>0$ satisfying the following: For any unital simple separable $C^*$-algebra $\mathcal{A}$ with tracial rank at most one, any $n$-tuple of unitaries $u_1,u_2,\dots,u_n$ in $\mathcal{A}$ such that
 \begin{eqnarray}\|u_ku_j-e^{2\pi i\theta_{jk}}u_ju_k\|<\delta,\,\,j,k=1,2,\dots,n,\label{app111}
 \end{eqnarray}
  and
\begin{eqnarray}\label{l1}\tau(R_{I}(u_1,u_2,\dots,u_n))={\rm pf}(M_{I}^{\Theta})+k_I+\sum\limits_{0<|J|< l}{\rm pf}(M_{J}^{\Theta})k_{I_J},
\end{eqnarray}
 for all possible $I=(i_1,i_2,\dots,i_{l})$, $1\leq i_1<i_2<\cdots<i_l\leq n,$ $l=2m,$ $m\in\{1,2,\dots,\lfloor\frac{n}{2}\rfloor\},$ and for all tracial states $\tau$ on $\mathcal{A}$, where $R_{I}(u_1,u_2,\dots,u_n)$ is as in Definition \ref{RI},  and
   $k_{I_J},k_I\in \mathbb{Z}$  are the numbers as in Corollary \ref{PI};
   there exists an $n$-tuple of unitaries $\tilde{u}_1,\tilde{u}_2,\dots,\tilde{u}_n$ in $\mathcal{A}$ such that
 \begin{eqnarray}\tilde{u}_k\tilde{u}_j=e^{2\pi i\theta_{jk}}\tilde{u}_j\tilde{u}_k\, \, {\rm and}\,\, \|\tilde{u}_j-u_j\|<\varepsilon,\,\, j,k=1,2,\dots,n.\label{=111}
 \end{eqnarray}
\end{theorem}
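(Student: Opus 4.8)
The plan is to follow the well-trodden Elliott-intertwining strategy for stability results of this kind (as in Theorems~\ref{HH} and \ref{nondegenerate stability}), with the new input being the explicit Rieffel-type projections constructed in the first part of the paper. First I would set up the argument by contradiction: if the statement fails, then there is an $\varepsilon_0>0$, a sequence $\delta_m\to 0$, a sequence of unital simple separable $C^*$-algebras $\mathcal{A}_m$ of tracial rank at most one, and $n$-tuples of unitaries $(u_1^{(m)},\dots,u_n^{(m)})$ in $\mathcal{A}_m$ satisfying (\ref{app111}) with $\delta=\delta_m$ and the trace conditions (\ref{l1}), but no nearby exactly-commuting-up-to-$\Theta$ $n$-tuple within $\varepsilon_0$. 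Passing to the central sequence algebra $\mathcal{B}=\prod \mathcal{A}_m/\bigoplus \mathcal{A}_m$ (or working with an ultraproduct), the images $\mathfrak{v}_j=(u_j^{(m)})_m$ become genuine unitaries satisfying $\mathfrak{v}_k\mathfrak{v}_j=e^{2\pi i\theta_{jk}}\mathfrak{v}_j\mathfrak{v}_k$ exactly, hence there is a unital homomorphism $\varphi\colon \mathcal{A}_\Theta\to \mathcal{B}$ sending $\mathfrak{u}_j\mapsto \mathfrak{v}_j$. The strategy is then to use $\varphi$, together with the $K$-theoretic control coming from the Bott/Rieffel-type projections, to produce the desired $\tilde u_j$ by a perturbation/lifting argument, and the contradiction follows.

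The key steps, in order, are: (1) Use the structure theory of $C^*$-algebras of tracial rank at most one (the relevant classification and the fact that such algebras are tracially approximated by interval algebras) to reduce, after cutting down by projections and a standard approximation, to controlling the relevant $KK$- or $K_0$-invariants of the map $\mathcal{A}_\Theta\to\mathcal{A}$ that the almost-representation $u_j\mapsto$ "$\mathfrak{u}_j$" determines on the level of $K$-theory — this is where one needs $\mathcal{A}_\Theta$ to have its $K_0$ generated by the explicit Rieffel-type projections (Theorem~\ref{PI}), so that the homomorphism on $K_0$ is pinned down by evaluating traces on finitely many concrete projections. (2) Show that the hypothesis (\ref{l1}) is exactly the statement that, for each $\delta_m$-representation, the trace of the image of each generating Rieffel-type projection $p_I$ of $K_0(\mathcal{A}_\Theta)$ equals $\tau_\Theta(p_I)$ up to the integer ambiguities allowed by the construction; i.e. $(\tau\circ\varphi_*)([p_I])=(\tau_\Theta)_*([p_I])$ in the appropriate sense, so that $\varphi$ and a reference homomorphism $\mathcal{A}_\Theta\to\mathcal{A}$ (obtained by Rieffel's projective modules / $K$-theory, using $\mathcal{A}$ simple with tracial rank $\le 1$) agree on $K$-theory and on traces. (3) Invoke the existence and uniqueness theorems for homomorphisms from $\mathcal{A}_\Theta$ (a unital AT-algebra when $\Theta$ is totally irrational, hence classifiable) into unital simple separable $C^*$-algebras of tracial rank at most one: two such homomorphisms inducing the same map on the invariant $\underline{K}(\mathcal{A}_\Theta)$ and on traces are approximately unitarily equivalent, with a control on the unitary depending only on $\varepsilon$ and $\Theta$. (4) Combine the approximate multiplicativity of $u_j\mapsto u_j$ with the reference $*$-homomorphism and this uniqueness statement to produce $\tilde u_j$ close to $u_j$; for $m$ large this contradicts the choice of $\varepsilon_0$.

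I expect the main obstacle to be step (3)–(4): making the uniqueness/existence theorem quantitative and uniform, so that $\delta$ depends only on $\varepsilon$ and $\Theta$ and not on $\mathcal{A}$ or the $u_j$. Concretely, one must check that the relevant invariant of $\mathcal{A}_\Theta$ — the ordered $K_0$ with its trace pairing, $K_1$, and the $\mathbb{Z}/k$-coefficient groups entering the total $K$-theory $\underline{K}(\mathcal{A}_\Theta)$ — is finitely generated in a controlled way (this is where strong total irrationality and the explicit Bott classes are essential, since they let us enumerate generators and their trace values), and that the almost-commuting relations (\ref{app111}) with $\delta$ small force the induced partial map on $\underline{K}$ to be well-defined and to match that of the reference homomorphism. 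The trace conditions (\ref{l1}) are precisely engineered to kill the Exel–Loring-type obstruction in each "$2m$-dimensional sub-block" $M_I^\Theta$; verifying that these finitely many conditions suffice (there is no further obstruction from the higher total $K$-theory) is the technical heart. Once the invariant is matched uniformly, the classification machinery for simple $C^*$-algebras of tracial rank at most one (Lin's uniqueness theorem) delivers the conclusion, and the standard $\varepsilon$–$\delta$ compactness argument closes the proof.
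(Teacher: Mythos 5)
Your proposal identifies the right machinery — the existence theorem (Theorem~\ref{T_exist}) to build a reference homomorphism $\phi\colon\mathcal{A}_\Theta\to\mathcal{A}$ from the $K$-theoretic/tracial data, the uniqueness theorem (Theorem~\ref{T_unique}) to conjugate it to the almost-homomorphism coming from the $u_j$'s, and Theorem~\ref{PI} to get explicit generators of $\K_0(\mathcal{A}_\Theta)$ whose traces are controlled by the hypothesis (\ref{l1}). The paper's proof of Theorem~\ref{main result} follows exactly this existence-plus-uniqueness scheme, and (\ref{l1}) is used there precisely as you say, to show that $\kappa_0([P_I])=[R_I(u_1,\dots,u_n)]$ defines a \emph{positive} homomorphism on $\K_0$ that pairs correctly with traces. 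So the strategy is the same.

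There are, however, a few points where your sketch departs from the paper or leaves real gaps. First, your top-level frame is a contradiction via an ultraproduct $\mathcal{B}=\prod\mathcal{A}_m/\bigoplus\mathcal{A}_m$, but the homomorphism $\varphi\colon\mathcal{A}_\Theta\to\mathcal{B}$ you produce there never actually enters your steps (1)--(4); there is no lifting theorem that lifts $\varphi$ to a genuine homomorphism into $\prod\mathcal{A}_m$, and $\mathcal{B}$ is not itself a simple $C^*$-algebra of tracial rank at most one, so the classification machinery cannot be applied to it. In the paper the ultraproduct plus Choi--Effros appears only inside a subsidiary step (Proposition~\ref{L-exist}) and does something much weaker: it produces, for small $\delta$, a unital $\mathcal{G}$-$\eta$-multiplicative \emph{c.p.c.} map $L\colon\mathcal{A}_\Theta\to\mathcal{A}$ with $L(\mathfrak{u}_j)\approx u_j$; everything after that runs directly inside the fixed $\mathcal{A}$, not in $\mathcal{B}$. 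Second, the uniqueness theorem being used is a uniqueness statement for maps out of a homogeneous building block $\mathcal{C}=PM_r(C(X))P$, so one has to approximate $\{1,\mathfrak{u}_1,\dots,\mathfrak{u}_n\}$ by a finite direct sum of circle algebras inside the AT-algebra $\mathcal{A}_\Theta$ and then compare $L\circ\iota$ with $\phi\circ\iota$ there (Lemma~\ref{L}); your sketch talks about tracial approximation of the \emph{target} $\mathcal{A}$ by interval algebras, which is a different and insufficient reduction. Third, the invariant matched by the uniqueness theorem is not only $\underline{\K}$ and traces but also the $U_\infty/CU_\infty$ data via Thomsen's split exact sequence (\ref{E_Thomsen_es}); the paper devotes Lemma~\ref{dist(u)} to controlling $\mathrm{dist}(\overline{\langle L(u')\rangle},\alpha_1(\overline{u'}))$ for finitely many unitaries, and this piece is absent from your plan. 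Finally, the concern you raise about higher total $K$-theory is a non-issue here: since $\K_*(\mathcal{A}_\Theta)$ is torsion free, $\K_*(\mathcal{A}_\Theta;\Z/m\Z)\cong\K_*(\mathcal{A}_\Theta)\otimes\Z/m\Z$ and the finite subset $\mathcal{P}\subset\underline{\K}(\mathcal{C})$ can be taken to consist of $[1]$, the $[P_I]$'s, and a generating set of $\K_1$; the genuinely technical part is instead the measure-density hypothesis $\mu_{\tau\circ L}(O_r)>\Delta(r)$ and the bookkeeping that makes $\delta$ depend only on $\varepsilon$ and $\Theta$.
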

In particular, when $n=2$ and  $\theta_{12}$ is an irrational number, this is a generalization of any unital simple separable $C^*$-algebra with tracial rank zero in Theorem \ref{HH} to any unital simple separable $C^*$-algebra with tracial rank at most one. When $n=3$, this is just Theorem \ref{nondegenerate stability}
in the case that $\Theta$ is totally irrational (note that totally irrationality of $\Theta$ implies that it is nondegenerate).

Finally, we  will also prove that the trace conditions (\ref{l1}) are also  necessary conditions in  Theorem \ref{the1}.

This paper is organized as follows. In Section~\ref{sec:pre}, we list down some notations and known results. In Section~\ref{sec:rie}, we give a very concrete description of higher dimensional Bott classes. Section~\ref{sec:stability} deals with proving the stability of the rotation relations of $n$ unitaries in the class of $C^*$-algebras of tracial rank at most one with certain trace conditions when $\Theta$ is strongly totally irrational (Theorem~\ref{the1}). In Section~\ref{sec:stability} we also show that these trace conditions
are necessary to prove the stability of rotation relations.

\section{Preliminaries}\label{sec:pre}
Let $n\geq2$ be an integer and $\mathcal{T}_n$ denote the space of $n\times n$ real skew-symmetric matrices.

\begin{definition}(\cite{Rieffel-1990}) Let $\Theta=(\theta_{jk})_{n \times n}\in \mathcal{T}_n$. {\it The noncommutative torus} $\mathcal{A}_{\Theta}$ is the universal $C^*$-algebra generated by unitaries $\mathfrak{u}_1,\mathfrak{u}_2,\dots,\mathfrak{u}_n$ subject to the relations
$$\mathfrak{u}_k\mathfrak{u}_j=e^{2\pi i\theta_{jk}}\mathfrak{u}_j\mathfrak{u}_k$$
for $1\leq j,k\leq n.$ (Of course, if all $\theta_{j,k}$ are integers, it is not really noncommutative.) Throughout this paper, we will use $\mathfrak{u}_1,\mathfrak{u}_2,\dots,\mathfrak{u}_n$ to represent the $n$ generators of $\mathcal{A}_\Theta$, sometimes without special emphasis. In particular, given $\theta\in \mathbb{R},$ we also let $\mathcal{A}_\theta$ denote the  universal $C^*$-algebra generated by a pair of unitaries $\mathfrak{u}_1$ and $\mathfrak{u}_2$ subject to $\mathfrak{u}_2\mathfrak{u}_1=e^{2\pi i \theta}\mathfrak{u}_1\mathfrak{u}_2.$ If $\theta$ is irrational (resp. rational), $\mathcal{A}_{\theta}$ is called  the {\it irrational} (resp. {\it rational}) {\it rotation algebra}.
\end{definition}

For any  $\Theta=(\theta_{j,k})_{n \times n}$ in $\mathcal{T}_n,$   $\mathcal{A}_{\Theta}$ has a canonical tracial state $\tau_{\Theta}$ given by the integration over
the canonical action of $\widehat{\mathbb{Z}^n}$ ( see page 4 of \cite{Rieffel-1990} for more details).
We denote this trace by $\tau_{\Theta}$.

\begin{definition}\label{def:nondege}
A skew symmetric real $n \times n$ matrix $\Theta$ is {\it nondegenerate} if whenever $x\in \mathbb{Z}^n$ satisfies $e^{(2\pi i\langle x,\Theta y\rangle)}=1$ for all $y\in \mathbb{Z}^n,$ then $x=0.$ Otherwise, we say $\Theta$ is {\it degenerate}.

\end{definition}

The following theorem shows the structure and the $\K$-theory of $\mathcal{A}_{\Theta}$ when $\Theta$ is nondegenerate.
\begin{theorem}[\cite{Phillips-06}]\label{AT and K}
  Let $\Theta$ be in $\mathcal{T}_n$, with
$n\geq 2$.  The $C^*$-algebra $\mathcal{A}_{\Theta}$ is simple if and only if $\Theta$ is nondegenerate. Moreover, if $\mathcal{A}_{\Theta}$ is simple, then it is a unital  \rm{AT} algebra  and has the unique tracial state $\tau_{\Theta}$,  and $\K_0(\mathcal{A}_{\Theta})\cong \K_1(\mathcal{A}_{\Theta})=\mathbb{Z}^{2^{n-1}}.$
\end{theorem}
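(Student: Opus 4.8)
The plan is to assemble the statement from four ingredients, all organized around an iterated crossed-product decomposition of $\mathcal{A}_\Theta$. First I would fix the decomposition: writing $\Theta'$ for the upper-left $(n-1)\times(n-1)$ block of $\Theta$ and $B=C^*(\mathfrak{u}_1,\dots,\mathfrak{u}_{n-1})\subseteq\mathcal{A}_\Theta$, the generators of $B$ satisfy exactly the defining relations of $\mathcal{A}_{\Theta'}$, and $B$ carries the faithful trace $\tau_\Theta\circ E$, where $E=\int_{\T}\gamma_t\,dt$ is the conditional expectation onto $B$ coming from the gauge action $\gamma_t$ on $\mathfrak{u}_n$; comparing with the faithful trace $\tau_{\Theta'}$ shows the canonical surjection $\mathcal{A}_{\Theta'}\to B$ is an isomorphism. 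Since conjugation by $\mathfrak{u}_n$ is the automorphism $\alpha\in\Aut(B)$ with $\alpha(\mathfrak{u}_j)=e^{2\pi i\theta_{jn}}\mathfrak{u}_j$ for $1\le j\le n-1$, one gets $\mathcal{A}_\Theta\cong B\rtimes_\alpha\Z\cong\mathcal{A}_{\Theta'}\rtimes_\alpha\Z$, with base case $\mathcal{A}_{(1\times1)}=C(\T)$, and the induction runs on $n$.

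For simplicity I would argue both directions. If $\Theta$ is degenerate, choose $0\neq x\in\Z^n$ with $e^{2\pi i\langle x,\Theta y\rangle}=1$ for all $y$; then $\mathfrak{u}^x:=\mathfrak{u}_1^{x_1}\cdots\mathfrak{u}_n^{x_n}$ is a central unitary, and because $\tau_\Theta((\mathfrak{u}^x)^k)=0$ for every $k\neq0$ its spectrum is all of $\T$, so $C^*(\mathfrak{u}^x)\cong C(\T)$ lies in the center and $\mathcal{A}_\Theta$ has proper quotients $\mathcal{A}_\Theta/\langle\mathfrak{u}^x-\lambda\rangle$, hence is not simple. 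Conversely, if $\Theta$ is nondegenerate I would run the induction using the crossed-product picture: the primitive ideal space of $\mathcal{A}_{\Theta'}$ is either a point or a torus $\T^k$ arising from the symmetrizer subgroup of $\Theta'$, nondegeneracy of $\Theta$ forces $\alpha$ to move every nontrivial closed invariant subset while keeping all nonzero powers $\alpha^m$ suitably outer, and Kishimoto's simplicity criterion for crossed products by $\Z$ then yields simplicity of $\mathcal{A}_{\Theta'}\rtimes_\alpha\Z$. (Equivalently this is the classical fact that a twisted group $C^*$-algebra of $\Z^n$ is simple exactly when its multiplier is totally skew.) Uniqueness of the trace is then elementary: for a tracial state $\tau$ and $z\neq0$, pick $x$ with $e^{2\pi i\langle x,\Theta z\rangle}\neq1$; expanding $\mathfrak{u}^x\mathfrak{u}^{z-x}=e^{2\pi i\langle x,\Theta z\rangle}\mathfrak{u}^{z-x}\mathfrak{u}^x$ (using $\langle x,\Theta x\rangle=0$) and applying $\tau(ab)=\tau(ba)$ forces $\tau(\mathfrak{u}^z)=0=\tau_\Theta(\mathfrak{u}^z)$, and density of $\mathrm{span}\{\mathfrak{u}^z\}$ gives $\tau=\tau_\Theta$.

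For the $\K$-theory (which needs no simplicity) I would use that $\alpha$ is joined to $\id_B$ by the point-norm continuous path of gauge automorphisms $\alpha_t(\mathfrak{u}_j)=e^{2\pi it\theta_{jn}}\mathfrak{u}_j$, so $\alpha_*=\id$ on $\K_*(\mathcal{A}_{\Theta'})$. Then in the Pimsner--Voiculescu exact sequence for $\mathcal{A}_{\Theta'}\rtimes_\alpha\Z$ the boundary maps $\id-\alpha_*$ vanish, leaving split short exact sequences $0\to\K_i(\mathcal{A}_{\Theta'})\to\K_i(\mathcal{A}_\Theta)\to\K_{1-i}(\mathcal{A}_{\Theta'})\to0$ (splitting because the groups are free), so $\K_i(\mathcal{A}_\Theta)\cong\K_i(\mathcal{A}_{\Theta'})\oplus\K_{1-i}(\mathcal{A}_{\Theta'})$; starting from $\K_0(C(\T))\cong\K_1(C(\T))\cong\Z$, induction gives $\K_0(\mathcal{A}_\Theta)\cong\K_1(\mathcal{A}_\Theta)\cong\Z^{2^{n-1}}$ (Elliott's computation \cite{Elliott}).

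The hard part will be the clause that a simple $\mathcal{A}_\Theta$ is an AT algebra: this is the main theorem of \cite{Phillips-06}, and I would invoke it directly. Its proof again proceeds by induction on $n$ through $\mathcal{A}_\Theta\cong\mathcal{A}_{\Theta'}\rtimes_\alpha\Z$: when $\mathcal{A}_{\Theta'}$ is already simple it is inductively a simple AT algebra, one shows $\alpha$ has a Rokhlin-type property, or directly that the crossed product has tracial rank at most one and satisfies the UCT, and the classification of such $\Z$-crossed products identifies $\mathcal{A}_\Theta$ as AT; when $\mathcal{A}_{\Theta'}$ is not simple one first splits off the $C(\T^k)$ tensor factor attached to its symmetrizer and works inside the simple complement. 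The genuine difficulty there lies in controlling the tracial and projection structure at each inductive step, whereas the simplicity, uniqueness of trace and $\K$-theory statements above are classical; so I would present the theorem as the combination of \cite{Phillips-06} and \cite{Elliott} with the elementary arguments just sketched.
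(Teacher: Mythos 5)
The paper gives no proof of this theorem: it is stated as a citation --- to \cite{Phillips-06} for the simplicity, unique-trace, and AT assertions, and ultimately to \cite{Elliott} for the $\K$-groups --- so there is nothing in the text to compare against. Evaluating your sketch on its own terms: it is a correct assembly of the standard ingredients. The crossed-product decomposition $\mathcal{A}_\Theta\cong\mathcal{A}_{\Theta'}\rtimes_\alpha\Z$ is the right scaffolding, your use of the faithful gauge-invariant trace to identify $C^*(\mathfrak{u}_1,\dots,\mathfrak{u}_{n-1})$ with $\mathcal{A}_{\Theta'}$ is fine, the Pimsner--Voiculescu computation with $\alpha_*=\id$ (via the gauge homotopy $\alpha_t$) correctly yields $\K_0\cong\K_1\cong\Z^{2^{n-1}}$, the unique-trace argument by conjugating with $\mathfrak{u}^x$ is the classical one, the central-unitary construction for degenerate $\Theta$ is correct, and you are right to invoke Phillips for the AT clause rather than reprove it.

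The one soft spot is ``nondegenerate $\Rightarrow$ simple.'' You cannot run a clean induction through the crossed-product picture, because even when $\Theta$ is nondegenerate the block $\Theta'$ is typically degenerate (already for $n=2$, where $\Theta'$ is the $1\times1$ zero matrix and $\mathcal{A}_{\Theta'}=C(\T)$), so the inductive hypothesis of simplicity simply fails to apply to $\mathcal{A}_{\Theta'}$. One then has to argue over the primitive ideal space $\T^k$ of $\mathcal{A}_{\Theta'}$ that the induced $\Z$-action is minimal and the nonzero powers of $\alpha$ are properly outer fiberwise --- the symmetrizer splitting you gesture at --- and this is substantially more work than your clause suggests. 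Your parenthetical alternative, Slawny's theorem that $C^*(\Z^n,\sigma)$ is simple iff the $2$-cocycle $\sigma$ is totally skew (which is exactly nondegeneracy of $\Theta$), is the standard self-contained route for this implication; I would lead with it and drop the Kishimoto-induction language, keeping the crossed-product decomposition only for the $\K$-theory and the AT discussion where it is actually load-bearing.
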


\begin{definition}[\cite{ExL2}]\label{Dbott}
{\rm\,
Define
 $$ f_1(e^{2\pi it})=\left\{
\begin{aligned}
1-2t, & & \mbox{if } 0\leq t\leq 1\slash 2, \\
-1+2t, & & \mbox{if }1\slash 2< t\leq 1, \\
\end{aligned}
\right.
$$
$$ g_1(e^{2\pi it})=\left\{
\begin{aligned}
(f_1(e^{2\pi it})-f_1(e^{2\pi it})^2)^{\frac{1}{2}}, & & \mbox{if } 0\leq t\leq 1\slash 2, \\
0,\phantom{(f_1(e^{2\pi it})-f_1(e^{2\pi it})^2)} & & \mbox{if }1\slash 2< t\leq 1, \\
\end{aligned}
\right.
$$
$$ h_1(e^{2\pi it})=\left\{
\begin{aligned}
0,\phantom{(f_1(e^{2\pi it})-f_1(e^{2\pi it})^2)} & & \mbox{if } 0\leq t\leq 1\slash 2, \\
(f_1(e^{2\pi it})-f_1(e^{2\pi it})^2)^{\frac{1}{2}}, & & \mbox{if }1\slash 2< t\leq 1. \\
\end{aligned}
\right.
$$
These are non-negative continuous functions defined on the unit circle $\mathbb{T}$.

Let $\mathcal{A}$ be a unital $C^*$-algebra and let $u,v\in \mathcal{A}$ be two unitaries. Define
$$ e(u,v)=\left(
\begin{aligned}
f_1(v)\phantom{f_1(v)} & & g_1(v)+h_1(v)u^* \\
g_1(v)+uh_1(v) & & 1-f_1(v)\phantom{(u)} \\
\end{aligned}
\right).
$$
It is a self-adjoint element. Furthermore if $vu=uv,$
then $e(u, v)$ is a projection.

 We denote by $C(\mathbb{T}^2)$ the $C^*$-algebra of continuous functions on the two-torus $\mathbb{T}^2$ generated by the two co-ordinate functions $z$ and $w$. In $M_2(C(\T^2))$, $e(z, w)$ is a non-trivial rank one projection.
Then
\begin{eqnarray}\label{b=bott}
b=[e(z,w)]-[\left(
\begin{aligned}
1 & & 0 \\
0 & & 0 \\
\end{aligned}
\right) ]\in \K_0(C(\T^2))
\end{eqnarray}
 is often called the Bott element for $C(\T^2).$ }
\end{definition}

There is a $\delta > 0$ (independent of the unitaries $u, v,$ and
$\mathcal{A}$, see \cite{Lor2} for the existence of such $\delta$) such that if $\|vu-uv\| <\delta,$ then the  spectrum of the element $e(u, v)$ has a gap at
1\slash 2. Let $\chi_{(\frac{1}{2}, \infty)}$ be the characteristic function on $(\frac{1}{2}, \infty).$ The Bott element of $u$ and $v$ is an element in $\K_0(\mathcal{A})$ as defined by
$$\mbox{bott}(u,v) = [\chi_{(1/2,\infty)}(e(u, v))]-[\left(
\begin{aligned}
1 & & 0 \\
0 & & 0 \\
\end{aligned}
\right) ].$$
The reader is referred to \cite{Exel,ExL1,ExL2,Lor} for more information on the Bott element.


\begin{definition}[\cite{Rieffel-PJM-1981}] \label{def of Rieffel P}Let $\theta\in (0,1).$ Choose $\varepsilon$ such that $0<\varepsilon \leq \theta<\theta+\varepsilon\leq 1.$ Set
$$f_2(e^{2\pi it})=\left\{
\begin{aligned}
\varepsilon^{-1}t,\phantom{ttttttttttt}& & 0\leq t\leq \varepsilon,\phantom{tttt}\\
1,\phantom{tttttttttttttt}& & \varepsilon\leq t\leq \theta,\phantom{tttt}\\
\varepsilon^{-1}(\theta+\varepsilon-t),& & \theta\leq t\leq \theta+\varepsilon,\\
0,\phantom{tttttttttttttt} & & \theta+\varepsilon \leq t\leq 1,\\
\end{aligned}\right.
$$
and
$$g_2(e^{2\pi it})=\left\{
\begin{aligned}
0,\phantom{tttttttttttttttttttttttttttt}& & 0\leq t\leq \theta,\phantom{tttt}\\
[f_2(e^{2\pi it})(1-f_2(e^{2\pi it}))]^{1\slash 2},& & \theta\leq t\leq \theta+\varepsilon,\\
0,\phantom{tttttttttttttttttttttttttttt}& & \theta+\varepsilon\leq t\leq 1.\\
\end{aligned}\right.
$$
Then $f_2$ and $g_2$ are the real-valued functions on the circle which satisfy\\
(1) $g_2(e^{2\pi it})\cdot g_2(e^{2\pi i(t-\theta)})=0, $\\
(2) $g_2(e^{2\pi it})\cdot [f_2(e^{2\pi it})+f_2(e^{2\pi i(t+\theta)})]=g_2(e^{2\pi it})$ and\\
(3) $f_2(e^{2\pi it})=[f_2(e^{2\pi it})]^2+[g_2(e^{2\pi it})]^2 +[g_2(e^{2\pi i(t-\theta)})]^2.$

\noindent Let $\mathfrak{u}_1$, $\mathfrak{u}_2$ be the canonical generators of $\mathcal{A}_{\theta}$.  The {\it Rieffel projection} in $\mathcal{A}_\theta$ is the projection
\begin{eqnarray}p_{\theta} = g_2(\mathfrak{u}_1)\mathfrak{u}_2^* + f_2(\mathfrak{u}_1) + \mathfrak{u}_2g_2(\mathfrak{u}_1)\label{Rieffel pro}
\end{eqnarray} (for the details of the above construction, see Theorem 1.1 of \cite{Rieffel-PJM-1981}).
\end{definition}

For $\theta \in (0,1)$, similar to the construction of the Rieffel projection, we give the following definition:

\begin{definition}
Let $\theta \in (0, 1)$. Let $\mathcal{A}$ be a unital $C^*$-algebra and $u, v$ be a pair of unitaries in $\mathcal{A}$. We define $e_{\theta}(u, v)$ to
be the element $g_2(u)v^* + f_2(u) + vg_2(u)$, where $f_2, g_2$ are the functions on the unit circle
defined as in Definition \ref{def of Rieffel P}. In particular, $e_{\theta}(u, v)$ is a projection if $u$ and $v$ satisfy $vu=e^{2\pi i\theta}uv.$
\end{definition}
\noindent It is clear that $e_{\theta}(u, v)$ is always self-adjoint.

\begin{proposition}[Proposition 4.9 of \cite{Hua-Wang}]\label{P_RieffelEgap}
Let $\theta \in (0, 1)$. There exists $\delta > 0$, depending only on $\theta,$ such that for any unital $C^*$-algebra $\mathcal{A}$,
any pair of unitaries $u, v$ in $\mathcal{A}$, if $u$ and $v$ satisfy
$\|vu - e^{2\pi i \theta}uv\| < \delta$,
then
\[
\|(e_{\theta}(u, v))^2 - e_{\theta}(u, v)\| < \frac{1}{4}.
\]
In particular, the spectrum of $e_{\theta}(u, v)$ has a gap at $\frac{1}{2}$.
\end{proposition}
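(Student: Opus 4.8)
The plan is to combine one exact algebraic identity with a soft limiting argument. The exact identity is that if $u,v$ are unitaries in a unital $C^*$-algebra satisfying $vu=e^{2\pi i\theta}uv$, then $e_\theta(u,v)$ is a genuine projection, so that $e_\theta(u,v)^2-e_\theta(u,v)=0$. This is precisely Rieffel's computation in Theorem~1.1 of \cite{Rieffel-PJM-1981} (take $u=\mathfrak{u}_1$, $v=\mathfrak{u}_2$): from $vuv^*=e^{2\pi i\theta}u$ one obtains $v\,g_2(u)\,v^*=g_2(e^{2\pi i\theta}u)$ and $v^*g_2(u)v=g_2(e^{-2\pi i\theta}u)$, expands $e_\theta(u,v)^2=(g_2(u)v^*+f_2(u)+vg_2(u))^2$, and checks that every cross term either cancels or simplifies by the identities (1)--(3) of Definition~\ref{def of Rieffel P} (together with $f_2(u)^*=f_2(u)$, $g_2(u)^*=g_2(u)$); since $e_\theta(u,v)$ is self-adjoint, being idempotent makes it a projection. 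I would either reproduce this computation or simply cite it.

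For the approximate statement I would argue by contradiction through the $\ell^\infty$-sequence algebra, which automatically yields a $\delta$ depending only on $\theta$. If no such $\delta$ existed, choose $\delta_m\downarrow 0$, unital $C^*$-algebras $\mathcal{A}_m$, and unitaries $u_m,v_m\in\mathcal{A}_m$ with $\|v_mu_m-e^{2\pi i\theta}u_mv_m\|<\delta_m$ but $\|e_\theta(u_m,v_m)^2-e_\theta(u_m,v_m)\|\geq \tfrac14$. Set $\mathcal{B}=\prod_m\mathcal{A}_m/\bigoplus_m\mathcal{A}_m$ with quotient map $\pi$, and let $u=\pi((u_m)_m)$, $v=\pi((v_m)_m)$, which are unitaries in $\mathcal{B}$. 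Since $\|v_mu_m-e^{2\pi i\theta}u_mv_m\|\to 0$, we get $vu=e^{2\pi i\theta}uv$ in $\mathcal{B}$. Because $f_2,g_2$ are continuous on $\mathbb{T}$, continuous functional calculus commutes with the $*$-homomorphism $\pi$, so $e_\theta(u,v)=\pi\big((e_\theta(u_m,v_m))_m\big)$, which by the exact identity above is a projection; hence $e_\theta(u,v)^2-e_\theta(u,v)=0$ in $\mathcal{B}$. But the norm of a sequence class in $\mathcal{B}$ is the $\limsup$ of the component norms, so $\|e_\theta(u,v)^2-e_\theta(u,v)\|=\limsup_m\|e_\theta(u_m,v_m)^2-e_\theta(u_m,v_m)\|\geq\tfrac14>0$, a contradiction. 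Thus a suitable $\delta$ exists, and it depends only on $\theta$ (through the auxiliary $\varepsilon$ used to define $f_2,g_2$).

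The ``in particular'' clause is the standard spectral remark: writing $x=e_\theta(u,v)=x^*$ and $x^2-x=(x-\tfrac12)^2-\tfrac14$, the hypothesis $\|x^2-x\|<\tfrac14$ gives $\mathrm{sp}\big((x-\tfrac12)^2\big)\subseteq(0,\tfrac12)$, so $x-\tfrac12$ is invertible and $\tfrac12\notin\mathrm{sp}(x)$, i.e.\ the spectrum of $e_\theta(u,v)$ has a gap at $\tfrac12$. The only step carrying real content is the exact projection identity, which is Rieffel's classical cancellation computation; everything else is formal. (A quantitative route avoiding sequence algebras is to approximate $f_2,g_2$ uniformly by Laurent polynomials $P,Q$ — legitimate since $\varepsilon$, hence $f_2$ and $g_2$, is fixed once $\theta$ is — bound $\|vP(u)v^*-P(e^{2\pi i\theta}u)\|$ and similar quantities by $\delta$ times constants depending on $P,Q$, and then estimate $e_\theta(u,v)^2-e_\theta(u,v)$ term by term; the nuisance there, and the reason one passes to polynomials, is that $f_2,g_2$ are merely Lipschitz, not operator-Lipschitz.)
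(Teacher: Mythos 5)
Your proof is correct. The paper cites this proposition from Hua--Wang without reproducing a proof, but it proves the closely analogous Proposition~\ref{gap} by exactly the same ultraproduct argument you give (pass to $\mathcal{B}=\prod_m\mathcal{A}_m/\bigoplus_m\mathcal{A}_m$, use that the rotation relation holds exactly there so the image of the approximate Rieffel element is a genuine projection, and derive a contradiction from the $\limsup$ characterization of the quotient norm), and your remark about the non--operator-Lipschitz nature of $f_2,g_2$ being the obstruction to a naive direct estimate is the right reason to prefer this soft route.
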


\begin{definition}(\cite{Hua-Wang})\label{D_RieffelProj}
Let $\theta \in [0, 1)$. Let $\mathfrak{u}$, $\mathfrak{v}$ be the canonical generators of $\mathcal{A}_{\theta}$. If $\theta \neq 0$,
we define $b_{\mathfrak{u},\mathfrak{v}} \in \K_0(\mathcal{A}_{\theta})$ to be the equivalent class of the Rieffel projection as constructed in Definition \ref{def of Rieffel P}.
If $\theta = 0$, we let $b_{\mathfrak{u},\mathfrak{v}} \in \K_0(\mathcal{A}_{\theta})$ be the Bott element (see (\ref{b=bott})).
\end{definition}


\begin{definition}
\label{D_RieffelE} Let $\theta \in [0, 1)$.
 If $\theta \neq 0$ let $\delta>0$ be  chosen as in Proposition \ref{P_RieffelEgap},
and let $u, v$ be a pair of unitaries in a unital $C^*$-algebra $\mathcal{A}$ with $ \|vu - e^{2\pi i \theta}uv\| < \delta$. Also let $\chi_{(\frac{1}{2}, \infty)}$ be the characteristic function on $(\frac{1}{2}, \infty).$ We define $R_{\theta}(u, v)=[\chi_{(\frac{1}{2}, \infty )}(e_{\theta}(u, v))].$
 If $\theta=0$, let $\delta>0$ be chosen as in the paragraph after Definition \ref{Dbott},  we  define
$R_{\theta}(u, v)={\rm bott}(u,v).$
\end{definition}
In particular, we have $R_{\theta}(\mathfrak{u}, \mathfrak{v})=b_{\mathfrak{u}, \mathfrak{v}}.$

\begin{notation}For any real number, we use $\lfloor\,\cdot\,\rfloor$ to represent its integer part.
  Let $\mathcal{A}$ be a unital $C^*$-algebra. Denote by $T(\mathcal{A})$ the tracial state space of $\mathcal{A}.$
  The set of all faithful tracial states on $\mathcal{A}$ will be denoted by $T_{f}(\mathcal{A}).$  Denote by $\mathrm{Aff}(T(\mathcal{A}))$ the space of all real affine continuous functions on $T(\mathcal{A}).$ If $\tau\in T(\mathcal{A})$, we will use $\tau^{\oplus k}$ for the trace $\tau\otimes {\rm tr}$ on $M_k(\mathcal{A})$ for all integer $k\geq1,$ where ${\rm tr}$ is the unnormalized trace on the matrix algebra $M_k.$ Denote by $\rho_{\mathcal{A}}:\K_0(\mathcal{A})\rightarrow\mathrm{Aff}(T(\mathcal{A}))$ the order preserving map induced by
  $\rho_{\mathcal{A}}([p])(\tau)=\tau^{\oplus n}(p),$ for all projections $p\in \mathcal{A}\otimes M_n.$ Denote by $\mathcal{A}_{s.a.}$ the set of all self-adjoint elements in $\mathcal{A}$. If $a\in \mathcal{A}_{s.a.}$, let $\hat{a}$ be the real affine function in ${\rm Aff}(T(\mathcal{A}))$ defined by $\hat{a}(\tau)=\tau(a)$ for all
$\tau\in T(\mathcal{A}).$

Denote by $U_n(\mathcal{A})$ the group of unitaries in $M_n(\mathcal{A})$ for $n\geq 1$. We often use $U(\mathcal{A})$ to express $U_1(\mathcal{A})$. Denote by
$U_0(\mathcal{A})$ the subgroup of $U(\mathcal{A})$ consists of the unitaries path connected to $1_\mathcal{A}$. Denote by $CU(\mathcal{A})$ the
closure of the subgroup generated by the commutators of $U(\mathcal{A})$.
Let $U_{\infty}(\mathcal{A})$ be the increasing union of $U_n(\mathcal{A}), n=1, 2, \dots$,  viewed as a topological group
with the inductive limit topology. Define $U_{\infty, 0}(\mathcal{A})$ and $CU_{\infty}(\mathcal{A})$ in a similar fashion. For a unitary $u\in \mathcal{A},$ define $\mathrm{Ad}u(a)=u^*au,$ for all $a\in \mathcal{A}.$ For any $a\in \mathcal{A},$ denote by $\mathrm{spec}(a)$ the spectrum of $a$.
\end{notation}

\begin{definition}[\cite{Hua-Wang}]\label{logarithm}
Let $\theta \in [0, 1)$. Denote by $\log_{\theta}$ the continuous branch of logarithm defined on $F_{\theta}=\{e^{it}:t\in (2\pi\theta-\pi,2\pi\theta+\pi)\}$ with values in $\{ri:r\in (2\pi\theta-\pi,2\pi\theta+\pi)\}$ such that $\log_{\theta}(e^{2\pi i \, \theta}) = 2\pi i \, \theta$.
  Note that if $u$ is any unitary in some $C^*$-algebra $\mathcal{A}$ such that
  $\|u-e^{2\pi i \theta}\|<2$, then ${\rm spec}(u)$ has a gap at $e^{2\pi i \theta + \pi i}$, and thus $\log_{\theta}(u)$
  is well defined. In particular, if $\theta=0,$ we simply write $\log(u)$ for $\log_{0}(u).$
\end{definition}

\begin{theorem}[Theorem 4.14 of \cite{Hua-Wang}]\label{trace formula} Let $\mathcal{A}$ be a unital $C^*$-algebra with $T(\mathcal{A})\neq \emptyset$. Let $\theta \in [0, 1)$, and $\delta$ be chosen as in Definition \ref{D_RieffelE}.
Then for any $u,v\in U(\mathcal{A})$ with $\|vu-e^{2\pi i \theta}uv\|<\delta$, and with $R_{\theta}(u,v)$ defined as in Definition \ref{D_RieffelE}, we have
\begin{eqnarray}\label{Exel formula}
 \rho_{\mathcal{A}}( R_{\theta}(u,v))(\tau)=\frac{1}{2\pi i}\tau(\log_{\theta}(vuv^*u^*)),\,\, \text{for all} ~\, \tau\in T(\mathcal{A}).
\end{eqnarray}
\end{theorem}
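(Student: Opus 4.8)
The plan is to deduce the formula from the classical Exel trace formula, i.e.\ from its case $\theta=0$, namely the identity $\rho_{\mathcal A}(\mathrm{bott}(u,v))(\tau)=\frac{1}{2\pi i}\tau(\log(vuv^*u^*))$ for an almost commuting pair of unitaries, which I take as known (\cite{Exel}). So assume $\theta\in(0,1)$ and tensor with the opposite rotation algebra. Let $\mathcal B=\mathcal A_{-\theta}$, with canonical generators $s,t$ satisfying $ts=e^{-2\pi i\theta}st$ and canonical trace $\tau_{\mathcal B}$, and set $\tilde u=u\otimes s^*$ and $\tilde v=v\otimes t^*$ in $\mathcal A\otimes\mathcal B$. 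A one-line computation using only the relation in $\mathcal B$ gives $\|\tilde u\tilde v-\tilde v\tilde u\|=\|vu-e^{2\pi i\theta}uv\|$, so if $\delta$ is chosen below both the threshold of Definition~\ref{D_RieffelE} and the threshold for which Exel's formula is valid, then $\tilde u,\tilde v$ is a genuinely almost commuting pair and $\mathrm{bott}(\tilde u,\tilde v)\in\K_0(\mathcal A\otimes\mathcal B)$ is defined.

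I would then carry out the two bookkeeping steps that fix the additive constant. Again using only the relation in $\mathcal B$ one finds $\tilde v\tilde u\tilde v^*\tilde u^*=e^{-2\pi i\theta}\big((vuv^*u^*)\otimes 1\big)$, which lies close to $1$ because $vuv^*u^*$ is within $\delta$ of $e^{2\pi i\theta}$; furthermore the branch identity $\log\big(e^{-2\pi i\theta}(vuv^*u^*)\big)=\log_\theta(vuv^*u^*)-2\pi i\theta\cdot 1$ holds, since both sides have spectrum near $0$ and the same exponential. Feeding this into Exel's formula for the pair $(\tilde u,\tilde v)$ and the trace $\tau\otimes\tau_{\mathcal B}$, and using $\tau_{\mathcal B}(1)=1$, gives
\[
\rho_{\mathcal A\otimes\mathcal B}\big(\mathrm{bott}(\tilde u,\tilde v)\big)(\tau\otimes\tau_{\mathcal B})=\frac{1}{2\pi i}\tau\big(\log_\theta(vuv^*u^*)\big)-\theta .
\]
Consequently the theorem becomes equivalent to the (logarithm-free) identity
\[
\rho_{\mathcal A\otimes\mathcal B}\big(\mathrm{bott}(u\otimes s^*,v\otimes t^*)\big)(\tau\otimes\tau_{\mathcal B})=\rho_{\mathcal A}\big(R_\theta(u,v)\big)(\tau)-\theta ,
\]
which is where the real content sits.

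I expect this identity to be the main obstacle, and I would prove it by a universality argument in the spirit of Exel's soft torus. Let $D$ be the universal unital $C^*$-algebra on unitaries $x,y$ with $\|yx-e^{2\pi i\theta}xy\|\le\delta$; for $\delta$ small, an adaptation of Exel's soft-torus analysis should show that the canonical quotient $D\to\mathcal A_\theta$ imposing the exact rotation relation is a $\KK$-equivalence, hence so is $D\otimes\mathcal B\to\mathcal A_\theta\otimes\mathcal B$. Since $R_\theta(\cdot,\cdot)$ and $\mathrm{bott}(\cdot\otimes s^*,\cdot\otimes t^*)$ are natural under the $*$-homomorphism $D\to\mathcal A$, $x\mapsto u$, $y\mapsto v$, both $\K_0$-classes above are pulled back from $D$, so it suffices to establish the corresponding identity of classes in $\K_0(\mathcal A_\theta\otimes\mathcal B)$, i.e.\ to treat the exact pair $vu=e^{2\pi i\theta}uv$. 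There $e_\theta(u,v)$ is literally the Rieffel projection $p_\theta$ of Definition~\ref{def of Rieffel P}, whose trace equals $\theta$ in every tracial state: for $\theta$ irrational the off-diagonal pieces $g_2(u)v^*$ and $vg_2(u)$ are annihilated by the $u$-invariance of the trace, because $u^*(g_2(u)v^*)u=e^{-2\pi i\theta}g_2(u)v^*$, while $\tau(f_2(u))=\int_0^1 f_2(e^{2\pi it})\,dt=\theta$; for rational $\theta$ the same value follows because the fibrewise rank of $p_\theta$ is constant over a connected base. On the other hand $\tilde u,\tilde v$ commute exactly, so $\mathrm{bott}(\tilde u,\tilde v)=[e(\tilde u,\tilde v)]-[\mathrm{diag}(1,0)]$ has trace $1-1=0$ (the diagonal of $e(\tilde u,\tilde v)$ sums to $1$), and both sides of the identity equal $\theta-\theta=0$. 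Upgrading this numerical check to the identity of \emph{classes} in $\K_0(\mathcal A_\theta\otimes\mathcal A_{-\theta})$ is the delicate point: one recognises $\mathrm{bott}(\mathfrak u_1\otimes s^*,\mathfrak u_2\otimes t^*)$ as the image of the Bott generator of $C(\mathbb T^2)$ under the commuting embedding $z\mapsto\mathfrak u_1\otimes s^*$, $w\mapsto\mathfrak u_2\otimes t^*$, expands it through the Künneth decomposition of $\K_0(\mathcal A_\theta\otimes\mathcal A_{-\theta})$ using the description of $\K_*(\mathcal A_\theta)$ in Theorem~\ref{AT and K}, and reads off the coefficients of $R_\theta(\mathfrak u_1,\mathfrak u_2)\boxtimes[1]$ and of $[1]\boxtimes q$ for a projection $q$ of trace $\theta$; the exterior products of $\K_1$-classes that appear automatically pair to $0$ with every product trace, so only those two coefficients matter, and one checks that they are the ones forced by the numerical identity above.

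An alternative route would avoid the tensoring and adapt Exel's own proof of the case $\theta=0$: realise both $\rho_{\mathcal A}(R_\theta(u,v))(\tau)$ and $\frac{1}{2\pi i}\tau(\log_\theta(vuv^*u^*))$ as the value of the de la Harpe--Skandalis determinant along a piecewise smooth path of unitaries joining $1$ to $vuv^*u^*$ constructed from the Rieffel functional calculus, and verify that the two expressions coincide. In either approach the crux is the logarithm-free comparison of the Rieffel--Bott class with a winding number, and that is where I would put the effort.
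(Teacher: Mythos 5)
The paper does not prove this statement; it imports it verbatim as Theorem 4.14 of \cite{Hua-Wang}, so there is no internal proof to compare against. I therefore assess your proposal on its own terms.

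Your opening reduction is correct and is a genuinely nice observation. Tensoring with $\mathcal A_{-\theta}$ via $\tilde u=u\otimes s^*,\ \tilde v=v\otimes t^*$ does give $\|\tilde v\tilde u-\tilde u\tilde v\|=\|vu-e^{2\pi i\theta}uv\|$ and $\tilde v\tilde u\tilde v^*\tilde u^*=e^{-2\pi i\theta}(vuv^*u^*)\otimes 1$, and your branch-cut identity $\log\bigl(e^{-2\pi i\theta}w\bigr)=\log_\theta(w)-2\pi i\theta$ is correct since both sides have values in $\{ri:r\in(-\pi,\pi)\}$ and the same exponential. Feeding Exel's $\theta=0$ formula into this correctly reduces the theorem to the ``logarithm-free'' identity you state. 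Up to this point the argument is sound.

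The gap is in the remaining step, which you yourself flag as the crux, and the sketch you give does not close it. Two concrete problems. First, the claim that the universal $C^*$-algebra $D$ on unitaries $x,y$ with $\|yx-e^{2\pi i\theta}xy\|\le\delta$ is $\KK$-equivalent to $\mathcal A_\theta$ is asserted as ``an adaptation of Exel's soft-torus analysis should show''; Exel's theorem is specifically for $\theta=0$, where the soft torus sits in a crossed-product framework, and the twisted analogue is not an established result one can cite — proving it is itself a serious piece of work. Second, even granting this, the passage from the single numerical check (both sides equal $0$ when the pair commutes exactly) to the required identity of K\"unneth coefficients is not valid in general. Writing $\mathrm{bott}(\tilde u,\tilde v)$ in the K\"unneth basis of $\K_0(\mathcal A_\theta\otimes\mathcal A_{-\theta})$ produces several integer coefficients, and the trace of the class against $\tau_\theta\otimes\tau_{-\theta}$ yields a single scalar relation among them involving $1$, $\theta$ and $\theta^2$. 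That pins the coefficients down only when $1,\theta,\theta^2$ are $\mathbb Q$-independent; it fails outright for rational $\theta$ (which the theorem permits, $\theta\in[0,1)$) and for irrational $\theta$ with $\theta^2\in\mathbb Q$. So the coefficients must be computed directly from the K\"unneth structure — which is exactly the hard content you defer — and, in addition, the vanishing of the $\K_1\otimes\K_1$ contribution against product traces is asserted but not justified. Note also that Theorem~\ref{AT and K}, which you invoke to describe $\K_*(\mathcal A_\theta)$, requires $\Theta$ nondegenerate and hence $\theta$ irrational, so it cannot be used uniformly across the stated range of $\theta$.

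In short: the tensoring reduction is a valid and elegant first step, but it transfers all the difficulty into a K-theoretic comparison that your sketch does not establish. The alternative you mention at the end — adapting Exel's de la Harpe--Skandalis/winding-number proof to the branch $\log_\theta$ directly — is the route that actually closes the argument, but as written it is only a pointer, not a proof.
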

The formula (\ref{Exel formula}) is called the {\it generalized Exel trace formula}.

\begin{definition} Denote by $I^{(0)}$ the class of finite dimensional $C^*$-algebras.  For an integer $k\geq 1,$ denote
by $I^{(k)}$ the class of $C^*$-subalgebras of the form $PM_r(C(X))P,$ where $r\geq 1$ is an integer, $X$ is
a finite CW complex of covering dimension at most $k,$ and $P\in M_r(C(X))$ is a projection.
\end{definition}

\begin{definition}Denote by $I_{(k)}$ the class of those $C^*$-algebras which are quotients of $C^*$-algebras
in $I^{(k)}$. Let $\mathcal{C}\in I_{(k)}.$ Then $\mathcal{C} = PM_r(C(X))P,$ where $X$ is a compact subset of a finite CW
complex of covering dimension at most $k$, $r\geq 1$ and $P\in M_r(C(X))$ is a projection.
\end{definition}

We recall the definition of tracial rank of $C^*$-algebras:

\begin{definition}[\cite{LinPLMS}] Let $\mathcal{A}$ be a unital simple $C^*$-algebra.
Then $\mathcal{A}$ is said to have tracial rank at most $k$ if for every $\varepsilon>0$, every
finite subset $\mathcal{F}\subset \mathcal{A}$  and every
nonzero positive element $c\in \mathcal{A},$ there exists a
$C^*$-subalgebra $\mathcal{B}\in I_{(k)}$ with $1_{\mathcal{B}}=p$ such
that:\\
(i) $\|pa-ap\|<\varepsilon$ for all $a\in \mathcal{F}$,\\
(ii) dist$(pap,\mathcal{B})<\varepsilon$ for all $a\in \mathcal{F}$,\\
(iii) $1_{\mathcal{A}}-p$ is Murray-von Neumann equivalent to a projection in $\overline{c\mathcal{A}c}.$
\end{definition}

If $\mathcal{A}$ has tracial rank at most $k$, we write $\mathrm{TR}(\mathcal{A})\leq k.$ If furthermore, $\mathrm{TR}(\mathcal{A})\nleq k-1,$ then we say $\mathrm{TR}(\mathcal{A})= k.$

\begin{definition}
  Let $\mathcal{A}$ be a $C^*$-algebra. Following Dadarlat and Loring (\cite{Dadarlat-Loring-DMJ}), we set
  $$\underline{\K}(\mathcal{A})=\oplus_{n=1}^{\infty}(\K_0(\mathcal{A};\mathbb{Z}/n\mathbb{Z})\oplus \K_1(\mathcal{A};\mathbb{Z}/n\mathbb{Z})).$$
  Let $\mathcal{B}$ be another unital $C^*$-algebra. If, furthermore, $\mathcal{A}$ is assumed to be separable and satisfy the Universal Coefficient Theorem (\cite{Rosenberg-Schochet}), by $\cite{Dadarlat-Loring-DMJ}$,
  $$\KL(\mathcal{A},\mathcal{B})\cong {\rm Hom}_{\wedge}(\underline{\K}(\mathcal{A}),\underline{\K}(\mathcal{B})).$$
  Here $\KL(\mathcal{A},\mathcal{B})=\KK(\mathcal{A},\mathcal{B})/\mathrm{Pext}(\K_*(\mathcal{A}),\K_*(\mathcal{B})),$
  where $\mathrm{Pext}$ is the subgroup of $\mathrm{Ext}_{\mathbb{Z}}^1$ consisting of classes of pure extensions (see \cite{Dadarlat-Loring-DMJ} for details).
\end{definition}

\begin{definition}
Let $\mathcal{A}$ and $\mathcal{B}$ be two unital $C^*$-algebras. Denote by $\KL(\mathcal{A},\mathcal{B})^{++}$ the set of those $\kappa\in \Hom _{\Lambda}(\underline{\K}(\mathcal{A}),\underline{\K}(\mathcal{B}))$ such that
$$\kappa(\K_0(\mathcal{A})_+\backslash \{0\})\subset \K_0(\mathcal{B})_+\backslash \{0\}.$$

Denote by $\KL_e(\mathcal{A}, \mathcal{B})^{++}$ the set of those $\kappa\in  \KL(\mathcal{A}, \mathcal{B})^{++}$ such that $\kappa([1_{\mathcal{A}}]) = [1_{\mathcal{B}}].$
\end{definition}

\begin{remark}
Let $\mathcal{A}$ and $\mathcal{B}$ be two unital $C^*$-algebras.
Let $\varphi \colon \mathcal{A} \rightarrow \mathcal{B}$ be a unital homomorphism. Denote by $\varphi_*$ the induced homomorphism on the $\K$-groups, $\varphi_{*0}$ the induced homomorphism on the $\K_0$-groups and $\varphi_{*1}$ the induced homomorphism on the $\K_1$-groups.
Also denote by $\varphi_\sharp$ the induced map on the tracial state spaces and denote by $\varphi^{\ddagger}$ the induced map
\[
\varphi^{\ddagger} \colon  U(M_{\infty}(\mathcal{A}))/CU(M_{\infty}(\mathcal{A})) \rightarrow U(M_{\infty}(\mathcal{B}))/CU(M_{\infty}(\mathcal{B})).
\]
\end{remark}

Note that if $\mathcal{A}$ is a simple unital separable $C^*$-algebra with tracial rank at most one,
then it has stable rank one (see Theorem 3.6.10 of \cite{Lin-book}) and  there is an isomorphism (see Corollary 3.5 of \cite{Lin-JFA-2010})
\[
U(\mathcal{A})/CU(\mathcal{A}) \cong U(M_{\infty}(\mathcal{A}))/CU(M_{\infty}(\mathcal{A})).
\]

\begin{definition}
  Let $\mathcal{A}$ and $\mathcal{B}$ be two $C^*$-algebras and $L:\mathcal{A}\rightarrow \mathcal{B}$ be a linear map. Let $\eta>0$ and $\mathcal{G}\subset \mathcal{A}$ be a finite subset. We say $L$ is $\mathcal{G}$-$\eta$-{\it multiplicative} if
 \begin{center}
   $\|L(ab)-L(a)L(b)\|<\eta$ for all $a,b\in \mathcal{G}.$
 \end{center}
\end{definition}

Let $\mathcal{C}_n$ be a $C^*$-algebra such that $\K_*(\mathcal{A}; \Z/n\Z) \cong \K_*(\mathcal{A} \otimes \mathcal{C}_n)$.
For convenience, if $L \colon \mathcal{A} \rightarrow \mathcal{B}$ is a linear map, we will use the same symbol $L$
to denote the induced map $L \otimes \mathrm{id}_{n} \colon \mathcal{A} \otimes M_n \rightarrow \mathcal{B} \otimes M_n$
as well as $L \otimes \mathrm{id}_{\mathcal{C}_n} \colon \mathcal{A} \otimes \mathcal{C}_n \rightarrow \mathcal{B} \otimes \mathcal{C}_n$.

It is well known that if $a \in M_n(\mathcal{A})$ is an `almost' projection, then it is norm close to a projection. Two norm close projections are unitarily equivalent.
So $[a] \in \K_0(\mathcal{A})$ is well-defined. Similarly, if $b \in M_n(\mathcal{A})$ is an
`almost' unitary, we shall use $[b]$ to denote the equivalent class in
$\K_1(\mathcal{A})$. If $L \colon \mathcal{A} \rightarrow \mathcal{B}$ is an `almost' homomorphism, we shall
use $L_*$ to denote the induced (partially defined) map on the $\K$-theories.
From \cite{Dadarlat-IJM} or \cite{Lin-book}, we know that for any finite set $\mathcal{P} \subset \underline{\K}(\mathcal{A})$, there is a finite subset $\mathcal{G} \subset \mathcal{A}$ and $\eta > 0$ such that,
for any unital completely positive $\mathcal{G}$-$\eta$-multiplicative linear map $L$, $L_*$ is well defined on $\mathcal{P}$.

If $u$ is a unitary in $U_{\infty}(\mathcal{A})$, we shall use $[u]$ to denote the equivalence class
in $\K_1(\mathcal{A})$ and use $\bar{u}$ to denote the equivalent class in $U_{\infty}(\mathcal{A})/CU_{\infty}(\mathcal{A})$.
If $L \colon \mathcal{A} \rightarrow \mathcal{B}$ is an `almost' homomorphism
so that $L(u)$ is invertible, we define
\[
\langle L(u) \rangle = L(u)(L(u)^*L(u))^{-1/2}.
\]

\section{Higher dimensional Bott classes and construction of Rieffel-type projections}\label{sec:rie}
 As the pfaffian of an even dimensional skew-symmetric matrix will play a central role in the construction of the projections in higher dimensional noncommutative tori, we recall the definition of the pfaffian.

\begin{definition}\label{def:pf} Let $p\geq1$ be an integer.
  The {\it pfaffian} of a $2p\times 2p$ skew-symmetric matrix $A=(a_{ij})$ is a polynomial, denoted by $\mathrm{pf}(A)$, in the entries $a_{ij}$ such that $\mathrm{pf}(A)^2=\det (A)$ and $\mathrm{pf}(J_0'')=1,$ where
  $$J_0''=\left(\begin{matrix}
    J_0' & & & \\
     & J_0'&&\\
     &&\ddots&\\
     &&&J_0'
  \end{matrix}\right)$$ is the block diagonal matrix constructed from $p$ identical $2\times 2$ blocks of the form
  $J_0'=\left(\begin{matrix}
    0 & 1 \\
    -1 & 0
  \end{matrix}\right).$
\end{definition}

It can be shown that $\mathrm{pf}(A)$ always exists and is unique. Also it is well known that for any $2p\times 2p$ matrix $B,$
\begin{equation}\label{eq:pfdet} \mathrm{pf}(BAB^t)=\det(B){\rm pf}(A),\end{equation}
where $B^t$ is the transpose matrix of $B.$

To give some examples,
\begin{eqnarray}\label{f1}{\rm pf}\left(\left(\begin{matrix}
                  0 & \theta_{12} \\
                  -\theta_{12} & 0
                \end{matrix}
\right)\right)=\theta_{12},
\end{eqnarray}

\begin{eqnarray}\label{f2}{\rm pf}\left(\left(\begin{matrix}
                  0 & \theta_{12}&\theta_{13}&\theta_{14} \\
                  -\theta_{12} & 0&\theta_{23}&\theta_{24}\\
                  -\theta_{13} & -\theta_{23}&0&\theta_{34}\\
                  -\theta_{14} & -\theta_{24}&-\theta_{34}&0
                \end{matrix}
\right)\right)=\theta_{12}\theta_{34}-\theta_{13}\theta_{24}+\theta_{14}\theta_{23}.
\end{eqnarray}
More generally, if $n=2m$ for some integer $m\geq1,$ then for
$$A=\left(\begin{matrix}
                  0 & \theta_{12}&\cdots&&\cdots&\theta_{1n} \\
                  -\theta_{12} &\ddots&\ddots&&& \theta_{2n}\\
                  \vdots&\ddots&&&&\\
                  &&&&&\\
                  &&&&\ddots&\vdots\\
                  -\theta_{1(n-1)}&&&\ddots&\ddots& \theta_{(n-1)n}\\
                  -\theta_{1n} &\cdots&&\cdots& -\theta_{(n-1)n}&0
                \end{matrix}
\right),$$
the pfaffian of $A$ is given by $\sum_{\xi}(-1)^{|\xi|}\Pi_{s=1}^m \theta_{\xi(2s-1)\xi(2s)},$ where the sum is taken over all elements $\xi$ of the
permutation group $S_n$ such that
$\xi(2s-1)<\xi(2s)$ for all $1\leq s\leq m$ and $\xi(1)<\xi(3)<\cdots<\xi(2m-1).$

Let $n\geq 2$ be an integer, and let $p$ be an integer such that $1\leq p\leq\frac{n}{2}.$

\begin{definition}\label{2p minor}
  A {\it $2p$-pfaffian minor} (or just {\it pfaffian minor}) of a skew-symmetric $n\times n$ matrix $A$ is the pfaffian of a sub-matrix $M_{I}^A$ of $A$ consisting of rows and columns indexed by $i_1,i_2,\dots,i_{2p}$ for some numbers
  $1\leq i_1<i_2<\cdots<i_{2p}\leq n,$ and $I=(i_1,i_2,\dots,i_{2p}).$ We often use ${\rm pf}_{I}^A$
  as the abbreviation of ${\rm pf}(M_{I}^A)$ without special emphasis.
  We also use $M^{A}_{I,J}$ denote a sub-matrix of $A$ consisting of rows and columns indexed by $i_1,i_2,\dots,i_{n}$ and $j_1,j_2,\dots,j_{m},$ respectively, for some numbers
  $i_1<i_2<\dots<i_{n}, j_1<j_2<\dots<j_{m},$ and $I=(i_1,i_2,\dots,i_{n}), J=(j_1,j_2,\dots,j_{m}).$
\end{definition}
Note that the number of $2p$-pfaffian minors is the combinatorial number $C_{n}^{2p}$
and the number of all pfaffian minors is $2^{n-1}-1.$

We recall the following fact due to Elliott which will play a key role.
\begin{theorem}[Elliott]\label{elliott_image_of_trace}
Let $\Theta\in \mathcal{T}_n.$ Then for all $\tau\in T(\mathcal{A}_{\Theta})$, $\tau_*(\K_0(A_\Theta))$  is the range of the exterior exponential
 $$\operatorname{exp}_{\wedge}:\Lambda^{\operatorname{even}}\Z^n\rightarrow \R.$$
\end{theorem}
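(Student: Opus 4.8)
The plan is to argue by induction on $n$, realizing $\mathcal{A}_{\Theta}$ as a crossed product of a lower-dimensional noncommutative torus by $\Z$ and tracking the canonical trace through the Pimsner--Voiculescu six-term exact sequence; this recovers Elliott's computation and at the same time matches it with the exterior-algebra description. For the base case $n=1$ we have $\mathcal{A}_{\Theta}=C(\T)$ with $\K_0=\Z[1]$ and canonical trace equal to normalized Lebesgue integration, so $(\tau_{\Theta})_*(\K_0)=\Z$, which agrees with the claim, since $\Lambda^{\operatorname{even}}\Z^1=\Lambda^0\Z=\Z$ and $\exp_{\wedge}(\Theta)$ has only its degree-zero term $1$. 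For the inductive step, let $\Theta'$ be the top-left $(n-1)\times(n-1)$ corner of $\Theta$, identify $\mathcal{A}_{\Theta'}$ with the $C^*$-subalgebra of $\mathcal{A}_{\Theta}$ generated by $\mathfrak{u}_1,\dots,\mathfrak{u}_{n-1}$, and use that $\mathfrak{u}_n$ normalizes $\mathcal{A}_{\Theta'}$ to write $\mathcal{A}_{\Theta}\cong\mathcal{A}_{\Theta'}\rtimes_{\alpha}\Z$, where $\alpha(a)=\mathfrak{u}_n a\mathfrak{u}_n^*$ is the gauge automorphism determined by $\alpha(\mathfrak{u}_j)=e^{2\pi i\theta_{jn}}\mathfrak{u}_j$ for $1\le j\le n-1$.

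The first key point is that $\alpha$ is homotopic to the identity through automorphisms: the formula $\alpha_t(\mathfrak{u}_j)=e^{2\pi i t\theta_{jn}}\mathfrak{u}_j$ defines, for each $t\in[0,1]$, an automorphism of $\mathcal{A}_{\Theta'}$ lying in the dual torus action $\T^{n-1}\curvearrowright\mathcal{A}_{\Theta'}$, and $t\mapsto\alpha_t$ is pointwise norm-continuous with $\alpha_0=\mathrm{id}$ and $\alpha_1=\alpha$. Hence $\alpha_*=\mathrm{id}$ on $\K_*(\mathcal{A}_{\Theta'})$, the maps $1-\alpha_*$ in the Pimsner--Voiculescu sequence vanish, and one obtains a short exact sequence
\[
0\longrightarrow\K_0(\mathcal{A}_{\Theta'})\xrightarrow{\ \iota_*\ }\K_0(\mathcal{A}_{\Theta})\xrightarrow{\ \partial\ }\K_1(\mathcal{A}_{\Theta'})\longrightarrow0
\]
together with its companion with $\K_0$ and $\K_1$ interchanged. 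Consequently $\K_0(\mathcal{A}_{\Theta})$ is generated by $\iota_*(\K_0(\mathcal{A}_{\Theta'}))$ together with lifts of any finite generating set of $\K_1(\mathcal{A}_{\Theta'})$, so $(\tau_{\Theta})_*(\K_0(\mathcal{A}_{\Theta}))$ is the subgroup of $\R$ generated by $(\tau_{\Theta})_*(\iota_*(\K_0(\mathcal{A}_{\Theta'})))$ and by the traces of those lifts; no splitting of the sequence is needed. Since the canonical trace of $\mathcal{A}_{\Theta}$ restricts on $\mathcal{A}_{\Theta'}$ to its canonical trace, $(\tau_{\Theta})_*\circ\iota_*=(\tau_{\Theta'})_*$, and by the inductive hypothesis this first piece contributes $\Z+\sum_{0<|I|\le n-1,\,I\subset\{1,\dots,n-1\}}{\rm pf}(M_I^{\Theta})\Z$, exactly the part of $\exp_{\wedge}(\Theta)$ supported on multi-indices avoiding $n$.

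It remains to compute $\tau_{\Theta}(\widetilde y)$ for a lift $\widetilde y\in\K_0(\mathcal{A}_{\Theta})$ of a generator $y=[u]\in\K_1(\mathcal{A}_{\Theta'})$. Here $\widetilde y$ is represented by a Rieffel/mapping-torus-type projection built from $u$ and the implementing unitary $\mathfrak{u}_n$, and its trace is a winding-number (de la Harpe--Skandalis determinant) invariant: $\tau_{\Theta}(\widetilde y)=\tfrac{1}{2\pi i}\int_0^1(\tau_{\Theta'})(\dot w_t\, w_t^*)\,dt$ for a path $(w_t)$ of unitaries over $\mathcal{A}_{\Theta'}$ running from $u$ to $\alpha(u)$, which is a scalar twist of $u$. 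The case $n=2$ of this is precisely the generalized Exel trace formula \eqref{Exel formula} of Theorem~\ref{trace formula}, which the present paper generalizes in Section~\ref{sec:rie}. Choosing the generators of $\K_1(\mathcal{A}_{\Theta'})\cong\Lambda^{\operatorname{odd}}\Z^{n-1}$ to be the matrix amplifications of the monomials-and-Rieffel-projections that represent the classes $e_{i_1}\wedge\cdots\wedge e_{i_{2m-1}}$ under the inductive identification, one checks that $\tau_{\Theta}(\widetilde y)$ for the class $e_{i_1}\wedge\cdots\wedge e_{i_{2m-1}}$ equals ${\rm pf}(M_{(i_1,\dots,i_{2m-1},n)}^{\Theta})$, the coefficient of $e_{i_1}\wedge\cdots\wedge e_{i_{2m-1}}\wedge e_n$ in $\exp_{\wedge}(\Theta)$. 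Adding the two contributions gives $(\tau_{\Theta})_*(\K_0(\mathcal{A}_{\Theta}))=\Z+\sum_{0<|I|\le n}{\rm pf}(M_I^{\Theta})\Z$, which is the range of $\exp_{\wedge}\colon\Lambda^{\operatorname{even}}\Z^n\to\R$; compare \eqref{range_intro}.

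The step I expect to be the main obstacle is the last identification, namely matching the winding contributions of the lifted $\K_1$-classes with the higher pfaffian minors together with their alternating signs: the splitting of the Pimsner--Voiculescu sequence is noncanonical and, once $|I|\ge3$, the $\Lambda^{\operatorname{odd}}$-generators are genuine matrix-valued unitaries assembled recursively from Rieffel projections, so a single winding number becomes a product of one rotation parameter $\theta_{jn}$ with a lower pfaffian minor. The cleanest way to control this is to prove the stronger basis-free statement that, under the identification $\K_0(\mathcal{A}_{\Theta})\cong\Lambda^{\operatorname{even}}\Z^n$, the functional $(\tau_{\Theta})_*$ is evaluation against $\exp_{\wedge}(\Theta)\in\Lambda^{\operatorname{even}}\R^n$; this is manifestly compatible with the inductive step because $\exp_{\wedge}(\Theta)=\exp_{\wedge}(\Theta')\wedge\bigl(1+\omega\wedge e_n\bigr)$ with $\omega=\sum_{j=1}^{n-1}\theta_{jn}e_j$, and the recursion ${\rm pf}(M_{I\cup\{n\}}^{\Theta})=\sum_{j\in I}\pm\,\theta_{jn}\,{\rm pf}(M_{I\setminus\{j\}}^{\Theta})$ is just the Laplace-type expansion of the pfaffian along the index $n$; the range of $\exp_{\wedge}$ can then be read off directly.
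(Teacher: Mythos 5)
The paper does not prove this theorem; it refers the reader to \cite[Theorem 3.1]{Elliott} for both the definition of $\operatorname{exp}_{\wedge}$ and the proof, so there is no in-paper argument to compare against. Your proposal reconstructs the shape of Elliott's original argument: iterate the Pimsner--Voiculescu sequence over the filtration by lower-dimensional subtori, use that the implementing automorphism is homotopic to the identity through gauge automorphisms so that the sequence degenerates into split short exact sequences, and compute traces of lifted $\K_1$-classes by a de la Harpe--Skandalis winding number. This is the right outline, and the organizing identity $\operatorname{exp}_{\wedge}(\Theta)=\operatorname{exp}_{\wedge}(\Theta')\wedge(1+\omega\wedge e_n)$ with $\omega=\sum_{j<n}\theta_{jn}e_j$, matched with the Laplace expansion of the pfaffian along the index $n$, is exactly the correct bookkeeping device.

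However, the step you yourself flag as ``the main obstacle'' is genuinely unfinished, and it is the crux of the theorem. Already for $n=4$ the $\K_1(\mathcal{A}_{\Theta'})$-generator corresponding to $e_1\wedge e_2\wedge e_3$ is a matrix-valued unitary built recursively (from a Rieffel projection over $C^*(\mathfrak{u}_1,\mathfrak{u}_2)$ together with $\mathfrak{u}_3$); $\alpha(u)$ is \emph{not} a scalar twist of $u$, so one must actually produce a path $w_t$ from $u$ to $\alpha(u)$ and verify $\frac{1}{2\pi i}\int_0^1\tau_{\Theta'}(\dot w_t\,w_t^*)\,dt\equiv{\rm pf}(M_{I\cup\{n\}}^{\Theta})$ modulo the trace image of $\K_0(\mathcal{A}_{\Theta'})$. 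Moreover the noncanonical splittings of the PV sequences at each level must be chosen coherently with the identifications $\K_*(\mathcal{A}_{\Theta'})\cong\Lambda^{*}\Z^{n-1}$ so that the inductive bookkeeping is meaningful; writing ``one checks that'' is not a substitute for this computation, which is precisely the content of Elliott's theorem. Your suggestion to reformulate as the basis-free claim that $(\tau_\Theta)_*$ is evaluation against $\operatorname{exp}_{\wedge}(\Theta)$ is the correct strategy, but it still has to be carried out. A secondary, easily repaired omission: the statement quantifies over every $\tau\in T(\mathcal{A}_\Theta)$ while you only treat $\tau_\Theta$; this is harmless because the connected gauge action of $\T^n$ fixes $\K_0$-classes and averages any tracial state of a projection to the canonical trace of that projection, so all $\tau_*$ agree on $\K_0$, but the point deserves a sentence.
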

 \noindent We refer to (\cite[Theorem 3.1]{Elliott}) for the definition of exterior exponential and the proof of the above theorem. The range of the exterior exponential is well known and is given below as a corollary of the above theorem:

\begin{corollary} For $\Theta\in \mathcal{T}_n,$
$\tau_*(\K_0(\mathcal{A}_{\Theta}))$ is the subgroup of $\mathbb{R}$ generated by $1$ and the numbers
\begin{eqnarray}
\sum_{\xi}(-1)^{|\xi|}\Pi_{s=1}^{p}\theta_{j_{\xi (2s-1)}j_{\xi(2s)}}\nonumber
\end{eqnarray} for
$1\leq j_1<j_2<\cdots<j_{2p}\leq n,$ where the sum is taken over all elements
$\xi$ of the permutation group $S_{2p}$ such that $\xi(2s-1)<\xi(2s)$ for all
$1\leq s\leq p$ and $\xi(1)<\xi(3)<\cdots<\xi(2p-1)$ for any $\tau\in T(\mathcal{A}_{\Theta}).$
\end{corollary}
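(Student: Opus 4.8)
The plan is to make the statement of Theorem~\ref{elliott_image_of_trace} completely explicit by unwinding the definition of the exterior exponential $\exp_{\wedge}\colon\Lambda^{\mathrm{even}}\Z^n\to\R$ and identifying its image with the subgroup of $\R$ described in the corollary. First I would fix the standard basis $e_1,\dots,e_n$ of $\Z^n$, so that $\Lambda^{\mathrm{even}}\Z^n$ has basis the wedges $e_{j_1}\wedge\cdots\wedge e_{j_{2p}}$ with $1\le j_1<\cdots<j_{2p}\le n$ and $0\le 2p\le n$; here the $p=0$ summand $\Lambda^0\Z^n$ contributes the generator $1$. The skew form $\Theta$ induces, via $\omega=\sum_{j<k}\theta_{jk}\,e_j^*\wedge e_k^*$, an element $\exp_{\wedge}(\omega)$ of the exterior algebra, and the pairing of this element against a basis vector $e_{j_1}\wedge\cdots\wedge e_{j_{2p}}$ is, by the combinatorial formula for powers of a two-form, exactly the sum $\sum_{\xi}(-1)^{|\xi|}\prod_{s=1}^p\theta_{j_{\xi(2s-1)}j_{\xi(2s)}}$ over permutations $\xi\in S_{2p}$ with $\xi(2s-1)<\xi(2s)$ and $\xi(1)<\xi(3)<\cdots<\xi(2p-1)$. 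This identification is the crux: it says that the number attached to the index set $\{j_1,\dots,j_{2p}\}$ is precisely $\pf(M^{\Theta}_{(j_1,\dots,j_{2p})})$, the corresponding pfaffian minor, by the explicit pfaffian formula recorded just before Definition~\ref{2p minor}.

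Next I would verify that the image of $\exp_{\wedge}$ is the \emph{subgroup} generated by these numbers, not merely the set of their integer combinations coming from a single element. Since $\exp_{\wedge}$ is being evaluated on all of $\Lambda^{\mathrm{even}}\Z^n$ (a free abelian group), and since evaluation is $\Z$-linear in the lattice variable after one fixes $\Theta$, the image is spanned over $\Z$ by the images of the basis vectors $e_{j_1}\wedge\cdots\wedge e_{j_{2p}}$; by the previous paragraph these images are $1$ (from $p=0$) together with the pfaffian minors $\pf(M^{\Theta}_I)$ for all even $I$. Hence $\tau_*(\K_0(\mathcal{A}_\Theta))=\Z+\sum_{0<|I|\le n}\pf(M^{\Theta}_I)\Z$, which is exactly the set displayed in the corollary once one rewrites each pfaffian minor via its explicit permutation-sum formula. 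I would also note that this is independent of the choice of $\tau\in T(\mathcal{A}_\Theta)$, as already built into the statement of Theorem~\ref{elliott_image_of_trace}.

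The main obstacle, and the only place requiring genuine care, is the combinatorial bookkeeping in matching the coefficient extracted from $\exp_{\wedge}(\omega)=\sum_{p\ge0}\frac{1}{p!}\omega^{\wedge p}$ against the normalization conventions for the pfaffian in Definition~\ref{def:pf}: one must check that the factor $1/p!$ is exactly absorbed by the number of orderings counted in expanding $\omega^{\wedge p}=\bigl(\sum_{j<k}\theta_{jk}e_j^*\wedge e_k^*\bigr)^{\wedge p}$, so that the surviving sum is over the restricted set of permutations $\xi$ (with $\xi(2s-1)<\xi(2s)$ and $\xi(1)<\xi(3)<\cdots$) with no leftover rational denominators — this is the standard identity $\pf(A)=\frac{1}{2^p p!}\sum_{\sigma\in S_{2p}}\mathrm{sgn}(\sigma)\prod_{s=1}^p a_{\sigma(2s-1)\sigma(2s)}$, and the claim is that restricting to the canonical representatives removes both the $2^p$ and the $p!$. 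Everything else is formal: linearity of $\exp_{\wedge}$ on the lattice, the observation that odd exterior powers contribute nothing to an \emph{even} exterior algebra pairing, and the remark that the number of pfaffian minors is $\sum_{p\ge1}C_n^{2p}=2^{n-1}-1$, matching the count in the text.
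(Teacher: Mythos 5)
Your proposal is correct, and it does more than the paper: the paper offers no argument for this corollary, simply invoking the range of the exterior exponential as ``well known.'' What you have written is the standard unwinding of that claim, and the one delicate point you flag is indeed the right one to worry about. Pairing $\frac{1}{p!}\omega^{\wedge p}$ (with $\omega=\sum_{j<k}\theta_{jk}e_j^*\wedge e_k^*$) against $e_{j_1}\wedge\cdots\wedge e_{j_{2p}}$ produces the full alternating sum over all $2^p p!$ orderings of the chosen pairing, and dividing by $p!$ still leaves a factor of $2^p$ to account for, which is absorbed because each unordered term $\theta_{ab}e_a^*\wedge e_b^*$ already appears once with $a<b$ in $\omega$ rather than twice; a quick sanity check at $p=2$ gives $\theta_{12}\theta_{34}-\theta_{13}\theta_{24}+\theta_{14}\theta_{23}$, matching formula~(\ref{f2}) and Definition~\ref{def:pf}. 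The step from ``images of basis vectors'' to ``subgroup generated by'' is also correctly justified, since $\exp_{\wedge}$ is $\Z$-linear in the lattice argument by construction and $\Lambda^{\operatorname{even}}\Z^n$ is free abelian on the wedges you list, with the $p=0$ summand supplying the generator $1$. In short, your argument fills a gap the paper leaves implicit rather than diverging from it, and is a useful supplement.
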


Noting that $\sum_{\xi}(-1)^{|\xi|}\Pi_{s=1}^{p}\theta_{j_{\xi (2s-1)}j_{\xi(2s)}}$ is exactly the pfaffian of $M_{I}^{\Theta}$,
where $I=(i_1,i_2,\dots,i_{2p})$, we have
\begin{eqnarray}\label{range of trace}
  \tau_*(\K_0(\mathcal{A}_{\Theta}))=\mathbb{Z}+\sum\limits_{0<|I|\leq n}{\rm pf}(M_{I}^{\Theta})\mathbb{Z},\, {\rm for}\,{\rm all}\, \tau\in T(\mathcal{A}_{\Theta}),
\end{eqnarray}
where $|I|=2p$ for $I=(i_1,i_2,\dots,i_{2p}).$


\begin{definition}[Definition 1 of \cite{Farsi-Watling-MA}]\label{df totally irrational}
  We say that $\Theta\in\mathcal{T}_n$ is {\it totally irrational} if ${\rm exp}_{\wedge}$ is an injective map from $\Lambda^{even}\mathbb{Z}^n$ to $\mathbb{R}$ (cf. Section 6 and 7 of \cite{Rieffel-1988}).
\end{definition}

It is clear from Elliott's work that $(\tau_{\Theta})_*$ is injective if and only if $\Theta$ is totally irrational.  Now the range of exp$_{\wedge}$ is given by
$$\mathbb{Z}+\sum\limits_{0<|I|\leq n}{\rm pf}(M_{I}^{\Theta})\mathbb{Z},$$ where $|I|=2p$ for $I=(i_1,i_2,\dots,i_{2p}).$
Thus $\Theta$ is totally irrational if and only if $1,$ $\mathrm{pf}(M_{I}^{\Theta}),$ $0<|I|\leq n$ are rationally independent.

 Note that if $\Theta$ is totally irrational, $\Theta$ is also nondegenerate and $\mathcal{A}_{\Theta}$ is a simple $C^*$-algebra by Theorem \ref{AT and K}.

For $$\Theta=\left(\begin{matrix}
                 \Theta_{11} &  \Theta_{12} \\
               \Theta_{21}& \Theta_{22}
                 \end{matrix}\right)= \left(\begin{matrix}
               \Theta_{11} & \Theta_{12} \\
                -\Theta_{12}^t &\Theta_{22}
                 \end{matrix}\right)\in\mathcal{T}_n,$$
                 where $n=2l,$ $l>1,$
                 and
                 \begin{eqnarray}
  \Theta_{11}=\left(\begin{matrix}
              0 &  \theta_{12} \\
               \theta_{21}& 0
                 \end{matrix}\right)=\left(\begin{matrix}
                 0 &  \theta_{12} \\
              - \theta_{12}& 0
                 \end{matrix}\right)\in\mathcal{T}_2\nonumber
\end{eqnarray} is invertible $2\times2$ matrix,
                 $$\Theta_{22}=\left(\begin{matrix}
                 0 &  \theta_{34}&\cdots&\theta_{3n} \\
              - \theta_{34}& 0&\cdots&\theta_{4n}\\
              \vdots&\vdots&\ddots&\vdots\\
              -\theta_{3n}&-\theta_{4n}&\cdots&0
                 \end{matrix}\right)\in\mathcal{T}_{n-2},$$
                  $$\Theta_{12}=\left(\begin{matrix}
                \theta_{13} &  \theta_{14}&\cdots&\theta_{1n} \\
               \theta_{23}& \theta_{24}&\cdots&\theta_{2n}
                 \end{matrix}\right),$$
     $$\Theta_{21}=\left(\begin{matrix}
                -\theta_{13} & - \theta_{23} \\
                -\theta_{14} & - \theta_{24} \\
                \vdots&\vdots\\
             -\theta_{1n}& -\theta_{2n}
                 \end{matrix}\right),$$
we have  $ \Theta_{11}^{-1}=\left(\begin{matrix}
                 0 &  -\frac{1}{\theta_{12}} \\
              \frac{1}{\theta_{12}}& 0
                 \end{matrix}\right)\in\mathcal{T}_2,$ and
$$\Theta_{22}-\Theta_{21}\Theta_{11}^{-1}\Theta_{12}=\left(\begin{matrix}
                 0 &              \theta_{34}-\frac{-\theta_{23}\theta_{14}+\theta_{13}\theta_{24}}{\theta_{12}}&\cdots&\theta_{3n}-\frac{-\theta_{23}\theta_{1n}+\theta_{13}\theta_{2n}}{\theta_{12}} \\
              - \theta_{34}+\frac{-\theta_{23}\theta_{14}+\theta_{13}\theta_{24}}{\theta_{12}}& 0&\cdots&\theta_{4n}-\frac{-\theta_{24}\theta_{1n}+\theta_{14}\theta_{2n}}{\theta_{12}}\\
              \vdots&\vdots&\ddots&\vdots\\
              -\theta_{3n}+\frac{-\theta_{23}\theta_{1n}+\theta_{13}\theta_{2n}}{\theta_{12}}&-\theta_{4n}+\frac{-\theta_{24}\theta_{1n}+\theta_{14}\theta_{2n}}{\theta_{12}}&\cdots&0
                 \end{matrix}\right).$$
                 Thus we have from (\ref{f1}) and (\ref{f2}) that
                \begin{eqnarray}\label{F(theta)}
              \Theta_{22}-\Theta_{21}\Theta_{11}^{-1}\Theta_{12}=\left(\begin{matrix}
                0 &  \frac{{\rm pf}^{\Theta}_{(1,2,3,4)}}{\theta_{12}}&\cdots&\frac{{\rm pf}^{\Theta}_{(1,2,3,n)}}{\theta_{12}} \\
              - \frac{{\rm pf}^{\Theta}_{(1,2,3,4)}}{\theta_{12}}& 0&\cdots&\frac{{\rm pf}^{\Theta}_{(1,2,4,n)}}{\theta_{12}}\\
              \vdots&\vdots&\ddots&\vdots\\
              -\frac{{\rm pf}^{\Theta}_{(1,2,3,n)}}{\theta_{12}}&-\frac{{\rm pf}^{\Theta}_{(1,2,4,n)}}{\theta_{12}}&\cdots&0
                 \end{matrix}\right).
                 \end{eqnarray}

The following lemma reveals the relationship between the pfaffian of a matrix and the pfaffian of its partitioned matrix.
\begin{lemma}\label{submatrix,x}
For any integer $n\geq 2,$
let
  $$\Theta=\left(\begin{matrix}
                  \Theta_{11} &  \Theta_{12} \\
                 \Theta_{21}& \Theta_{22}
                 \end{matrix}
\right)=\left(\begin{matrix}
                  \Theta_{11} &  \Theta_{12} \\
                 -\Theta_{12}^t& \Theta_{22}
                 \end{matrix}
\right)\in \mathcal{T}_n,$$
where $\Theta_{11}$ is invertible $2\times 2$ matrix, one has
\begin{eqnarray}\label{=2}
{\rm pf}(\Theta_{11}){\rm pf}(M_{I'}^{\Theta_{22}-\Theta_{21}\Theta_{11}^{-1}\Theta_{12}})
={\rm pf}(M_{I}^{\Theta}),
\end{eqnarray}
where $|I|=2l+2$ for $I=(1,2,i_1+2,i_2+2,\dots,i_{2l}+2),$ $I'=(i_1,i_2,\dots,i_{2l})$.
In particular, when $n$ is an even number, we have
\begin{eqnarray}\label{pf1}
  {\rm pf}(\Theta)={\rm pf}(\Theta_{11}){\rm pf}(\Theta_{22}+\Theta^t_{12}\Theta^{-1}_{11}\Theta_{12})={\rm pf}(\Theta_{11}){\rm pf}(\Theta_{22}-\Theta_{21}\Theta_{11}^{-1}\Theta_{12}).
\end{eqnarray}
\end{lemma}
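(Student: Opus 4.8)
The plan is to reduce the identity to the elementary block $L^{t}DL$-factorisation of a skew-symmetric matrix whose leading $2\times 2$ block is invertible, and then to combine the transformation rule (\ref{eq:pfdet}) with the multiplicativity of the pfaffian on block-diagonal matrices.

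First I would fix $I=(1,2,i_1+2,\dots,i_{2l}+2)$ and $I'=(i_1,\dots,i_{2l})$ and observe that, because $1,2\in I$ and $\Theta$ is skew-symmetric, the submatrix $M_I^{\Theta}$ has the block form $\begin{pmatrix}\Theta_{11}&A\\-A^{t}&B\end{pmatrix}$, where $A=M^{\Theta_{12}}_{(1,2),I'}$ is obtained from $\Theta_{12}$ by keeping the columns indexed by $I'$ and $B=M^{\Theta_{22}}_{I'}$. Then I would write down and verify by direct multiplication the factorisation
\[
M_I^{\Theta}=L^{t}\begin{pmatrix}\Theta_{11}&0\\0&S\end{pmatrix}L,\qquad L=\begin{pmatrix}I&\Theta_{11}^{-1}A\\0&I\end{pmatrix},\qquad S:=B+A^{t}\Theta_{11}^{-1}A .
\]
The key point here is that $\Theta_{11}$, hence $\Theta_{11}^{-1}$, is skew-symmetric, so $(\Theta_{11}^{-1}A)^{t}=-A^{t}\Theta_{11}^{-1}$ and the left factor is genuinely the transpose of $L$; one also checks $S^{t}=-S$, so that $S$ is a $2l\times 2l$ skew-symmetric matrix and ${\rm pf}(S)$ is defined.

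Next, since $L$ is unipotent upper triangular we have $\det L=1$, so (\ref{eq:pfdet}) applied with the matrix $L^{t}$ gives ${\rm pf}(M_I^{\Theta})={\rm pf}\!\begin{pmatrix}\Theta_{11}&0\\0&S\end{pmatrix}$. I would then invoke the standard fact that the pfaffian of a block-diagonal skew-symmetric matrix is the product of the pfaffians of the diagonal blocks, so that ${\rm pf}(M_I^{\Theta})={\rm pf}(\Theta_{11})\,{\rm pf}(S)$; this is a polynomial identity that one can either cite or read off from the exterior-algebra description ${\rm pf}(\,\cdot\,)\,e_1\wedge\cdots\wedge e_{2p}=\tfrac1{p!}\,\omega^{\wedge p}$ with $\omega=\sum_{i<j}a_{ij}\,e_i\wedge e_j$, since for a block-diagonal matrix $\omega$ splits as a sum of two $2$-forms in disjoint sets of variables, all cross powers of which of too high degree vanish. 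Finally I would identify $S$ with the claimed minor: using $\Theta_{21}=-\Theta_{12}^{t}$ one has $\Theta_{22}-\Theta_{21}\Theta_{11}^{-1}\Theta_{12}=\Theta_{22}+\Theta_{12}^{t}\Theta_{11}^{-1}\Theta_{12}$, and since restricting rows and columns to $I'$ leaves the ``inner'' index set $\{1,2\}$ of $\Theta_{11}$ untouched, this restriction commutes with the expression, i.e. $M^{\Theta_{22}-\Theta_{21}\Theta_{11}^{-1}\Theta_{12}}_{I'}=B+A^{t}\Theta_{11}^{-1}A=S$. Putting these together yields (\ref{=2}); the ``in particular'' statement (\ref{pf1}) is then the special case $I=(1,2,\dots,n)$, i.e. $l=(n-2)/2$, in which $M_I^{\Theta}=\Theta$ and $S$ is the full Schur complement $\Theta_{22}-\Theta_{21}\Theta_{11}^{-1}\Theta_{12}=\Theta_{22}+\Theta_{12}^{t}\Theta_{11}^{-1}\Theta_{12}$.

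I do not expect a genuine obstacle here: the argument is essentially bookkeeping. The two points that need a little care are the sign tracking that keeps all the Schur complements skew-symmetric (which relies on $\Theta_{11}^{-1}$ being skew-symmetric and on $\Theta_{21}=-\Theta_{12}^{t}$), and the justification of the block-diagonal multiplicativity of the pfaffian, for which one cites a standard reference or gives the short exterior-algebra argument indicated above.
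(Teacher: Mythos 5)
Your proof is correct and takes essentially the same route as the paper: extract the block form of $M_I^{\Theta}$, conjugate by a unipotent block-triangular matrix to obtain the block-diagonal matrix $\operatorname{diag}(\Theta_{11},S)$ with $S$ the Schur complement, apply the transformation rule~(\ref{eq:pfdet}), and factor the pfaffian of the block-diagonal matrix. Your version is in fact a bit tidier than the paper's displayed factorisation: you write it in the genuine $L^{t}DL$ form (checking that the two outer factors really are transposes of one another, which uses skew-symmetry of $\Theta_{11}^{-1}$), and you spell out the block-diagonal multiplicativity of the pfaffian, which the paper invokes implicitly.
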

\begin{proof}
By using the following formula:
 $$\left(\begin{matrix}
                  \Theta_{11} &  0 \\
                0&M_{I'}^{\Theta_{22}+\Theta_{12}^t \Theta_{11}^{-1}\Theta_{12}}
                 \end{matrix}
\right)=\left(\begin{matrix}
                   {\rm id} & 0 \\
                (M_{I,(1,2)}^{\Theta})^t\Theta_{11}^{-1}&  {\rm id}
                 \end{matrix}\right)
                 \left(\begin{matrix}
                  \Theta_{11} &  M^{\Theta}_{(1,2),I} \\
                 -(M_{I,(1,2)}^{\Theta})^t& M_{I'}^{\Theta_{22}}
                 \end{matrix}
\right)
\left(\begin{matrix}
                  {\rm id} &  -\Theta_{11}^{-1}M_{(1,2),I}^{\Theta} \\
                0&  {\rm id}
                 \end{matrix}\right),$$
                 and using (\ref{eq:pfdet}), we have
\begin{eqnarray}\label{=1}
{\rm pf}(\Theta_{11}){\rm pf}(M_{I'}^{\Theta_{22}-\Theta_{21}\Theta_{11}^{-1}\Theta_{12}})
={\rm pf}(M_{I}^{\Theta}).
\end{eqnarray}
In particular, when $n$ is an even number,  by taking $I=(1,2,\dots,n)$ in (\ref{=1}), we have
\begin{eqnarray}\label{p1}
  {\rm pf}(\Theta)={\rm pf}(\Theta_{11}){\rm pf}(\Theta_{22}+\Theta^t_{12}\Theta^{-1}_{11}\Theta_{12})={\rm pf}(\Theta_{11}){\rm pf}(\Theta_{22}-\Theta_{21}\Theta_{11}^{-1}\Theta_{12}).\nonumber
\end{eqnarray}
\end{proof}

For the convenience of symbols, we give the following definition:
\begin{definition}
  For $$\Theta=\left(\begin{matrix}
                 \Theta_{11} &  \Theta_{12} \\
               \Theta_{21}& \Theta_{22}
                 \end{matrix}\right)\in\mathcal{T}_n,$$
                 where $\Theta_{11}$ is an invertible $2\times2$ matrix,
                 and $n=2l,$ $l>1,$ define
                 \begin{eqnarray}\label{de F}
                 F(\Theta)=\Theta_{22}+\Theta_{12}^t \Theta_{11}^{-1}\Theta_{12}=
                 \Theta_{22}-\Theta_{21}\Theta_{11}^{-1}\Theta_{12}\in \mathcal{T}_{n-2}.
                 \end{eqnarray}
\end{definition}
Hence from (\ref{pf1}), we have
\begin{eqnarray}\label{q1}
{\rm pf}(\Theta)={\rm pf}(\Theta_{11}){\rm pf}(F(\Theta)).
\end{eqnarray}

If $m$ be an integer less than $l$, we denote by $F^m$ the composition of $F$ taken $m$ times (when makes sense) and $F^0(\Theta):=\Theta.$
Note that $F$ is defined for a $\Theta$ such that $\Theta_{11}$ is invertible, but still $F^m$ may not make sense. The following lemma tells us  when $F^m(\Theta)$ is well defined.
\begin{lemma}\label{h1}
   Let $\Theta\in \mathcal{T}_n$ with $n=2l$ for $l>1.$ If ${\rm pf}^{\Theta}_{(1,2,\dots,2s)}\neq 0$ for $s=1,2,\dots,l-1,$ then
   $F^m(\Theta)$ is well defined for $m=1,2,\dots,l-1.$
\end{lemma}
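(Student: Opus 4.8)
The plan is to proceed by induction on $m$, showing simultaneously that $F^m(\Theta)$ is well defined and that its leading principal pfaffian minors (the ones needed to apply $F$ once more) are nonzero; the bridge between "defined" and "nonzero" is exactly the multiplicativity formula (\ref{=2}) of Lemma~\ref{submatrix,x}. Concretely, for $m=0$ there is nothing to prove, and the base case $m=1$ holds because $F(\Theta)$ requires only that $\Theta_{11}=M^{\Theta}_{(1,2)}$ be invertible, i.e.\ that ${\rm pf}^{\Theta}_{(1,2)}=\theta_{12}\neq 0$, which is the hypothesis with $s=1$.

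For the inductive step, suppose $F^{m-1}(\Theta)\in\mathcal{T}_{n-2(m-1)}$ is well defined. To form $F^m(\Theta)=F\big(F^{m-1}(\Theta)\big)$ we need the upper-left $2\times 2$ block of $F^{m-1}(\Theta)$ to be invertible, equivalently ${\rm pf}\big(M^{F^{m-1}(\Theta)}_{(1,2)}\big)\neq 0$. The key observation is that repeatedly applying (\ref{=2}) expresses this $2$-pfaffian minor of $F^{m-1}(\Theta)$ in terms of pfaffian minors of $\Theta$. Indeed, one iteration of (\ref{=2}) with the index set $I$ corresponding to rows/columns $(1,2)$ adjoined to the first two indices of $F^{m-2}(\Theta)$ gives
\begin{eqnarray*}
{\rm pf}\big(M^{F^{m-2}(\Theta)}_{(1,2)}\big)\cdot{\rm pf}\big(M^{F^{m-1}(\Theta)}_{(1,2)}\big)={\rm pf}\big(M^{F^{m-2}(\Theta)}_{(1,2,3,4)}\big),
\end{eqnarray*}
and iterating downward (or, more cleanly, applying (\ref{=2}) directly to $\Theta$ after recognizing that $m-1$ successive applications of $F$ with the standard leading indices amount to the partition of $\Theta$ with $\Theta_{11}=M^{\Theta}_{(1,2,\dots,2(m-1))}$ — this is the content of (\ref{=2}) applied with $I=(1,2,\dots,2m)$ and the $(m-1)$-fold version of $F$) yields the telescoping identity
\begin{eqnarray*}
{\rm pf}^{\Theta}_{(1,2,\dots,2m-2)}\cdot{\rm pf}\big(M^{F^{m-1}(\Theta)}_{(1,2)}\big)={\rm pf}^{\Theta}_{(1,2,\dots,2m)}.
\end{eqnarray*}
By hypothesis the right-hand side ${\rm pf}^{\Theta}_{(1,2,\dots,2m)}\neq 0$ (valid since $m\leq l-1$) and the factor ${\rm pf}^{\Theta}_{(1,2,\dots,2m-2)}\neq 0$ (the case $s=m-1$, or $s=1$ when $m=2$, and trivially $1$ when $m=1$), so ${\rm pf}\big(M^{F^{m-1}(\Theta)}_{(1,2)}\big)\neq 0$. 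Hence the $2\times 2$ leading block of $F^{m-1}(\Theta)$ is invertible and $F^m(\Theta)$ is well defined, completing the induction.

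The only genuine technical point — what I expect to be the main obstacle in writing it carefully — is bookkeeping the index shifts: formula (\ref{=2}) relates minors of $\Theta$ indexed by $(1,2,i_1+2,\dots,i_{2l}+2)$ to minors of $F(\Theta)$ indexed by $(i_1,\dots,i_{2l})$, so after $m-1$ applications of $F$ one must track that the minor $M^{F^{m-1}(\Theta)}_{(1,2)}$ corresponds, all the way up the tower, to the block of $\Theta$ on rows/columns $\{2m-1,2m\}$ sitting over the block $\{1,\dots,2m-2\}$, which is precisely the partition needed to invoke (\ref{=2}) once with $\Theta_{11}=M^{\Theta}_{(1,\dots,2m-2)}$. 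Once this identification is made, the argument is the short telescoping computation above; no further estimates or constructions are needed.
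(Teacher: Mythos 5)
Your proof is correct and follows essentially the same strategy as the paper: induct on $m$ and use the Schur-complement pfaffian identity of Lemma~\ref{submatrix,x} to show the leading $2\times2$ block of $F^{m-1}(\Theta)$ has nonzero pfaffian. Your telescoping identity ${\rm pf}^{\Theta}_{(1,\dots,2m-2)}\cdot{\rm pf}\bigl(M^{F^{m-1}(\Theta)}_{(1,2)}\bigr)={\rm pf}^{\Theta}_{(1,\dots,2m)}$ is exactly the content of the paper's relation (\ref{a1}) (equivalently (\ref{F3}) iterated), so it is the same bookkeeping repackaged more directly, avoiding the paper's detour through the restricted matrices $M^{\Theta}_{n-p}$ and the identification $F^j(\Theta)_{11}=F^j(M^\Theta_{n-2})_{11}$.
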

\begin{proof}
  To show that $F^m(\Theta)$ is well defined for $m=1,2,\dots,l-1,$ it is enough to show that $F^m(\Theta)_{11}$ is invertible for all $m=0,1,2,\dots,l-2.$ Since ${\rm pf}^{\Theta}_{(1,2)}=\theta_{12}\neq0$. Now, by (\ref{F(theta)}) we have
  \begin{eqnarray}\label{F(theta1)}
             F(\Theta)=\Theta_{22}-\Theta_{21}\Theta_{11}^{-1}\Theta_{12}=\left(\begin{matrix}
                0 &  \frac{{\rm pf}^{\Theta}_{(1,2,3,4)}}{\theta_{12}}&\cdots&\frac{{\rm pf}^{\Theta}_{(1,2,3,n)}}{\theta_{12}} \\
              - \frac{{\rm pf}^{\Theta}_{(1,2,3,4)}}{\theta_{12}}& 0&\cdots&\frac{{\rm pf}^{\Theta}_{(1,2,4,n)}}{\theta_{12}}\\
              \vdots&\vdots&\ddots&\vdots\\
              -\frac{{\rm pf}^{\Theta}_{(1,2,3,n)}}{\theta_{12}}&-\frac{{\rm pf}^{\Theta}_{(1,2,4,n)}}{\theta_{12}}&\cdots&0
                 \end{matrix}\right).\nonumber
                 \end{eqnarray}
               Since ${\rm pf}^{\Theta}_{(1,2,3,4)}\neq0,$ we have that $F(\Theta)_{11}$ is also invertible as ${\rm pf}(F(\Theta)_{11})=\frac{{\rm pf}^{\Theta}_{(1,2,3,4)}}{\theta_{12}}\neq 0.$ Now we want to use induction on $m.$
              Assume $F^{j}(\Theta)_{11}$ is invertible for all $j=0,1,2,\dots,m-1$ for some $m\leq l-2,$ then we must prove that $F^m(\Theta)_{11}$ is invertible. Let's use $M^{\Theta}_{k}$ to express $M^{\Theta}_{(1,2,\dots,k)}$ briefly, for $k\leq n.$ Note that, from (\ref{F(theta)}), we have the following identity
              \begin{eqnarray}\label{F1}
              F^{j}(\Theta)_{11}=F^{j}(M^\Theta_{n-2})_{11}
              \end{eqnarray}
              for all $j=0,1,2,\dots,l-2.$
              It follows that there will exist some even number $p,$ depending on $m,$ such that
  \begin{eqnarray}\label{F2}
  F^{m}(M^\Theta_{n-2})_{11}=F^{m}(M^\Theta_{n-p})_{11}=F^{m}(M^\Theta_{n-p}).
  \end{eqnarray}
  In particular, the matrix $F^{m}(M^\Theta_{n-p})$ will be of size $2\times 2.$
  Since by induction hypothesis $F^{j}(\Theta)_{11}$ is invertible for all $j=0,1,2,\ldots,m-1$, $F^{j}(M^\Theta_{n-p})_{11}=F^{j}(\Theta)_{11}$ are also invertible by using (\ref{F1}) for $j=0,1,2,\dots,m-1.$

  Now for the matrix $F^{m-1}(M^\Theta_{n-p}),$ since $F^{m-1}(M^\Theta_{n-p})_{11}$ is invertible, we have from (\ref{pf1})
  \begin{eqnarray}\label{F3}
  {\rm pf}(F^{m-1}(M^\Theta_{n-p}))={\rm pf}(F^{m-1}(M^\Theta_{n-p})_{11}){\rm pf}(F^{m}(M^\Theta_{n-p})).
  \end{eqnarray}
  Since $ {\rm pf}(F^{m-1}(M^\Theta_{n-p}))$ is non-zero, which follows from (\ref{F3})
  by induction and using the fact that ${\rm pf}(M^\Theta_{n-p})$ is non-zero, we have ${\rm pf}(F^{m}(M^\Theta_{n-p}))$ is also non-zero. But
  $F^m(M^\Theta_{n-p})$ is $F^{m}(\Theta)_{11}$. Hence $F^{m}(\Theta)_{11}$ is invertible.

\end{proof}
\begin{remark}
  In fact, in the above lemma, if $\Theta$ is totally irrational, we can get a stronger result, i.e. ${\rm pf}(F^m(\Theta)_{11})$ is irrational for $m=0,1,\dots,l-1.$
   For this let us consider the equation
  \begin{eqnarray}\label{F4}
  {\rm pf}(F^m(\Theta))={\rm pf}(F^m(\Theta)_{11}){\rm pf}(F^{m+1}(\Theta)),\nonumber
  \end{eqnarray}
  thus \begin{eqnarray}\label{F5}
        {\rm pf}(F^{m+1}(\Theta))=\frac{{\rm pf}(F^m(\Theta))}{{\rm pf}(F^m(\Theta)_{11})}.
       \end{eqnarray}
 Combining (\ref{F2}) with (\ref{F5}), we get
  $${\rm pf}(F^{m+1}(\Theta))=\frac{{\rm pf}(F^m(\Theta))}{{\rm pf}(F^m(M^\Theta_{n-p}))}.$$
  Now using  (\ref{q1}) repeatedly for all $F^{m+1}(\Theta)$ and $F^m(M^\Theta_{n-p}),$
  $${\rm pf}(F^{m+1}(\Theta))=\frac{{\rm pf}(\Theta)}{{\rm pf}(M^\Theta_{n-p})}.$$
  Similarly,
  $${\rm pf}(F^{m}(\Theta))=\frac{{\rm pf}(\Theta)}{{\rm pf}(M^\Theta_{n-p-2})}.$$
  So we have
  \begin{eqnarray}\label{a1}
  {\rm pf}(F^{m}(\Theta)_{11})=\frac{{\rm pf}(M^\Theta_{n-p})}{{\rm pf}(M^\Theta_{n-p-2})}
  \end{eqnarray}
 which is irrational since $\Theta$ is totally irrational.
\end{remark}

The following lemma
gives an explicit expression of $F^m(\Theta)$ for $m=1,2,\dots,l-1.$
\begin{lemma}\label{h2}
  Let $\Theta\in \mathcal{T}_n$ with $n=2l$ for $l>1.$ If ${\rm pf}^{\Theta}_{(1,2,\dots,2s)}\neq 0$ for $s=1,2,\dots,l-1,$ then
  \begin{eqnarray}\label{F^ii}
  F^m(\Theta)=\left(
  \begin{array}{cccc}
  0&\frac{{\rm pf}^{\Theta}_{(1,2,\dots,n-p-1,n-p)}}{{\rm pf}^{\Theta}_{(1,2,\dots,n-p-2)}}&\cdots&\frac{{\rm pf}^{\Theta}_{(1,2,\dots,n-p-1,n)}}{{\rm pf}^{\Theta}_{(1,2,\dots,n-p-2)}}\\
  -\frac{{\rm pf}^{\Theta}_{(1,2,\dots,n-p-1,n-p)}}{{\rm pf}^{\Theta}_{(1,2,\dots,n-p-2)}}&0&\cdots& \frac{{\rm pf}^{\Theta}_{(1,2,\dots,n-p,n)}}{{\rm pf}^{\Theta}_{(1,2,\dots,n-p-2)}}\\
  \vdots&\vdots&\ddots&\vdots\\
  -\frac{{\rm pf}^{\Theta}_{(1,2,\dots,n-p-1,n)}}{{\rm pf}^{\Theta}_{(1,2,\dots,n-p-2)}}&
  -\frac{{\rm pf}^{\Theta}_{(1,2,\dots,n-p,n)}}{{\rm pf}^{\Theta}_{(1,2,\dots,n-p-2)}}&
  \ldots&0
  \end{array}\right)
  \end{eqnarray}
  and in particular,
  \begin{eqnarray}\label{g1}
  F^m(\Theta)_{jk}=\frac{{\rm pf}^{\Theta}_{(1,2,\dots,s',s'+j,s'+k)}}{{\rm pf}^{\Theta}_{(1,2,\dots,s')}},\quad p=n-2m-2,\quad s'=n-p-2=2m,
  \end{eqnarray}
  for $m=1,\dots,l-1.$
\end{lemma}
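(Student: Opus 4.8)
The plan is to argue by induction on $m$. To make the induction self-contained it is cleaner to prove a stronger statement, namely a formula for \emph{all} pfaffian minors of $F^m(\Theta)$ (not only its entries, which are its $2$-pfaffian minors) in terms of the pfaffian minors of $\Theta$: for every $m=1,\dots,l-1$ and every $J=(j_1,\dots,j_{2r})$ with $r\ge 1$ and $1\le j_1<\dots<j_{2r}\le n-2m$,
\[
{\rm pf}\bigl(M_J^{F^m(\Theta)}\bigr)=\frac{{\rm pf}^{\Theta}_{(1,2,\dots,2m,2m+j_1,\dots,2m+j_{2r})}}{{\rm pf}^{\Theta}_{(1,2,\dots,2m)}},
\]
all denominators being nonzero by the standing hypothesis ${\rm pf}^{\Theta}_{(1,2,\dots,2s)}\neq 0$. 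Since the pfaffian of a $2\times2$ skew-symmetric matrix equals its upper-right entry, taking $J=(j,k)$ with $j<k$ immediately yields (\ref{g1}) with $s'=2m$, hence the matrix display (\ref{F^ii}). For the base case $m=1$ this is exactly (\ref{=2}) of Lemma~\ref{submatrix,x} applied to $\Theta$ itself, with $I'=J$, $I=(1,2,j_1+2,\dots,j_{2r}+2)$, and ${\rm pf}(\Theta_{11})=\theta_{12}={\rm pf}^{\Theta}_{(1,2)}$.

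For the inductive step, put $\Psi:=F^{m-1}(\Theta)$; by Lemma~\ref{h1} this is well defined and its $2\times2$ block $\Psi_{11}$ is invertible, so $F^m(\Theta)=F(\Psi)$ makes sense. Applying (\ref{=2}) of Lemma~\ref{submatrix,x} to $\Psi$ gives, for $J=(j_1,\dots,j_{2r})$ with $1\le j_1<\dots<j_{2r}\le n-2m$,
\[
{\rm pf}(\Psi_{11})\,{\rm pf}\bigl(M_J^{F(\Psi)}\bigr)={\rm pf}\bigl(M_{(1,2,j_1+2,\dots,j_{2r}+2)}^{\Psi}\bigr).
\]
Now apply the inductive hypothesis to $\Psi=F^{m-1}(\Theta)$ on the right-hand side: since $2(m-1)+1=2m-1$, $2(m-1)+2=2m$ and $2(m-1)+(j_s+2)=2m+j_s$, the index shift telescopes and the right-hand side equals ${\rm pf}^{\Theta}_{(1,\dots,2m,2m+j_1,\dots,2m+j_{2r})}/{\rm pf}^{\Theta}_{(1,\dots,2m-2)}$, while the case $r=1$, $J=(1,2)$ of the inductive hypothesis evaluates the leading factor as ${\rm pf}(\Psi_{11})={\rm pf}\bigl(M_{(1,2)}^{F^{m-1}(\Theta)}\bigr)={\rm pf}^{\Theta}_{(1,\dots,2m)}/{\rm pf}^{\Theta}_{(1,\dots,2m-2)}$. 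Dividing, the common factor ${\rm pf}^{\Theta}_{(1,\dots,2m-2)}$ cancels and one recovers precisely the asserted identity for $m$, closing the induction.

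The only genuinely substantive choice here is the form of the induction statement: one cannot iterate (\ref{=2}) using only the entries of $F^{m-1}(\Theta)$, because passing from $F^{m-1}(\Theta)$ to $F^m(\Theta)$ through Lemma~\ref{submatrix,x} feeds in the $4$-pfaffian minors $M_{(1,2,j+2,k+2)}^{F^{m-1}(\Theta)}$ of the previous matrix; carrying \emph{all} pfaffian minors along makes the recursion close. The remainder is bookkeeping: the shift $s'\mapsto s'+2$ is performed at each of the $m$ steps and telescopes to $s'=2m$, and one checks that the index ranges stay inside the shrinking matrix sizes $n-2m$ — both routine given Lemma~\ref{submatrix,x} and Lemma~\ref{h1}.
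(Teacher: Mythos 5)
Your argument is correct; the check goes through. Let me verify the crux: you strengthen the claim to all pfaffian minors, and the inductive step is exactly one application of equation (\ref{=2}) (Lemma~\ref{submatrix,x}) to $\Psi=F^{m-1}(\Theta)$ followed by the inductive hypothesis on $\Psi$ for both $M_{(1,2,j_1+2,\dots,j_{2r}+2)}^{\Psi}$ and $\Psi_{11}=M_{(1,2)}^{\Psi}$; the index shift $2(m-1)+(j_s+2)=2m+j_s$ telescopes exactly as you say, the common factor ${\rm pf}^{\Theta}_{(1,\dots,2m-2)}$ cancels on division, and the ranges stay within the shrinking matrix size because $j_{2r}+2\le n-2m+2$. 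Lemma~\ref{h1} supplies the well-definedness of $F^m$ and the nonvanishing of the denominators, so nothing is missing.

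Your route differs from the paper's. The paper fixes an entry $(j',k')$ of $F^m(\Theta)$ and moves it to position $(1,2)$ by conjugating with a permutation, then invokes the formula ${\rm pf}(F^m(\Theta)_{11})={\rm pf}(M^{\Theta}_{2m+2})/{\rm pf}(M^{\Theta}_{2m})$ from the preceding Remark (derived by iterating (\ref{q1})); closing the argument requires commuting $F$ with permutation conjugation, which the paper tracks by chasing several auxiliary permutations ($\sigma_n$, $\sigma'_n$, $\rho$). You instead recognize that the obstruction to a naive induction on the entries is that one application of (\ref{=2}) feeds in $4$-pfaffian minors of $F^{m-1}(\Theta)$, and you cure it by carrying along all pfaffian minors. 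This buys a shorter, permutation-free induction, at the cost of stating a (slightly) more general claim; in exchange you get the formula for ${\rm pf}(M_J^{F^m(\Theta)})$ for every $J$ as a free byproduct, which is in fact the more natural statement and the one actually used later via (\ref{a1}). I would call your proof the cleaner of the two.
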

\begin{proof}
  We prove that (\ref{F^ii}) holds by using induction on $m.$ Since ${\rm pf}^{\Theta}_{(1,2)}=\theta_{12}\neq 0,$ we have from (\ref{F(theta)}) that
  \begin{eqnarray}\label{F(theta2)}
             F(\Theta)=\Theta_{22}-\Theta_{21}\Theta_{11}^{-1}\Theta_{12}=\left(\begin{matrix}
                0 &  \frac{{\rm pf}^{\Theta}_{(1,2,3,4)}}{\theta_{12}}&\cdots&\frac{{\rm pf}^{\Theta}_{(1,2,3,n)}}{\theta_{12}} \\
              - \frac{{\rm pf}^{\Theta}_{(1,2,3,4)}}{\theta_{12}}& 0&\cdots&\frac{{\rm pf}^{\Theta}_{(1,2,4,n)}}{\theta_{12}}\\
              \vdots&\vdots&\ddots&\vdots\\
              -\frac{{\rm pf}^{\Theta}_{(1,2,3,n)}}{\theta_{12}}&-\frac{{\rm pf}^{\Theta}_{(1,2,4,n)}}{\theta_{12}}&\cdots&0
                 \end{matrix}\right).\nonumber
                 \end{eqnarray}
 So  (\ref{F^ii}) holds for $m=1.$ Now assume that (\ref{F^ii}) holds for $m-1$. Then we must prove that the same holds for $m.$ Hence we have
 \begin{eqnarray}\label{g2}
 F^{m-1}(\Theta)_{jk}=\frac{{\rm pf}^{\Theta}_{(1,2,\dots,s,s+j,s+k)}}{{\rm pf}^{\Theta}_{(1,2,\dots,s)}},\quad s=2(m-1),\nonumber
 \end{eqnarray}
  and we have to show that
  \begin{eqnarray}\label{g3}
 F^{m}(\Theta)_{j'k'}=\frac{{\rm pf}^{\Theta}_{(1,2,\dots,s',s'+j',s+k')}}{{\rm pf}^{\Theta}_{(1,2,\dots,s')}},\quad s'=2m.\nonumber
 \end{eqnarray}

Let $\sigma_n$ denote the permutation in symmetric group $\mathcal{S}_n$ of degree $n$ which interchanges $1$ and $j'$, and $2$ and $k'$. Also let $\sigma_n(\Theta)$ denote the matrix obtained from conjugating $\Theta$ with the permutation matrix corresponding to
the permutation $\sigma_n.$  Now $F^m(\Theta)$ is a $(n-2m)\times (n-2m)$ matrix. Consider $\sigma_{n-2m}(F^m(\Theta)).$ It is clear that $\sigma_{n-2m}(F^m(\Theta))_{12}=F^m(\Theta)_{j'k'}.$ So we need to determine
$\sigma_{n-2m}(F^m(\Theta))_{12}.$  Now $\sigma_{n-2}(F(\Theta))_{12}=\frac{{\rm pf}^{\Theta}_{(1,2,2+j',2+k')}}{\theta_{12}}.$ If we denote the permutation in $\mathcal{S}_n$ which interchanges $3$ and $2+j'$, and $4$ and $2+k'$ by $\sigma'_n,$ then $F(\sigma'_n(\Theta))_{12}=\frac{{\rm pf}^{\Theta}_{(1,2,2+j',2+k')}}{\theta_{12}}.$
Hence we get
\begin{eqnarray}\label{g4}
\sigma_{n-2}(F(\Theta))_{12}=F(\sigma'_n(\Theta))_{12}.
\end{eqnarray}
Putting $\Theta=F^{m-1}(\Theta)$ in the above equation (\ref{g4}), we get
\begin{eqnarray}\label{g5}
\sigma_{n-2m}(F^m(\Theta))_{12}=F(\sigma'_{n-2(m-1)}(F^{m-1}(\Theta)))_{12}.\end{eqnarray}
Now by induction hypothesis,
\begin{eqnarray}\label{g6}
 F^{m-1}(\Theta)_{jk}=\frac{{\rm pf}^{\Theta}_{(1,2,\dots,s,s+j,s+k})}{{\rm pf}^{\Theta}_{(1,2,\dots,s)}},\quad s=2(m-1).
 \end{eqnarray}
Denoting $F^{m-1}(\Theta)$ by $\widetilde{\Theta},$ we have \begin{eqnarray}F(\widetilde{\Theta})_{12}=\frac{{\rm pf}^{\widetilde{\Theta}}_{(1,2,3,4)}}{\widetilde{\Theta}_{12}}.\nonumber
\end{eqnarray}
But from (\ref{a1}) we have
\begin{eqnarray}F(\widetilde{\Theta})_{12}=\frac{{\rm pf}^{\widetilde{\Theta}}_{(1,2,3,4)}}{\widetilde{\Theta}_{12}}=\frac{{\rm pf}(M^\Theta_{n-p})}{{\rm pf}(M^\Theta_{n-p-2})}.\nonumber
\end{eqnarray}
 Now let us look at the permutation
$\rho$ in $\mathcal{S}_n$ which just interchanges $s'+1$ and $s'+j'$, and $s'+2$ and $s'+k'$, and consider ${\rm pf}(M_{s'}^{\rho(\Theta)})={\rm pf}(M_{s'}^{\Theta})\neq0$. So from (\ref{a1}),
\begin{eqnarray}F^m(\rho(\Theta))_{12}=\frac{{\rm pf}(M^{\rho(\Theta)}_{n-p})}{{\rm pf}(M^{\rho(\Theta)}_{n-p-2})}=\frac{{\rm pf}(M^{\rho(\Theta)}_{s'+2})}{{\rm pf}(M^{\rho(\Theta)}_{s'})},\nonumber
\end{eqnarray} where $s'=n-p-2=2m$. But
\begin{eqnarray}\label{g7}
F^m(\rho(\Theta))_{12}=\frac{{\rm pf}(M^{\rho(\Theta)}_{s'+2})}{{\rm pf}(M^{\rho(\Theta)}_{s'})}
=\frac{{\rm pf}^{\Theta}_{(1,2,\dots,s',s'+j',s'+k')}}{{\rm pf}^{\Theta}_{(1,2,\dots,s')}},\quad s'=2m.
\end{eqnarray}
But an easy observation from (\ref{g6}) shows that
\begin{eqnarray}\label{g8}
F^m(\rho(\Theta))_{12}=F(F^{m-1}(\rho(\Theta)))_{12}=
F(\sigma'_{n-2(m-1)}(F^{m-1}(\Theta)))_{12}.
\end{eqnarray}
Combining (\ref{g5}), (\ref{g7}) with (\ref{g8}),
we have
\begin{eqnarray*}
\sigma_{n-2m}(F^m(\Theta))_{12}=F(\sigma'_{n-2(m-1)}(F^{m-1}(\Theta)))_{12}=F^m(\rho(\Theta))_{12}
=\frac{{\rm pf}^{\Theta}_{(1,2,\dots,s',s'+j',s'+k')}}{{\rm pf}^{\Theta}_{(1,2,\dots,s')}}, \quad s'=2m.
\end{eqnarray*}
Hence we prove the conclusion.
\end{proof}

\begin{remark}\label{F()simple}
By using the above lemma, we can know more about the properties of $F^m(\Theta)$ and $\mathcal{A}_{F^{m}(\Theta)}$ when $\Theta\in\mathcal{T}_n$
is totally irrational with $n=2l\geq2$.
For example,  the entries (above the diagonal) of $F^m(\Theta)$ are all irrational and independent over $\mathbb{Q}$ for $m=0,1,\dots,l-1.$  Hence $\mathcal{A}_{F^{m}(\Theta)}$ is a simple $C^*$-algebra and has a unique tracial state $\tau_{F^{m}(\Theta)}$ for $m=0,1,\dots,l-1$.
This is because when $\Theta$ is totally irrational, we have that ${\rm pf}^{\Theta}_{(1,2,\dots,2s)}\neq 0$ for $s=1,2,\dots,l-1$. By Lemma \ref{h1} and Lemma \ref{h2}, we know that $ F^m(\Theta)$ is well-defined and
 \begin{eqnarray}\label{F^i}
  F^m(\Theta)=\left(
  \begin{array}{cccc}
  0&\frac{{\rm pf}^{\Theta}_{(1,2,\dots,n-p-1,n-p)}}{{\rm pf}^{\Theta}_{(1,2,\dots,n-p-2)}}&\cdots&\frac{{\rm pf}^{\Theta}_{(1,2,\dots,n-p-1,n)}}{{\rm pf}^{\Theta}_{(1,2,\dots,n-p-2)}}\\
  -\frac{{\rm pf}^{\Theta}_{(1,2,\dots,n-p-1,n-p)}}{{\rm pf}^{\Theta}_{(1,2,\dots,n-p-2)}}&0&\cdots& \frac{{\rm pf}^{\Theta}_{(1,2,\dots,n-p,n)}}{{\rm pf}^{\Theta}_{(1,2,\dots,n-p-2)}}\\
  \vdots&\vdots&\ddots&\vdots\\
  -\frac{{\rm pf}^{\Theta}_{(1,2,\dots,n-p-1,n)}}{{\rm pf}^{\Theta}_{(1,2,\dots,n-p-2)}}&
  -\frac{{\rm pf}^{\Theta}_{(1,2,\dots,n-p,n)}}{{\rm pf}^{\Theta}_{(1,2,\dots,n-p-2)}}&
  \ldots&0
  \end{array}\right)\nonumber
  \end{eqnarray}
  and in particular,
  \begin{eqnarray}\label{g1}
  F^m(\Theta)_{jk}=\frac{{\rm pf}^{\Theta}_{(1,2,\dots,s',s'+j,s'+k)}}{{\rm pf}^{\Theta}_{(1,2,\dots,s')}},\quad p=n-2m-2,\quad s'=n-p-2=2m,
  \end{eqnarray}
  for $m=1,\dots,l-1.$
Now since $\Theta$ is totally irrational, the numbers ${\rm pf}^{\Theta}_{(1,2,\dots,n-p-2,j,k)}$, $n-p-1\leq j<k\leq n$  along with ${\rm pf}^{\Theta}_{(1,2,\dots,n-p-2)},$ are irrational and independent over $\mathbb{Q}$. This means that the numbers $\frac{{\rm pf}^{\Theta}_{(1,2,\dots,n-p-2,j,k)}}{{\rm pf}^{\Theta}_{(1,2,\dots,n-p-2)}}$ are irrational numbers. Next, to show that these numbers are independent over $\mathbb{Q}$, let us write
$$\sum\limits_{n-p-1\leq j<k\leq n}c_{j,k} \frac{{\rm pf}^{\Theta}_{(1,2,\dots,n-p-2,j,k)}}{{\rm pf}^{\Theta}_{(1,2,\dots,n-p-2)}}=0,\quad c_{j,k}\in \mathbb{Q}.$$
This implies
$$\sum\limits_{n-p-1\leq j<k\leq n}c_{j,k}{\rm pf}^{\Theta}_{(1,2,\dots,n-p-2,j,k)}=0,\quad c_{j,k}\in \mathbb{Q}.$$
But since the numbers ${\rm pf}^{\Theta}_{(1,2,\dots,n-p-2,j,k)},$ $n-p-1\leq j<k\leq n$ are independent over $\mathbb{Q},$ $c_{j,k}$'s are all zero. Hence the numbers $\frac{{\rm pf}^{\Theta}_{(1,2,\dots,n-p-2,j,k)}}{{\rm pf}^{\Theta}_{(1,2,\dots,n-p-2)}}$, $n-p-1\leq j<k\leq n$ are rationally
independent. So we have shown that the entries (above the diagonal) of $F^m(\Theta)$ are
irrational and rationally independent. Now, from this, it is easy to see that
$F^m(\Theta)$  is non-degenerate in the sense of Definition~\ref{def:nondege}. Hence $\mathcal{A}_{F^m(\Theta)}$ is simple and has a unique tracial state $\tau_{F^m(\Theta)}$ for $m=0,1,\dots,l-1$.
\end{remark}

In the following we shall construct Rieffel-type projections for the
higher dimensional noncommutative tori.

We fix a number $p$ with $1\leq p\leq n\slash 2$ and let $q\geq0$ be an integer such that $n=2p+q.$ Let us write $\Theta\in \mathcal{T}_n$ as
$$\left(\begin{matrix}
                  \Theta_{11} & \Theta_{12} \\
                  \Theta_{21} & \Theta_{22}
                \end{matrix}
\right),$$
partitioned into four sub-matrices $\Theta_{11},\Theta_{12},\Theta_{21},\Theta_{22},$ and assume $\Theta_{11}$ to be an invertible $2p\times 2p$ matrix. Let
$$\Theta'=\left(\begin{matrix}
                  \Theta_{11}^{-1} & -\Theta_{11}^{-1}\Theta_{12} \\
                  \Theta_{21}\Theta_{11}^{-1} & \Theta_{22}-\Theta_{21}\Theta_{11}^{-1}\Theta_{12}
                \end{matrix}
\right).$$

Set $\mathcal{A}=\mathcal{A}_{\Theta}$ and $\mathcal{B}=\mathcal{A}_{\Theta'}.$
Let $\mathcal{M}$ be the group $\mathbb{R}^{p}\times \mathbb{Z}^q,$ $G=\mathcal{M}\times\widehat{\mathcal{M}}$ and
$\langle\cdot,\cdot\rangle$ be the natural pairing between $\mathcal{M}$ and its dual group $\widehat{\mathcal{M}}$ (our notation does not distinguish between the pairing of a group and its dual group, and the standard inner product on a linear space). Consider the Schwartz space $\mathcal{E}^{\infty}=\mathcal{S}(\mathcal{M})$ consisting of smooth and rapidly decreasing complex-valued functions on $\mathcal{M}.$

Denote by $\mathcal{A}^{\infty}=\mathcal{A}_{\Theta}^{\infty}$ and $\mathcal{B}=\mathcal{A}_{\Theta'}^{\infty}$ the dense sub-algebras of $\mathcal{A}$ and $\mathcal{B}$, respectively, consisting of formal series with rapidly decaying coefficients. Let us consider the following $(2p+2q)\times(2p+q)$
real valued matrix:
\begin{eqnarray}T=\left(\begin{matrix}
                  T_{11} & 0 \\
                 0 & {\rm id}_q\\
                 T_{31}&T_{32}
                \end{matrix}
\right),\nonumber
\end{eqnarray}
where $T_{11}$ is an invertible matrix such that $T^{t}_{11}J_0 T_{11}=\Theta_{11},$ $J_0=\left(\begin{matrix}
                  0 & {\rm id}_{p} \\
                 -{\rm id}_{p}& 0
                 \end{matrix}
\right),$
$T_{31}=\Theta_{21}$ and $T_{32}$ is any $q\times q$ matrix such that
$\Theta_{22}=T_{32}-T_{32}^t.$ For our purposes, we take $T_{32}=\Theta_{22}\slash 2.$

We also define the following $(2p+2q)\times (2p+q)$ real valued matrix:
$$S=\left(\begin{matrix}
                  J_0(T_{11}^t)^{-1} &  -J_0(T_{11}^t)^{-1} T_{31}^t \\
                 0& {\rm id}_q\\
                 0& T_{32}^t
                 \end{matrix}
\right).$$
Let
$$J=\left(\begin{matrix}
                  J_0 &  0&0 \\
                 0& 0&{\rm id}_q\\
                 0& -{\rm id}_q&0
                 \end{matrix}
\right)$$
and $J'$ be the matrix obtained from $J$ by replacing the negative entries of it by zeroes.
Note that $T$ and $S$ can be thought as maps from $(\mathbb{R}^n)^*$
to $\mathbb{R}^{p}\times (\mathbb{R}^p)^*\times \mathbb{R}^{q}\times (\mathbb{R}^q)^*$ (see the definition of an embedding map in Definition 2.1 of \cite{Li-2004}), and $S(\mathbb{Z}^n),T(\mathbb{Z}^n)\subset \mathbb{R}^{p}\times (\mathbb{R}^p)^*\times \mathbb{Z}^{q}\times (\mathbb{R}^q)^*$.
Then we can think of $S(\mathbb{Z}^n),T(\mathbb{Z}^n)$ as in $G$ via composing
$S|_{\mathbb{Z}^n},T|_{\mathbb{Z}^n}$ with the natural covering map
$\mathbb{R}^{p}\times (\mathbb{R}^p)^*\times \mathbb{Z}^{q}\times (\mathbb{R}^q)^*\rightarrow G.$ Let $P'$ and $P''$ be the canonical projections of
$G$ to $\mathcal{M}$ and $\widehat{\mathcal{M}},$ respectively, and let
$$T'=P'\circ T,\quad T''=P''\circ T,\quad S'=P'\circ S,\quad S''=P''\circ S.$$
Then the following set of formulas define a $\mathcal{B}^{\infty}$-$\mathcal{A}^{\infty}$
bimodule structure on $\mathcal{E}^{\infty}$:
\begin{empheq}[left=\empheqlbrace]{align}
  &(fU_{l}^{\Theta})(x)=e^{2\pi i\langle -T(l),J'T(l)\slash2\rangle}\langle x,T''(l)\rangle f(x-T'(l)), \label{eq:module1}\\
&\langle f,g\rangle_{\mathcal{A}^{\infty}}(l)=e^{2\pi i\langle -T(l),J'T(l)\slash2\rangle}\int_{G}\langle x,-T''(l)\rangle g(x+T'(l))\overline{f(x)}dx,\label{eq:module2}\\
&(U_{l}^{\Theta'}f)(x)=e^{2\pi i\langle -S(l),J'S(l)\slash2\rangle}\langle x,S''(l)\rangle f(x-S'(l)), \label{eq:module3}\\
& _{\mathcal{B}^{\infty}}\langle f,g\rangle(l)=Ke^{2\pi i\langle S(l),J'S(l)\slash2\rangle}\int_{G}\langle x,S''(l)\rangle \overline{g(x+S'(l))}f(x)dx, \label{eq:module4}
\end{empheq}
where $U_l^{\Theta},U_{l}^{\Theta'}$ denote the canonical unitaries with respect to the group element $l\in \mathbb{Z}^n$ in $\mathcal{A}^{\infty}$ and $\mathcal{B}^{\infty}$, respectively, and $K$ is a positive constant. See Proposition 2.2 in \cite{Li-2004} for the following well-known result.

\begin{theorem}
  The smooth module $\mathcal{E}^{\infty}$, with above structures, is an $\mathcal{B}^{\infty}$-$\mathcal{A}^{\infty}$ Morita equivalence bimodule which can be extended to a strong Morita equivalence between $\mathcal{B}$ and $\mathcal{A}$.
\end{theorem}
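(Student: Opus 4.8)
The plan is to recognize $\mathcal{E}^\infty$ with the operations \eqref{eq:module1}--\eqref{eq:module4} as an instance of Rieffel's Heisenberg equivalence bimodule construction for twisted group $C^*$-algebras of lattices, in the packaged form of Li (Proposition 2.2 of \cite{Li-2004}), and to verify that the explicit matrices $T$, $S$, $J$, $J'$ introduced above meet the hypotheses of that general result. Concretely, $\mathcal{A}^\infty=\mathcal{A}_\Theta^\infty$ and $\mathcal{B}^\infty=\mathcal{A}_{\Theta'}^\infty$ are the smooth twisted group algebras of $\mathbb{Z}^n$ with the cocycles attached to $\Theta$ and $\Theta'$, so it suffices to exhibit $\mathcal{E}^\infty=\mathcal{S}(\mathcal{M})$ as a pre-imprimitivity $\mathcal{B}^\infty$-$\mathcal{A}^\infty$ bimodule; completing it in the norm coming from either inner product then produces a $\mathcal{B}$-$\mathcal{A}$ imprimitivity bimodule, hence the asserted strong Morita equivalence between $\mathcal{B}$ and $\mathcal{A}$.

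First I would check that \eqref{eq:module1} defines a genuine right $\mathcal{A}^\infty$-action and \eqref{eq:module3} a genuine left $\mathcal{B}^\infty$-action. This amounts to the identity $(fU^\Theta_l)U^\Theta_m=e^{2\pi i c(l,m)}\,fU^\Theta_{l+m}$, where $c$ is the defining cocycle of $\mathcal{A}_\Theta$, together with its analogue for $\mathcal{B}^\infty$: the phase $e^{2\pi i\langle -T(l),J'T(l)/2\rangle}$ and the Heisenberg pairing $\langle x,T''(l)\rangle$ must combine to reproduce $c$. The inputs that make this work are exactly the defining relations of $T$, namely $T_{11}^tJ_0T_{11}=\Theta_{11}$, $T_{31}=\Theta_{21}$, $\Theta_{22}=T_{32}-T_{32}^t$ (with $T_{32}=\Theta_{22}/2$), which are arranged so that the pullback along $T$ of the canonical form of $G$ is the bilinear form defining the commutation relations of $\mathcal{A}_\Theta$; likewise the shape of $S$, built from $J_0(T_{11}^t)^{-1}$, $T_{31}$, $T_{32}$, forces the corresponding pullback to reproduce $\Theta'$. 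I would then check that the two actions commute, $U^{\Theta'}_l(fU^\Theta_m)=(U^{\Theta'}_lf)U^\Theta_m$, which reduces to mutual orthogonality of the sublattices $T(\mathbb{Z}^n)$ and $S(\mathbb{Z}^n)$ of $G$ with respect to the Heisenberg form — a direct consequence of the form of $S$.

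Next I would verify that \eqref{eq:module2} and \eqref{eq:module4} are well defined: the Schwartz estimates on $\mathcal{S}(\mathcal{M})$ make the integrals converge and produce rapidly decaying functions on $\mathbb{Z}^n$, so their values lie in $\mathcal{A}^\infty$ and $\mathcal{B}^\infty$. One then checks, all by explicit Fourier/Poisson-type manipulations, that the inner products are conjugate-bilinear, compatible with the actions ($\langle f,ga\rangle_{\mathcal{A}^\infty}=\langle f,g\rangle_{\mathcal{A}^\infty}a$ and its left analogue), positive, and satisfy the imprimitivity identity ${}_{\mathcal{B}^\infty}\langle f,g\rangle\cdot h=f\cdot\langle g,h\rangle_{\mathcal{A}^\infty}$. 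Finally comes fullness: with the positive constant $K$ chosen so that the $C^*$-norms induced by the two inner products are compatible, the span of the range of one inner product is dense in its algebra because the embedding $\mathbb{Z}^n\hookrightarrow G$ determined by $T$ has cocompact range with complementary lattice $S(\mathbb{Z}^n)$, and fullness on one side together with the imprimitivity identity yields fullness on the other; Rieffel's completion argument then finishes the proof.

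The only genuinely delicate step is the cocycle bookkeeping: matching the phases coming from $J$ and $J'$ with the block data $T_{11},T_{31},T_{32}$ (respectively the blocks of $S$) so that the twists attached to $T$ and $S$ reproduce exactly the cocycles of $\mathcal{A}_\Theta$ and $\mathcal{A}_{\Theta'}$ and are mutually orthogonal. Once that computation is carried out, everything else is Rieffel's standard imprimitivity bimodule machinery, and in practice one simply invokes \cite{Li-2004} (cf.\ \cite{Rieffel-1988}).
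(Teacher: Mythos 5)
Your proposal is correct and matches the paper's treatment: the paper offers no independent proof of this theorem but simply cites Proposition 2.2 of \cite{Li-2004} as a known result, and your sketch is precisely the verification that the block data $T$, $S$, $J$, $J'$ fit Li's (Rieffel's) Heisenberg equivalence bimodule framework, followed by an invocation of that general result. Since you also conclude by ``simply invoking \cite{Li-2004}'', the approach is the same; your write-up merely spells out the cocycle bookkeeping and complementarity of the lattices $T(\mathbb{Z}^n)$ and $S(\mathbb{Z}^n)$ that the paper leaves implicit in the citation.
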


Let $\mathcal{E}$ denote the completion of $\mathcal{E}^{\infty}$ with respect to
the $C^*$-valued inner products given above. This becomes a finitely generated projective module over $\mathcal{A}$. This class is called the {\it Bott class}. The Bott class  contributes to a part of a generating set of the $\K_0$-group of $\mathcal{A}_{\Theta}$.
Next, we will construct a specific projection to describe the Bott class.

\begin{theorem}\label{even n projection}
  For any even number $n=2l\geq2,$ if $\Theta\in \mathcal{T}_n$ satisfy ${\rm pf}(F^j(\Theta)_{11})\in (0,1)$ for $j=0,1,\dots,l-2$, and ${\rm pf}(F^{l-1}(\Theta)_{11})\notin \mathbb{Z}$, then
  there exist a (Rieffel-type) projection $p_m$ inside $\mathcal{A}_{F^m(\Theta)}$ such that
  \begin{eqnarray}\tau_{F^m(\Theta)}(p_m)={\rm pf}(F^m(\Theta))+k_{(1,2)}^{(m)}{\rm pf}(M^{F^m(\Theta)}_{(1,2)})+\cdots+k_{(1,2,\dots,n-2(m+1))}^{(m)}{\rm pf}(M^{F^m(\Theta)}_{(1,2,\dots,n-2(m+1))})+k^{(m)}_0\nonumber
   \end{eqnarray}
   for some $k_I^{(m)},k^{(m)}_0\in \mathbb{Z}$ and $m=0,1,2,\dots,l-1,$ where $I=(1,2,\dots,n-2s)$ and $s=m+1,m+2,\dots,l-1,$ $\tau_{F^m(\Theta)}$ is the canonical tracial state on $\mathcal{A}_{F^m(\Theta)}.$
\end{theorem}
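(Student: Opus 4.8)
The plan is a downward induction on $m$, from $m=l-1$ down to $m=0$, transporting a projection across the Morita equivalence of the preceding pages in the spirit of Boca. Write $\Theta^{(m)}:=F^m(\Theta)$; by Lemma~\ref{h1} each $\Theta^{(m)}$ with $0\le m\le l-1$ is well defined, by Lemma~\ref{h2} and the hypotheses each $\Theta^{(m)}_{11}$ is an invertible $2\times2$ skew-symmetric matrix with ${\rm pf}(\Theta^{(m)}_{11})\in(0,1)$ for $m\le l-2$ and ${\rm pf}(\Theta^{(l-1)}_{11})={\rm pf}(\Theta^{(l-1)})\notin\mathbb{Z}$, and $F(\Theta^{(m)})=\Theta^{(m+1)}$ by \eqref{de F}. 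For the base case $m=l-1$, $\Theta^{(l-1)}$ is itself a $2\times2$ matrix, so $\mathcal{A}_{\Theta^{(l-1)}}$ is a rotation algebra with parameter ${\rm pf}(\Theta^{(l-1)})\notin\mathbb{Z}$; replacing this parameter by its fractional part $\{{\rm pf}(\Theta^{(l-1)})\}\in(0,1)$ (which does not change the algebra) and applying Definition~\ref{def of Rieffel P}, we get a Rieffel projection $p_{l-1}\in\mathcal{A}_{\Theta^{(l-1)}}$ with $\tau_{\Theta^{(l-1)}}(p_{l-1})=\{{\rm pf}(\Theta^{(l-1)})\}={\rm pf}(\Theta^{(l-1)})+k_0^{(l-1)}$, where $k_0^{(l-1)}=-\lfloor{\rm pf}(\Theta^{(l-1)})\rfloor\in\mathbb{Z}$; note $\tau_{\Theta^{(l-1)}}(p_{l-1})\in(0,1)$. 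This is the asserted identity at level $l-1$, all pfaffian-minor coefficients vanishing.

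For the inductive step, fix $m$ with $0\le m\le l-2$ and assume a projection $p_{m+1}\in\mathcal{A}_{\Theta^{(m+1)}}$ is already constructed, with $\tau_{\Theta^{(m+1)}}(p_{m+1})\in(0,1)$ and satisfying the asserted formula at level $m+1$. Apply the construction recorded above to $\Theta^{(m)}$ with $p=1$: it yields a strong Morita equivalence bimodule $\mathcal{E}$ between $\mathcal{B}=\mathcal{A}_{(\Theta^{(m)})'}$ (acting on the left) and $\mathcal{A}=\mathcal{A}_{\Theta^{(m)}}$ (acting on the right), where the bottom-right $(n-2m-2)\times(n-2m-2)$ block of $(\Theta^{(m)})'$ is $F(\Theta^{(m)})=\Theta^{(m+1)}$. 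Thus the sub-torus of $\mathcal{B}$ generated by its last $n-2m-2$ generators is a copy of $\mathcal{A}_{\Theta^{(m+1)}}$ on which $\tau_{(\Theta^{(m)})'}$ restricts to $\tau_{\Theta^{(m+1)}}$; let $\widetilde p_{m+1}\in\mathcal{B}$ be the image of $p_{m+1}$ there, so $\tau_{(\Theta^{(m)})'}(\widetilde p_{m+1})=\tau_{\Theta^{(m+1)}}(p_{m+1})\in(0,1)$. The coupling constant of $\mathcal{E}$ — its Murray--von Neumann dimension as a right-$\mathcal{A}$ module, equivalently $\tau_{\mathcal{A}}([\mathcal{E}])$ — equals ${\rm pf}(\Theta^{(m)}_{11})\in(0,1)$, so the right-$\mathcal{A}$ module $\widetilde p_{m+1}\mathcal{E}$ has dimension ${\rm pf}(\Theta^{(m)}_{11})\,\tau_{(\Theta^{(m)})'}(\widetilde p_{m+1})\in(0,1)$. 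Because this dimension is $<1$, the bimodule formulas \eqref{eq:module1}--\eqref{eq:module4} allow one (as in Rieffel's and Boca's constructions) to write down an explicit $\xi\in\mathcal{E}^\infty$ with ${}_{\mathcal{B}}\langle\xi,\xi\rangle$ a projection Murray--von Neumann equivalent to $\widetilde p_{m+1}$; by the compatibility ${}_{\mathcal{B}}\langle\xi,\eta\rangle\zeta=\xi\langle\eta,\zeta\rangle_{\mathcal{A}}$ of the two $C^*$-valued inner products, $p_m:=\langle\xi,\xi\rangle_{\mathcal{A}}$ is then a genuine projection inside $\mathcal{A}=\mathcal{A}_{F^m(\Theta)}$.

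It remains to compute the trace. The standard relation $\tau_{\mathcal{A}}\!\circ\langle\cdot,\cdot\rangle_{\mathcal{A}}={\rm pf}(\Theta^{(m)}_{11})\,\tau_{\mathcal{B}}\!\circ{}_{\mathcal{B}}\langle\cdot,\cdot\rangle$ gives
\[
\tau_{F^m(\Theta)}(p_m)={\rm pf}(F^m(\Theta)_{11})\,\tau_{F^{m+1}(\Theta)}(p_{m+1})\in(0,1),
\]
which also carries the induction forward. Substituting the level-$(m+1)$ formula and using ${\rm pf}(F^m(\Theta)_{11})\,{\rm pf}(F^{m+1}(\Theta))={\rm pf}(F^m(\Theta))$ from \eqref{q1} together with the partitioned-pfaffian identity ${\rm pf}(F^m(\Theta)_{11})\,{\rm pf}(M^{F^{m+1}(\Theta)}_{(1,\dots,k)})={\rm pf}(M^{F^m(\Theta)}_{(1,\dots,k+2)})$ from \eqref{=2} of Lemma~\ref{submatrix,x} (applied with $I'=(1,\dots,k)$), the reindexing $s\mapsto s-1$ turns each minor term ${\rm pf}(M^{F^{m+1}(\Theta)}_{(1,\dots,n-2s)})$, $m+2\le s\le l-1$, into the minor term ${\rm pf}(M^{F^m(\Theta)}_{(1,\dots,n-2(s-1))})$, and the constant $k_0^{(m+1)}$ becomes the coefficient of ${\rm pf}(M^{F^m(\Theta)}_{(1,2)})$. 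Collecting terms, the minor indices that appear are exactly $(1,\dots,n-2s)$ for $s=m+1,\dots,l-1$ and no constant remains, so $\tau_{F^m(\Theta)}(p_m)$ has the asserted form with $k_0^{(m)}=0$. Letting $m$ decrease to $0$ completes the proof.

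The main obstacle is the step asserting that $\widetilde p_{m+1}\mathcal{E}$ is singly generated over $\mathcal{A}_{F^m(\Theta)}$, equivalently that $\widetilde p_{m+1}$ is, up to Murray--von Neumann equivalence, of the form ${}_{\mathcal{B}}\langle\xi,\xi\rangle$ for a single module vector: this is precisely where the interval hypotheses ${\rm pf}(F^j(\Theta)_{11})\in(0,1)$ are consumed, since they force the dimension ${\rm pf}(F^m(\Theta)_{11})\,\tau_{F^{m+1}(\Theta)}(p_{m+1})$ to remain in $(0,1)$ at every level and hence let one produce the explicit $\xi$, and thereby an honest projection $p_m$ inside $\mathcal{A}_{F^m(\Theta)}$ rather than merely a class in $\K_0$ or a projection in a matrix amplification. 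The remaining ingredients — well-definedness of the $F^m(\Theta)$, the value of the coupling constant, and the integer bookkeeping — are routine given Lemmas~\ref{h1}, \ref{h2}, \ref{submatrix,x} and the bimodule structure set up above; the construction is the natural iteration of Boca's.
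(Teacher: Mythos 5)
Your overall scheme --- downward induction on $m$, pushing a projection across the Morita equivalence between $\mathcal{A}_{(F^m(\Theta))'}$ and $\mathcal{A}_{F^m(\Theta)}$, and combining (\ref{q1}) with (\ref{=2}) for the trace bookkeeping --- is the paper's, and the final trace formula is handled correctly. The gap is in the step that actually manufactures a projection inside $\mathcal{A}_{F^m(\Theta)}$. You assert that because the dimension ${\rm pf}(F^m(\Theta)_{11})\,\tau_{F^{m+1}(\Theta)}(p_{m+1})$ lies in $(0,1)$, one can ``write down an explicit $\xi\in\mathcal{E}^\infty$ with ${}_{\mathcal{B}}\langle\xi,\xi\rangle$ a projection Murray--von Neumann equivalent to $\widetilde p_{m+1}$.'' A dimension bound is not a construction: together with stable rank one and cancellation it could at best tell you abstractly that $\widetilde p_{m+1}\mathcal{E}$ is represented by a projection in $\mathcal{A}_{F^m(\Theta)}$ rather than only in a matrix amplification, but it produces no module vector and no ``Rieffel-type'' formula; and the theorem carries no non-degeneracy or simplicity hypothesis on $\Theta$, so cancellation for $\mathcal{A}_{F^m(\Theta)}$ cannot be taken for granted.

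The correct mechanism, which is what the paper (following Boca) uses, has nothing to do with the size of $\tau_{F^{m+1}(\Theta)}(p_{m+1})$. The hypothesis ${\rm pf}(F^m(\Theta)_{11})\in(0,1)$ is consumed in constructing a single smooth module vector $f$ with $\langle f,f\rangle_{\mathcal{A}_{(F^m(\Theta))'}}=1$ (the Rieffel-projection construction applied to the $2\times 2$ block); then $e:=\langle f,f\rangle_{\mathcal{A}_{F^m(\Theta)}}$ is a projection of trace ${\rm pf}(F^m(\Theta)_{11})$, and $\psi(b):=\langle fb,f\rangle_{\mathcal{A}_{F^m(\Theta)}}$ is an isomorphism $\mathcal{A}_{(F^m(\Theta))'}\to e\,\mathcal{A}_{F^m(\Theta)}\,e$. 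Applying $\psi$ to $\widetilde p_{m+1}\in\mathcal{A}_{F^{m+1}(\Theta)}\subset\mathcal{A}_{(F^m(\Theta))'}$ automatically yields a projection $p_m:=\psi(\widetilde p_{m+1})$; equivalently, $\xi:=\widetilde p_{m+1}\cdot f$ in your conventions satisfies ${}_{\mathcal{B}}\langle\xi,\xi\rangle=\widetilde p_{m+1}$ exactly, not merely up to Murray--von Neumann equivalence, and $p_m=\langle\xi,\xi\rangle_{\mathcal{A}_{F^m(\Theta)}}$. No constraint on $\tau_{F^{m+1}(\Theta)}(p_{m+1})$ enters, so the claim that the $(0,1)$-hypotheses serve to keep that trace in $(0,1)$ misattributes where the hypothesis is used. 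With this step reasoned correctly, your argument coincides with the paper's.
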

\begin{proof}
We do the proof using recursion on $m.$ For $m=l-1$, $F^{l-1}(\Theta)$ is a $2\times2$ matrix and  ${\rm pf}(F^{l-1}(\Theta)_{11})={\rm pf}(F^{l-1}(\Theta))\notin \mathbb{Z}$, construction of such projection is well-known
  and the projection is known as Rieffel projection with trace ${\rm pf}(F^{l-1}(\Theta))+k^{(l-1)}_0,$ for some $k^{(l-1)}_0\in\mathbb{Z}$ (see (\ref{Rieffel pro})).

Now suppose that for some $m\in \{1,2,\dots,l-1\}$, there is such a projection in
$\mathcal{A}_{F^m(\Theta)}$ such that \begin{eqnarray}\label{u1}
\tau_{F^m(\Theta)}(p_m)&=&{\rm pf}(F^m(\Theta))+k_{(1,2)}^{(m)}{\rm pf}(M^{F^m(\Theta)}_{(1,2)})+\cdots\nonumber\\
&&+k_{(1,2,\dots,n-2(m+1))}^{(m)}{\rm pf}(M^{F^m(\Theta)}_{(1,2,\dots,n-2(m+1))})+k^{(m)}_0\label{u1}
\end{eqnarray} for some $k_I^{(m)},k^{(m)}_0\in \mathbb{Z},$ where $I=(1,2,\dots,n-2s)$ and $s=m+1,m+2,\dots,l-1.$

   Next we want to prove that there is a projection $p_{m-1}$ in $\mathcal{A}_{F^{m-1}(\Theta)}$ with $\tau_{F^{m-1}(\Theta)}(p_{m-1})={\rm pf}(F^{m-1}(\Theta))+k_{(1,2)}^{(m-1)}{\rm pf}(M^{F^{m-1}(\Theta)}_{(1,2)})+\cdots+k_{(1,2,\dots,n-2m))}^{(m-1)}{\rm pf}(M^{F^{m-1}(\Theta)}_{(1,2,\dots,n-2m)})+k^{(m-1)}_0$ for some $k_I^{(m-1)},k^{(m-1)}_0\in \mathbb{Z},$ where $I=(1,2,\dots,n-2s)$ and $s=m,m+1,\dots,l-1.$ Now we follow the method of page 198-199 in \cite{Boca} to construct such a projection. Write
   $$F^{m-1}(\Theta)=\left(\begin{matrix}
                  F^{m-1}(\Theta)_{11} &  F^{m-1}(\Theta)_{12} \\
                 F^{m-1}(\Theta)_{21}& F^{m-1}(\Theta)_{22}
                 \end{matrix}
\right)=\left(\begin{matrix}
                  F^{m-1}(\Theta)_{11} &  F^{m-1}(\Theta)_{12} \\
                 -F^{m-1}(\Theta)_{12}^t& F^{m-1}(\Theta)_{22}
                 \end{matrix}
\right)\in \mathcal{T}_{n-2(m-1)},$$
where $F^{m-1}(\Theta)_{11}$ is a $2\times 2$ block.
 From the previous discussion of this section, $\mathcal{A}_{F^{m-1}(\Theta)}$ is strong Morita equivalent to $\mathcal{A}_{F^{m-1}(\Theta)'},$
where
\begin{eqnarray}F^{m-1}(\Theta)'&=&\left(\begin{matrix}
                  F^{m-1}(\Theta)_{11}^{-1} & -F^{m-1}(\Theta)_{11}^{-1}F^{m-1}(\Theta)_{12} \\
                 F^{m-1}(\Theta)_{21}F^{m-1}(\Theta)_{11}^{-1} & F^{m-1}(\Theta)_{22}-F^{m-1}(\Theta)_{21}F^{m-1}(\Theta)_{11}^{-1}F^{m-1}(\Theta)_{12}
                \end{matrix}
\right)\nonumber\\
&=&\left(\begin{matrix}
                  F^{m-1}(\Theta)_{11}^{-1} & -F^{m-1}(\Theta)_{11}^{-1}F^{m-1}(\Theta)_{12} \\
                 F^{m-1}(\Theta)_{21}F^{m-1}(\Theta)_{11}^{-1} & F^{m}(\Theta)
                \end{matrix}
\right).\nonumber
\end{eqnarray}
Denote the Rieffel projection in $\mathcal{A}_{F^{m-1}(\Theta)_{11}}$, by $e$ and  $\tau_{F^{m-1}(\Theta)}(e)={\rm pf}(F^{m-1}(\Theta)_{11})$ (see Appendix for the construction of such $e$, here we use the assumption that ${\rm pf}(F^{m-1}(\Theta)_{11})$ is in $(0,1)$). It follows that $\mathcal{A}_{F^{m-1}(\Theta)'}\cong e\mathcal{A}_{F^{m-1}(\Theta)}e,$ and we denote this isomorphism by $\psi$. By (\ref{u1}) there exists a projection $e'\in \mathcal{A}_{F^{m}(\Theta)}\subset \mathcal{A}_{F^{m-1}(\Theta)'}$ with
\begin{eqnarray}\tau_{F^{m}(\Theta)}(e')&=&{\rm pf}(F^m(\Theta))+k_{(1,2)}^{(m)}{\rm pf}(M^{F^m(\Theta)}_{(1,2)})+\cdots\nonumber\\
&&+k_{(1,2,\dots,n-2(m+1))}^{(m)}{\rm pf}(M^{F^m(\Theta)} _{(1,2,\dots,n-2(m+1))})+k^{(m)}_0\label{m13}
\end{eqnarray}
 for some $k_I^{(m)},k^{(m)}_0\in \mathbb{Z},$ where $I=(1,2,\dots,n-2s)$ and $s=m+1,m+2,\dots,l-1.$
Now for the tracial state $\tau_{F^{m-1}(\Theta)}$ on $\mathcal{A}_{F^{m-1}(\Theta)}$, since $\psi(1_{\mathcal{A}_{F^{m-1}(\Theta)'}})=e$ and $\tau_{F^{m-1}(\Theta)}(e)={\rm pf}(F^{m-1}(\Theta)_{11})$, we have $$\frac{1}{{\rm pf}(F^{m-1}(\Theta)_{11})}\tau_{F^{m-1}(\Theta)}\circ \psi$$ is a tracial state on $\mathcal{A}_{F^{m-1}(\Theta)'}.$ Note $e'\in \mathcal{A}_{F^{m}(\Theta)}\subset \mathcal{A}_{F^{m-1}(\Theta)'},$ so
$\tau_{F^{m-1}(\Theta)'}(e')=\tau_{F^{m}(\Theta)}(e').$
Then using the fact that all tracial states of $\mathcal{A}_{F^{m-1}(\Theta)'}$ induce the same map on K$_0(\mathcal{A}_{F^{m-1}(\Theta)'})$, and (\ref{m13}), we get
\begin{eqnarray}
&&\frac{1}{{\rm pf}(F^{m-1}(\Theta)_{11})}\tau_{F^{m-1}(\Theta)}\circ \psi (e')\nonumber\\
&=&\tau_{F^{m-1}(\Theta)'}(e')=\tau_{F^{m}(\Theta)}(e')\nonumber\\
&=&{\rm pf}(F^m(\Theta))+k_{(1,2)}^{(m)}{\rm pf}(M^{F^m(\Theta)}_{(1,2)})+\cdots\nonumber\\
&&+k_{(1,2,\dots,n-2(m+1))}^{(m)}{\rm pf}(M^{F^m(\Theta)} _{(1,2,\dots,n-2(m+1))})+k^{(m)}_0\label{m11}
\end{eqnarray}
for some $k_I^{(m)},k^{(m)}_0\in \mathbb{Z},$ where $I=(1,2,\dots,n-2s)$ and $s=m+1,m+2,\dots,l-1.$ Let $p_{m-1}=\psi(e'),$
from (\ref{m11}) we get
\begin{eqnarray}
\tau_{F^{m-1}(\Theta)}(p_{m-1})&=&\tau_{F^{m-1}(\Theta)}
(\psi(e'))\nonumber\\
&=&{\rm pf}(F^{m-1}(\Theta)_{11})(\frac{1}{{\rm pf}(F^{m-1}(\Theta)_{11})}\tau_{F^{m-1}(\Theta)}\circ \psi (e'))\nonumber\\
&=&{\rm pf}(F^{m-1}(\Theta)_{11})({\rm pf}(F^m(\Theta))+k_{(1,2)}^{(m)}{\rm pf}(M^{F^m(\Theta)}_{(1,2)})+\cdots\nonumber\\
&&+k_{(1,2,\dots,n-2(m+1))}^{(m)}{\rm pf}(M^{F^m(\Theta)} _{(1,2,\dots,n-2(m+1))})+k^{(m)}_0).\nonumber
\end{eqnarray}
But from (\ref{=2}) and (\ref{pf1}), we have
\begin{eqnarray}
&&{\rm pf}(F^{m-1}(\Theta)_{11})({\rm pf}(F^m(\Theta))+k_{(1,2)}^{(m)}{\rm pf}(M^{F^m(\Theta)}_{(1,2)})\nonumber\\
&&+\cdots+k_{(1,2,\dots,n-2(m+1))}^{(m)}{\rm pf}(M^{F^m(\Theta)}_{(1,2,\dots,n-2(m+1))})+k^{(m)}_0)\nonumber\\
&=&{\rm pf}(F^{m-1}(\Theta))+k_{(1,2)}^{(m)}{\rm pf}(M^{F^{m-1}(\Theta)}_{(1,2,3,4)})\nonumber\\
&&+\cdots+k_{(1,2,\dots,n-2(m+1))}^{(m)}{\rm pf}(M^{F^{m-1}(\Theta)}_{(1,2,\dots,n-2m)})+k^{(m)}_0{\rm pf}(M^{F^{m-1}(\Theta)}_{(1,2)}).\nonumber
\end{eqnarray}
Put $k_0^{(m)}=k_{(1,2)}^{m-1}$, $k_{(1,2,3,4)}^{(m-1)}=k_{(1,2)}^{(m)}$,\dots,
$k_{(1,2,\dots,n-2m)}^{(m-1)}=k_{(1,2,\dots,n-2(m+1))}^{(m)},$ then
there is a projection $p_{m-1}$ in $\mathcal{A}_{F^{m-1}(\Theta)}$ with \begin{eqnarray}\tau_{F^{m-1}(\Theta)}(p_{m-1})&=&{\rm pf}(F^{m-1}(\Theta))+k_{(1,2)}^{(m-1)}{\rm pf}(M^{F^{m-1}(\Theta)}_{(1,2)})+\cdots\nonumber\\
&&+k_{(1,2,\dots,n-2m))}^{(m-1)}{\rm pf}(M^{F^{m-1}(\Theta)}_{(1,2,\dots,n-2m)})+k^{(m-1)}_0\nonumber
\end{eqnarray}for some $k_I^{(m-1)},k^{(m-1)}_0\in \mathbb{Z},$ where $I=(1,2,\dots,n-2s)$ and $s=m,m+1,\dots,l-1.$

\end{proof}

The above theorem tells us how to construct a higher dimensional projection with corresponding trace values from a low dimensional projection under certain conditions. We call this type of projection a Rieffel-type projection. By Lemma \ref{h2}, we know that
\begin{eqnarray}{\rm pf}(F^j(\Theta)_{11})=\frac{{\rm pf}^{\Theta}_{(1,2,\dots,n-p-1,n-p)}}{{\rm pf}^{\Theta}_{(1,2,\dots,n-p-2)}},\quad  p=n-2j-2,\quad j=1,\dots,l-1.
\end{eqnarray}
Therefore, the conditions of the above theorem can also be described as:
\begin{eqnarray}\frac{{\rm pf}^{\Theta}_{(1,2,\dots,n-p-1,n-p)}}{{\rm pf}^{\Theta}_{(1,2,\dots,n-p-2)}}\in (0,1)\quad {\rm for}\,\, p=n-2j-2,\quad j=1,\dots,l-2,
\end{eqnarray}
 and
 \begin{eqnarray}\label{a4}
 \frac{{\rm pf}^{\Theta}_{(1,2,\dots,n-1,n)}}{{\rm pf}^{\Theta}_{(1,2,\dots,n-2)}}\notin \mathbb{Z}.
 \end{eqnarray}We record this fact as a corollary below.

 \begin{corollary}\label{even n projection explicit}
  For any even number $n=2l\geq2,$ if $\Theta\in \mathcal{T}_n$ satisfies \begin{eqnarray}\frac{{\rm pf}^{\Theta}_{(1,2,\dots,n-p-1,n-p)}}{{\rm pf}^{\Theta}_{(1,2,\dots,n-p-2)}}\in (0,1)\quad {\rm for}\,\, p=n-2j-2,\quad j=1,\dots,l-2,
\end{eqnarray} and  $
 \frac{{\rm pf}^{\Theta}_{(1,2,\dots,n-1,n)}}{{\rm pf}^{\Theta}_{(1,2,\dots,n-2)}}\notin \mathbb{Z},$ then
  there exist a (Rieffel-type) projection $p_m$ inside $\mathcal{A}_{F^m(\Theta)}$ such that
  \begin{eqnarray}\tau_{F^m(\Theta)}(p_m)={\rm pf}(F^m(\Theta))+k_{(1,2)}^{(m)}{\rm pf}(M^{F^m(\Theta)}_{(1,2)})+\cdots+k_{(1,2,\dots,n-2(m+1))}^{(m)}{\rm pf}(M^{F^m(\Theta)}_{(1,2,\dots,n-2(m+1))})+k^{(m)}_0\nonumber
   \end{eqnarray}
   for some $k_I^{(m)},k^{(m)}_0\in \mathbb{Z}$ and $m=0,1,2,\dots,l-1,$ where $I=(1,2,\dots,n-2s)$ and $s=m+1,m+2,\dots,l-1,$ $\tau_{F^m(\Theta)}$ is the canonical tracial state on $\mathcal{A}_{F^m(\Theta)}.$
\end{corollary}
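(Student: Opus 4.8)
The plan is to derive this statement directly from Theorem~\ref{even n projection} by rewriting the quantities ${\rm pf}(F^{j}(\Theta)_{11})$ occurring in the hypotheses of that theorem in terms of pfaffian minors of $\Theta$ itself. First I would observe that for every $m$ with $0\le m\le l-1$ the upper-left block $F^{m}(\Theta)_{11}$ is a $2\times 2$ skew-symmetric matrix with zero diagonal, so by (\ref{f1}) we have ${\rm pf}(F^{m}(\Theta)_{11})=F^{m}(\Theta)_{12}$.

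Next I would apply Lemma~\ref{h2}. The explicit formula for $F^{m}(\Theta)_{jk}$ given there, taken with $j=k=1$ and $s'=2m$, yields
\[
F^{m}(\Theta)_{12}=\frac{{\rm pf}^{\Theta}_{(1,2,\dots,2m,2m+1,2m+2)}}{{\rm pf}^{\Theta}_{(1,2,\dots,2m)}}=\frac{{\rm pf}^{\Theta}_{(1,2,\dots,2m+2)}}{{\rm pf}^{\Theta}_{(1,2,\dots,2m)}}.
\]
Putting $p=n-2m-2$, so that $n-p=2m+2$ and $n-p-2=2m$, the right-hand side is exactly the ratio ${\rm pf}^{\Theta}_{(1,2,\dots,n-p-1,n-p)}\big/{\rm pf}^{\Theta}_{(1,2,\dots,n-p-2)}$ appearing in the statement; for $m=0$, with the convention that the pfaffian of the empty $0\times 0$ matrix equals $1$, this reduces to $\theta_{12}\in(0,1)$. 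I note that Lemma~\ref{h2}, together with Lemma~\ref{h1} which ensures that each $F^{m}(\Theta)$ is well defined, applies here because the stated hypotheses force ${\rm pf}^{\Theta}_{(1,2,\dots,2s)}\neq 0$ for $s=1,\dots,l-1$. Consequently the two displayed hypotheses of the corollary are precisely the hypotheses of Theorem~\ref{even n projection}, namely ${\rm pf}(F^{j}(\Theta)_{11})\in(0,1)$ for $j=0,1,\dots,l-2$ and ${\rm pf}(F^{l-1}(\Theta)_{11})\notin\mathbb{Z}$, read off under the index substitution $p=n-2j-2$.

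Finally, since the trace formula for $p_m$ displayed in the corollary is verbatim the conclusion of Theorem~\ref{even n projection}, I would simply invoke that theorem to conclude. There is no genuine obstacle here: the corollary is a bookkeeping reformulation of Theorem~\ref{even n projection}, and the only point requiring care is keeping the dictionary $p=n-2m-2$, $s'=2m$ consistent while matching the pfaffian-minor ratios on the two sides.
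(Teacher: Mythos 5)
Your proposal is correct and follows essentially the same route the paper takes: it quotes the formula ${\rm pf}(F^{j}(\Theta)_{11})={\rm pf}^{\Theta}_{(1,2,\dots,n-p-1,n-p)}/{\rm pf}^{\Theta}_{(1,2,\dots,n-p-2)}$ from Lemma~\ref{h2}, checks it matches the hypotheses of Theorem~\ref{even n projection}, and cites that theorem. Two small remarks: the phrase ``taken with $j=k=1$'' should read $j=1$, $k=2$, since the entry you extract is $F^{m}(\Theta)_{12}$; and your observation about the $j=0$ case (that it corresponds to $\theta_{12}\in(0,1)$ via the empty-pfaffian convention) is a genuine and worthwhile point of care, since the corollary's displayed index range $j=1,\dots,l-2$ does not literally cover $j=0$, whereas Theorem~\ref{even n projection} does require ${\rm pf}(F^{0}(\Theta)_{11})=\theta_{12}\in(0,1)$.
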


In particular, when $m=0,$ we have the following:

 \begin{corollary}\label{even n projection explicit_m=0}
  For any even number $n=2l\geq2,$ if $\Theta\in \mathcal{T}_n$ satisfies \begin{eqnarray}\frac{{\rm pf}^{\Theta}_{(1,2,\dots,n-p-1,n-p)}}{{\rm pf}^{\Theta}_{(1,2,\dots,n-p-2)}}\in (0,1)\quad {\rm for}\,\, p=n-2j-2,\quad j=1,\dots,l-2,
\end{eqnarray} and  $
 \frac{{\rm pf}^{\Theta}_{(1,2,\dots,n-1,n)}}{{\rm pf}^{\Theta}_{(1,2,\dots,n-2)}}\notin \mathbb{Z},$ then
  there exist a (Rieffel-type) projection $p=p_0$ inside $\mathcal{A}_{\Theta}$ such that
  \begin{eqnarray}\tau_{\Theta}(p)={\rm pf}(\Theta)+k_{(1,2)}{\rm pf}(M^{\Theta}_{(1,2)})+\cdots+k_{(1,2,\dots,n-2)}{\rm pf}(M^{\Theta}_{(1,2,\dots,n-2)})+k_0\nonumber
   \end{eqnarray}
   for some $k_I,k_0\in \mathbb{Z},$ where $I=(1,2,\dots,n-2s)$ and $s=1,2,\dots,l-1,$ $\tau_{\Theta}$ is the canonical tracial state on $\mathcal{A}_{\Theta}.$
\end{corollary}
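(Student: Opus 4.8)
The plan is to read this off as the $m=0$ instance of Theorem~\ref{even n projection}; essentially all of the work has already been done there, so only a translation of the hypotheses remains.

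First I would use Lemma~\ref{h2} to re-express the hypotheses of Theorem~\ref{even n projection} in terms of honest pfaffian minors of $\Theta$. Writing $p=n-2j-2$, that lemma gives ${\rm pf}(F^j(\Theta)_{11})={\rm pf}^{\Theta}_{(1,2,\dots,n-p-1,n-p)}/{\rm pf}^{\Theta}_{(1,2,\dots,n-p-2)}$ for $j=1,\dots,l-1$. Hence the condition ``${\rm pf}(F^j(\Theta)_{11})\in(0,1)$ for $j=1,\dots,l-2$'' is exactly the displayed ratio condition, and ``${\rm pf}(F^{l-1}(\Theta)_{11})={\rm pf}(F^{l-1}(\Theta))\notin\mathbb{Z}$'' is exactly ${\rm pf}^{\Theta}_{(1,2,\dots,n-1,n)}/{\rm pf}^{\Theta}_{(1,2,\dots,n-2)}\notin\mathbb{Z}$. (The remaining $j=0$ requirement ${\rm pf}(\Theta_{11})=\theta_{12}={\rm pf}^{\Theta}_{(1,2)}\in(0,1)$ of Theorem~\ref{even n projection} is understood implicitly here --- it is the denominator of the $j=1$ ratio --- and I would either add it to the hypotheses or note that one may reduce to it by subtracting an integer from $\theta_{12}$, which does not alter $\mathcal{A}_{\Theta}$.)

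With the hypotheses matched, I would then invoke Theorem~\ref{even n projection} with $m=0$. Since $F^0(\Theta)=\Theta$ and $\tau_{F^0(\Theta)}=\tau_{\Theta}$, it produces a projection $p:=p_0\in\mathcal{A}_{\Theta}$ with $\tau_{\Theta}(p)={\rm pf}(\Theta)+k^{(0)}_{(1,2)}{\rm pf}(M^{\Theta}_{(1,2)})+\cdots+k^{(0)}_{(1,2,\dots,n-2)}{\rm pf}(M^{\Theta}_{(1,2,\dots,n-2)})+k^{(0)}_0$ for some integers $k^{(0)}_I,k^{(0)}_0$; relabelling $k_I:=k^{(0)}_I$, $k_0:=k^{(0)}_0$ with $I=(1,2,\dots,n-2s)$, $s=1,\dots,l-1$, gives the asserted formula. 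That $p$ is a ``Rieffel-type'' projection is built into the construction in the proof of Theorem~\ref{even n projection}: it is the image under the corner isomorphism $\psi\colon\mathcal{A}_{\Theta'}\cong e\mathcal{A}_{\Theta}e$ of the Rieffel-type projection produced one level up, with the classical two-dimensional Rieffel projection as the base case of the recursion.

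There is no real obstacle in this corollary, since the recursive construction of the projection and the pfaffian bookkeeping for its trace are entirely contained in Theorem~\ref{even n projection} (equivalently Corollary~\ref{even n projection explicit} at $m=0$). The only points requiring care are keeping the index ranges straight after setting $m=0$ --- so that the multi-indices $I=(1,2,\dots,n-2s)$ with $s=1,\dots,l-1$ and the reindexing $p=n-2j-2$ come out consistently --- and making the implicit normalization $\theta_{12}\in(0,1)$ explicit or reducing to it.
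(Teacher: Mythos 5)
Your proposal is correct and matches the paper's own proof exactly: the paper derives this as the $m=0$ instance of Theorem~\ref{even n projection} (via Corollary~\ref{even n projection explicit}), using Lemma~\ref{h2} to rewrite each ${\rm pf}(F^j(\Theta)_{11})$ as the ratio ${\rm pf}^{\Theta}_{(1,2,\dots,n-p-1,n-p)}/{\rm pf}^{\Theta}_{(1,2,\dots,n-p-2)}$. Your flag that the $j=0$ requirement $\theta_{12}\in(0,1)$ of Theorem~\ref{even n projection} is not among the displayed ratio conditions (which start at $j=1$) is a fair observation about a small implicit assumption that the paper itself leaves unaddressed; normalizing $\theta_{12}$ modulo $\mathbb{Z}$ as you suggest, or just stating it, resolves it.
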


 Let us now show that under suitable conditions on $\Theta,$ the numbers coming in the right hand side of the equation~(\ref{range of trace}) may be realized as traces of the Rieffel-type projections in $\mathcal{A}_\Theta.$
\begin{theorem}\label{PI1}
Let $\Theta\in \mathcal{T}_n$ with $n\geq 2$. If
\begin{eqnarray}\label{a21}
 {\rm pf}(F^{j}(M^{\Theta}_I)_{11})\in (0,1),\quad {\text and}\,\, {\rm pf}(F^{m-1}(M^{\Theta}_I)_{11})\notin \mathbb{Z}
 \end{eqnarray}
for all $I$ with $2\leq|I|\leq 2\lfloor\,\frac{n}{2}\,\rfloor$, $|I|=l=2m,$ and for all $j=0,1,\dots,m-2,$
  then there exist  (Rieffel-type) projections $P_{I},$ such that
  \begin{eqnarray}\label{pI1}\tau_{\Theta}(P_{I})={\rm pf}(M_{I}^{\Theta})+\sum\limits_{0<|J|< l}{\rm pf}(M_{J}^{\Theta})k_{I_J}+k_{I_0}
  \end{eqnarray}
  for some $k_{I_J},k_{I_0}\in \mathbb{Z},$
where  $J=(i_1,i_2,\dots,i_{s})$ and $s=2,4,\dots,2m-2,$ $\tau_{\Theta}$ is the canonical tracial state on $\mathcal{A}_{\Theta}.$
\end{theorem}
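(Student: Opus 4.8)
The plan is to obtain Theorem~\ref{PI1} as a direct consequence of Corollary~\ref{even n projection explicit_m=0}, applied separately to each submatrix $M_I^\Theta$; the only genuine work is to carry the projection produced there from $\mathcal{A}_{M_I^\Theta}$ into $\mathcal{A}_\Theta$ without changing its trace, and to bookkeep which minors of $M_I^\Theta$ correspond to which minors of $\Theta$.

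Fix $I=(i_1,\dots,i_l)$ with $l=2m$ and $2\le l\le 2\lfloor\frac{n}{2}\rfloor$. The universal property of $\mathcal{A}_{M_I^\Theta}$ gives a unital $*$-homomorphism $\pi_I\colon\mathcal{A}_{M_I^\Theta}\to\mathcal{A}_\Theta$ sending the $j$-th canonical generator to $\mathfrak{u}_{i_j}$. Both $\tau_\Theta\circ\pi_I$ and $\tau_{M_I^\Theta}$ are tracial states agreeing on every generator monomial (each takes the value $1$ on the unit and $0$ on every nontrivial monomial in the $\mathfrak{u}_{i_j}$), hence they coincide on the dense $*$-subalgebra of monomials, hence everywhere; since the canonical trace on every noncommutative torus is faithful, $\pi_I$ is injective. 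Thus I regard $\mathcal{A}_{M_I^\Theta}$ as a $C^*$-subalgebra of $\mathcal{A}_\Theta$ on which $\tau_\Theta$ restricts to $\tau_{M_I^\Theta}$.

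Now relabel $i_1,\dots,i_l$ as $1,\dots,l$ and view $M_I^\Theta\in\mathcal{T}_l$. With $l$ and $m$ here playing the roles of $n$ and $l$ in Corollary~\ref{even n projection explicit_m=0}, the hypothesis (\ref{a21}) is precisely the hypothesis of that corollary for $M_I^\Theta$, so it produces a Rieffel-type projection $P_I\in\mathcal{A}_{M_I^\Theta}\subseteq\mathcal{A}_\Theta$ with
\begin{eqnarray*}
\tau_{M_I^\Theta}(P_I)={\rm pf}(M_I^\Theta)+\sum_{s=1}^{m-1}k_{(1,\dots,l-2s)}\,{\rm pf}\bigl(M^{M_I^\Theta}_{(1,\dots,l-2s)}\bigr)+k_0
\end{eqnarray*}
for some integers $k_{(1,\dots,l-2s)}$ and $k_0$. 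Undoing the relabelling, $M^{M_I^\Theta}_{(1,\dots,l-2s)}=M^\Theta_{(i_1,\dots,i_{l-2s})}$ is a pfaffian minor $M_J^\Theta$ of $\Theta$ with $0<|J|=l-2s<l$, and $\tau_\Theta(P_I)=\tau_{M_I^\Theta}(P_I)$ by the previous paragraph; rewriting the finite sum above as a sum over all $J$ with $0<|J|<l$ (assigning coefficient $0$ to those $J$ not of the form $(i_1,\dots,i_{l-2s})$) puts it in the form (\ref{pI1}).

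The routine parts are the trace identity for $\pi_I$ (elementary, checked on monomials $\mathfrak{u}_{i_1}^{a_1}\cdots\mathfrak{u}_{i_l}^{a_l}$) and the minor correspondence; there is no essential difficulty beyond Corollary~\ref{even n projection explicit_m=0} (equivalently Theorem~\ref{even n projection}), whose recursive construction via the Morita equivalences $\mathcal{A}_{F^{j-1}(\Theta)}\sim\mathcal{A}_{F^{j-1}(\Theta)'}$ and the corner isomorphisms $\mathcal{A}_{F^{j-1}(\Theta)'}\cong e\mathcal{A}_{F^{j-1}(\Theta)}e$ is already at our disposal. The single point requiring care is that the interval and non-integrality conditions of the corollary must hold for each $M_I^\Theta$ — but this is exactly what the hypothesis (\ref{a21}) asserts, term by term, so no descent argument is needed.
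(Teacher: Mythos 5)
Your proposal is correct and takes essentially the same route as the paper: the paper's own proof is a single sentence saying to apply Corollary~\ref{even n projection explicit_m=0} with $M_I^\Theta$ in place of $\Theta$, implicitly using exactly the embedding $\mathcal{A}_{M_I^\Theta}\hookrightarrow\mathcal{A}_\Theta$ and trace-restriction fact that you spell out, together with the correspondence $M^{M_I^\Theta}_{(1,\dots,l-2s)}=M^\Theta_{(i_1,\dots,i_{l-2s})}$. You merely make explicit the (entirely routine, but worth stating) verifications the paper leaves to the reader.
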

\begin{proof}
  We use $M_{I}^{\Theta}$ instead of $\Theta$ in Corollary~\ref{even n projection explicit_m=0} to get $\tau_{\Theta}(P_{I})={\rm pf}(M_{I}^{\Theta})+\sum\limits_{0<|J|< l}{\rm pf}(M_{J}^{\Theta})k_{I_J}+k_{I_0}$.
\end{proof}

The conditions in the above theorem are satisfied by many examples. For example, if $n=2, 3$ any antisymmetric matrix with non-integer entries will work. For $n=4$ let us consider the following class of matrices: $\Theta=\left(
  \begin{matrix}
    0 & \gamma & \gamma^2 & \gamma^8 \\
    -\gamma & 0 & \gamma^4 & \gamma^{16}\\
    -\gamma^2 & -\gamma^4  & 0 & \gamma^{32} \\
    -\gamma^8 & -\gamma^{16} & -\gamma^{32} & 0  \end{matrix}
  \right), \gamma \in (0,1).$ An easy computation shows that   \begin{eqnarray*}
  {\rm pf}(F^1(\Theta)_{11})&\overset{(\ref{f2})}{=}& \frac{\gamma\cdot\gamma^{32}-\gamma^{2}\cdot\gamma^{16}+\gamma^{8}\cdot\gamma^{4}}{\gamma}\\
  &=&\gamma^{11}(\gamma^{21}-\gamma^6+1)\notin \Z.
   \end{eqnarray*}
   Hence this class satisfies the conditions in the above theorem.

We will now see that the above projections form a generating set of $\K_0(\mathcal{A}_\Theta),$ when $\Theta$ is totally irrational, under the assumptions of the above theorem.

\begin{definition}\label{a3}
 We say that $\Theta$ is {\it strongly totally irrational} if $\Theta=(\theta_{jk})\in \mathcal{T}_n$ is totally irrational matrix such that
 \begin{eqnarray}\label{a2}
 {\rm pf}(F^{j}(M^{\Theta}_I)_{11})\in (0,1)\quad\quad (when \,\,n\geq4)
 \end{eqnarray}
for all $I$ with $4\leq|I|\leq 2\lfloor\,\frac{n}{2}\,\rfloor$, and for all $j=0,1,\dots,m-2,$ where $|I|=2m.$ For $n=2,3$, we do not need the condition (\ref{a2}) to hold. (Here we don't need condition ${\rm pf}(F^{m-1}(M^{\Theta}_I)_{11})\notin \mathbb{Z}$ because it is satisfied automatically by (\ref{a4}) and $\Theta$'s totally irrationality.)
\end{definition}
Note that since $e^{2\pi i\theta}=e^{2\pi i(\theta+k)}$ for any $k\in\mathbb{Z},$ we can perturb $\Theta$ such a way that the condition (\ref{a2}) holds for $n=4,5.$ Indeed, when $n=4,$ take $|I|=4.$ Then $M^{\Theta}_I=\Theta$ and $j=0$ so that ${\rm pf}(F^{0}(\Theta)_{11})={\rm pf}(\Theta_{11})=\theta_{12}.$ If $\theta_{12}$ is not in $(0,1),$ we can find some $k\in\mathbb{Z}$ such that $\theta_{12}+k$ is in $(0,1)$. Thus we can replace $\theta_{12}$ with $\theta_{12}+k$ in $\Theta$ (the corresponding noncommutative torus  remains unchanged). When $n=5$, all possible $M^{\Theta}_I$'s are $M^{\Theta}_{(1,2,3,4)}$, $M^{\Theta}_{(2,3,4,5)}$, $M^{\Theta}_{(1,3,4,5)}$, $M^{\Theta}_{(1,2,4,5)}$, $M^{\Theta}_{(1,2,3,5)},$ and $j=0.$ As all possible ${\rm pf}(F^{j}(M^{\Theta}_I)_{11})$'s are $\theta_{12},\theta_{23},\theta_{13},$ we can  deal with them as in the case of $n = 4.$

For $n=6,$ we now determine a class of totally irrational matrices for which the condition (\ref{a2}) holds.

\begin{proposition}\label{h3}
  Let $\gamma$ be a transcendental number in $(0,1)$. Then for the matrix
  \begin{eqnarray*}
  \Theta=\left(
  \begin{matrix}
    0 & \gamma & \gamma^2 & \gamma^8 & \gamma^{64} & \gamma^{1024} \\
    -\gamma & 0 & \gamma^4 & \gamma^{16} & \gamma^{128} & \gamma^{2048} \\
    -\gamma^2 & -\gamma^4  & 0 & \gamma^{32} & \gamma^{256} & \gamma^{4096} \\
    -\gamma^8 & -\gamma^{16} & -\gamma^{32} & 0 & \gamma^{512} & \gamma^{8192} \\
    -\gamma^{64} & -\gamma^{128} & - \gamma^{256} & -\gamma^{512} & 0 & \gamma^{16384} \\
    -\gamma^{1024} & -\gamma^{2048} & -\gamma^{4096} & -\gamma^{8192}& -\gamma^{16384} & 0
  \end{matrix}
  \right),
  \end{eqnarray*} the condition (\ref{a2}) holds and $\Theta$ is totally irrational. Hence  $\Theta$ is strongly totally irrational.
\end{proposition}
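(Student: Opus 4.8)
The plan is to verify the two assertions separately: that $\Theta$ is totally irrational, and that it satisfies condition~(\ref{a2}) for $n=6$; strong total irrationality then follows at once from Definition~\ref{a3}. Everything rests on one combinatorial feature of the matrix: its $\binom{6}{2}=15$ super-diagonal entries carry the exponents $2^0,2^1,\dots,2^{14}$, i.e. fifteen \emph{distinct} powers of two (reading down the columns: $\theta_{12}=\gamma^{2^0}$, $\theta_{13}=\gamma^{2^1}$, $\theta_{23}=\gamma^{2^2}$, $\theta_{14}=\gamma^{2^3}$, \dots, $\theta_{56}=\gamma^{2^{14}}$). Write $e_{ab}$ for the exponent of $\theta_{ab}$.

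First, total irrationality. By Elliott's computation~(\ref{range of trace}) together with Definition~\ref{df totally irrational}, it suffices to show that $1$ and the pfaffian minors ${\rm pf}(M_I^{\Theta})$, $0<|I|\le 6$, are linearly independent over $\Q$. Expanding the pfaffian of $M_I^{\Theta}$ over the perfect matchings $\mu$ of the index set $I$, the contribution of $\mu$ is $\pm\gamma^{E(\mu)}$ with $E(\mu)=\sum_{(a,b)\in\mu}e_{ab}$, a sum of distinct powers of two. Hence $E(\mu)$ is recovered from its binary expansion, which recovers the edge set $\mu$ and therefore the vertex set $I$; so the assignment $(I,\mu)\mapsto E(\mu)$ is injective, distinct monomials $\gamma^{E(\mu)}$ with $E(\mu)>0$ come from distinct pairs $(I,\mu)$, and the only contribution to $\gamma^0$ is the constant $1$. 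Collecting the coefficient of each $\gamma^N$ in a relation $c_0\cdot 1+\sum_I c_I\,{\rm pf}(M_I^{\Theta})=0$ then forces $c_0=0$ and each $c_I=0$. As $\gamma$ is transcendental, a $\Q$-linear relation among these real numbers is equivalent to a polynomial identity, so linear independence over $\Q$ follows; that is, $(\tau_{\Theta})_*$ is injective and $\Theta$ is totally irrational.

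Next, condition~(\ref{a2}). For $n=6$ one must check ${\rm pf}(F^j(M_I^{\Theta})_{11})\in(0,1)$ for all $I$ with $|I|\in\{4,6\}$ and all $j=0,\dots,m-2$, where $|I|=2m$. When $|I|=4$ (so $m=2$) only $j=0$ occurs, and ${\rm pf}\bigl(F^0(M_I^{\Theta})_{11}\bigr)={\rm pf}\bigl((M_I^{\Theta})_{11}\bigr)=\theta_{i_1i_2}=\gamma^{2^k}\in(0,1)$, since $\gamma\in(0,1)$. When $|I|=6$ we have $M_I^{\Theta}=\Theta$, $m=3$, and $j\in\{0,1\}$; here ${\rm pf}(F^0(\Theta)_{11})=\theta_{12}=\gamma\in(0,1)$, while by Lemma~\ref{h2} (equivalently~(\ref{a1})),
\[
{\rm pf}(F^1(\Theta)_{11})=\frac{{\rm pf}^{\Theta}_{(1,2,3,4)}}{{\rm pf}^{\Theta}_{(1,2)}}
=\frac{\gamma^{33}-\gamma^{18}+\gamma^{12}}{\gamma}=\gamma^{11}\bigl(\gamma^{21}-\gamma^{6}+1\bigr).
\]
Since $0<\gamma<1$ we have $-1<\gamma^{21}-\gamma^{6}<0$, so $\gamma^{21}-\gamma^{6}+1\in(0,1)$, and multiplying by $\gamma^{11}\in(0,1)$ keeps the value in $(0,1)$. (Lemma~\ref{h1} applies throughout, as ${\rm pf}^{\Theta}_{(1,2)}=\gamma\ne 0$ and ${\rm pf}^{\Theta}_{(1,2,3,4)}=\gamma^{12}(\gamma^{21}-\gamma^6+1)\ne 0$, so every $F^j$ in sight is defined.) This establishes~(\ref{a2}); the auxiliary requirement ${\rm pf}(F^{m-1}(M_I^{\Theta})_{11})\notin\Z$ is automatic for a totally irrational $\Theta$, as noted after Definition~\ref{a3}. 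Hence $\Theta$ is strongly totally irrational.

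There is no serious obstacle here, since the matrix is rigged so that these checks go through; the one point that genuinely needs care is the injectivity of $(I,\mu)\mapsto E(\mu)$, where it is essential that the chosen exponents be distinct powers of two so that no two \emph{different} matchings — even on different index sets — produce the same monomial in $\gamma$. The explicit numerology ($1,2,4,8,\dots$, growing fast enough to keep the binary supports disjoint under addition) is also precisely what makes the single nontrivial membership ${\rm pf}(F^1(\Theta)_{11})\in(0,1)$ transparent.
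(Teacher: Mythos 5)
Your proof is correct and follows essentially the same route as the paper: establish total irrationality from the fact that the fifteen super-diagonal exponents are the distinct powers $2^0,\dots,2^{14}$, and verify condition~(\ref{a2}) by the explicit computation $\mathrm{pf}(F^1(\Theta)_{11})=\gamma^{11}(\gamma^{21}-\gamma^6+1)\in(0,1)$, with the $j=0$ and $|I|=4$ cases being immediate. The one place you add substance is the total irrationality step: the paper simply asserts it is "well known," whereas you make explicit the injectivity of $(I,\mu)\mapsto E(\mu)$ (via uniqueness of binary expansions) that is needed to pass from linear independence of the monomials $\gamma^{E(\mu)}$ to linear independence of the pfaffian minors themselves.
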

\begin{proof}Since $\gamma$ is transcendental, it is well known that the numbers $\gamma,\gamma^2,\dots,\gamma^{2^n},$ as well as any product the numbers are linear independent over $\mathbb{Q}$ for any $n\in \mathbb{N}.$ So we have that $\Theta$ is totally irrational.
     For any $|I|=4$, the condition (\ref{a2})  trivially holds for $j=0.$ For $|I|=6,$ namely $M^{\Theta}_{I}=\Theta,$ again the condition (\ref{a2})  trivially holds for $j=0.$ For $j=1,$
   \begin{eqnarray*}
  F^1(\Theta)&=&
   \Theta_{22}-\Theta_{21}\Theta_{11}^{-1}\Theta_{12}\\
   &\overset{(\ref{F(theta)})}{=}&\left(\begin{matrix}
                0 &  \frac{{\rm pf}^{\Theta}_{(1,2,3,4)}}{\gamma}&\frac{{\rm pf}^{\Theta}_{(1,2,3,5)}}{\gamma}&\frac{{\rm pf}^{\Theta}_{(1,2,3,6)}}{\gamma} \\
              - \frac{{\rm pf}^{\Theta}_{(1,2,3,4)}}{\gamma}& 0&\frac{{\rm pf}^{\Theta}_{(1,2,4,5)}}{\gamma}&\frac{{\rm pf}^{\Theta}_{(1,2,4,6)}}{\gamma}\\
             - \frac{{\rm pf}^{\Theta}_{(1,2,3,5)}}{\gamma}&- \frac{{\rm pf}^{\Theta}_{(1,2,4,5)}}{\gamma}&0&\frac{{\rm pf}^{\Theta}_{(1,2,5,6)}}{\gamma}\\
              -\frac{{\rm pf}^{\Theta}_{(1,2,3,6)}}{\gamma}&-\frac{{\rm pf}^{\Theta}_{(1,2,4,6)}}{\gamma}&-\frac{{\rm pf}^{\Theta}_{(1,2,5,6)}}{\gamma}&0
                 \end{matrix}\right).
   \end{eqnarray*}
Then
   \begin{eqnarray*}
  {\rm pf}(F^1(\Theta)_{11})&=&\frac{{\rm pf}^{\Theta}_{(1,2,3,4)}}{\gamma}\\
  &\overset{(\ref{f2})}{=}& \frac{\gamma\cdot\gamma^{32}-\gamma^{2}\cdot\gamma^{16}+\gamma^{8}\cdot\gamma^{4}}{\gamma}\\
  &=&\gamma^{11}(\gamma^{21}-\gamma^6+1)\in (0,1).
   \end{eqnarray*}

\end{proof}

In Appendix \uppercase\expandafter{\romannumeral1},  we provide a large class of examples of $n\times n$ skew-symmetric matrices, generalising  Proposition \ref{h3}, which satisfy the conditions of Theorem \ref{PI1}. These skew-symmetric matrices can also be made totally irrational (see Corollary~\ref{a}).

\begin{theorem}\label{PI}
  For any integer $n\geq 2,$ let $\Theta\in \mathcal{T}_n$ be strongly totally irrational, then there exist  (Rieffel-type) projections $P_{I},$ $I=(i_1,i_2,\dots,i_{l})$, $1\leq i_1<i_2<\cdots<i_l\leq n,$ $l=2m,$ $m=1,2,\dots,\lfloor\frac{n}{2}\rfloor,$ inside $\mathcal{A}_{\Theta},$ such that
  \begin{eqnarray}\label{pI1}\tau_{\Theta}(P_{I})={\rm pf}(M_{I}^{\Theta})+\sum\limits_{0<|J|< l}{\rm pf}(M_{J}^{\Theta})k_{I_J}+k_{I_0}
  \end{eqnarray}
  for some $k_{I_J},k_{I_0}\in \mathbb{Z},$
where  $J=(i_1,i_2,\dots,i_{s})$ and $s=2,4,\dots,2m-2,$ $\tau_{\Theta}$ is the canonical tracial state on $\mathcal{A}_{\Theta}.$  Moreover, the generators of $\K_0(\mathcal{A}_{\Theta})$ are given by $[1]$ and $\{P_{I}|I=(i_1,i_2,\dots,i_{l})$, $1\leq i_1<i_2<\cdots<i_l\leq n, l=2m, m=1,2,\dots,\lfloor\frac{n}{2}\rfloor\}$.
\end{theorem}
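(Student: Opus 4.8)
The plan is to obtain the projections $P_I$ as a direct application of Theorem~\ref{PI1}, and then to prove the generating statement by a unitriangularity argument against Elliott's description of $(\tau_\Theta)_*(\K_0(\mathcal{A}_\Theta))$ from~(\ref{range of trace}). The first step is essentially bookkeeping to check that the hypotheses of Theorem~\ref{PI1} are available; the second is a clean piece of linear algebra once injectivity of $(\tau_\Theta)_*$ is invoked.

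First I would verify that a strongly totally irrational $\Theta$ satisfies the hypothesis~(\ref{a21}) of Theorem~\ref{PI1} for every even-cardinality index set $I$ with $2\le|I|\le 2\lfloor n/2\rfloor$. When $|I|=2m\ge4$, the requirement ${\rm pf}(F^j(M_I^\Theta)_{11})\in(0,1)$ for $j=0,\dots,m-2$ is exactly condition~(\ref{a2}) in Definition~\ref{a3}. The remaining requirement ${\rm pf}(F^{m-1}(M_I^\Theta)_{11})\notin\mathbb{Z}$, and the whole condition in the case $|I|=2$ (where it reduces to $\theta_{i_1 i_2}\notin\mathbb{Z}$), is automatic from total irrationality: by Lemma~\ref{h2} and the Remark following Lemma~\ref{h1} one has ${\rm pf}(F^{m-1}(M_I^\Theta)_{11})={\rm pf}(M_I^\Theta)/{\rm pf}(M_{I'}^\Theta)$ with $I'=(i_1,\dots,i_{|I|-2})$, a quotient of two $\mathbb{Q}$-independent reals and hence irrational; moreover total irrationality forces every ${\rm pf}^{M_I^\Theta}_{(1,2,\dots,2s)}$ to be nonzero, so each $F^j$ is well defined by Lemma~\ref{h1}. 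Feeding this into Theorem~\ref{PI1} then yields, for every $I=(i_1,\dots,i_{2m})$ with $1\le i_1<\cdots<i_{2m}\le n$ and $1\le m\le\lfloor n/2\rfloor$, a Rieffel-type projection $P_I$ (living in the subalgebra $\mathcal{A}_{M_I^\Theta}\subseteq\mathcal{A}_\Theta$) whose trace is given by~(\ref{pI1}).

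For the assertion that $[1]$ and the $P_I$ generate $\K_0(\mathcal{A}_\Theta)$, I would argue as follows. Since $\Theta$ is strongly totally irrational it is in particular totally irrational, so $(\tau_\Theta)_*\colon\K_0(\mathcal{A}_\Theta)\to\mathbb{R}$ is injective (see the discussion after Definition~\ref{df totally irrational}), and $\K_0(\mathcal{A}_\Theta)$ is carried isomorphically onto $G:=(\tau_\Theta)_*(\K_0(\mathcal{A}_\Theta))=\mathbb{Z}+\sum_{0<|I|\le n}{\rm pf}(M_I^\Theta)\mathbb{Z}$ by~(\ref{range of trace}). Total irrationality means precisely that $1$ together with the $2^{n-1}-1$ pfaffian minors ${\rm pf}(M_I^\Theta)$, $0<|I|\le n$, are $\mathbb{Q}$-linearly independent; being a finitely generated torsion-free abelian group spanned by these $2^{n-1}$ elements, $G$ is free of rank $2^{n-1}$ with $\mathbb{Z}$-basis $\{1\}\cup\{{\rm pf}(M_I^\Theta):0<|I|\le n\}$, which is consistent with $\K_0(\mathcal{A}_\Theta)\cong\mathbb{Z}^{2^{n-1}}$ from Theorem~\ref{AT and K}. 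Now order the index sets $I$ by increasing cardinality, breaking ties in any fixed way, and consider the $2^{n-1}$ classes $[1]$ and $[P_I]$. By~(\ref{pI1}) the value $\tau_\Theta(P_I)$, expressed in the above basis, has coefficient $1$ on ${\rm pf}(M_I^\Theta)$ and is otherwise a $\mathbb{Z}$-combination of $1$ and the ${\rm pf}(M_J^\Theta)$ with $|J|<|I|$; together with $\tau_\Theta([1])=1$ this makes the transition matrix from $\{[1]\}\cup\{[P_I]\}$ to $\{1\}\cup\{{\rm pf}(M_I^\Theta)\}$ lower triangular with $1$'s on the diagonal, hence invertible over $\mathbb{Z}$. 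Therefore $\{[1]\}\cup\{[P_I]\}$ is a $\mathbb{Z}$-basis of $G$, and by injectivity of $(\tau_\Theta)_*$ it is a $\mathbb{Z}$-basis of $\K_0(\mathcal{A}_\Theta)$.

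The only step requiring a little care is the bookkeeping in the last paragraph: one must use that the correction terms in~(\ref{pI1}) involve strictly smaller index sets only (as~(\ref{pI1}) states, the sum runs over $0<|J|<l$), so that sorting the $I$'s by cardinality alone already produces a triangular, unipotent change-of-basis matrix, with no need to refine the order within a fixed cardinality or to track the precise integers $k_{I_J}$. I do not expect a genuine obstacle at this stage: the substantive content is already in Theorem~\ref{PI1}, which itself rests on the inductive construction of Theorem~\ref{even n projection} together with the pfaffian identities of Lemma~\ref{submatrix,x} and Lemma~\ref{h2}.
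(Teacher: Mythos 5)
Your proof is correct and takes essentially the same route as the paper: apply Theorem~\ref{PI1} (whose hypotheses are supplied by Definition~\ref{a3} together with the ratio formula of Lemma~\ref{h2} and total irrationality), then invoke injectivity of $(\tau_\Theta)_*$ and the unitriangular transition from $\{1\}\cup\{\mathrm{pf}(M_I^\Theta)\}$ to $\{\tau_\Theta(1)\}\cup\{\tau_\Theta(P_I)\}$ to conclude generation via~(\ref{range of trace}). The paper's proof is considerably terser and leaves the triangularity step implicit, but it is the same argument.
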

\begin{proof}By Theorem \ref{PI1} and from the definition of strongly totally irrationality,  we know that those projections $\{P_{I}\}$ exist.
  Note that since $\Theta$ is totally irrational, $(\tau_{\Theta})_*$ is injective. So $[1]$ and $\{P_{I}|I=(i_1,i_2,\dots,i_{l}), 1\leq i_1<i_2<\cdots<i_l\leq n, l=2m, m=1,2,\dots,\lfloor\frac{n}{2}\rfloor\}$ are the generators of $\K_0(\mathcal{A}_{\Theta})$ by (\ref{range of trace}).
\end{proof}

In Appendix \uppercase\expandafter{\romannumeral2}, as an example, we will give explicit expressions of the Rieffel-type projections in $\mathcal{A}_{\Theta}$ when $n=4.$

\section{Stability of some relations in $C^*$-algebras of tracial rank at most one}\label{sec:stability}

From the above discussion, when $\Theta\in\mathcal{T}_n$ is strongly totally irrational, we know that $P_{I}$ is in $\mathcal{A}_{\Theta}$  and
$P_{I}$ can be expressed in terms of $\mathfrak{u}_{i_1},\mathfrak{u}_{i_2},\dots,\mathfrak{u}_{i_l}$ by Theorem \ref{PI}, where $I=(i_1,i_2,\dots,i_{l})$, $1\leq i_1<i_2<\cdots<i_l\leq n,$ $l=2m,$ $m=1,2,\dots,\lfloor\frac{n}{2}\rfloor.$ Hence we can assume that
\begin{eqnarray}\label{coefficient}
P_{I}=\sum\limits_{k_1=-\infty}^{+\infty}\sum\limits_{k_2=-\infty}^{+\infty}\cdots \sum\limits_{k_l=-\infty}^{+\infty}a_{I,k_1,k_2,\dots,k_l}\mathfrak{u}_{i_1}^{k_1}\mathfrak{u}_{i_2}^{k_2}\cdots \mathfrak{u}_{i_l}^{k_l},\quad a_{I,k_1,k_2,\dots,k_l}\in\mathbb{C}
\end{eqnarray}
where $I=(i_1,i_2,\dots,i_{l})$, $1\leq i_1<i_2<\cdots<i_l\leq n,$ $l=2m,$ $m=1,2,\dots,\lfloor\frac{n}{2}\rfloor$.

The following lemma is well known (we refer to the proof of Lemma 3 of \cite{Hua-FANG-XU}).
\begin{lemma}\label{hfx}
Let $f\in C([-\frac{1}{8},\frac{1}{8}]\bigcup [\frac{7}{8},\frac{9}{8}]).$ Then, for any $\epsilon>0,$ there exists $\delta'>0$ such that for any unital $C^*$-algebra $A$ and two self-adjoint elements $a,b\in A$ with ${\rm spec}(a),{\rm spec}(b)\subset [-\frac{1}{8},\frac{1}{8}]\bigcup [\frac{7}{8},\frac{9}{8}],$ if $\|a-b\|<\delta',$ then $\|f(a)-f(b)\|<\epsilon.$
  \end{lemma}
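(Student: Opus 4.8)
The plan is to reduce everything to the case of polynomials, using the basic fact that the continuous functional calculus of a self-adjoint element depends only on the values of the function on its spectrum, together with the Weierstrass approximation theorem.

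First I would fix the compact set $X=[-\frac{1}{8},\frac{1}{8}]\cup[\frac{7}{8},\frac{9}{8}]$ and extend $f$ to a continuous function $\tilde f$ on the compact interval $[-\frac{1}{8},\frac{9}{8}]$, e.g.\ by interpolating linearly across the gap $(\frac{1}{8},\frac{7}{8})$ (or by invoking the Tietze extension theorem). Since $[-\frac{1}{8},\frac{9}{8}]$ is compact, the Weierstrass approximation theorem provides, for the given $\epsilon>0$, a polynomial $p(t)=\sum_{k=0}^{N}c_k t^k$ with $\sup_{t\in[-\frac{1}{8},\frac{9}{8}]}|\tilde f(t)-p(t)|<\epsilon/3$. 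Note that $p$ (hence $N$ and the $c_k$) depends only on $\epsilon$ and $f$, not on any $C^*$-algebra.

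Next, for any unital $C^*$-algebra $A$ and any self-adjoint $a\in A$ with $\mathrm{spec}(a)\subset X$, one has $\|a\|=\max\{|\lambda|:\lambda\in\mathrm{spec}(a)\}\le \frac{9}{8}$, and since the functional calculus only sees the spectrum, $f(a)=\tilde f(a)$ and $\|f(a)-p(a)\|=\|\tilde f(a)-p(a)\|\le\sup_{t\in X}|\tilde f(t)-p(t)|<\epsilon/3$; the same holds for $b$. It then remains to estimate $\|p(a)-p(b)\|$, which I would do with the telescoping identity $a^k-b^k=\sum_{j=0}^{k-1}a^{j}(a-b)b^{k-1-j}$ and the bounds $\|a\|,\|b\|\le\frac{9}{8}$, giving $\|p(a)-p(b)\|\le C_p\|a-b\|$ with $C_p=\sum_{k=1}^{N}|c_k|\,k\,(9/8)^{k-1}$, a constant depending only on $p$. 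Setting $\delta'=\epsilon/(3C_p)$ (any positive $\delta'$ if $C_p=0$) and combining the three estimates by the triangle inequality yields $\|f(a)-f(b)\|<\epsilon$ whenever $\|a-b\|<\delta'$.

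There is no genuine obstacle here; the only point deserving care is the uniformity of $\delta'$ in $A$, $a$ and $b$, which is automatic because $p$ and hence $C_p$ are extracted once and for all from $\epsilon$ and $f$, while the bound $\|a\|,\|b\|\le 9/8$ comes for free from the spectral hypothesis. Equivalently, one may simply cite the standard fact that, for a fixed compact $K\subset\mathbb{R}$ and $g\in C(K)$, the map $a\mapsto g(a)$ is norm-continuous on the set of self-adjoint elements with spectrum in $K$, uniformly across all $C^*$-algebras; the argument above is just the short proof of that fact, as in the proof of Lemma~3 of \cite{Hua-FANG-XU}.
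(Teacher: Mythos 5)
Your proof is correct, and it is the standard polynomial-approximation argument (extend $f$ across the gap, approximate uniformly by a polynomial via Weierstrass, control $\|p(a)-p(b)\|$ by telescoping, and combine by the triangle inequality), which is the same route taken in Lemma 3 of \cite{Hua-FANG-XU} to which the paper delegates this lemma. The one point you rightly flag — that the polynomial and hence the Lipschitz constant are extracted once from $\epsilon$ and $f$, while the norm bound $\|a\|,\|b\|\le 9/8$ is forced by the spectral hypothesis — is exactly what makes $\delta'$ uniform over all unital $C^*$-algebras and all such $a,b$.
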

From the lemma above, let $f=\chi_{(\frac{1}{2},+\infty)}$ be the characteristic function on $(\frac{1}{2},\infty)$, for $\epsilon=\frac{1}{6}$, there exists $\delta'>0$ such that for any unital $C^*$-algebra $A$ and two self-adjoint elements $a,b\in A$ with ${\rm spec}(a),{\rm spec}(b)\subset [-\frac{1}{8},\frac{1}{8}]\bigcup [\frac{7}{8},\frac{9}{8}],$ if $\|a-b\|<\delta',$ then $\|\chi_{(\frac{1}{2},+\infty)}(a)-\chi_{(\frac{1}{2},+\infty)}(b)\|<1.$

Next we can choose a sufficiently large positive integer  $N$ such that
 \begin{eqnarray}
\| P_{I}-\frac{1}{2}(\sum\limits_{k_1=-N}^{N}\sum\limits_{k_2=-N}^{N}\cdots \sum\limits_{k_l=-N}^{N}a_{I,k_1,k_2,\dots,k_l}\mathfrak{u}_{i_1}^{k_1}\mathfrak{u}_{i_2}^{k_2}\cdots \mathfrak{u}_{i_l}^{k_l}\nonumber\\+(\sum\limits_{k_1=-N}^{N}\sum\limits_{k_2=-N}^{N}\cdots \sum\limits_{k_l=-N}^{N}a_{I,k_1,k_2,\dots,k_l}\mathfrak{u}_{i_1}^{k_1}\mathfrak{u}_{i_2}^{k_2}\cdots \mathfrak{u}_{i_l}^{k_l})^*)\|<\delta'<\frac{1}{8},\label{rep}
\end{eqnarray}
for all $P_I$ and $a_{I,k_1,k_2,\dots,k_l}\in \mathbb{C}$ is as in (\ref{coefficient}).
Let
\begin{eqnarray}
  \widetilde{P}_{I}=\frac{1}{2}(\sum\limits_{k_1=-N}^{N}\sum\limits_{k_2=-N}^{N}\cdots \sum\limits_{k_l=-N}^{N}a_{I,k_1,k_2,\dots,k_l}\mathfrak{u}_{i_1}^{k_1}\mathfrak{u}_{i_2}^{k_2}\cdots \mathfrak{u}_{i_l}^{k_l}\nonumber\\+(\sum\limits_{k_1=-N}^{N}\sum\limits_{k_2=-N}^{N}\cdots \sum\limits_{k_l=-N}^{N}a_{I,k_1,k_2,\dots,k_l}\mathfrak{u}_{i_1}^{k_1}\mathfrak{u}_{i_2}^{k_2}\cdots \mathfrak{u}_{i_l}^{k_l})^*).\label{Nk}
\end{eqnarray}
It is obvious that $\widetilde{P}_{I}$ is a self-adjoint element, and  $\mathrm{spec}(\widetilde{P}_{I})\subset (-\frac{1}{8},\frac{1}{8})\bigcup (\frac{7}{8},\frac{9}{8})$.
Thus by Lemma \ref{hfx} we have \begin{eqnarray}
 \|\chi_{(\frac{1}{2},+\infty)}(P_{I})-\chi_{(\frac{1}{2},+\infty)}(\widetilde{P}_{I})\|<1,
\end{eqnarray}
i.e.
\begin{eqnarray}\label{p=p}
 \|P_{I}-\chi_{(\frac{1}{2},+\infty)}(\widetilde{P}_{I})\|<1.
\end{eqnarray}
Thus by Lemma 2.5.1 of \cite{Lin-book},  we know that \begin{eqnarray}[P_{I}]=[\chi_{(\frac{1}{2},+\infty)}(\widetilde{P}_{I})]\label{b1}
\end{eqnarray} in $\K_0(A_{\Theta}).$

\begin{definition}\label{eI}
  Let $\Theta=(\theta_{jk})\in \mathcal{T}_n$ be strongly totally irrational. Let $\mathcal{A}$ be a unital $C^*$-algebra and $u_1,u_2,\dots,u_n$ be an $n$-tuple of unitaries in $\mathcal{A}.$ We define \begin{eqnarray}e_{I}(u_1,u_2,\dots,u_n)&=&\frac{1}{2}(\sum\limits_{k_1=-N}^{N}\sum\limits_{k_2=-N}^{N}\cdots \sum\limits_{k_l=-N}^{N}a_{I,k_1,k_2,\dots,k_l}u_{i_1}^{k_1}u_{i_2}^{k_2}\cdots u_{i_l}^{k_l}\nonumber\\
  &&+(\sum\limits_{k_1=-N}^{N}\sum\limits_{k_2=-N}^{N}\cdots \sum\limits_{k_l=-N}^{N}a_{I,k_1,k_2,\dots,k_l}u_{i_1}^{k_1}u_{i_2}^{k_2}\cdots u_{i_l}^{k_l})^*),\nonumber
  \end{eqnarray}
where $I=(i_1,i_2,\dots,i_{l})$, $1\leq i_1<i_2<\cdots<i_l\leq n,$ $l=2m,$ $m=1,2,\dots,\lfloor\frac{n}{2}\rfloor,$ $a_{I,k_1,k_2,\dots,k_l}$ is as in (\ref{coefficient}), and $N$ is as in (\ref{rep}). In the absence of ambiguity, we often use $e_I$ to express $e_{I}(u_1,u_2,\dots,u_n)$ briefly. It is clear that $e_{I}$ is always self-adjoint.
\end{definition}
\begin{remark}
  In the above definition, we do not directly define $e_{I}$ in the form of $P_I$, but instead use the form of $\widetilde{P}_I$. This is because $e_{I}$ may not converge if we use the form of $P_I$.
\end{remark}

\begin{remark}\label{remark1}
  If $u_1,u_2,\dots,u_n$ in a unital $C^*$-algebra $\mathcal{A}$ satisfy $u_k u_j=e^{2\pi i\theta_{jk}}u_j u_k$ for $j,k=1,2,\dots,n$, then $\chi_{(\frac{1}{2},+\infty)}(e_{I})$ is a projection in $\mathcal{A}$ for every $I.$ This is because $\mathcal{A}_{\Theta}$ is the universal $C^*$-algebra generated by $\mathfrak{u}_1,\mathfrak{u}_2,\dots,\mathfrak{u}_n$ subject to the relations $\mathfrak{u}_k \mathfrak{u}_j=e^{2\pi i\theta_{jk}}\mathfrak{u}_j \mathfrak{u}_k$, $j,k=1,2,\dots,n$, there exists a homomorphism $\varphi:\mathcal{A}_{\Theta}\rightarrow \mathcal{A}$ such that $\varphi(\mathfrak{u}_j)=u_j,\, j=1,2,\dots,n.$ Thus
  $\varphi(\widetilde{P}_{I})=e_{I}$ for any possible $I$ defined as in Definition \ref{eI}, we have $\varphi(\chi_{(\frac{1}{2},+\infty)}(\widetilde{P}_{I}))=\chi_{(\frac{1}{2},+\infty)}(e_{I}).$ It follows that $\chi_{(\frac{1}{2},+\infty)}(e_{I})$ is a projection in $\mathcal{A}.$ In particular, by (\ref{b1}) we have \begin{eqnarray}[\chi_{(\frac{1}{2},+\infty)}(e_{I}(\mathfrak{u}_1,\dots,\mathfrak{u}_n))]=[\chi_{(\frac{1}{2},+\infty)}(\widetilde{P}_{I})]=[P_{I}]
  \end{eqnarray}
  in K$_0(\mathcal{A}_{\Theta})$ for any possible $I$ defined in Definition \ref{eI}.
\end{remark}

\begin{proposition}\label{gap}
 Let $\Theta=(\theta_{jk})\in \mathcal{T}_n.$ There exists a $\delta>0$ such that: For any unital $C^*$-algebra $\mathcal{A},$ any $n$-tuple of unitaries $u_1,u_2,\dots,u_n$ in $\mathcal{A}$  satisfying $\|u_ku_j-e^{2\pi i\theta_{jk}}u_ju_k\|<\delta$ for $j,k=1,2,\dots,n,$ we have
 $\|(e_I(u_1,u_2,\dots,u_n))^2-e_I(u_1,u_2,\dots,u_n)\|<\frac{1}{4}$ for any possible $I$ defined in Definition \ref{eI}. In particular, $\mathrm{spec}(e_I)$ has a gap at $\frac{1}{2}$.
\end{proposition}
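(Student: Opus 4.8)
The plan is to prove this by a compactness (approximate-representation) argument, importing the spectral estimate already available for the universal model $\widetilde P_I$. First note that the ``in particular'' clause is automatic: if $\|e_I^2-e_I\|<\tfrac14$, then since $e_I$ is self-adjoint the spectral mapping theorem gives $|t^2-t|<\tfrac14$ for every $t\in\mathrm{spec}(e_I)$, so $\tfrac12\notin\mathrm{spec}(e_I)$, and as $\mathrm{spec}(e_I)$ is compact this is a genuine gap at $\tfrac12$. Hence it suffices to produce $\delta>0$ so that $\|e_I(u_1,\dots,u_n)^2-e_I(u_1,\dots,u_n)\|<\tfrac14$ whenever $\|u_ku_j-e^{2\pi i\theta_{jk}}u_ju_k\|<\delta$ for all $j,k$. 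The key input I would use is the fact recorded after (\ref{Nk}): the element $\widetilde P_I=e_I(\mathfrak u_1,\dots,\mathfrak u_n)\in\mathcal A_\Theta$ is self-adjoint with $\mathrm{spec}(\widetilde P_I)\subset(-\tfrac18,\tfrac18)\cup(\tfrac78,\tfrac98)$, so that
\[
\|\widetilde P_I^{\,2}-\widetilde P_I\|=\sup_{t\in\mathrm{spec}(\widetilde P_I)}|t^2-t|\le\tfrac{9}{64}<\tfrac14,
\]
comfortably below the required threshold.

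I would then argue by contradiction. Fix one admissible index set $I$ and suppose no $\delta$ works. Then for every $m\ge1$ there are a unital $C^*$-algebra $\mathcal A_m$ and unitaries $u^{(m)}_1,\dots,u^{(m)}_n\in\mathcal A_m$ with $\|u^{(m)}_ku^{(m)}_j-e^{2\pi i\theta_{jk}}u^{(m)}_ju^{(m)}_k\|<\tfrac1m$ for all $j,k$, yet $\|e_I(u^{(m)}_1,\dots,u^{(m)}_n)^2-e_I(u^{(m)}_1,\dots,u^{(m)}_n)\|\ge\tfrac14$. Pass to the asymptotic sequence algebra $\mathcal C=\bigl(\prod_m\mathcal A_m\bigr)/\bigl(\bigoplus_m\mathcal A_m\bigr)$, the quotient by the sequences vanishing in norm, with quotient $*$-homomorphism $q$, recalling that $\|q((a_m)_m)\|=\limsup_m\|a_m\|$. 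The elements $U_j=q((u^{(m)}_j)_m)$ are unitaries in $\mathcal C$ with $\|U_kU_j-e^{2\pi i\theta_{jk}}U_jU_k\|\le\limsup_m\tfrac1m=0$, so they satisfy the rotation relations \emph{exactly}; hence by the universal property of $\mathcal A_\Theta$ there is a unital $*$-homomorphism $\varphi\colon\mathcal A_\Theta\to\mathcal C$ with $\varphi(\mathfrak u_j)=U_j$, and exactly as in Remark~\ref{remark1} one obtains $e_I(U_1,\dots,U_n)=\varphi(\widetilde P_I)$. Therefore $\|e_I(U)^2-e_I(U)\|=\|\varphi(\widetilde P_I^{\,2}-\widetilde P_I)\|\le\tfrac{9}{64}<\tfrac14$. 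On the other hand $e_I$ is a fixed noncommutative $*$-polynomial in its arguments, so its evaluation commutes with the $*$-homomorphism $q$; thus $e_I(U_1,\dots,U_n)=q\bigl((e_I(u^{(m)}_1,\dots,u^{(m)}_n))_m\bigr)$ (the sequence being bounded since $e_I$ is a fixed polynomial in unitaries), whence $\|e_I(U)^2-e_I(U)\|=\limsup_m\|e_I(u^{(m)})^2-e_I(u^{(m)})\|\ge\tfrac14$, a contradiction. Since for a fixed $n$ there are only finitely many admissible $I$, taking the least of the resulting $\delta$'s gives a single $\delta$ that works for all $I$.

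The main obstacle is precisely the step where the exact-relation bound $\le\tfrac{9}{64}$ is transferred to the approximate setting. A naive term-by-term estimate — expanding $e_I^2-e_I$ and commuting monomials into a normal ordering at cost $O(\delta)$ per transposition — does not by itself close, because the normal-ordered Laurent polynomial produced has, when evaluated at arbitrary unitaries, norm controlled only by the (much larger) $\ell^1$-norm of its coefficients; the bound $\tfrac{9}{64}$ genuinely uses that the generators $\mathfrak u_j$ satisfy the relations on the nose. The passage to $\mathcal C$ is exactly the device that converts ``almost'' into ``exactly'' while preserving the quantity $\|e_I^2-e_I\|$ as a $\limsup$, and this is what makes the argument short; the rest is routine bookkeeping with the universal property and the continuous functional calculus.
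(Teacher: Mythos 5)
Your proof is correct and is essentially the same argument as the paper's: assume no $\delta$ works, collect witnesses into the asymptotic sequence algebra $\prod_m\mathcal{A}_m/\bigoplus_m\mathcal{A}_m$, use the universal property of $\mathcal{A}_\Theta$ to get a $*$-homomorphism carrying $\widetilde P_I$ to $e_I(U_1,\dots,U_n)$, and contradict $\limsup_m\|e_I(u^{(m)})^2-e_I(u^{(m)})\|\geq\tfrac14$ with the spectral bound $\mathrm{spec}(\widetilde P_I)\subset(-\tfrac18,\tfrac18)\cup(\tfrac78,\tfrac98)$. You spell out a couple of points the paper leaves implicit (that $e_I$ is a fixed $*$-polynomial so commutes with the quotient map, that the quotient norm is the $\limsup$, and that one takes a minimum over the finitely many admissible $I$), but the structure and the key idea are identical.
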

\begin{proof}
  Suppose the statement is false. Let $(\delta_m)_{m=1}^{\infty}$ be a sequence of positive numbers decreasing to $0.$ Then for any positive integer $m,$ there exists a unital $C^*$-algebra $\mathcal{A}_m$, a $n$-tuple of unitaries $\tilde{u}_{1m},\tilde{u}_{2m},\dots,\tilde{u}_{nm}$ in $\mathcal{A}_m$
  such that  $\|u_{km}u_{jm}-e^{2\pi i\theta_{jk}}u_{jm}u_{km}\|<\delta_m,$ but
  $$\|(e_I(\tilde{u}_{1m},\tilde{u}_{2m},\dots,\tilde{u}_{nm}))^2-e_I(\tilde{u}_{1m},\tilde{u}_{2m},\dots,\tilde{u}_{nm})\|\geq \frac{1}{4}.$$
  Let $\mathcal{B}=\prod_{m=1}^{\infty}\mathcal{A}_m\slash \oplus_{m=1}^{\infty}\mathcal{A}_m.$ Let $\pi:\prod_{m=1}^{\infty}\mathcal{A}_m\rightarrow \mathcal{B}$ be the canonical quotient map.
  Let $\tilde{u}_j=(\tilde{u}_{j1},\tilde{u}_{j2},\dots,\tilde{u}_{jm},\dots)$ for $j=1,2,\dots,n.$ Then $\pi(\tilde{u}_j)$ satisfies
  $\pi(\tilde{u}_k)\pi(\tilde{u}_j)=e^{2\pi i\theta_{jk}}\pi(\tilde{u}_j)\pi(\tilde{u}_k),$
  hence there is a homomorphism
  $$\phi:\mathcal{A}_{\Theta}\rightarrow \mathcal{B},\quad \phi(\mathfrak{u}_j)=\pi(\tilde{u}_j)\quad {\rm for}\, j=1,2,\dots,n,$$
  where $\mathfrak{u}_j,j=1,2,\dots,n$ are the canonical generators of $\mathcal{A}_{\Theta}.$ In particular, the element
  $$\pi(e_{I}(\tilde{u}_{11},\dots,\tilde{u}_{n1}),e_{I}(\tilde{u}_{12},
  \dots,\tilde{u}_{n2}),\dots)=\pi(e_I(\tilde{u}_1,\dots,\tilde{u}_n))=\phi(e_I(\mathfrak{u}_1,\dots,\mathfrak{u}_n))$$
and ${\rm spec}(\phi(e_I(\mathfrak{u}_1,\dots,\mathfrak{u}_n)))={\rm spec}(\phi(\widetilde{P}_I)\subset (-\frac{1}{8},\frac{1}{8})\bigcup (-\frac{7}{8},\frac{9}{8})$. But this implies that
$$\lim\limits_{m\rightarrow\infty}\|(e_I(\tilde{u}_{1m},\dots,\tilde{u}_{1m}))^2-e_I(\tilde{u}_{1m},\dots,\tilde{u}_{nm})\|<\frac{9}{64}<\frac{1}{4},$$
which is a contradiction.
\end{proof}

\begin{definition}\label{RI}
  Let $\Theta=(\theta_{jk})\in \mathcal{T}_n$ be strongly totally irrational. Let $\delta>0$ be chosen as in Proposition \ref{gap}. Let $\mathcal{A}$ be a unital $C^*$-algebra and let $u_1,\dots,u_n$ be an $n$-tuple of unitaries in $\mathcal{A}$ with $\|u_ku_j-e^{2\pi i\theta_{jk}}u_ju_k\|<\delta$ for $j,k=1,2,\dots,n.$ Let $\chi_{(\frac{1}{2},\infty)}$ be the characteristic function on $(\frac{1}{2},\infty).$ We define $R_{I}(u_1,\dots,u_n)=\chi_{(\frac{1}{2},\infty)}(e_{I}(u_1,u_2,\dots,u_n)).$ In particular, when $n=2,$ we have $R_{\theta_{12}}(u_1,u_2)=[R_{(1,2)}(u_1,u_2)],$ where $R_{\theta_{12}}(u_1,u_2)$ is defined as in Definition \ref{D_RieffelE}.
\end{definition}

Let $\mathcal{C} = PM_n(C(X))P$ be a homogenous algebra, where $X$ is a compact metric space
and $P$ is a projection in $M_n(C(X))$. Then for any tracial state $\tau$ on $\mathcal{C}$,
there is a probability measure $\mu$ on $X$ such that
\[
\tau(f) = \int_{x \in X} \mathrm{tr}_x(f(x))\,d\mu,
\]
where $\mathrm{tr}_x$ is the normalized trace on the fiber corresponding to $x$ (which is isomorphic to a matrix algebra).
We shall use $\mu_\tau$ to denote the measure associated to $\tau$.

Let $\mathcal{A}$ be a unital $C^*$-algebra with $T(\mathcal{A}) \neq \emptyset$.
By Theorem 3.2 of \cite{Thomsen-1995}, the de la Harpe-Scandalis determinant provides a continuous homomorphsim
\begin{equation}\label{E_HSdet}
\bar{\Delta} \colon U_0(M_{\infty}(\mathcal{A}))/CU(M_{\infty}(\mathcal{A}))
\rightarrow \mathrm{Aff}(T(\mathcal{A}))/\overline{\rho_A(\K_0(\mathcal{A}))}.
\end{equation}
By Corollary 3.3 of \cite{Thomsen-1995}, there is an induced split exact sequence

\begin{eqnarray}\label{E_Thomsen_es}
0 \rightarrow \mathrm{Aff}(T(\mathcal{A})) / \overline{\rho_A(\K_0(\mathcal{A}))}
\rightarrow U(M_{\infty}(\mathcal{A}))/CU(M_\infty(\mathcal{A}))
\xrightarrow[]{\pi_{\mathcal{A}}} \K_1(\mathcal{A}) \rightarrow 0.
\end{eqnarray}
The reader is referred to \cite{Thomsen-1995} for more details
of the homomorphism (\ref{E_HSdet}) and the exact sequence
(\ref{E_Thomsen_es}).

Since the exact sequence (\ref{E_Thomsen_es}) is split,
there is a homomorphism
\[
J_{\mathcal{A}} \colon \K_1(\mathcal{A}) \rightarrow U(M_{\infty}(\mathcal{A}))/CU(M_\infty(\mathcal{A}))
\]
such that $\pi_{\mathcal{A}} \circ J_{\mathcal{A}} = \mathrm{id}_{\K_1(\mathcal{A})}$.
We shall use $U_c(\K_1(\mathcal{A}))$ to denote a set of representatives
of $J_{\mathcal{A}}(\K_1(\mathcal{A}))$ in $U(M_{\infty}(\mathcal{A}))$.
For each $\mathcal{A}$, we shall fix a splitting map $J_{\mathcal{A}}$ and a set of representatives $U_c(\K_1(\mathcal{A}))$
if not mentioned explicitly.

For any two unitaries $u, v \in U(M_{\infty}(\mathcal{A}))$ such that
$uv^* \in U_0(M_{\infty}(\mathcal{A}))$, define
\[
\mathrm{dist}(\bar{u}, \bar{v}) = \| \bar{\Delta}(uv^*)\|,
\]
where the norm is the quotient norm on
$\mathrm{Aff}(T(\mathcal{A}))/\rho_{\mathcal{A}}(\K_0(\mathcal{A}))$.

The following lemma allows us to estimate the norm we just defined:
\begin{lemma}[Lemma 5.2 of \cite{Hua-Wang}] \label{L_Dist}
For any $\ep > 0$, there exists $\delta > 0$ such that, whenever $\mathcal{A}$ is a unital
$C^*$-algebra, if $u, v$ are unitaries in $U_{\infty}(\mathcal{A})$ such that $\|wuv^* - 1\| < \delta$
for some $w \in CU_{\infty}(\mathcal{A})$, then
\[
\mathrm{dist} (\bar{u}, \bar{v}) < \ep.
\]
\end{lemma}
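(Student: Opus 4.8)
The plan is to push the whole estimate onto the de la Harpe--Scandalis determinant of the single element $wuv^*$, which by hypothesis lies within $\delta$ of $1$, and then to bound $\bar\Delta$ of that element by a holomorphic functional calculus estimate that is uniform over all unital $C^*$-algebras. So, fix $\ep>0$, let $\delta\in(0,2)$ be a constant to be specified at the end, and suppose $u,v\in U_\infty(\mathcal{A})$ and $w\in CU_\infty(\mathcal{A})$ satisfy $\|wuv^*-1\|<\delta$; put $x=wuv^*$. (In the applications $\mathcal{A}$ has stable rank one, so by the isomorphism $U(\mathcal{A})/CU(\mathcal{A})\cong U_\infty(\mathcal{A})/CU_\infty(\mathcal{A})$ one may even take $u,v\in U(\mathcal{A})$, which is the situation I would really work in.)

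First I would record the spectral picture of $x$: since $x$ is unitary with $\|x-1\|<\delta<2$, its spectrum lies in an arc $\{e^{2\pi i t}:|t|\le t_0(\delta)\}$ with $t_0(\delta)=\tfrac1\pi\arcsin(\delta/2)\to 0$ as $\delta\to 0$; in particular $\mathrm{spec}(x)$ has a gap at $-1$, so $x\in U_0(M_\infty(\mathcal{A}))$, and since $w\in CU_\infty(\mathcal{A})\subseteq U_0(M_\infty(\mathcal{A}))$ also $uv^*=w^{-1}x\in U_0(M_\infty(\mathcal{A}))$, so that $\mathrm{dist}(\bar u,\bar v)=\|\bar\Delta(uv^*)\|$ is defined. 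Next, because $w\in CU_\infty(\mathcal{A})$ we have $\overline{uv^*}=\overline{w^{-1}}\,\bar x=\bar x$ in $U_\infty(\mathcal{A})/CU_\infty(\mathcal{A})$, and since $\bar\Delta$ is defined on $U_0(M_\infty(\mathcal{A}))/CU(M_\infty(\mathcal{A}))$ this gives $\bar\Delta(uv^*)=\bar\Delta(x)$, hence $\mathrm{dist}(\bar u,\bar v)=\|\bar\Delta(x)\|$. Finally, using the gap at $-1$, write $x=\exp(2\pi i h)$ with $h=\tfrac1{2\pi i}\log_0(x)$ self-adjoint and $\|h\|\le t_0(\delta)$; by the construction of the de la Harpe--Scandalis determinant of an exponential, $\bar\Delta(x)$ is represented in $\mathrm{Aff}(T(\mathcal{A}))$ by $\hat h$, so $\mathrm{dist}(\bar u,\bar v)=\|\bar\Delta(x)\|\le\|\hat h\|\le\|h\|\le t_0(\delta)$. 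Choosing $\delta$ (depending only on $\ep$) so small that $t_0(\delta)<\ep$ — e.g.\ $\delta<2\sin(\pi\ep)$ for $\ep<\tfrac12$ — completes the argument.

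The step I expect to require genuine care, and which is really the only obstacle, is the uniformity of $\delta$ in the ambient algebra: one must verify that the representative $\hat h$ of $\bar\Delta(x)$ can be taken with $\|\hat h\|$ controlled by $\|h\|$ with an absolute constant, so that the normalisation of traces on matrix amplifications does not slip a dependence on the matrix size of $u,v,w$ back into the bound (this is automatic once one reduces to $u,v\in U(\mathcal{A})$ as above). Granting that, every inequality used — the functional calculus bound $\|h\|\le\tfrac1\pi\arcsin(\delta/2)$ on the disc $\{|z-1|<\delta\}$, and the fact that $a\mapsto\hat a$ and the quotient map onto $\mathrm{Aff}(T(\mathcal{A}))/\rho_{\mathcal{A}}(\K_0(\mathcal{A}))$ are norm non-increasing — holds with constants independent of $\mathcal{A}$, so $\delta$ is genuinely universal. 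Everything else is just unwinding the definitions of $\mathrm{dist}$, of $\bar\Delta$, and of $CU_\infty(\mathcal{A})$.
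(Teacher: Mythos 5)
Your overall route — push everything onto $\bar\Delta(wuv^*)$, observe that $\bar\Delta$ kills $CU_\infty$, and then bound $\bar\Delta$ of a unitary near $1$ via functional calculus — is the natural one, and you correctly identify the matrix-size uniformity as the crux. But the way you dispose of that concern does not work, and this is a genuine gap, not a formality. The key estimate you use is $\|\hat h\|\le\|h\|$ for $h\in M_n(\mathcal{A})_{s.a.}$. With the convention actually in force in this paper (and in Thomsen's picture as quoted here), $\hat h(\tau)=\tau^{\oplus n}(h)$ with $\tau^{\oplus n}=\tau\otimes\mathrm{tr}$ the \emph{unnormalised} matrix trace, and $\rho_{\mathcal{A}}([p])(\tau)=\tau^{\oplus n}(p)$. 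Under this convention $\|\hat h\|\le n\|h\|$, not $\le\|h\|$, so your $t_0(\delta)$ bound picks up a factor of $n$ and $\delta$ ceases to be universal. Replacing $\tau^{\oplus n}$ by the normalised trace to save the inequality is not an option either, since that normalisation is incompatible with the definition of $\rho_{\mathcal{A}}$ on $\K_0$.

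Your proposed fix — reduce to $u,v\in U(\mathcal{A})$ via $U(\mathcal{A})/CU(\mathcal{A})\cong U_\infty(\mathcal{A})/CU_\infty(\mathcal{A})$ — requires stable rank one, which is not among the hypotheses (``whenever $\mathcal{A}$ is a unital $C^*$-algebra''), and in any case it does not control the size of $w\in CU_\infty(\mathcal{A})$. To see that the obstruction is real under the stated conventions, take $\mathcal{A}=\C$, $u=e^{\pi i/n}1_n$, $v=w=1_n$: then $\|wuv^*-1_n\|=|e^{\pi i/n}-1|\to 0$, but $\bar\Delta(uv^*)$ is represented by the constant $\tau^{\oplus n}\bigl(\tfrac1{2n}1_n\bigr)=\tfrac12$, and since $\rho_{\C}(\K_0(\C))=\Z$ the quotient norm is $\tfrac12$ for every $n$. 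So the statement as reproduced here must be read with an implicit restriction (all of $u,v,w$ in a fixed $U_k(\mathcal{A})$ with $\delta$ allowed to depend on $k$, or an ambient-algebra hypothesis such as stable rank one, as is in fact the case in every application in this paper). Your proof needs to make such a restriction explicit and carry it through, rather than asserting that $\|\hat h\|\le\|h\|$ holds with an absolute constant.
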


\begin{definition} Let $X$ be a compact metric space, let $x\in X$ and let $r > 0$. Denote by $O_r(x)$
the open ball with center at $x$ and radius $r$. If $x$ is not specified, $O_r$ is an open ball of radius $r$.
\end{definition}

Let us begin with a brief outline of our strategy of proving stability of rotation relations of $n$ unitaries for $n\geq 2$. Let $\Theta\in \mathcal{T}_n.$ Suppose $u_1,u_2,\dots,u_n$ are $n$ unitaries in a unital $C^*$-algebra $\mathcal{A}$, and $u_1,u_2,\dots,u_n$ satisfy (\ref{app111}). Then there is an almost homomorphism from $\mathcal{A}_{\Theta}$ to $\mathcal{A}$. Now stability of relations (\ref{=111}) is equivalent to that this almost homomorphism is close to an actual homomorphism.

The latter problem is usually divided into two parts: the existence part and the uniqueness part. An almost homomorphism will induce an `almost' homomorphism between the invariants of the two $C^*$-algebras, where the invariant consists of the $\K$-theories. It is usually easier to show that an `almost' homomorphism of the invariants is close to an actual homomorphism. The existence part says that a homomorphism at the invariant level lifts to a homomorphism at
the $C^*$-algebra level. The uniqueness part says that, two almost homomorphisms which induces
`almost' the same maps on the invariants are almost unitarily equivalent. Therefore, conjugating
suitable unitaries, one shows that an almost homomorphism is close to an actual homomorphism.

Now we are ready to  state the uniqueness theorem and the existence theorem.
\begin{theorem}[Theorem 5.3 and Corollary 5.5 of \cite{Lin-JTA-2017}]\label{T_unique}
Let $\mathcal{C} = PM_n(C(X))P$, where $X$ is a compact metric space and $P$ is a projection in $M_n(C(X))$.
Let $\Delta \colon (0, 1) \rightarrow (0, 1)$
be a non-decreasing function such that $\lim_{r \rightarrow 0} \Delta(r) = 0$.
Let $\ep > 0$ and
let $\mathcal{F} \subset \mathcal{C}$ be a finite subset.
Then there exists $\eta> 0$, $\delta > 0$,
a finite subset $\mathcal{G} \subset \mathcal{C}$,
a finite subset $\mathcal{P} \subset \underline{\K}(\mathcal{C})$,
a finite subset $\mathcal{H} \subset \mathcal{C}_{s.a.}$ and
a finite subset $\mathcal{U} \subset U_c(\K_1(\mathcal{C}))$ satisfying the following:

Suppose that $\mathcal{A}$ is a unital separable simple $C^*$-algebra with $TR(\mathcal{A}) \leq 1$.
Suppose $L_1, L_2 \colon \mathcal{C} \rightarrow \mathcal{A}$ are two unital $\mathcal{G}$-$\delta$-multiplicative completely positive contractive
maps such that
\begin{align}
(L_1)_*\vert_\mathcal{P} & = (L_2)_*\vert_{\mathcal{P}},\phantom{aaaaaaaaaaaaaaaaaaaaa}\label{unique1}\nonumber\\
|\tau \circ L_1(g) - \tau \circ L_2(g)| & < \delta,
\quad \text{for all\,\,} g \in \mathcal{H} \nonumber\\
\mathrm{dist}(\overline{ \langle L_1(u) \rangle }, \overline{\langle L_2(u) \rangle}) & < \delta,\quad \text{for all\,\,} u \in \mathcal{U}\quad \text{and}\nonumber\\
\mu_{\tau \circ L_i}(O_r) & > \Delta(r), \quad i = 1, 2,\nonumber
\end{align}
for all $\tau \in T(\mathcal{A})$ and  all open balls $O_r$ in $X$ with radius $r\geq  \eta$. Then there exists a unitary $w \in \mathcal{A}$ such that
\[
\| \mathrm{Ad} w \circ L_1 (f) - L_2(f)\| < \ep
\text{\,\,for all\,\,} f \in \mathcal{F}.
\]
\end{theorem}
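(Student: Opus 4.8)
The plan is to run the standard ``uniqueness theorem'' machinery for maps out of a homogeneous algebra into a simple $C^*$-algebra of low tracial rank, ultimately reducing everything to a spectral matching problem over a one-dimensional complex. After an initial approximation we may assume $X$ is a finite CW complex (so that a finite $\mathcal P\subset\underline{\K}(\mathcal C)$ pins down the $\KL$-class via the UCT, and finitely many elements of $\mathcal H$ and $\mathcal U$ control traces and the $U/CU$-invariant). The first real step is to use $TR(\mathcal A)\le 1$ to replace the target by (a corner of) a homogeneous algebra: given the finite data below and a small tolerance, choose a projection $p\in\mathcal A$ and a $C^*$-subalgebra $\mathcal B=QM_s(C(Y))Q\in I_{(1)}$ with $1_{\mathcal B}=p$, $\dim Y\le 1$, such that $p$ almost commutes with $L_1(\mathcal F)\cup L_2(\mathcal F)$, the compressions $pL_i(\cdot)p$ are nearly in $\mathcal B$, and $\tau(1-p)$ is uniformly small on $T(\mathcal A)$. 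After a small correction this produces two $\mathcal G'$-$\delta'$-multiplicative maps $\Phi_i\colon\mathcal C\to\mathcal B$ and two ``small'' maps $\Psi_i\colon\mathcal C\to (1-p)\mathcal A(1-p)$. Since $\mathcal A$ is simple with $TR(\mathcal A)\le 1$ it has strict comparison, so the complementary corner being small in trace lets us absorb/rotate $\Psi_1$ onto $\Psi_2$ by a unitary in $(1-p)\mathcal A(1-p)$ at a cost controlled by $\varepsilon$; the problem thus reduces to approximate unitary equivalence of $\Phi_1$ and $\Phi_2$ inside $\mathcal B$, and the needed finite sets $\mathcal P,\mathcal H,\mathcal U$ on $\mathcal C$ are exactly those demanded by the uniqueness theorem for $\mathcal B$.

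The heart of the argument is then a uniqueness statement for $\mathcal G$-$\delta$-multiplicative unital maps $\Phi_1,\Phi_2\colon\mathcal C=PM_n(C(X))P\to QM_s(C(Y))Q$ with $\dim Y\le 1$. Up to a small perturbation and unitary conjugation, such a map is described by a ``spectrum'': a continuous assignment $y\mapsto\{x_1(y),\dots,x_k(y)\}\subset X$ (a finite multiset, the eigenvalue list of the fibre representation, compatible with $P$), together with the $\K_1$-data carried by the loops in the one-skeleton of $Y$. The finite trace condition $|\tau\circ\Phi_1(g)-\tau\circ\Phi_2(g)|<\delta$ for $g\in\mathcal H$ forces the pushforward measures $\mu_{\tau\circ\Phi_1}$ and $\mu_{\tau\circ\Phi_2}$ on $X$ to agree on a prescribed finite family of functions, while the lower bound $\mu_{\tau\circ\Phi_i}(O_r)>\Delta(r)$ for $r\ge\eta$ guarantees that neither spectrum bunches up, so that over small balls the two multiset-valued maps have comparable counting functions and can be paired off point-by-point. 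Simultaneously, $(L_1)_*|_{\mathcal P}=(L_2)_*|_{\mathcal P}$ and $\mathrm{dist}(\overline{\langle\Phi_1(u)\rangle},\overline{\langle\Phi_2(u)\rangle})<\delta$ on $\mathcal U\subset U_c(\K_1(\mathcal C))$ kill the $\K_1$ and $U/CU$ obstructions attached to the loops of $Y$; Lemma~\ref{L_Dist} is used to convert the ``$\mathrm{dist}$ small'' data into a genuine norm estimate after multiplying by a commutator. Combining the local pairing with the vanishing of these obstructions, one builds over each cell of $Y$, and then globally (here $\dim Y\le 1$ is essential, since patching need only be carried out over intervals and circles), a norm-continuous family of unitaries conjugating $\Phi_1$ to within $\varepsilon$ of $\Phi_2$ on $\mathcal F$.

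Finally I would reassemble: patch the unitary obtained inside $\mathcal B$ with the one handling the small corner into a single unitary $w\in\mathcal A$, and propagate all tolerances backwards to fix $\eta,\delta$ and the finite sets $\mathcal G,\mathcal P,\mathcal H,\mathcal U$ at the outset. The main obstacle is the spectral matching step over the one-dimensional complex $Y$: one must connect the two eigenvalue-lists $y\mapsto\{x_j(y)\}$ by a norm-continuous family of unitaries that at once respects the local closeness supplied by the measure lower bound $\Delta$ and the trace data, and the global $\K$-theoretic and determinant constraints. It is precisely the tension between this local rigidity and the global $\K_1/U/CU$ bookkeeping on the $1$-skeleton that is delicate and that dictates the exact shape of the hypotheses.
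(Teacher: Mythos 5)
This theorem is not proved in the paper at all: it is quoted directly from Lin (Theorem 5.3 and Corollary 5.5 of \cite{Lin-JTA-2017}) and used as a black box, so there is no proof in the present paper against which to compare your attempt. Your sketch does capture the broad strategy of Lin's argument --- reduction to finite CW complexes, the $TR(\mathcal A)\le 1$ decomposition into a subalgebra in $I_{(1)}$ plus a small corner, spectral matching of eigenvalue lists controlled by the measure bound $\Delta$, and the $\K$-theory and $U/CU$ bookkeeping --- and you correctly single out the local-to-global spectral pairing over a one-skeleton as the crux.

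What you have written, however, is an itinerary rather than a proof. The genuinely hard steps are all deferred: extracting eigenvalue functions from an almost-multiplicative (not actual) map, pairing them so that the conjugating unitaries patch continuously across the cells of $Y$ while respecting the $\K_1$ and determinant data, the quantitative role of $\Delta$ in ruling out spectral collision, and the use of strict comparison to absorb the small corner. Those deferred steps are precisely what occupy Lin's argument, and the hypotheses of the theorem (the exact finite sets $\mathcal G,\mathcal P,\mathcal H,\mathcal U$ and tolerances $\eta,\delta$) are tuned to make them work. A blind reconstruction that stops at the level of your sketch cannot be certified correct as a proof, though it is a fair summary of the mechanism and of why the stated hypotheses are the ones that appear. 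For the purposes of reading this paper, the right takeaway is simply that Theorem~\ref{T_unique} is an external input.
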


\begin{definition}Let $\mathcal{A}$ and $\mathcal{B}$ be two unital $C^*$-algebras.
Let $\kappa \in \KL_e(\mathcal{B}, \mathcal{A})^{++}$ and let $\lambda : T(\mathcal{A}) \rightarrow  T(\mathcal{B})$ be a continuous affine map.
We say that $\lambda$ is compatible with $\kappa$ if $\lambda$ is compatible with $\kappa|_{\K_0(\mathcal{B})}.$
(i.e. $\rho_{\mathcal{B}}([p])(\lambda(\tau)) = \rho_{\mathcal{A}}(\kappa([p]))(\tau)$, for any projection $p \in M_{\infty}(\mathcal{B})$ and any tracial state $\tau\in T(\mathcal{A})$).
Let
\[
\alpha \colon U(M_{\infty}(\mathcal{B}))/CU(M_{\infty}(\mathcal{B})) \rightarrow U(\mathcal{A})/CU(\mathcal{A})
\]
be a continuous homomorphism. Then by the exact sequence (\ref{E_Thomsen_es}),
there is an induced map $\af_1 \colon \K_1(\mathcal{B}) \rightarrow \K_1(\mathcal{A})$.
We say $\alpha$ and $\kappa$ are compatible, if $\alpha_1 = \kappa|_{\K_1(\mathcal{B})}$.
We say $\alpha, \lambda$ and $\kappa$ are compatible if $\lambda, \kappa$ are compatible, $\alpha, \lambda$ are compatible
and $\alpha, \kappa$ are compatible.
\end{definition}

\begin{theorem}[Theorem 6.11 of \cite{Lin-JTA-2017}]\label{T_exist}
Let $\mathcal{B}$ be a unital AH-algebra and let $\mathcal{A}$ be a unital infinite
dimensional separable simple $C^*$-algebra with $TR(\mathcal{A}) \leq 1$.
Then for any $\kappa \in \KL_e(\mathcal{B}, \mathcal{A})^{++}$,
any affine continuous map $\gamma \colon T(\mathcal{A}) \rightarrow T_f(\mathcal{B})$
and any continuous homomorphism
\[
\alpha \colon U(M_{\infty}(\mathcal{B}))/CU(M_{\infty}(\mathcal{B})) \rightarrow U(\mathcal{A})/CU(\mathcal{A})
\]
such that $\kappa, \gamma$ and $\alpha$ are compatible, there is a unital monomorphism
$\varphi \colon \mathcal{B} \rightarrow \mathcal{A}$ such that
\[
\varphi_* = \kappa, \quad \varphi_\sharp = \gamma \quad \text{and\quad}
\varphi^{\ddagger} = \alpha.
\]
\end{theorem}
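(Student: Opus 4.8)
The plan is to deduce this existence theorem from the uniqueness theorem (Theorem~\ref{T_unique}) by an Elliott-type approximate intertwining, the non-formal content being a quantitatively controlled realization of the triple $(\kappa,\gamma,\alpha)$ by approximately multiplicative maps. Write $\mathcal{B}=\varinjlim(\mathcal{B}_k,\psi_k)$ with each $\mathcal{B}_k$ a finite direct sum of building blocks $PM_r(C(X))P$, and fix an increasing chain of finite subsets $\mathcal{F}_k\subset\mathcal{B}$ with dense union, together with $\ep_k\downarrow 0$ and finite subsets $\mathcal{P}_k\subset\underline{\K}(\mathcal{B})$, $\mathcal{H}_k\subset\mathcal{B}_{s.a.}$, $\mathcal{U}_k\subset U_c(\K_1(\mathcal{B}))$ exhausting the relevant invariant data. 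Throughout I would use that, since $\mathcal{A}$ is simple, separable, infinite dimensional with ${\rm TR}(\mathcal{A})\le 1$, it has stable rank one, real rank at most one, weakly unperforated $\K_0$ with strict comparison of positive elements, and $T(\mathcal{A})$ is a metrizable Choquet simplex; these are the structural inputs that power every approximation below.

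The heart of the argument is, for each $k$, the construction of a unital $\mathcal{G}_k$-$\delta_k$-multiplicative completely positive contractive map $L_k\colon\mathcal{B}\to\mathcal{A}$ whose induced data match $(\kappa,\gamma,\alpha)$ within the tolerances demanded by Theorem~\ref{T_unique}, done in three steps. First, realize $\kappa$ on $\mathcal{P}_k$: since $\mathcal{B}$ is an AH algebra satisfying the UCT and $\mathcal{A}$ has ${\rm TR}(\mathcal{A})\le1$, the $\KL$-element lifts to a sequence of asymptotically multiplicative c.p.c.\ maps (Dadarlat--Loring type results), and the approximate absorption of building blocks by $\mathcal{A}$ lets one make them unital with $(L_k)_*|_{\mathcal{P}_k}=\kappa|_{\mathcal{P}_k}$. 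Second, correct the tracial data: $\kappa|_{\K_0}$ determines $\tau\circ L_k$ only on $\rho_{\mathcal{B}}(\K_0(\mathcal{B}))$, so on the complement in $\mathrm{Aff}(T(\mathcal{B}))$ there is room, and using strict comparison in $\mathcal{A}$ together with the abundance of mutually orthogonal projections with traces densely covering $[0,1]$ one perturbs $L_k$ on a small corner so that $|\tau\circ L_k(g)-\gamma(\tau)(g)|<\delta_k$ for all $g\in\mathcal{H}_k$ and all $\tau\in T(\mathcal{A})$, uniformly. Third, correct the determinant data: by the de la Harpe--Scandalis determinant $\bar{\Delta}$ and the split exact sequence (\ref{E_Thomsen_es}), once the $\K_1$-map and the trace data are fixed the remaining freedom in $\varphi^{\ddagger}$ lives in $\mathrm{Aff}(T(\mathcal{A}))/\overline{\rho_{\mathcal{A}}(\K_0(\mathcal{A}))}$; the compatibility hypothesis identifies $\alpha$ with the prescribed element there, which one realizes by composing $L_k$ with a unitary close to $1$ of the appropriate determinant, achieving $\mathrm{dist}(\overline{\langle L_k(u)\rangle},\alpha(\bar u))<\delta_k$ for $u\in\mathcal{U}_k$. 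Finally, fixing a distribution function $\Delta$ as in Theorem~\ref{T_unique}, one uses simplicity of $\mathcal{A}$ and a standard spreading homotopy to arrange $\mu_{\tau\circ L_k}(O_r)>\Delta(r)$ for all $\tau$ and all balls $O_r$ of radius at least $\eta_k$.

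With the $L_k$ in hand the intertwining is formal. Applying Theorem~\ref{T_unique} to $L_k$ and $L_{k+1}$ on the building-block summands of $\mathcal{B}_k$ (they induce $\ep$-close invariant data by construction) produces unitaries $w_k\in\mathcal{A}$ with $\|\mathrm{Ad}\,w_k\circ L_k(f)-L_{k+1}(f)\|<\ep_k$ for $f\in\mathcal{F}_k$; after a further tightening of the $\delta_k$ so that the partial products of the $w_k$ converge, the maps $\mathrm{Ad}(w_1\cdots w_k)\circ L_k$ form a point-norm Cauchy net whose limit is a genuine unital homomorphism $\varphi\colon\mathcal{B}\to\mathcal{A}$. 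Since every step disturbs the invariants only within tolerances tending to $0$, we get $\varphi_*=\kappa$, $\varphi_\sharp=\gamma$ and $\varphi^{\ddagger}=\alpha$. That $\varphi$ is a monomorphism uses the hypothesis $\gamma(T(\mathcal{A}))\subset T_f(\mathcal{B})$: for any $\tau\in T(\mathcal{A})$ we have $\tau\circ\varphi=\gamma(\tau)$ faithful, so if $0\ne b\ge0$ in $\mathcal{B}$ then $\tau(\varphi(b))=\gamma(\tau)(b)>0$, whence $\varphi(b)\ne0$.

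The main obstacle is the simultaneous, quantitatively controlled realization of the three pieces of data in a single c.p.c.\ map, especially the determinant adjustment: the unitary used to fix $\varphi^{\ddagger}$ must be chosen so as not to disturb the already-arranged $\K$-theory and tracial data, and the measure-distribution bound $\mu_{\tau\circ L_k}(O_r)>\Delta(r)$ must be preserved under all the perturbations, since it is precisely the hypothesis that feeds the uniqueness theorem. This is where the ${\rm TR}(\mathcal{A})\le1$ hypothesis enters in an essential way and is the technical core of the argument; once the $L_k$ are produced, the approximate intertwining that assembles $\varphi$ is routine.
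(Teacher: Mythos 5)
The paper does not prove this theorem; it is imported verbatim as Theorem~6.11 of \cite{Lin-JTA-2017} and used as a black box, so there is no in-paper argument to compare yours against. Your sketch is the standard Elliott approximate-intertwining skeleton, and that is indeed the philosophy behind Lin's proof; what you have reconstructed is the \emph{easy} half. The hard half --- the actual content of the cited theorem --- is precisely the step you compress into ``construct a unital $\mathcal{G}_k$-$\delta_k$-multiplicative c.p.c.\ map $L_k$ whose induced data match $(\kappa,\gamma,\alpha)$'' and then dispatch with phrases like ``Dadarlat--Loring type results,'' ``perturb $L_k$ on a small corner,'' and ``compose with a unitary of the appropriate determinant.'' None of those is a lemma you can actually cite at the required level of generality, and the second and third are not independent of the first: once you move projections around in a corner to fix traces, you disturb the class in $\K_0$ and the comparison with $\kappa$ on $\mathcal{P}_k$, and once you postcompose with a unitary to fix $\bar{\Delta}$, you must show the new map is still $\mathcal{G}_k$-$\delta_k$-multiplicative and still has the right $\KK$- and trace-data. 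Organizing these corrections so that each one respects the others is the technical core of Lin's Chapter 6 and occupies dozens of pages there; you flag it as ``the main obstacle'' but then declare it done by appeal to ``${\rm TR}(\mathcal{A})\le1$ enters in an essential way,'' which is naming the difficulty rather than resolving it.

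Two further concrete issues. First, the measure-distribution bound $\mu_{\tau\circ L_k}(O_r)>\Delta(r)$ is not something you arrange by a ``spreading homotopy'' after the fact; it should come \emph{from} the choice of $\Delta$. Because $\gamma$ lands in $T_f(\mathcal{B})$, each $\gamma(\tau)$ restricts to a faithful trace on the building block $\mathcal{B}_k$, whose associated measure has full support; one defines $\Delta$ from this measure (as in Lemma~\ref{L_Delta}) and then the tracial approximation $|\tau\circ L_k(h)-\gamma(\tau)(h)|<\delta_k$ automatically yields the bound. The quantifier order in your write-up (build $L_k$, then fix $\Delta$) is backwards. Second, realizing a $\KL$-element by a single sequence of approximately multiplicative c.p.c.\ maps into $\mathcal{A}$ is not a soft consequence of the UCT; it uses the structure theorem for ${\rm TR}\le1$ algebras (tracial approximation by interval algebras) to manufacture the maps block by block, and this is also where unitality and positivity of $\kappa$ on $\K_0$ are consumed. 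Your final paragraph (faithfulness of $\gamma(\tau)$ forcing injectivity) and the convergence of the intertwining chain once the $L_k$ exist are both correct and standard. In short: the approach is right, but the proof of the statement you were given \emph{is} the construction of the $L_k$, and that part is missing.
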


Below we will always use ``$c. p. c.$'' for the abbreviation of ``completely positive contractive''.

\begin{proposition}\label{L-exist} Let $\Theta=(\theta_{jk})\in \mathcal{T}_n.$ Let $\mathfrak{u}_1,\mathfrak{u}_2,\dots,\mathfrak{u}_n$ be the canonical generators of $\mathcal{A}_{\Theta}.$ Then for any finite subset $\mathcal{G}\subset \mathcal{A}_{\Theta},$ any $\eta>0$ and any $\varepsilon>0,$ there exists $\delta>0$ such that for any unital $C^*$-algebra $\mathcal{A},$ any  $n$-tuple of unitaries $u_1,u_2,\dots,u_n$ in $\mathcal{A}$ satisfying
$$\|u_ku_j-e^{2\pi i\theta_{jk}}u_ju_k\|<\delta\,\,  for\,\, all\,\,j,k=1,2,\dots,n, $$
there is a unital $\mathcal{G}$-$\eta$-multiplicative c.p.c. map $L:\mathcal{A}_{\Theta}\rightarrow\mathcal{A}$ such that
$$\|L(\mathfrak{u}_j)-u_j\|<\varepsilon\,\, for\,\,j=1,2,\dots,n.$$
\end{proposition}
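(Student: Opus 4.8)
The plan is to deduce this from the universal property of $\mathcal{A}_{\Theta}$ via a reduced-product argument, in the same spirit as the proof of Proposition~\ref{gap}; the essential extra ingredient is that $\mathcal{A}_{\Theta}$ is separable and nuclear, so that the Choi--Effros lifting theorem applies.

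I would argue by contradiction. If the statement fails, there are a finite subset $\mathcal{G}\subset\mathcal{A}_{\Theta}$, an $\eta>0$ and an $\varepsilon>0$ such that for every $m\in\mathbb{N}$ there exist a unital $C^*$-algebra $\mathcal{A}_m$ and unitaries $u_{1m},\dots,u_{nm}\in\mathcal{A}_m$ with $\|u_{km}u_{jm}-e^{2\pi i\theta_{jk}}u_{jm}u_{km}\|<1/m$ for all $j,k$, but such that no unital $\mathcal{G}$-$\eta$-multiplicative c.p.c.\ map $L\colon\mathcal{A}_{\Theta}\to\mathcal{A}_m$ satisfies $\|L(\mathfrak{u}_j)-u_{jm}\|<\varepsilon$ for $j=1,\dots,n$. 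Set $\mathcal{C}=\prod_{m=1}^{\infty}\mathcal{A}_m$, $\mathcal{I}=\bigoplus_{m=1}^{\infty}\mathcal{A}_m$, $\mathcal{B}=\mathcal{C}/\mathcal{I}$, and let $\pi\colon\mathcal{C}\to\mathcal{B}$ be the quotient map. The elements $v_j=(u_{jm})_m\in\mathcal{C}$ are unitaries, and since the commutation defects tend to $0$ their images satisfy $\pi(v_k)\pi(v_j)=e^{2\pi i\theta_{jk}}\pi(v_j)\pi(v_k)$ for all $j,k$; hence, by the universal property of $\mathcal{A}_{\Theta}$, there is a unital $\ast$-homomorphism $\phi\colon\mathcal{A}_{\Theta}\to\mathcal{B}$ with $\phi(\mathfrak{u}_j)=\pi(v_j)$.

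Next I would lift $\phi$. As $\mathcal{A}_{\Theta}$ is separable and nuclear (being the twisted group $C^*$-algebra of the amenable group $\mathbb{Z}^n$), the Choi--Effros lifting theorem, in its unital form, gives a unital c.p.c.\ map $\Phi\colon\mathcal{A}_{\Theta}\to\mathcal{C}$ with $\pi\circ\Phi=\phi$. Writing $\Phi=(L_m)_m$ with each $L_m\colon\mathcal{A}_{\Theta}\to\mathcal{A}_m$ unital c.p.c., the relation $\pi\big(\Phi(ab)-\Phi(a)\Phi(b)\big)=\phi(ab)-\phi(a)\phi(b)=0$ forces $\Phi(ab)-\Phi(a)\Phi(b)\in\mathcal{I}$, that is, $\|L_m(ab)-L_m(a)L_m(b)\|\to0$ for all $a,b\in\mathcal{A}_{\Theta}$; similarly $\pi\big(\Phi(\mathfrak{u}_j)-v_j\big)=0$ yields $\|L_m(\mathfrak{u}_j)-u_{jm}\|\to0$. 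Applying the first estimate to the finitely many pairs drawn from $\mathcal{G}$ and the second to $j=1,\dots,n$, I conclude that for all sufficiently large $m$ the map $L_m$ is a unital $\mathcal{G}$-$\eta$-multiplicative c.p.c.\ map with $\|L_m(\mathfrak{u}_j)-u_{jm}\|<\varepsilon$ for all $j$, contradicting the choice of $\mathcal{A}_m$; this proves the proposition.

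The one step that requires care is the production of the \emph{unital} c.p.c.\ lift $\Phi$, which is precisely where nuclearity of $\mathcal{A}_{\Theta}$ enters. If one wishes to use only the non-unital form of Choi--Effros, one takes any c.p.c.\ lift $\Phi_0=(L_m^0)_m$ of $\phi$ and replaces $L_m^0$ by the unital c.p.\ (hence c.p.c.) map $a\mapsto h_m^{-1/2}L_m^0(a)h_m^{-1/2}$, where $h_m=L_m^0(1_{\mathcal{A}_{\Theta}})$; since $\pi(h_m)=1$, $h_m$ is invertible (in fact close to $1$) for $m$ large, and the corrected maps are still asymptotically multiplicative and still carry $\mathfrak{u}_j$ asymptotically to $u_{jm}$, so the contradiction is reached in the same way.
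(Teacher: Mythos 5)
Your proposal is correct and follows essentially the same route as the paper's own proof: argue by contradiction, pass to the reduced product $\prod\mathcal{A}_m/\bigoplus\mathcal{A}_m$, invoke the universal property of $\mathcal{A}_{\Theta}$ to obtain a unital $\ast$-homomorphism to the quotient, and lift it by Choi--Effros (using nuclearity of $\mathcal{A}_{\Theta}$) to get coordinatewise c.p.c.\ maps that are asymptotically multiplicative and asymptotically send the generators to $u_{jm}$. Your closing remark on how to unitize a non-unital c.p.c.\ lift via conjugation by $h_m^{-1/2}$ is a welcome elaboration of the paper's brief phrase about ``normalization,'' but it does not change the underlying argument.
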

\begin{proof}
  Assume that the proposition is false. Let $(\delta_m)_{m=1}^{\infty}$ be a sequence of positive numbers decreasing to $0.$ Then there is a finite subset $\mathcal{G}\subset \mathcal{A}_{\Theta},$ some $\varepsilon,\eta>0$ such that for any integer $m>0,$ there is a unital $C^*$-algebra $\mathcal{A}_m$, an $n$-tuple of unitaries $u_1^{(m)},u_2^{(m)},\dots,u_n^{(m)}$ in $\mathcal{A}_m$ satisfying
  \begin{center}
    $\|u_k^{(m)}u_j^{(m)}-e^{2\pi i\theta_{jk}}u_j^{(m)}u_k^{(m)}\|<\delta_m,$
  \end{center}
but for any unital $\mathcal{G}$-$\eta$-multiplicative c.p.c. map $\varphi_m:\mathcal{A}_{\Theta}\rightarrow
\mathcal{A}_m,$ we have
$$\|\varphi_{m}(\mathfrak{u}_{j_m})-u_{j_m}^{(m)}\|\geq \varepsilon\,\, {\rm for}\,\, {\rm some} \,\, j_m\in \{1,2,\dots,n\}.$$

Set $\mathcal{B}=\prod_{m=1}^{\infty} \mathcal{A}_m\slash \oplus_{m=1}^{\infty}\mathcal{A}_m.$
Let $\ell:\prod_{m=1}^{\infty} \mathcal{A}_m\rightarrow \mathcal{B}$ be the canonical quotient map.
Let $u_j=(u_j^{(m)})\in \prod_{m=1}^{\infty}\mathcal{A}_m$ for $j=1,2,\dots,n.$ Then
$\{\ell(u_j)\}_{j=1}^n$ are unitaries in $\mathcal{B}$ satisfying
 $$
\ell(u_k)\ell(u_j)=e^{2\pi i\theta_{jk}}\ell(u_j)\ell(u_k).
$$
 Therefore there is a unital homomorphism $\psi:\mathcal{A}_{\Theta}\rightarrow \mathcal{B}$ such that
 $$\psi(\mathfrak{u}_j)=\ell(u_j)\,\, {\rm for}\,\, j=1,2,\dots,n.$$
  It is well known that  $\mathcal{A}_{\Theta}$ is nuclear. By the Choi--Effros lifting theorem, we can lift $\psi$ to a unital c.p.c. map
 $$\widetilde{\psi}=(\psi_1,\psi_2,\dots,\psi_m,\dots):\mathcal{A}_{\Theta}\rightarrow\prod_{m=1}^{\infty}\mathcal{A}_m$$
 such that $\psi=\ell \circ\widetilde{\psi}.$
 In particular, each coordinate map $\psi_m$ is unital completely positive, and we can also assume that they are contractive by normalization. By choosing $m$ large enough, we can make sure that $\psi_m$ is $\mathcal{G}$-$\eta$-multiplicative. From our construction,
 $$\lim\limits_{m\rightarrow \infty}\|\psi_m(\mathfrak{u}_j)-u_j^{(m)}\|=0\,\, {\rm for}\,\, j=1,2,\dots,n.$$
 This is a contradiction.
 \end{proof}

The following lemma follows from functional calculus and the fact that norm close projections (or unitaries) are equivalent.
\begin{lemma}\label{app}Let $\Theta\in\mathcal{T}_n$ be strongly totally irrational. Let $\mathfrak{u}_1,\mathfrak{u}_2,\dots,\mathfrak{u}_n$ be the canonical generators of $\mathcal{A}_{\Theta}.$ Then there is a finite subset $\mathcal{G}\subset \mathcal{A}_{\Theta},$ some $\eta>0$ and $\varepsilon>0,$ such that for any unital $C^*$-algebra $\mathcal{A},$ any $n$-tuple of unitaries $u_1,u_2,\dots,u_n$ in $\mathcal{A},$ if $L:\mathcal{A}_{\Theta}\rightarrow \mathcal{A}$ is a $\mathcal{G}$-$\eta$-multiplicative c.p.c. map such that
$$\|L(\mathfrak{u}_j)-u_j\|<\varepsilon,\quad j=1,2,\dots,n,$$
then
$$L_{*1}([\mathfrak{u}_j])=[u_j],\quad j=1,2,\dots,n$$
and
$$L_{*0}([P_{I}])=[R_{I}(u_1,u_2,\dots,u_n)],\,\, \text{for all $I$ defined as in Definition \ref{eI}}. $$
\end{lemma}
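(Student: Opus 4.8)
The plan is to run the familiar argument that a sufficiently multiplicative c.p.c.\ map induces a well-defined partial map on $\underline{\K}(\mathcal{A}_\Theta)$ which preserves the classes in question, taking care about which concrete representatives are used. Recall from (\ref{b1}) that $[P_I]=[\chi_{(1/2,\infty)}(\widetilde{P}_I)]$ in $\K_0(\mathcal{A}_\Theta)$, where by (\ref{coefficient})--(\ref{Nk}) the element $\widetilde{P}_I=q_I(\mathfrak{u}_{i_1},\dots,\mathfrak{u}_{i_l})$ is a \emph{fixed} self-adjoint noncommutative $*$-polynomial in the generators, of degree at most $N$, with $\mathrm{spec}(\widetilde{P}_I)\subset(-\frac{1}{8},\frac{1}{8})\cup(\frac{7}{8},\frac{9}{8})$; and that by Definition~\ref{eI} $e_I(u_1,\dots,u_n)=q_I(u_{i_1},\dots,u_{i_l})$ is \emph{the same} polynomial evaluated on the $u_j$, while $R_I(u_1,\dots,u_n)=\chi_{(1/2,\infty)}(e_I(u_1,\dots,u_n))$ once $\mathrm{spec}(e_I)$ has a gap at $\frac{1}{2}$ (Definition~\ref{RI}). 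I fix once and for all a continuous $\phi\colon\mathbb{R}\to[0,1]$ with $\phi\equiv 0$ on $[-\frac{1}{8},\frac{1}{8}]$ and $\phi\equiv 1$ on $[\frac{7}{8},\frac{9}{8}]$, so that $\phi(\widetilde{P}_I)=\chi_{(1/2,\infty)}(\widetilde{P}_I)$ is a genuine projection in $\mathcal{A}_\Theta$, together with a $*$-polynomial $\psi$ approximating $\phi$ uniformly on $[-\frac{1}{4},\frac{1}{4}]\cup[\frac{3}{4},\frac{5}{4}]$, with the error to be prescribed small later.

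First I would fix $\mathcal{G},\eta,\varepsilon$. Take $\mathcal{G}$ to contain the $\mathfrak{u}_j$, their adjoints, and all words in $\mathfrak{u}_1^{\pm1},\dots,\mathfrak{u}_n^{\pm1}$ of length at most $M$, with $M$ large enough to include every monomial occurring in $\widetilde{P}_I$, in $e_I$, and in $\psi(\widetilde{P}_I)$, for all the finitely many index sets $I$. Enlarging $\mathcal{G}$ and shrinking $\eta$ if necessary, the criterion recalled at the end of Section~\ref{sec:pre} (see \cite{Dadarlat-IJM,Lin-book}) lets us also assume $L_*$ is well defined on a finite subset $\mathcal{P}\subset\underline{\K}(\mathcal{A}_\Theta)$ containing all $[\mathfrak{u}_j]$ and all $[P_I]$. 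Finally shrink $\eta$ and $\varepsilon$ so that, whenever $L$ satisfies the hypotheses of the lemma: (i) from $\|L(\mathfrak{u}_k\mathfrak{u}_j)-L(\mathfrak{u}_k)L(\mathfrak{u}_j)\|<\eta$ and $\mathfrak{u}_k\mathfrak{u}_j=e^{2\pi i\theta_{jk}}\mathfrak{u}_j\mathfrak{u}_k$ one gets $\|L(\mathfrak{u}_k)L(\mathfrak{u}_j)-e^{2\pi i\theta_{jk}}L(\mathfrak{u}_j)L(\mathfrak{u}_k)\|<2\eta$, and then, using $\|L(\mathfrak{u}_j)-u_j\|<\varepsilon$ and $\|L\|\le 1$, $\|u_ku_j-e^{2\pi i\theta_{jk}}u_ju_k\|<\delta_0$, where $\delta_0$ is at most the constant of Proposition~\ref{gap} and of Definition~\ref{RI}, and (shrinking $\delta_0$ further by the same compactness argument as in the proof of Proposition~\ref{gap}) small enough that $\mathrm{spec}(e_I)\subset[-\frac{1}{8},\frac{1}{8}]\cup[\frac{7}{8},\frac{9}{8}]$, whence $\phi\equiv\chi_{(1/2,\infty)}$ on $\mathrm{spec}(e_I)$; (ii) $\|L(\widetilde{P}_I)-e_I(u_1,\dots,u_n)\|$ is as small as we wish, which holds by $\mathcal{G}$-$\eta$-multiplicativity on the monomials of $\widetilde{P}_I$ together with the elementary bound $\|q_I(L(\mathfrak{u}_{i_1}),\dots)-q_I(u_{i_1},\dots)\|\to 0$ as $\varepsilon\to 0$ (valid since $q_I$ has fixed degree and coefficients and all arguments have norm $\le 1$); and (iii) $\varepsilon<1$.

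For the $\K_1$ assertion: since $\varepsilon<1$ and $\|L\|\le 1$ we have $\|L(\mathfrak{u}_j)^*L(\mathfrak{u}_j)-1\|\le 2\varepsilon$, so $L(\mathfrak{u}_j)$ is invertible and $\langle L(\mathfrak{u}_j)\rangle=L(\mathfrak{u}_j)(L(\mathfrak{u}_j)^*L(\mathfrak{u}_j))^{-1/2}$ is a unitary within $O(\varepsilon)$ of $L(\mathfrak{u}_j)$, hence within $O(\varepsilon)$ of $u_j$; for $\varepsilon$ small this distance is $<2$, so $[\langle L(\mathfrak{u}_j)\rangle]=[u_j]$ in $\K_1(\mathcal{A})$. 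Since by definition $L_{*1}([\mathfrak{u}_j])=[L(\mathfrak{u}_j)]=[\langle L(\mathfrak{u}_j)\rangle]$, this gives $L_{*1}([\mathfrak{u}_j])=[u_j]$.

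For the $\K_0$ assertion: by (\ref{b1}), $L_{*0}([P_I])=L_{*0}([\phi(\widetilde{P}_I)])$, and since $L_*$ is well defined on $\mathcal{P}$ this is the class of a projection norm-close to $L(\phi(\widetilde{P}_I))$. Now I estimate, in norm, $L(\phi(\widetilde{P}_I))\approx L(\psi(\widetilde{P}_I))\approx\psi(L(\widetilde{P}_I))\approx\psi(e_I(u_1,\dots,u_n))\approx\phi(e_I(u_1,\dots,u_n))=\chi_{(1/2,\infty)}(e_I(u_1,\dots,u_n))=R_I(u_1,\dots,u_n)$, where the first and fourth approximations use that $\|\psi-\phi\|_\infty$ is small on the relevant spectra, the second uses $\mathcal{G}$-$\eta$-multiplicativity on the monomials of $\psi(\widetilde{P}_I)$ (in $\mathcal{G}$ by choice of $M$) together with $L(\widetilde{P}_I)=L(\widetilde{P}_I)^*$, the third uses $\|L(\widetilde{P}_I)-e_I\|$ small and continuity of $\psi$, and the last equality is where the gap of $\mathrm{spec}(e_I)$ at $\frac{1}{2}$ (Proposition~\ref{gap}) and the arrangement $\mathrm{spec}(e_I)\subset[-\frac{1}{8},\frac{1}{8}]\cup[\frac{7}{8},\frac{9}{8}]$ enter. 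Thus $L(\phi(\widetilde{P}_I))$ is norm-close to the genuine projection $R_I(u_1,\dots,u_n)$, so the projection produced by $L_{*0}$ is Murray--von Neumann equivalent to it, giving $L_{*0}([P_I])=[R_I(u_1,\dots,u_n)]$. The only real obstacle is bookkeeping: checking that functional calculus almost commutes with the almost-homomorphism $L$ (the step $L(\phi(\widetilde{P}_I))\approx\psi(L(\widetilde{P}_I))$, for which the polynomial $\psi$ and the inclusion of its monomials in $\mathcal{G}$ are precisely what is needed), and choosing the constants in the correct order — $N,M$ first, then $\mathcal{P}$, then $\eta$, then $\varepsilon$ — so that all the finitely many smallness requirements are satisfied simultaneously. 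Everything else is the cited structural facts plus the fact that norm-close projections, resp.\ unitaries at distance $<2$, have the same $K$-theory class.
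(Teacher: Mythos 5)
Your argument is correct and is precisely the detailed expansion of the paper's one-line remark immediately preceding the lemma, namely that it ``follows from functional calculus and the fact that norm close projections (or unitaries) are equivalent.'' The paper offers no further proof, and your chain of approximations $L(\phi(\widetilde{P}_I))\approx L(\psi(\widetilde{P}_I))\approx\psi(L(\widetilde{P}_I))\approx\psi(e_I(u_1,\dots,u_n))\approx\phi(e_I(u_1,\dots,u_n))=R_I(u_1,\dots,u_n)$, together with the uniform choice of $\mathcal{G},\eta,\varepsilon$ depending only on the fixed noncommutative polynomials $\widetilde{P}_I$ and a single scalar polynomial $\psi$, is exactly what that remark is alluding to.
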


\begin{lemma}[Lemma 4.1 of \cite{LnTAM}]\label{tracial app} Let $\mathcal{A}$ be a separable unital $C^*$-algebra. For any $\varepsilon>0$ and any finite subset $\mathcal{F}\subset \mathcal{A}_{s.a.},$ there exists $\eta>0$ and a finite subset $\mathcal{G}\subset \mathcal{A}_{s.a.}$
satisfying the following: For any $\mathcal{G}$-$\eta$-multiplicative c.p.c. map $L:\mathcal{A}\rightarrow \mathcal{B}$, for any unital $C^*$-algebra $\mathcal{B}$ with $T(\mathcal{B})\neq \emptyset$, and any tracial state $t\in T(\mathcal{B}),$ there exists a
$\tau\in T(\mathcal{A})$ such that
\begin{center}
  $\|t\circ L(a)-\tau(a)\|<\varepsilon$ for all $a\in \mathcal{F}.$
\end{center}
\end{lemma}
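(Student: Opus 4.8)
The plan is to prove this by a standard weak-$*$ compactness argument on the state space of $\mathcal{A}$, arguing by contradiction. Suppose no such $\eta$ and $\mathcal{G}$ exist for some $\varepsilon>0$ and finite $\mathcal{F}\subset\mathcal{A}_{s.a.}$. Using separability of $\mathcal{A}$, fix an increasing sequence $\mathcal{G}_1\subset\mathcal{G}_2\subset\cdots$ of finite subsets of $\mathcal{A}_{s.a.}$ with $\overline{\bigcup_m\mathcal{G}_m}=\mathcal{A}_{s.a.}$, and put $\eta_m=1/m$. Negating the conclusion for the data $(\mathcal{G}_m,\eta_m)$ yields, for every $m$, a unital $C^*$-algebra $\mathcal{B}_m$ with $T(\mathcal{B}_m)\neq\emptyset$, a unital $\mathcal{G}_m$-$\eta_m$-multiplicative c.p.c.\ map $L_m\colon\mathcal{A}\to\mathcal{B}_m$, and a tracial state $t_m\in T(\mathcal{B}_m)$, such that for \emph{every} $\tau\in T(\mathcal{A})$ there is some $a\in\mathcal{F}$ with $|t_m\circ L_m(a)-\tau(a)|\geq\varepsilon$.

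The next step is to extract a limit. Since each $L_m$ is unital and c.p.c.\ and each $t_m$ is a state, the functionals $\phi_m:=t_m\circ L_m$ are states on $\mathcal{A}$. Because $\mathcal{A}$ is separable and unital, the state space $S(\mathcal{A})$ is weak-$*$ compact and metrizable, so after passing to a subsequence we may assume $\phi_m\to\phi$ in the weak-$*$ topology for some $\phi\in S(\mathcal{A})$. I then check that $\phi$ is tracial: given $a,b\in\mathcal{A}_{s.a.}$ and $\varepsilon'>0$, pick $a',b'\in\bigcup_m\mathcal{G}_m$ with $\|a-a'\|,\|b-b'\|<\varepsilon'$; since the $L_m$ are contractive, $|\phi_m(ab)-\phi_m(a'b')|$ and $|\phi_m(ba)-\phi_m(b'a')|$ are bounded by a quantity of order $\varepsilon'$ uniformly in $m$; for $m$ large enough that $a',b'\in\mathcal{G}_m$ one has $\|L_m(a'b')-L_m(a')L_m(b')\|<1/m$ and likewise for $b'a'$, while $t_m(L_m(a')L_m(b'))=t_m(L_m(b')L_m(a'))$ because $t_m$ is a trace. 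Letting $m\to\infty$ and then $\varepsilon'\to0$ gives $\phi(ab)=\phi(ba)$; since $\mathcal{A}_{s.a.}$ spans $\mathcal{A}$, this shows $\phi\in T(\mathcal{A})$.

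To finish, apply the defining property of the $t_m$ to the particular trace $\tau=\phi$: for each $m$ there is $a_m\in\mathcal{F}$ with $|\phi_m(a_m)-\phi(a_m)|\geq\varepsilon$, and since $\mathcal{F}$ is finite we may pass to a further subsequence along which $a_m$ equals a fixed $a_0\in\mathcal{F}$, contradicting $\phi_m(a_0)\to\phi(a_0)$. The one genuinely delicate point is ensuring that the weak-$*$ limit $\phi$ is an honest tracial state: unitality of the $L_m$ is what keeps $\phi(1)=1$ (without it $\phi$ need not even be a state), and the approximate multiplicativity of the $L_m$ together with the trace property of the $t_m$ is exactly what forces $\phi$ to be a trace in the limit; the remaining estimates are routine.
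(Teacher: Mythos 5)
The paper does not prove this statement: it is cited as Lemma 4.1 of \cite{LnTAM}, so there is no in-paper argument to compare against. Your compactness argument is a correct, self-contained proof and is the standard route to such approximate-trace lemmas: negate the statement, run over a countable cofinal family $(\mathcal{G}_m,\eta_m)$ supplied by separability, extract a weak-$*$ limit $\phi$ of the states $\phi_m=t_m\circ L_m$, verify that $\phi$ is tracial from the approximate multiplicativity of $L_m$ on $\mathcal{G}_m$ together with the trace property of the $t_m$, and finally pigeonhole over the finite set $\mathcal{F}$. The one point worth flagging is that you (correctly) treat each $L_m$ as \emph{unital}; the word ``unital'' has been dropped in the paper's transcription of the lemma, but it is essential. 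Without it, the zero map is $\mathcal{G}$-$\eta$-multiplicative and c.p.c.\ for every $\mathcal{G},\eta$, yet $t\circ 0=0$ cannot be $\varepsilon$-close to any trace on a set $\mathcal{F}$ containing $1_{\mathcal{A}}$, and more to the point your weak-$*$ limit $\phi$ would then only be a positive functional of norm at most one rather than a state. Unitality of the $L_m$ is exactly what guarantees $\phi_m(1)=1$ for all $m$, hence $\phi(1)=1$, so your reading of the hypothesis is the right one.
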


We say that $\Theta=(\theta_{jk})\in\mathcal{T}_n$ is {\it not rational} if $\theta_{jk},1\leq j<k\leq n$ are not all rational. Note that if $\Theta$ is nondegenerate, then it is not rational. If $\Theta$ is totally irrational, then it is also not rational.



\begin{theorem}[Theorem 8.3 of \cite{Rieffel-1988}]\label{rep K1}
  If $\Theta$ is not rational, then the natural map from $U(\mathcal{A}_{\Theta})\slash U_0(\mathcal{A}_{\Theta})$ to $\K_1(\mathcal{A}_{\Theta})$ is an isomorphism.
\end{theorem}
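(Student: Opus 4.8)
The plan is to reduce the statement to the single assertion that $\mathcal{A}_\Theta$ has stable rank one whenever $\Theta$ is not rational. Indeed, for any unital $C^*$-algebra $\mathcal{A}$ with $\operatorname{tsr}(\mathcal{A}) = 1$ the inclusion $U_m(\mathcal{A}) \hookrightarrow U_{m+1}(\mathcal{A})$ induces a bijection on path components for every $m \geq 1$, so the natural map
\[
U(\mathcal{A})/U_0(\mathcal{A}) = \pi_0(U_1(\mathcal{A})) \longrightarrow \varinjlim_m \pi_0(U_m(\mathcal{A})) = \K_1(\mathcal{A})
\]
is an isomorphism, and both surjectivity and injectivity become formal. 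The hypothesis is genuinely needed: for $\Theta = 0$ one has $\mathcal{A}_\Theta = C(\mathbb{T}^n)$, where $U(\mathcal{A}_\Theta)/U_0(\mathcal{A}_\Theta) \cong H^1(\mathbb{T}^n;\Z) \cong \Z^n$ while $\K_1(\mathcal{A}_\Theta) \cong \Z^{2^{n-1}}$, so the map already fails to be surjective once $n \geq 3$. Thus everything comes down to proving $\operatorname{tsr}(\mathcal{A}_\Theta) = 1$.

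First I would normalize. Since $\mathcal{A}_\Theta \cong \mathcal{A}_{M\Theta M^{t}}$ for every $M \in GL_n(\Z)$, and coordinate permutations lie in $GL_n(\Z)$, we may assume $\theta_{12} \notin \Q$, so that $\mathcal{A}_\Theta$ contains the canonical copy of the irrational rotation algebra $\mathcal{A}_{\theta_{12}}$. If $\Theta$ is moreover nondegenerate, Theorem~\ref{AT and K} identifies $\mathcal{A}_\Theta$ as a unital simple AT algebra; such algebras have stable rank one, and we are done. In particular this settles the base case $n = 2$, where ``not rational'' coincides with ``nondegenerate''. What remains is the case that $\Theta$ is degenerate but not rational, which is the real content.

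For this I would argue by induction on $n$, peeling off one crossed product by $\Z$ at a time: write $\mathcal{A}_\Theta = \mathcal{B} \rtimes_{\operatorname{Ad}\mathfrak{u}_n} \Z$, where $\mathcal{B} = \mathcal{A}_{\Theta'}$ is the subtorus generated by $\mathfrak{u}_1, \dots, \mathfrak{u}_{n-1}$; its matrix $\Theta'$ still contains $\theta_{12} \notin \Q$, so $\operatorname{tsr}(\mathcal{B}) = 1$ by the inductive hypothesis. One then wants a permanence statement for stable rank one across such a crossed product. The favourable sub-case is when, after a further $GL_n(\Z)$-adjustment, $\operatorname{Ad}\mathfrak{u}_n$ restricts to a sufficiently free (tracial-Rokhlin-type) automorphism of $\mathcal{B}$ — which is forced by the irrationality sitting inside $\Theta$ — and then a Rokhlin-type crossed-product theorem gives $\operatorname{tsr}(\mathcal{A}_\Theta) = 1$. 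The residual sub-case, where the peeled-off direction can only twist $\mathcal{B}$ by a scalar of finite order (so $\operatorname{Ad}\mathfrak{u}_n$ is essentially periodic), is handled by analysing the center of $\mathcal{A}_\Theta$ via $GL_n(\Z)$: one reduces $\mathcal{A}_\Theta$, up to a finite-group crossed product, to a simple noncommutative torus tensored with a commutative torus, i.e.\ to an inductive limit of algebras $M_{m_i}(C(\mathbb{T}^{k}))$ with $m_i \to \infty$, each of stable rank one once $m_i$ is large, whence $\operatorname{tsr}(\mathcal{A}_\Theta) = 1$.

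The main obstacle is exactly this permanence step in the induction: a crossed product by $\Z$ can raise the stable rank by one in general, so one must genuinely extract a Rokhlin-type freeness (or, in the residual sub-case, the finite-order structure together with the shape of the center) from the non-rationality of $\Theta$, and carefully track the commutative ``degenerate'' directions — which is precisely where the hypothesis that $\Theta$ is not rational is used and where, when it is dropped, the conclusion fails. Everything else — the reduction in the first paragraph, the $GL_n(\Z)$ normalizations, and the nondegenerate base case via Theorem~\ref{AT and K} — is formal or already available.
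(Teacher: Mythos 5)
The paper does not prove this statement: it is quoted verbatim as Theorem 8.3 of \cite{Rieffel-1988}, so there is no internal proof to compare against. Your opening reduction is sound — if $\operatorname{tsr}(\mathcal{A}_\Theta)=1$ then $U(\mathcal{A}_\Theta)/U_0(\mathcal{A}_\Theta)\cong\K_1(\mathcal{A}_\Theta)$ by Rieffel's nonstable $\K$-theory results — and the example $\Theta=0$, $n\geq 3$ correctly shows the non-rationality hypothesis is really needed. The nondegenerate case then follows from Theorem~\ref{AT and K} (simple AT algebra $\Rightarrow$ stable rank one). Since the paper only invokes Theorem~\ref{rep K1} for $\Theta$ totally irrational (Lemma~\ref{dist(u)}), hence nondegenerate, your first two paragraphs already cover everything the paper actually uses.

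The genuine gap is in the degenerate-but-not-rational case. You correctly reduce the theorem to the assertion $\operatorname{tsr}(\mathcal{A}_\Theta)=1$, and that assertion is true, but your induction on iterated crossed products by $\Z$ does not establish it. Crossed products by $\Z$ can raise stable rank, and the dichotomy you propose (a tracial-Rokhlin-type permanence theorem in the ``favourable'' sub-case, versus a reduction via the centre to an inductive limit of $M_{m_i}(C(\mathbb{T}^k))$ with $m_i\to\infty$ in the ``residual'' sub-case) is sketched but never proved: there is no argument that the dichotomy is exhaustive, no verification that $\operatorname{Ad}\mathfrak{u}_n$ satisfies the Rokhlin-type hypothesis you would need, and no derivation of the claimed inductive-limit structure. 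You yourself flag the permanence step as ``the main obstacle,'' and as written it is a plan for the one nontrivial ingredient rather than a proof of it. It is also worth noting that Rieffel's own argument in \cite{Rieffel-1988} does not go through stable rank one: he proves $\operatorname{tsr}(\mathcal{A}_\Theta)\leq 2$ for all $\Theta$ together with cancellation of projective modules for non-rational $\Theta$, and derives the $\K_1$ isomorphism from those; full stable rank one for non-rational $\Theta$ came from later classification-type work. So even if you filled the gap, your route would be a genuinely different one from the cited source, and it would rest on permanence machinery not available to (or used by) Rieffel.
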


\begin{lemma}\label{dist(u)} Let $\Theta=(\theta_{jk})\in\mathcal{T}_n$ be totally irrational. Let $\pi_{\mathcal{A}_{\Theta}}:U_{\infty}(\mathcal{A}_{\Theta})\slash CU_{\infty}(\mathcal{A}_{\Theta})\rightarrow \K_1(\mathcal{A}_{\Theta})$ be the natural surjective homomorphism. Let $[u'_j],~ j=1,2,\dots,2^{n-1}$ be a set of generators of $\K_1(\mathcal{A}_{\Theta}).$ Let $J_{\mathcal{A}_{\Theta}}:\K_1(\mathcal{A}_{\Theta})\rightarrow U_{\infty}(\mathcal{A}_{\Theta})\slash CU_{\infty}(\mathcal{A}_{\Theta})$ be the embedding induced by
$$[u_j']\rightarrow [\overline{u_j'}],\quad j=1,2,\dots,2^{n-1}.$$
Let $\{\overline{w}_1,\overline{w}_2,\dots,\overline{w}_m\}$ be a finite subset of
$J_{\mathcal{A}_{\Theta}}(\K_1(\mathcal{A}_{\Theta})).$ Then for every $\varepsilon>0,$ there is a finite subset $\mathcal{G}\subset \mathcal{A}_{\Theta}$ and a $\eta>0$ such that, for any unital $C^*$-algebra $\mathcal{A}$ and any unital $\mathcal{G}$-$\eta$-multiplicative c.p.c. map $L:\mathcal{A}_{\Theta}\rightarrow \mathcal{A},$ the homomorphism
$$\alpha_1:J_{\mathcal{A}_{\Theta}}(\K_1(\mathcal{A}_{\Theta}))\rightarrow U_{\infty}(\mathcal{A})\slash CU_{\infty}(\mathcal{A}),\quad \alpha_1(\overline{u_j'})=\overline{\langle L(u_j')\rangle},\, j=1,2,\dots,2^{n-1}$$
satisfies dist$(\overline{\langle L(w_j)\rangle},\alpha_1(\overline{w}_j))<\varepsilon,$ for $j=1,2,\dots,m.$
\end{lemma}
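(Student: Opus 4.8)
The plan is to reduce the whole statement to Lemma \ref{L_Dist} together with the elementary fact that a sufficiently multiplicative unital c.p.c.\ map carries a fixed word in unitaries to (approximately) the corresponding word in the images.

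First I would normalise the data. Since $\Theta$ is totally irrational it is not rational, so by Theorem \ref{rep K1} the natural map $U(\mathcal{A}_{\Theta})/U_0(\mathcal{A}_{\Theta})\to\K_1(\mathcal{A}_{\Theta})$ is an isomorphism, and I may take the representatives $u'_1,\dots,u'_{2^{n-1}}$ to lie in $U(\mathcal{A}_{\Theta})$. As $\K_1(\mathcal{A}_{\Theta})\cong\mathbb{Z}^{2^{n-1}}$ is free (Theorem \ref{AT and K}) with $\{[u'_j]\}$ a basis, $J_{\mathcal{A}_{\Theta}}$ is a genuine splitting homomorphism. Writing $\overline{w}_i=\sum_k c_{ik}\,\overline{u'_k}$ with $c_{ik}\in\mathbb{Z}$, injectivity of $\pi_{\mathcal{A}_{\Theta}}$ on $J_{\mathcal{A}_{\Theta}}(\K_1(\mathcal{A}_{\Theta}))$ forces $\overline{w}_i=\overline{\prod_k(u'_k)^{c_{ik}}}$, so I may as well take $w_i=\prod_k(u'_k)^{c_{ik}}\in U(\mathcal{A}_{\Theta})$. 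Because $\alpha_1$ is a homomorphism with $\alpha_1(\overline{u'_k})=\overline{\langle L(u'_k)\rangle}$, it follows that $\alpha_1(\overline{w}_i)=\overline{\prod_k\langle L(u'_k)\rangle^{c_{ik}}}$ in $U_{\infty}(\mathcal{A})/CU_{\infty}(\mathcal{A})$, provided each $L(u'_k)$ is invertible.

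Next I would invoke Lemma \ref{L_Dist}: given $\varepsilon>0$ it produces $\delta_0>0$ such that, taking the commutator there to be $1$, $\|\zeta\xi^*-1\|<\delta_0$ implies $\mathrm{dist}(\bar\zeta,\bar\xi)<\varepsilon$. Hence it suffices to choose a finite $\mathcal{G}\subset\mathcal{A}_{\Theta}$ and $\eta>0$ so that any unital $\mathcal{G}$-$\eta$-multiplicative c.p.c.\ map $L$ makes every $L(u'_k)$ invertible and satisfies $\|\langle L(w_i)\rangle-\prod_k\langle L(u'_k)\rangle^{c_{ik}}\|<\delta_0$ for $i=1,\dots,m$. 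I would let $\mathcal{G}$ consist of $1$, the $u'_k$ and $(u'_k)^{*}$, and all the partial products (and their adjoints) occurring in the fixed words $w_i$ — a finite set, since the exponents $c_{ik}$ are fixed — and set $C=\max_i\sum_k|c_{ik}|$. For $\eta$ small, $\mathcal{G}$-$\eta$-multiplicativity and unitality of $L$ (together with $L(a^*)=L(a)^*$) give $\|L(u'_k)^{*}L(u'_k)-1\|<\eta$, so $L(u'_k)$ is invertible with $\|L(u'_k)\|\le1$, $\|L(u'_k)^{-1}\|\le\sqrt2$, hence $\langle L(u'_k)\rangle$ is unitary and $\|\langle L(u'_k)\rangle-L(u'_k)\|$ is small; telescoping over at most $C$ factors gives $\|L(w_i)-\prod_kL(u'_k)^{c_{ik}}\|$ small; replacing each $L(u'_k)$ by its unitary part in this bounded-length product, and $L(w_i)$ by $\langle L(w_i)\rangle$ (valid since $w_i\in\mathcal{G}$ and so $L(w_i)$ is near-unitary), yields the required estimate after shrinking $\eta$ so that all the accumulated errors are below $\delta_0$ for every $i$. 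Then $\mathrm{dist}(\overline{\langle L(w_i)\rangle},\alpha_1(\overline{w}_i))=\mathrm{dist}\big(\overline{\langle L(w_i)\rangle},\overline{\prod_k\langle L(u'_k)\rangle^{c_{ik}}}\big)<\varepsilon$.

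The argument is essentially bookkeeping, with Lemma \ref{L_Dist} as the only non-formal ingredient. The one point requiring care is that $x\mapsto\langle x\rangle=x(x^{*}x)^{-1/2}$ is not multiplicative, only approximately so; thus the comparison of $\langle L(w_i)\rangle$ with $\prod_k\langle L(u'_k)\rangle^{c_{ik}}$ must be carried out through the fixed word length $C$, using uniform continuity of $\langle\,\cdot\,\rangle$ on a neighbourhood of $U(\mathcal{A})$ with uniform control of $\|x\|$ and $\|x^{-1}\|$ — this is exactly what makes all the error estimates independent of $\mathcal{A}$ and of $L$.
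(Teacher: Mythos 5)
Your reduction ``$I$ may as well take $w_i=\prod_k(u'_k)^{c_{ik}}$'' is not valid, and this is precisely the point where the paper does genuine work. The classes $\overline{w}_j$ are given as cosets, but the conclusion $\mathrm{dist}(\overline{\langle L(w_j)\rangle},\alpha_1(\overline{w}_j))<\varepsilon$ involves the \emph{specific} representatives $w_j\in U_\infty(\mathcal{A}_\Theta)$ (indeed, in the later application in Lemma~\ref{L} the $w_j$ are $\iota(s)$ for $s$ in a fixed set of representatives $\widetilde{\mathcal{U}}\subset U_c(\K_1(\mathcal{C}))$, which live in matrix algebras over $\mathcal{A}_\Theta$ and are in no way equal to canonical words in the $u'_k$). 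Since $L$ is only a $\mathcal{G}$-$\eta$-multiplicative c.p.c.\ map, not a homomorphism, it does not send $CU_\infty(\mathcal{A}_\Theta)$ into $CU_\infty(\mathcal{A})$: if $w_j$ and $b_j:=\prod_k(u'_k)^{c_{jk}}$ differ by an element $c_j\in CU_\infty(\mathcal{A}_\Theta)$ that is not small in norm and that $L$ is not controlled on, there is no a priori relation between $\overline{\langle L(w_j)\rangle}$ and $\overline{\langle L(b_j)\rangle}$. Changing the representative therefore changes what must be proved.

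The paper handles exactly this: it writes $w_j\approx b_j\, v_js_jv_j^*s_j^*$ to within $\delta/4$, with $b_j$ a diagonal matrix over $\{u'_k,(u'_k)^{-1}\}$ and $v_j,s_j$ explicit unitaries, then \emph{includes $v_j,s_j$ in the finite set} $\mathcal{G}$ so that $L$ carries the commutator $v_js_jv_j^*s_j^*$ approximately to a commutator $\langle L(v_j)\rangle\langle L(s_j)\rangle\langle L(v_j)\rangle^*\langle L(s_j)\rangle^*\in CU_\infty(\mathcal{A})$. That step — approximating the $CU_\infty$-discrepancy by a single commutator and pulling its factors into $\mathcal{G}$ — is the missing idea in your argument. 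The remainder of your sketch (freeness of $\K_1$, writing $\overline{w}_j$ in terms of the generators, uniform continuity of $x\mapsto\langle x\rangle$, invoking Lemma~\ref{L_Dist}) matches the paper and is fine; but without the commutator approximation the proof does not go through for the given representatives $w_j$.
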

\begin{proof}
  Let $\varepsilon>0$ be given. Choose $\delta>0$ according to Lemma 5.2 of \cite{Hua-Wang}. We can also choose $u_j'$ to be in $\mathcal{A}_{\Theta}$ according to Theorem \ref{rep K1}. Let $\mathcal{F}=\{u_j',(u_j')^{-1}|~j=1,2,\dots,2^{n-1}\}.$ For each $k\in\mathbb{N},$ define
  $$\mathcal{F}_k=\{{\rm diag}(a_1,a_2,\dots,a_k)~|~a_j\in\mathcal{F},\,\,j=1,2,\dots,k\}.$$
  It is easy to see that, there is some $N\in \mathbb{N}$ large enough such that, for $j=1,2,\dots,m,$ we can find an element $b_j\in \mathcal{F}_{N}$ and unitaries $v_j,s_j$ such that
  $$\|w_j-b_j v_js_jv^*_{j}s_j^{*}\|<\frac{\delta}{4}\,\, {\rm and}\,\, \alpha_1(\overline{w}_j)=\alpha_1(\overline{b}_j).$$
  Since $\langle L(\cdot)\rangle$ is defined by continuous functional calculus on the set of unitaries, there is a $\eta_1$ such that whenever $L:\mathcal{A}_{\Theta}\rightarrow \mathcal{A}$
  is a unital $\mathcal{F}$-$\eta_1$-multiplicative c.p.c. map, we have
  $\|\langle L(u_j')\rangle^{-1}-\langle L((u_j')^{-1})\rangle\|<\frac{\delta}{4}$, for $j=1,2,\dots,2^{n-1}.$

  Let $\mathcal{G}_1=\{v_j,s_j,v_j^*,s_j^*| j=1,2,\dots,m\}.$ There is a $\eta_2$ such that whenever $L:\mathcal{A}_{\Theta}\rightarrow \mathcal{A}$ is a unital $\mathcal{G}_1$-$\eta_2$-multiplicative c.p.c. map, we have
  $$\|\langle L(v_js_jv_j^*s_j^*)\rangle-\langle L(v_j)\rangle \langle L(s_j)\rangle \langle L(v_j)\rangle^*\langle L(s_j)\rangle^*\|<\frac{\delta}{4}\,\,\, {\rm for}\,\,j=1,2,\dots,m.$$

  Let $\mathcal{G}=\mathcal{F}\cup \mathcal{G}_1$ and let $\eta=\min\{\eta_1,\eta_2,\frac{\delta}{4}\}.$ Let $\mathcal{A}$ be a unital $C^*$-algebra. Suppose that $L:\mathcal{A}_{\Theta}\rightarrow \mathcal{A}$ is a unital $\mathcal{G}$-$\eta$-multiplicative c.p.c. map. Define $\alpha_1$ to be the homomorphism:
  $$\alpha_1:J_{\mathcal{A}_{\Theta}}(\K_1(\mathcal{A}_{\Theta}))
  \rightarrow U_{\infty}(\mathcal{A})\slash CU_{\infty}(\mathcal{A}),\quad \alpha_1(\overline{u_j'})=\overline{\langle L(u_j')\rangle},\,j=1,2,\dots,2^{n-1}.$$
  By our construction, for any $b_j\in \mathcal{F}_{N},$ there is a unitary $c_j\in U_{\infty}(\mathcal{A})$ such that
  $$\alpha_1(\overline{b}_j)=\overline{c}_j\,\,{\rm and}\,\, \|c_j-\langle L(b_j)\rangle\|<\frac{\delta}{4}.$$
  Therefore we have $\alpha_1(\overline{w}_j)=\overline{c}_j.$ Moreover, we can compute that
  \begin{eqnarray*}
     && \|c_j-\langle L(w_j)\rangle \langle L(v_j)\rangle \langle L(s_j)\rangle\langle L(v_j)\rangle^* \langle L(s_j)\rangle^*\| \\
     &<&  \|c_j-\langle L(b_jv_js_jv_j^*s_j^*)\rangle \langle L(v_j)\rangle \langle L(s_j)\rangle\langle L(v_j)\rangle^* \langle L(s_j)\rangle^*\|+\frac{\delta}{4} \\
     &<& \eta+\frac{\delta}{4}+\frac{\delta}{4}+\|c_j-\langle L(b_j)\rangle\|<\delta.
  \end{eqnarray*}
By Lemma \ref{L_Dist}, we have dist$(\alpha_1(\overline{w}_j),\overline{\langle L(w_j)\rangle})<\varepsilon,$ for $j=1,2,\dots,m.$
\end{proof}

 \begin{lemma}[Lemma 5.11 of \cite{Hua-Wang}] \label{L_Delta}
Let $(X, d)$ be an infinite compact metric space. Let $\mathcal{C} = PM_n(C(X))P$, where $P$ is a projection in $M_n(C(X))$.
Let $\tau_0$ be a faithful tracial state on $\mathcal{C}$.
Define a function $\Delta \colon (0, 10) \rightarrow (0, 1)$ by
\[
\Delta(r) = \min \{ \mu_{\tau_0}(O_{\frac{r}{10}}(x)) \,\vert\,  x \in X\},
\quad 0 < r < 10.
\]
Then $\Delta$ is a strictly increasing function such that $\lim_{r \rightarrow 0} \Delta(r) = 0$.

Moreover, for any $\eta > 0$, there exists a finite subset $\mathcal{H} \in \mathcal{C}_+ \backslash \{ 0 \}$
and a $\delta > 0$ such that, whenever $L \colon \mathcal{C} \rightarrow \mathcal{A}$ is a c.p.c. map
and $\tau$ is a tracial state on $\mathcal{A}$, if
\[
|\tau \circ L (h) - \tau_0(h)| < \delta \quad \text{for all \,\,} h \in \mathcal{H},
\]
then $\mu_{\tau \circ L}(O_r) > \Delta(r)$ for all $r \geq \eta$.
\end{lemma}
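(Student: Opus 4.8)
The plan is to handle the two assertions in turn. For the first one, monotonicity of $\Delta$ is immediate from $O_{r_1/10}(x)\subseteq O_{r_2/10}(x)$ whenever $r_1<r_2$, and strictness of the monotonicity comes from the fact that, $\tau_0$ being faithful, $\mu_{\tau_0}$ has full support: any nonempty open $U$ carries a nonzero $g\in C(X)_+$, so $0\ne gP\in\mathcal C_+$ and $\mu_{\tau_0}(U)\ge\tau_0(gP)>0$ (recall the fiberwise normalized trace sends $1_{\mathcal C}$ to $1$), whence the spherical shell swept out as $r$ increases has positive $\mu_{\tau_0}$–mass; the same full–support remark gives $\Delta(r)>0$ for every $r$. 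The minimum defining $\Delta(r)$ is attained because $x\mapsto\mu_{\tau_0}(O_{r/10}(x))$ is lower semicontinuous on the compact space $X$: if $x_n\to x$ then $O_{r/10}(x)\subseteq\liminf_n O_{r/10}(x_n)$, so Fatou's lemma applies. For $\lim_{r\to0}\Delta(r)=0$ I would argue by contradiction: if the limit were some $c>0$, monotonicity gives $\mu_{\tau_0}(O_\varepsilon(x))\ge c$ for all $x$ and all $\varepsilon>0$; letting $\varepsilon\downarrow0$ and using continuity from above of the finite measure $\mu_{\tau_0}$ yields $\mu_{\tau_0}(\{x\})\ge c$ for \emph{every} $x\in X$, so $X$ has at most $\lfloor1/c\rfloor$ points, contradicting that $X$ is infinite.

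For the ``moreover'' part, fix $\eta>0$. The idea is to replace the continuum of balls $\{O_r(x):x\in X,\ r\ge\eta\}$ by one finite list of balls carrying a finite family of Urysohn bump functions, so that a $\mu_{\tau_0}$–mass estimate for a ball transfers, via the closeness hypothesis, to a $\mu_{\tau\circ L}$–mass estimate for any ball containing it (here $\mu_{\tau\circ L}$ is the measure on $X$ obtained by restricting $\tau\circ L$ to $C(X)\cdot1_{\mathcal C}$, which reduces to the measure of the preliminaries when $\tau\circ L$ is tracial); the factor $\tfrac1{10}$ in the definition of $\Delta$ is exactly what provides room to do this uniformly. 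For $r$ larger than $\operatorname{diam}X$ one has $O_r(x)=X$ and $\mu_{\tau\circ L}(X)=\tau(L(1_{\mathcal C}))$, which is forced close to $1>\Delta(r)$, so only $\eta\le r\le R:=\operatorname{diam}X+\eta$ needs attention. I would choose small $\alpha,\gamma>0$ (fixed fractions of $\eta$), a finite $\alpha$–net $x_1,\dots,x_m$ of $X$, and a partition $\eta=r_0<r_1<\cdots<r_K=R$ of mesh $<\alpha$, arranged so that $10(r_k-\alpha-\gamma)\ge r_{k+1}$ for all $k$ (possible precisely because each $r_k\ge\eta>0$). For each pair $(k,i)$ pick $g_{k,i}\in C(X)$ with $0\le g_{k,i}\le1$, $g_{k,i}\equiv1$ on $\overline{O_{r_k-\alpha-\gamma}(x_i)}$ and $\operatorname{supp}g_{k,i}\subseteq O_{r_k-\alpha}(x_i)$, and set $\mathcal H=\{g_{k,i}P:1\le i\le m,\ 0\le k\le K\}\subseteq\mathcal C_+\setminus\{0\}$. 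Using $\mu_{\tau_0}(O_s(y))\ge\min_{y'}\mu_{\tau_0}(O_s(y'))=\Delta(10s)$ together with the strict monotonicity of $\Delta$ just established, each $\tau_0(g_{k,i}P)$ strictly exceeds $\Delta(r_{k+1})$, say by at least a common $\delta_0>0$ (a minimum over finitely many positive gaps). I would then take $\delta:=\delta_0/2$ (and small enough to also cover the trivial case $r>\operatorname{diam}X$).

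The verification is then routine: if $L\colon\mathcal C\to\mathcal A$ is c.p.c., $\tau\in T(\mathcal A)$, and $|\tau\circ L(h)-\tau_0(h)|<\delta$ for all $h\in\mathcal H$, then given $x\in X$ and $r\in[\eta,R]$ one picks $k$ with $r_k\le r\le r_{k+1}$ and $i$ with $d(x,x_i)<\alpha$, so that $\operatorname{supp}g_{k,i}\subseteq O_{r_k-\alpha}(x_i)\subseteq O_r(x)$, and then
\[
\mu_{\tau\circ L}\big(O_r(x)\big)\ \ge\ \tau\circ L(g_{k,i}P)\ >\ \tau_0(g_{k,i}P)-\delta\ \ge\ \Delta(r_{k+1})+\delta_0-\delta\ >\ \Delta(r_{k+1})\ \ge\ \Delta(r),
\]
using monotonicity of $\Delta$ at the last step. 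The part I expect to be the real obstacle is keeping all of this \emph{uniform and strict} simultaneously: a single finite $\mathcal H$ and a single $\delta$ must certify $\mu_{\tau\circ L}(O_r(x))>\Delta(r)$ for the whole continuum of pairs $(x,r)$ with $r\ge\eta$, and with strict inequality. The $\tfrac1{10}$–contraction in $\Delta$, the compactness of $X$ (finiteness of the net and of the partition), and the strict monotonicity of $\Delta$ are precisely what make the net–and–partition discretization close with room to spare, and carefully tracking the strictness through that discretization is the one delicate point.
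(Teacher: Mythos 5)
The paper does not actually prove this lemma; it simply cites Lemma~5.11 of \cite{Hua-Wang}, so there is no in-paper proof to compare against. Evaluating your argument on its own merits, the overall strategy for the ``moreover'' part --- a finite $\alpha$-net, a finite radial partition, Urysohn bumps forming $\mathcal{H}$, and using the factor $\frac1{10}$ to create slack --- is exactly the standard approach and is structurally sound. However, there is a real gap in the first paragraph that then propagates. Your justification of \emph{strict} monotonicity claims ``the spherical shell swept out as $r$ increases has positive $\mu_{\tau_0}$-mass,'' but the shell $O_{r_2/10}(x)\setminus\overline{O_{r_1/10}(x)}$ need not be a nonempty open set: if $x$ is isolated and both $r_1/10,r_2/10$ lie strictly between $0$ and the distance from $x$ to the rest of $X$, the shell is empty, and full support gives nothing. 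In fact strict monotonicity \emph{fails} in general: take $X=\{-1,0\}\cup\{2^{-n}:n\geq 1\}\subset\mathbb{R}$ with the purely atomic probability measure $\mu(\{-1\})=\tfrac1{100}$, $\mu(\{2^{-n}\})=2^{-n-1}$, $\mu(\{0\})=0.49$; then for all $r$ in a long interval the minimum defining $\Delta(r)$ is attained at the isolated point $-1$ with $O_{r/10}(-1)=\{-1\}$, so $\Delta(r)\equiv\tfrac1{100}$ there. (Your argument would be correct if $X$ were connected, since then the distance function from any point takes a full interval of values and the shell is a nonempty open set; but the hypothesis allows arbitrary infinite compact metric $X$.)

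This matters downstream: in the ``moreover'' step you use strict monotonicity to conclude that each $\tau_0(g_{k,i}P)$ exceeds $\Delta(r_{k+1})$ \emph{strictly}, and then take $\delta_0$ to be the minimum of these finitely many positive gaps. In the example above, if $x_i=-1$ is in the $\alpha$-net (and it must be, being isolated), then $g_{k,i}=1_{\{-1\}}$ is forced and $\tau_0(g_{k,i}P)=\tfrac1{100}=\Delta(r_{k+1})$ exactly, so $\delta_0=0$ and the chain of inequalities only yields $\mu_{\tau\circ L}(O_r(x))>\Delta(r)-\delta$, not $>\Delta(r)$. So as written your argument does not certify the strict inequality in the conclusion, and in fact the strict form of the conclusion is not reachable by your route for general $X$. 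The fix that matches how the lemma is actually consumed in this paper (where Theorem~\ref{T_unique} only requires a \emph{non-decreasing} $\Delta$ with $\Delta(0^+)=0$) is to replace $\Delta$ by, say, $\Delta/2$: your estimate then gives $\mu_{\tau\circ L}(O_r(x))>\Delta(r)-\delta\geq\Delta(r)/2$ once $\delta$ is small relative to $\Delta(\eta)$, which is all the application needs. A minor additional point: for $r>\operatorname{diam}X$ your reduction to $\tau(L(1_{\mathcal C}))$ being near $1>\Delta(r)$ requires including $1_{\mathcal C}$ (or $P$) in $\mathcal{H}$ and taking $\delta<1-\Delta(r)$ uniformly for $r$ in the relevant range; this is fine but should be said.
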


Let $\Theta=(\theta_{jk})\in \mathcal{T}_n$ be strongly totally irrational. Let $\varepsilon>0$ be given. Let $\mathfrak{u}_1,\mathfrak{u}_2,\dots,\mathfrak{u}_n$ be the generators of $\mathcal{A}_{\Theta}$ satisfying
$$\mathfrak{u}_k\mathfrak{u}_j=e^{2\pi i\theta_{jk}}\mathfrak{u}_j\mathfrak{u}_k\,\, {\rm for}\,\, j,k=1,2,\dots,n.$$
Recall that  $\mathcal{A}_{\Theta}$
is an $\mathrm{AT}$-algebra with the unique tracial state $\tau_{\Theta},$ and $\K_0(\mathcal{A}_{\Theta})\cong \K_1(\mathcal{A}_{\Theta})\cong \mathbb{Z}^{2^{n-1}},$ by Theorem \ref{AT and K}. By Lemma 4.1.2 of \cite{Lin-book}
we may assume $\mathcal{A}_{\Theta}=\overline{\cup_{k=1}^{\infty}\mathcal{B}_k}$, $\mathcal{B}_k\subset \mathcal{B}_{k+1},k=1,2,\dots$, where each
$\mathcal{B}_k$ is a finite direct sum of $C^*$-algebras of the form $M_m(C(X)),$ where $X$ is $S^1$, an arc or a point. Set $\mathcal{F}=\{1_{\mathcal{A}_{\Theta}},\mathfrak{u_1},\mathfrak{u}_2,\dots,\mathfrak{u}_n\}.$ Choose a sufficiently large $k$ such that for every $a$ in $\mathcal{F},$ there is an $\tilde{a}\in \mathcal{B}_k$ such that $\|a-\tilde{a}\|<\varepsilon.$ Next we  denote $\mathcal{B}_k$ by $\mathcal{C}.$
For simplicity, in the proof of Lemma \ref{L} and Theorem \ref{main result} we assume that $\mathcal{C}=M_m(C(X)).$ The proof for $\mathcal{C}$ being the direct sum of such algebras is  similar. Next we use $\iota:\mathcal{C}\rightarrow \mathcal{A}_{\Theta}$ to denote the inclusion map. Define a function $\Delta:(0,10)\rightarrow (0,1)$ by
 $$\Delta(r)=\min\{\mu_{\tau_{\mathcal{C}}}(O_{\frac{r}{10}}(x))\},\quad 0<r<10,$$
 where $\tau_{\mathcal{C}}=\tau_{\mathcal{A}_{\Theta}}|_{\mathcal{C}}.$

\begin{lemma}\label{L} Let $\Theta=(\theta_{jk})\in \mathcal{T}_n$ be strongly totally irrational. Let $\mathfrak{u}_1,\mathfrak{u}_2,\dots,\mathfrak{u}_n$ be the canonical generators of $\mathcal{A}_{\Theta}.$ Set $\mathcal{F}=\{1_{\mathcal{A}_{\Theta}},\mathfrak{u_1},\mathfrak{u}_2,\dots,\mathfrak{u}_n\}.$ Let $\mathcal{C}$ and $\Delta$ be as in the above paragraph.
Then for any finite subset $\mathcal{G}\subset \mathcal{A}_{\Theta},$  and any $\varepsilon>0,$ choose $\eta>0,\delta_1$ (in place of $\delta$), a finite subset $\widetilde{\mathcal{G}}\subset \mathcal{C},$ a finite subset $\widetilde{\mathcal{P}}\subset \underline{\K}(\mathcal{C}),$ a finite subset $\widetilde{\mathcal{H}}_0\subset \mathcal{C}_{s.a.}$ and a finite subset $\widetilde{\mathcal{U}}\subset
 U_{c}(\K_1(\mathcal{C}))$ with respect to $\Delta,$ $\varepsilon$  and $\widetilde{\mathcal{F}}$ (in place of $\mathcal{F}$) according to Theorem \ref{T_unique}. There exists $\delta>0$ such that for any unital $C^*$-algebra $\mathcal{A},$ any  $n$-tuple of unitaries $u_1,u_2,\dots,u_n$ in $\mathcal{A}$ satisfying
$$\|u_ku_j-e^{2\pi i\theta_{jk}}u_ju_k\|<\delta\,\,  for\,\, all\,\,j,k=1,2,\dots,n, $$
there is  a unital
$\mathcal{G}$-$\delta_1$-multiplicative c.p.c. map $L:\mathcal{A}_{\Theta}\rightarrow\mathcal{A}$ such that:\\
(1) there is a finite subset $\widetilde{\mathcal{F}}\subset \mathcal{C}$,  such that for any $a\in \mathcal{F}$, there exists $\tilde{a}\in\widetilde{\mathcal{F}}$ such that
$$\|\iota(\tilde{a})-a\|<\varepsilon,$$\\
(2)
$$\|L(\mathfrak{u}_j)-u_j\|<\varepsilon\,\, for\,\,j=1,2,\dots,n,$$
and
 $$L_{*1}([\mathfrak{u}_j])=[u_j]\,\, for\,\, j=1,2,\dots,n,$$
and
$$L_{*0}([P_I])=[R_{I}(u_1,u_2,\dots,u_n)]\,\,\text {for all $I$ defined as in Definition \ref{eI}},$$
where $P_I$ is as in Theorem \ref{PI} and  $R_{I}(u_1,u_2,\dots,u_n)$ is defined as in Definition \ref{RI}.\\
 (3) the homomorphism
$$\alpha_1:J_{\mathcal{A}_{\Theta}}(\K_1(\mathcal{A}_{\Theta}))\rightarrow U_{\infty}(\mathcal{A})\slash CU_{\infty}(\mathcal{A}),\quad \alpha_1(\overline{\mathfrak{u}}_j)=\overline{\langle L(\mathfrak{u}_j)\rangle},\,\, {\rm for}\,\, j=1,2,\dots,n$$
satisfies dist$(\overline{\langle L(\iota(s))\rangle},\alpha_1(\overline{\iota(s)}))<\delta_1$ for all $s\in \widetilde{\mathcal{U}}.$ \\
(4) put  $\widetilde{L}=L\circ \iota$, we have $$\|\tau\circ\widetilde{L}(h)-\tau_{\Theta}(\iota(h))\|<\delta_1 \,\, {\rm for}\, {\rm all}\,\, h\in \widetilde{\mathcal{H}}_0.$$
  Moreover,
$$\mu_{\tau\circ \widetilde{L}}(O_r)>\Delta (r)$$ for all $\tau \in T(\mathcal{A})$ and  all open balls $O_r$ in $X$ with radius $r\geq  \eta.$
\end{lemma}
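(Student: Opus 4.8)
The statement is a bookkeeping lemma that bundles four earlier results together for a single approximating map, so the plan is to invoke each of them with mutually compatible parameters. The existence of the map $L$ will come from Proposition~\ref{L-exist}; the $\K$-theoretic identities $L_{*1}([\mathfrak{u}_j])=[u_j]$ and $L_{*0}([P_I])=[R_I(u_1,\dots,u_n)]$ from Lemma~\ref{app}; the de la Harpe--Scandalis determinant estimate in (3) from Lemma~\ref{dist(u)}; and the tracial inequality together with the lower bound $\mu_{\tau\circ\widetilde L}(O_r)>\Delta(r)$ in (4) from Lemma~\ref{tracial app} followed by Lemma~\ref{L_Delta}. The one conceptual point driving the tracial part is that, since $\Theta$ is strongly totally irrational it is in particular nondegenerate, so by Theorem~\ref{AT and K} the algebra $\mathcal{A}_\Theta$ is simple with a \emph{unique} tracial state $\tau_\Theta$; this turns the existence of \emph{some} trace on $\mathcal{A}_\Theta$ in the conclusion of Lemma~\ref{tracial app} into a direct comparison with $\tau_\Theta|_{\mathcal{C}}=\tau_{\mathcal{C}}$, which is precisely the reference trace used to build $\Delta$.

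Concretely, I would first fix a finite set $\widetilde{\mathcal{F}}\subset\mathcal{C}$ of $\varepsilon$-approximants of $\mathcal{F}$ — available because $\mathcal{C}=\mathcal{B}_k$ was chosen in the preceding paragraph to $\varepsilon$-contain $\mathcal{F}$ — which already yields (1); then feed $\mathcal{C},\Delta,\varepsilon,\widetilde{\mathcal{F}}$ into Theorem~\ref{T_unique} to obtain $\eta,\delta_1,\widetilde{\mathcal{G}},\widetilde{\mathcal{P}},\widetilde{\mathcal{H}}_0,\widetilde{\mathcal{U}}$. Next I would apply the second assertion of Lemma~\ref{L_Delta} with this $\eta$ to get a finite set $\mathcal{H}_1\subset\mathcal{C}_+\setminus\{0\}$ and $\delta_{\mathrm{LD}}>0$; apply Lemma~\ref{tracial app} to $\mathcal{A}_\Theta$ with the self-adjoint finite set $\iota(\widetilde{\mathcal{H}}_0)\cup\iota(\mathcal{H}_1)$ and tolerance $\min\{\delta_1,\delta_{\mathrm{LD}}\}$ to get $\mathcal{G}_{\mathrm{tr}}\subset\mathcal{A}_\Theta$, $\eta_{\mathrm{tr}}>0$; apply Lemma~\ref{app} to obtain $\mathcal{G}_{\mathrm K}\subset\mathcal{A}_\Theta$, $\eta_{\mathrm K}>0$, $\varepsilon_{\mathrm K}>0$; and, after fixing once and for all a $\mathbb{Z}$-basis of $\K_1(\mathcal{A}_\Theta)$ whose first $n$ members are $[\mathfrak u_1],\dots,[\mathfrak u_n]$ (possible since these classes are independent and extend to a basis), together with a compatible splitting $J_{\mathcal{A}_\Theta}$ and a set of representatives $U_c(\K_1(\mathcal{A}_\Theta))$, apply Lemma~\ref{dist(u)} with target tolerance $\delta_1$ to the finite subset $\iota(\widetilde{\mathcal{U}})\subset U_\infty(\mathcal{A}_\Theta)$ to get $\mathcal{G}_{\mathrm d}\subset\mathcal{A}_\Theta$, $\eta_{\mathrm d}>0$, the resulting homomorphism automatically satisfying $\alpha_1(\overline{\mathfrak u}_j)=\overline{\langle L(\mathfrak u_j)\rangle}$ as required in (3). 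Finally I would set $\mathcal{G}'=\mathcal{G}\cup\{\mathfrak u_1,\dots,\mathfrak u_n\}\cup\iota(\widetilde{\mathcal{G}})\cup\mathcal{G}_{\mathrm{tr}}\cup\mathcal{G}_{\mathrm K}\cup\mathcal{G}_{\mathrm d}$, enlarge it by a finite subset and shrink $\eta'$ (to at most $\min\{\delta_1,\eta_{\mathrm{tr}},\eta_{\mathrm K},\eta_{\mathrm d}\}$) so that $L_*$ is defined on $\widetilde{\mathcal{P}}$ and on $[\mathfrak u_j],[P_I]$ for every unital $\mathcal{G}'$-$\eta'$-multiplicative c.p.c.\ map, using \cite{Dadarlat-IJM,Lin-book}, and put $\varepsilon'=\min\{\varepsilon,\varepsilon_{\mathrm K}\}$.

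The last step is to apply Proposition~\ref{L-exist} with $\mathcal{G}',\eta',\varepsilon'$, obtaining $\delta>0$, which I would shrink further so that it lies below the constant of Proposition~\ref{gap} (ensuring $e_I(u_1,\dots,u_n)$ and $R_I(u_1,\dots,u_n)$ are defined). For $\mathcal{A}$ and $u_1,\dots,u_n$ with $\|u_ku_j-e^{2\pi i\theta_{jk}}u_ju_k\|<\delta$ one gets a single unital $\mathcal{G}'$-$\eta'$-multiplicative c.p.c.\ map $L\colon\mathcal{A}_\Theta\to\mathcal{A}$ with $\|L(\mathfrak u_j)-u_j\|<\varepsilon'$, and this \emph{one} $L$ is then checked against every clause: $\mathcal{G}$-$\delta_1$-multiplicativity is immediate from $\mathcal{G}\subset\mathcal{G}'$ and $\eta'\le\delta_1$; (2) follows from Lemma~\ref{app} and $\varepsilon'\le\varepsilon$; (3) follows from Lemma~\ref{dist(u)}; and for (4), Lemma~\ref{tracial app} combined with uniqueness of the trace on $\mathcal{A}_\Theta$ gives $|\tau\circ L(a)-\tau_\Theta(a)|<\min\{\delta_1,\delta_{\mathrm{LD}}\}$ for all $\tau\in T(\mathcal{A})$ and all $a\in\iota(\widetilde{\mathcal{H}}_0)\cup\iota(\mathcal{H}_1)$, which is precisely the stated inequality on $\widetilde{\mathcal{H}}_0$, while its restriction to $\mathcal{H}_1$ feeds into Lemma~\ref{L_Delta} applied to $\widetilde L=L\circ\iota\colon\mathcal{C}\to\mathcal{A}$ and yields $\mu_{\tau\circ\widetilde L}(O_r)>\Delta(r)$ for all $r\ge\eta$. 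I do not expect any individual step to be hard; the main thing to get right is the \emph{simultaneity} — all four conclusions must hold for the same $L$, which is why one composite finite set $\mathcal{G}'$ and a single application of Proposition~\ref{L-exist} are used — together with the compatible choice of basis, splitting and representatives for $\K_1(\mathcal{A}_\Theta)$ that makes the homomorphism $\alpha_1$ of clause (3) literally coincide with the one produced by Lemma~\ref{dist(u)}.
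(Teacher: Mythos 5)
Your proof is correct and follows essentially the same route as the paper's: choose the data of Theorem~\ref{T_unique}, then feed Lemma~\ref{L_Delta}, Lemma~\ref{tracial app} (using uniqueness of $\tau_\Theta$), Lemma~\ref{app}, and Lemma~\ref{dist(u)} to produce one composite finite set and tolerance, obtain a single $L$ from Proposition~\ref{L-exist}, and verify all four clauses for that $L$. Your explicit remarks that $[\mathfrak{u}_1],\dots,[\mathfrak{u}_n]$ should be chosen as initial members of the $\mathbb{Z}$-basis of $\K_1(\mathcal{A}_\Theta)$ (so that $\alpha_1(\overline{\mathfrak{u}}_j)=\overline{\langle L(\mathfrak{u}_j)\rangle}$ is literally the map from Lemma~\ref{dist(u)}) and that the given $\mathcal{G}$ must be included in the composite set $\mathcal{G}'$ are points the paper leaves implicit, but they are in the spirit of the same argument rather than a different approach.
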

\begin{proof} For (1), it is obvious according to the discussion above this lemma.

 Next we use $[u_1'],[u_2'],\dots,[u_{2^{n-1}}']$ to denote a set of generators of $\K_1(\mathcal{A}_{\Theta})$.
We define a splitting map:
$$J_{\mathcal{A}_{\Theta}}:\K_1(\mathcal{A}_{\Theta})\rightarrow U_{\infty}(\mathcal{A}_{\Theta})\slash CU_{\infty}(\mathcal{A}_{\Theta}),\quad J_{\mathcal{A}_{\Theta}}([u_j'])=\overline{u_j'},\, j=1,2,\dots,2^{n-1}.$$
 Then there is a splitting map $J_{\mathcal{C}}:\K_1(\mathcal{C})\rightarrow U_{\infty}(\mathcal{C})\slash CU_{\infty}(\mathcal{C})$ such that the following diagram commutes:
$$\xymatrix{
 U_{\infty}(\mathcal{C})\slash CU_{\infty}(\mathcal{C}) \ar[d]^{\iota^{\ddag}} & \ar[l]_{\phantom{aaaaa}J_{\mathcal{C}}} K_1(\mathcal{C})
    \ar[d]^{\iota_*} \\
 U_{\infty}(\mathcal{A}_{\Theta})\slash CU_{\infty}(\mathcal{A}_{\Theta}) &\ar[l]_{\phantom{aaaaaa}J_{\mathcal{A}_{\Theta}}} \K_1(\mathcal{A}_{\Theta}).}$$

 Choose $\eta>0,\delta_1$ (in place of $\delta$), a finite subset $\widetilde{\mathcal{G}}\subset \mathcal{C},$ a finite subset $\widetilde{\mathcal{P}}\subset \underline{\K}(\mathcal{C}),$ a finite subset $\widetilde{\mathcal{H}}_0\subset \mathcal{C}_{s.a.}$ and a finite subset $\widetilde{\mathcal{U}}\subset
 U_{c}(\K_1(\mathcal{C}))$ with respect to $\Delta,$ $\varepsilon$  and $\widetilde{\mathcal{F}}$ (in place of $\mathcal{F}$) according to Theorem \ref{T_unique}.

  By Lemma \ref{L_Delta}, there exists a finite subset $\widetilde{\mathcal{H}}_1\in \mathcal{C}_{+}\backslash \{0\}$ and a $\delta_2\leq \delta_1$ such that, for any $C^*$-algebra $\mathcal{A}$, whenever $\widetilde{L}:\mathcal{C}\rightarrow \mathcal{A}$ is a c.p.c. map and $\tau$ is a tracial state on $\mathcal{A},$ if
 \begin{center}$\|\tau\circ \widetilde{L}(h)-\tau_{\mathcal{C}}(h)\|<\delta_2$ for all $h\in \widetilde{\mathcal{H}}_1,$
 \end{center}
 then $\mu_{\tau\circ \widetilde{L}}(O_r)> \Delta(r),$ for all $r\geq \eta.$

 Let $\mathcal{H}=\iota(\widetilde{\mathcal{H}}_0\cup\widetilde{\mathcal{H}}_1)$. Choose a finite subset $\mathcal{G}_0\subset \mathcal{A}_{\Theta}$, a $\delta_3>0$ and a positive integer $N$ with respect to $\mathcal{H}$ and $\delta_2$ (in place of $\varepsilon$) according to Lemma \ref{tracial app}.

 Let $\overline{\mathcal{U}}=\{\overline{\iota(s)}|s\in \widetilde{\mathcal{U}}\}.$ Then $\overline{\mathcal{U}}$ is a finite subset of
 $J_{\mathcal{A}_{\Theta}}(\K_1(\mathcal{A}_{\Theta})).$ By Lemma \ref{dist(u)}, there is a finite subset $\mathcal{G}_1$ and a $\delta_4$ such that, for any unital $C^*$-algebra $\mathcal{A}$ and any unital $\mathcal{G}_1$-$\delta_4$-multiplicative c.p.c. map $L:\mathcal{A}_{\Theta}\rightarrow \mathcal{A},$ there is a homomorphism
 $$\alpha_1:J_{\mathcal{A}_{\Theta}}(\K_1(\mathcal{A}_{\Theta}))\rightarrow U_{\infty}(\mathcal{A})\slash CU_{\infty}(\mathcal{A})$$
 such that dist$(\overline{\langle L(\iota(s))\rangle},\alpha_1(\overline{\iota(s)}))<\delta_1$ for all $s\in \widetilde{\mathcal{U}}.$

 Let $\mathcal{P}=\iota_*(\widetilde{\mathcal{P}}).$ Since $\K_*(\mathcal{A}_{\Theta})$ is torsion free, by Proposition 2.4 of \cite{Schochet-1984}, we have $\K_*(\mathcal{A}_{\Theta};\mathbb{Z}\slash m\mathbb{Z})\cong \K_*(\mathcal{A}_{\Theta})\otimes \mathbb{Z}\slash m\mathbb{Z}$ for any $m\in \mathbb{Z}_+.$ By the description of $\K_*(\mathcal{A}_{\Theta}),$ we may assume without loss of generality that
 $\mathcal{P}=\{[1_{\mathcal{A}_{\Theta}}],[u_1'],[u_2'],\dots,[u_{2^{n-1}}']\}\bigcup
 \{ [P_I]| I=(i_1,i_2,\dots,i_{l}), 1\leq i_1<i_2<\cdots<i_l\leq \lfloor\frac{n}{2}\rfloor,l=2m, m\in\{1,2,\dots,\lfloor\frac{n}{2}\rfloor\}\}$.

 By Lemma \ref{app}, we can choose a finite subset $\mathcal{G}_2\subset \mathcal{A}_{\Theta}$, $0<\varepsilon_0<\varepsilon$ and $\delta_5>0$ so that
 whenever $u_1,u_2,\dots,u_n$ are unitaries in $\mathcal{A}$ and $L:\mathcal{A}_{\Theta}\rightarrow \mathcal{A}$ is a $\mathcal{G}_2$-$\delta_5$-multiplicative c.p.c. map such that
 $$\|L(\mathfrak{u}_j)-u_j\|<\varepsilon_0\,\, {\rm for}\,\, j=1,2,\dots,n,$$
 then
 $$L_{*0}([P_I])=[R_{I}(u_1,u_2,\dots,u_n)]\,\,{\rm for}\,\, {\rm every\,\, possible}\,\, I,$$
 and
 $$L_{*1}([\mathfrak{u}_j])=[u_j]\,\,{\rm for}\,\, j=1,2,\dots,n.$$

Set $\mathcal{G}=\iota(\widetilde{\mathcal{G}})\cup \mathcal{G}_0\cup \mathcal{G}_1\cup \mathcal{G}_2.$ Let $\delta_6=\min\{\delta_1,\delta_2,\delta_3,\delta_4,\delta_5\}.$ Find a positive number $\delta\leq \delta_6$ according to $\mathcal{G},$ $\delta_6$ (in place of $\eta$) and $\varepsilon_0>0$ (in place of $\varepsilon$) as in Proposition \ref{L-exist}.

Now suppose that $\mathcal{A}$ is a unital simple $C^*$-algebra with tracial rank at most one. Let $u_1,u_2,\dots,u_n\in \mathcal{A}$ be unitaries such that
\begin{eqnarray*}
\,\|u_ku_j-e^{2\pi i\theta_{jk}}u_ju_k\|<\delta,\,\,j,k=1,2,\dots,n.
\end{eqnarray*}

  By Proposition \ref{L-exist}, there is a unital $\mathcal{G}$-$\delta_6$-multiplicative c.p.c. map
$L:\mathcal{A}_{\Theta}\rightarrow \mathcal{A}$ such that $\|L(\mathfrak{u}_j)-u_j\|<\varepsilon_0$ for $j=1,2,\dots,n$. By Lemma \ref{dist(u)}, the homomorphism
$$\alpha_1:J_{\mathcal{A}_{\Theta}}(\K_1(\mathcal{A}_{\Theta}))\rightarrow U_{\infty}(\mathcal{A})\slash CU_{\infty}(\mathcal{A}),\quad \alpha_1(\overline{\mathfrak{u}}_j)=\overline{\langle L(\mathfrak{u}_j)\rangle},\,\, {\rm for}\,\, j=1,2,\dots,n$$
satisfies dist$(\overline{\langle L(\iota(s))\rangle},\alpha_1(\overline{\iota(s)}))<\delta_1$ for all $s\in \widetilde{\mathcal{U}}.$

Since $\tau_{\Theta}$ is the only tracial state on $\mathcal{A}_{\Theta}$, by Lemma \ref{tracial app}, we have
$$\|\tau\circ L(h)-\tau_{\Theta}(h)\|<\delta_2\,\, {\rm for}\, {\rm all}\,\, h\in \mathcal{H}.$$

Therefore
$$\|\tau\circ\widetilde{L}(h)-\tau_{\Theta}(\iota(h))\|=\|\tau\circ L(\iota(h))-\tau\circ \widetilde{\phi}(h)\|<\delta_2\leq \delta_1$$
for all $\tau\in T(\mathcal{A})$ and $h\in \widetilde{\mathcal{H}}_0.$

The same computation shows that
$$\|\tau\circ\widetilde{L}(h)-\tau_{\Theta}(\iota(h))\|<\delta_2 \,\, {\rm for}\, {\rm all}\,\, h\in \widetilde{\mathcal{H}}_1.$$

So by Lemma \ref{L_Delta}, we have
$$\mu_{\tau\circ \widetilde{L}}(O_r)>\Delta (r)\,\, {\rm for}\,{\rm all}\,\,r\geq \eta.$$
\end{proof}

Now let us prove our main result.
\begin{theorem}\label{main result}
  Let $\Theta=(\theta_{jk})\in \mathcal{T}_n$ be strongly totally irrational. Then for any $\varepsilon>0,$ there exists $\delta>0$ satisfying the following: For any unital simple separable $C^*$-algebra $\mathcal{A}$ with tracial rank at most one, any $n$-tuple of unitaries $u_1,u_2,\dots,u_n$ in $\mathcal{A}$ such that
 \begin{eqnarray}\|u_ku_j-e^{2\pi i\theta_{jk}}u_ju_k\|<\delta,\,\,\,j,k=1,2,\dots,n,\label{app11}\nonumber
 \end{eqnarray}
  and
 \begin{eqnarray}\label{t11}\tau(R_{I}(u_1,u_2,\dots,u_n))={\rm pf}(M_{I}^{\Theta})+\sum\limits_{0<|J|< l}{\rm pf}(M_{J}^{\Theta})k_{I_J}+k_{I_0}\label{assump2}
  \end{eqnarray}
  for all possible  $I=(i_1,i_2,\dots,i_{l})$, $ 1\leq i_1<i_2<\cdots<i_l\leq n,$ $l=2m,$ $m\in\{1,2,\dots,\lfloor\frac{n}{2}\rfloor\},$ and all tracial states $\tau$ on $\mathcal{A}$, where  $R_{I}(u_1,u_2,\dots,u_n)$ is defined as in Definition \ref{RI},
  $k_{I_J},k_{I_0}\in \mathbb{Z}$  is the same as  in Corollay \ref{PI},
  $J=(i_1,i_2,\dots,i_{s})$,  $|J|=s;$ there exists an $n$-tuple of unitaries $\tilde{u}_1,\tilde{u}_2,\dots,\tilde{u}_n$ in $\mathcal{A}$ such that
 \begin{eqnarray}\tilde{u}_k\tilde{u}_j=e^{2\pi i\theta_{jk}}\tilde{u}_j\tilde{u}_k\,\, {\rm and}\,\, \|\tilde{u}_j-u_j\|<\varepsilon,\,\, j,k=1,2,\dots,n.\label{=}\nonumber
 \end{eqnarray}
\end{theorem}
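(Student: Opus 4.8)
The plan is to run the existence--uniqueness scheme outlined before Theorem~\ref{T_unique}. Given $\varepsilon>0$ I would first replace it by $\varepsilon/5$ to leave room for later triangle inequalities, and record two structural facts: by Theorem~\ref{AT and K}, $\mathcal A_\Theta$ is a unital simple AT-algebra --- in particular a unital AH-algebra --- with unique (faithful) tracial state $\tau_\Theta$, and $\tau_\Theta(\K_0(\mathcal A_\Theta))$ is dense in $\mathbb R$ since $\Theta$ is totally irrational. We may also assume $\mathcal A$ is infinite dimensional: otherwise $\mathcal A\cong M_k$, so the left-hand side of $(\ref{t11})$ lies in $\tfrac1k\mathbb Z$, while its right-hand side equals $\mathrm{pf}(M_I^\Theta)$ plus an integral combination of $1$ and the strictly smaller pfaffians and is therefore irrational (these numbers are rationally independent), making the hypotheses vacuous. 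Thus the conclusion follows once we produce a unital monomorphism $\varphi\colon\mathcal A_\Theta\to\mathcal A$ together with a unitary $w$ such that $w\varphi(\mathfrak u_j)w^*$ is within $\varepsilon$ of $u_j$; the exact rotation relations for $\tilde u_j:=w\varphi(\mathfrak u_j)w^*$ are then automatic.

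Next I would invoke Lemma~\ref{L} with the finite subalgebra $\mathcal C\subseteq\mathcal A_\Theta$ and the data $\eta,\delta_1,\widetilde{\mathcal G},\widetilde{\mathcal P},\widetilde{\mathcal H}_0,\widetilde{\mathcal U},\widetilde{\mathcal F},\Delta$ there. For $\delta$ small it supplies a unital $\mathcal G$-$\delta_1$-multiplicative c.p.c.\ map $L\colon\mathcal A_\Theta\to\mathcal A$ with $\|L(\mathfrak u_j)-u_j\|<\varepsilon$, with $L_{*1}([\mathfrak u_j])=[u_j]$ and $L_{*0}([P_I])=[R_I(u_1,\dots,u_n)]$, with a homomorphism $\alpha_1$ on $J_{\mathcal A_\Theta}(\K_1(\mathcal A_\Theta))$ obeying the $\mathrm{dist}$ estimate on $\widetilde{\mathcal U}$, and with $|\tau\circ(L\circ\iota)(h)-\tau_\Theta(\iota(h))|<\delta_1$ on $\widetilde{\mathcal H}_0$ and $\mu_{\tau\circ(L\circ\iota)}(O_r)>\Delta(r)$ for all $\tau$ and $r\ge\eta$. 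Since $\K_*(\mathcal A_\Theta)\cong\mathbb Z^{2^{n-1}}$ is free and $\{[1_{\mathcal A_\Theta}]\}\cup\{[P_I]\}$ is a basis of $\K_0(\mathcal A_\Theta)$ by Theorem~\ref{PI}, I would define $\kappa\in\KK(\mathcal A_\Theta,\mathcal A)$ by extending $L_*$ off the finite set $\mathcal P$ of Lemma~\ref{L}: concretely $\kappa([1_{\mathcal A_\Theta}])=[1_{\mathcal A}]$, $\kappa([P_I])=[R_I(u_1,\dots,u_n)]$, and $\kappa|_{\K_1}=L_{*1}$. Let $\gamma\colon T(\mathcal A)\to T(\mathcal A_\Theta)=\{\tau_\Theta\}$ be the (forced) constant affine map, and let $\alpha$ be $\alpha_1$, regarded as a homomorphism on $U(M_\infty(\mathcal A_\Theta))/CU(M_\infty(\mathcal A_\Theta))$ via the splitting $(\ref{E_Thomsen_es})$ and the vanishing of $\mathrm{Aff}(T(\mathcal A_\Theta))/\overline{\rho_{\mathcal A_\Theta}(\K_0(\mathcal A_\Theta))}$.

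The crux --- and the only genuinely delicate point --- is that $(\kappa,\gamma,\alpha)$ is a compatible triple with $\kappa\in\KL_e(\mathcal A_\Theta,\mathcal A)^{++}$, and this is exactly where the trace hypotheses $(\ref{t11})$ enter. For any $x=a_0[1_{\mathcal A_\Theta}]+\sum_I a_I[P_I]\in\K_0(\mathcal A_\Theta)$ with $a_0,a_I\in\mathbb Z$ and any $\tau\in T(\mathcal A)$,
\[
\tau(\kappa(x))=a_0+\sum_I a_I\,\tau\bigl([R_I(u_1,\dots,u_n)]\bigr)=a_0+\sum_I a_I\,\tau_\Theta([P_I])=\tau_\Theta(x),
\]
the middle equality being $(\ref{t11})$ together with the value $\tau_\Theta(P_I)=\mathrm{pf}(M_I^\Theta)+\sum_{0<|J|<l}\mathrm{pf}(M_J^\Theta)k_{I_J}+k_{I_0}$ of Theorem~\ref{PI}. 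Hence $\gamma$ is compatible with $\kappa$; and since $\mathcal A_\Theta$ is simple (so $\tau_\Theta(x)>0$ for $x\in\K_0(\mathcal A_\Theta)_+\setminus\{0\}$) and $\mathcal A$ has strict comparison ($\mathrm{TR}(\mathcal A)\le1$), the displayed identity forces $\kappa(\K_0(\mathcal A_\Theta)_+\setminus\{0\})\subseteq\K_0(\mathcal A)_+\setminus\{0\}$; with $\kappa([1_{\mathcal A_\Theta}])=[1_{\mathcal A}]$ this gives $\kappa\in\KL_e(\mathcal A_\Theta,\mathcal A)^{++}$. Compatibility of $\alpha$ with $\kappa$ holds because $\alpha$ induces $L_{*1}=\kappa|_{\K_1}$ on $\K_1$, and compatibility of $\alpha$ with $\gamma$ is automatic since the affine part of $U(M_\infty(\mathcal A_\Theta))/CU$ is trivial. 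I expect this verification to be the main obstacle; conversely it makes transparent why $(\ref{t11})$ will turn out to be necessary.

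Finally I would apply the existence Theorem~\ref{T_exist} to obtain a unital monomorphism $\varphi\colon\mathcal A_\Theta\to\mathcal A$ with $\varphi_*=\kappa$, $\varphi_\sharp=\gamma$, $\varphi^{\ddagger}=\alpha$, and then the uniqueness Theorem~\ref{T_unique} to the two unital $\widetilde{\mathcal G}$-$\delta_1$-multiplicative c.p.c.\ maps $L_1:=L\circ\iota$ and $L_2:=\varphi\circ\iota$ from $\mathcal C$ to $\mathcal A$. Their induced maps agree on $\widetilde{\mathcal P}$ (both restrict to $\kappa\circ\iota_*$, using that $L_*$ and $\kappa$ coincide on $\mathcal P$); for $\tau\in T(\mathcal A)$ one has $\tau\circ L_2=\tau_\Theta\circ\iota$, so the trace estimate on $\widetilde{\mathcal H}_0$ follows from Lemma~\ref{L} and the measure bound $\mu_{\tau\circ L_2}(O_r)=\mu_{\tau_{\mathcal C}}(O_r)>\Delta(r)$ holds by the definition of $\Delta$, while the bound for $L_1$ is part of Lemma~\ref{L}; and for $s\in\widetilde{\mathcal U}$, $\langle L_2(s)\rangle=\varphi(\iota(s))$ with $\overline{\varphi(\iota(s))}=\alpha_1(\overline{\iota(s)})$, so the $\mathrm{dist}$ estimate is exactly Lemma~\ref{L}(3). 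Theorem~\ref{T_unique} then yields a unitary $w\in\mathcal A$ with $\|w^*L_1(f)w-L_2(f)\|<\varepsilon$ for all $f\in\widetilde{\mathcal F}$, in particular for the approximants $\widetilde{\mathfrak u}_j\in\mathcal C$ of $\mathfrak u_j$. Setting $\tilde u_j:=w\varphi(\mathfrak u_j)w^*$, the relations $\tilde u_k\tilde u_j=e^{2\pi i\theta_{jk}}\tilde u_j\tilde u_k$ hold because $\varphi$ is a homomorphism, and inserting the intermediate terms $w\varphi(\iota(\widetilde{\mathfrak u}_j))w^*\approx L(\iota(\widetilde{\mathfrak u}_j))\approx L(\mathfrak u_j)\approx u_j$ --- using that $\varphi$ and $L$ are contractive, $\|\iota(\widetilde{\mathfrak u}_j)-\mathfrak u_j\|<\varepsilon$, the uniqueness estimate, and $\|L(\mathfrak u_j)-u_j\|<\varepsilon$ --- gives $\|\tilde u_j-u_j\|<5\varepsilon$, which after the initial rescaling is the desired bound. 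This completes the plan.
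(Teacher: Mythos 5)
Your proof is correct and follows essentially the same route as the paper: invoke Lemma~\ref{L} to produce the almost homomorphism $L$ with matching $\K$-theory and tracial data, build $\kappa=(\kappa_0,\kappa_1)$ via $[P_I]\mapsto[R_I(u_1,\dots,u_n)]$, use the trace hypotheses (\ref{t11}) together with strict comparison to put $\kappa$ in $\KL_e(\mathcal{A}_\Theta,\mathcal{A})^{++}$ and verify compatibility with the constant map $\gamma$ and the $U/CU$ map $\alpha$, then apply the existence theorem (Theorem~\ref{T_exist}) to get a monomorphism $\varphi$ and the uniqueness theorem (Theorem~\ref{T_unique}) to $L\circ\iota$ and $\varphi\circ\iota$ to produce the conjugating unitary. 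Your preliminary observation that the trace hypotheses are vacuous when $\mathcal{A}$ is finite dimensional (needed since Theorem~\ref{T_exist} requires $\mathcal{A}$ infinite dimensional) is a point the paper leaves implicit, and your remark that $\mathrm{Aff}(T(\mathcal{A}_\Theta))/\overline{\rho_{\mathcal{A}_\Theta}(\K_0(\mathcal{A}_\Theta))}$ is trivial (unique trace plus total irrationality) streamlines the construction of $\alpha$ slightly; otherwise the two proofs coincide up to bookkeeping of $\varepsilon/3$ versus $\varepsilon/5$.
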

\begin{proof}Let $\varepsilon>0$ be given. Let $\mathfrak{u}_1,\mathfrak{u}_2,\dots,\mathfrak{u}_n$ be the generators of $\mathcal{A}_{\Theta}$ satisfying
$$\mathfrak{u}_k\mathfrak{u}_j=e^{2\pi i\theta_{jk}}\mathfrak{u}_j\mathfrak{u}_k\,\, {\rm for}\,\, j,k=1,2,\dots,n.$$
Set $\mathcal{F}=\{1_{\mathcal{A}_{\Theta}},\mathfrak{u_1},\mathfrak{u}_2,\dots,\mathfrak{u}_n\}.$  Choose $\mathcal{C}$ using Lemma \ref{L} (replace $\varepsilon$ with $\frac{\varepsilon}{3})$, so there is a finite subset $\widetilde{\mathcal{F}}\subset\mathcal{C}$ such that for every $a$ in $\mathcal{F},$ there is an $\tilde{a}\in \widetilde{\mathcal{F}}$ such that $\|a-\tilde{a}\|<\frac{\varepsilon}{3}.$ We use $\iota:\mathcal{C}\rightarrow \mathcal{A}_{\Theta}$ to denote the inclusion map.

 Note that $[1]$ and $\{[P_{I}]|I=(i_1,i_2,\dots,i_{l}), 1\leq i_1<i_2<\cdots<i_l\leq n, l=2m, m=1,2,\dots,\lfloor\frac{n}{2}\rfloor\}$ are the generators of $\K_0(\mathcal{A}_{\Theta})$ by Theorem \ref{PI}. We define $\kappa_0:\K_0(\mathcal{A}_{\Theta})\rightarrow \K_0(\mathcal{A})$ to be the homomorphism induced by
$$\kappa_0([1_{\mathcal{A}_{\Theta}}])=[1_{\mathcal{A}}],\, \, \kappa_0([P_I])=[R_{I}(u_1,u_2,\dots,u_n)]\,\,{\rm for\,\,all\,\, possible}\,\, I.$$
We claim that this is a positive homomorphism. Indeed, let $[p]\in \K_0(\mathcal{A}_{\Theta})$ be a positive element. Then $(\tau_{\Theta})_*([p])>0.$
There are  integers $n_0,n_{I}$ for every possible $I$ such that $$[p]=n_0[1_{\mathcal{A}_{\Theta}}]+\sum\limits_I n_I [P_I].$$ So for any tracial state $\tau$ on $\mathcal{A}$, by (\ref{pI1}) and (\ref{assump2}) we compute that
\begin{eqnarray*}
  \kappa_0([p])(\tau) &=& n_0 \tau_*([1_{\mathcal{A}}])+\sum\limits_{I}n_I\tau_*([R_{I}(u_1,u_2,\dots,u_n)]) \\
   &=& n_0 (\tau_{\Theta})_*([1_{\mathcal{A}_{\Theta}}])+\sum\limits_{I}n_I(\tau_{\Theta})_*([P_I])=(\tau_{\mathcal{B}})_*([p])>0.
\end{eqnarray*}
By Theorem 3.7.2 of \cite{Lin-book} $\mathcal{A}$ has strict comparison, which shows that $\kappa_0([p])$ is positive.

Define a map
$$\gamma:T(\mathcal{A})\rightarrow T(\mathcal{A}_{\Theta}),\,\, \gamma(\tau)=\tau_{\Theta},\,\, {\rm for}\,{\rm all}\, \tau\in T(\mathcal{A}).$$
It induces an affine map $\gamma_{\sharp}:{\rm Aff}(T(\mathcal{A}_{\Theta}))\rightarrow {\rm Aff}(T(\mathcal{A})).$ For any tracial state $\tau\in\mathcal{A},$ we can compute that
$$\gamma_{\sharp}(\hat{p})(\tau)=\hat{p}(\gamma(\tau))=\hat{p}(\tau_{\Theta})
=\tau_{\Theta}(p)=\kappa_0([p])(\tau).$$
It follows that $\gamma_{\sharp}(\overline{\rho_{\mathcal{A}_{\Theta}}(\K_0(\mathcal{A}_{\Theta}))})\subset \overline{\rho_{\mathcal{A}}(\K_0(\mathcal{A}))}.$ Therefore there is an induced homomorphism (denoted by the same symbol)
$$\gamma_{\sharp}:{\rm Aff}(T(\mathcal{A}_{\Theta}))\slash \overline{\rho_{\mathcal{A}_{\Theta}}(\K_0(\mathcal{A}_{\Theta}))}\rightarrow {\rm Aff}(T(\mathcal{A}))\slash \overline{\rho_{\mathcal{A}}(\K_0(\mathcal{A}))}.$$

Since $\K_1(\mathcal{A}_{\Theta})$ is free abelian, it is clear that there is a homomorphism
$\kappa_1:\K_1(\mathcal{A}_{\Theta})\rightarrow \K_1(\mathcal{A})$ such that the following diagram commutes:
$$\xymatrix{
U_{\infty}(\mathcal{A}_{\Theta})\slash CU_{\infty}(\mathcal{A}_{\Theta}) \ar[d]^{\alpha_1} &\ar[l]_{\phantom{aaaaaaa}J_{\mathcal{A}_{\Theta}}} \K_1(\mathcal{A}_{\Theta})
    \ar[d]^{\kappa_1} \\
 U_{\infty}(\mathcal{A})\slash CU_{\infty}(\mathcal{A}) &\ar[l]_{\phantom{aaaaaa}J_{\mathcal{A}}} \K_1(\mathcal{A}).}$$

 Use the split exact sequence (\ref{E_Thomsen_es}), we can find a homomorphism
 $\alpha:U_{\infty}(\mathcal{A}_{\Theta})\slash CU_{\infty}(\mathcal{A}_{\Theta})\rightarrow  U_{\infty}(\mathcal{A})\slash CU_{\infty}(\mathcal{A})$ so that the following
 diagram commutes:
$$\xymatrix{
  0  \ar[r]^{} & {\rm Aff}(T(\mathcal{A}_{\Theta}))\slash \overline{\rho_{\mathcal{A}_{\Theta}}(\K_0(\mathcal{A}_{\Theta}))} \ar[d]_{\gamma_{\sharp}} \ar[r]^{} & U_{\infty}(\mathcal{A}_{\Theta})\slash CU_{\infty}(\mathcal{A}_{\Theta}) \ar@{.>}[d]_{\alpha} {\phantom a}{\phantom a}{\phantom a}
  \overset{\pi_{\mathcal{A}_{\Theta}}}{\underset{J_{\mathcal{A}_{\Theta}}}\leftrightarrows} & \K_{1}(\mathcal{A}_{\Theta}) \ar[r]^{}\ar[d]_{\kappa_1} & 0  \phantom{,}\\
  0 \ar[r]^{} & {\rm Aff}(T(\mathcal{A}))\slash \overline{\rho_{\mathcal{A}}(\K_0(\mathcal{A}))} \ar[r]^{} & U_{\infty}(\mathcal{A})\slash CU_{\infty}(\mathcal{A}) \ar[r]^{\phantom{aaaaaa}\pi_{\mathcal{A}}} & \K_{1}(\mathcal{A}) \ar[r]^{} & 0. }$$
Let $\kappa=(\kappa_0,\kappa_1).$ Then $\kappa\in \KL_{e}(\mathcal{A}_{\Theta},\mathcal{A})^{++}.$ It follows from the commutative diagram
that $\kappa,\gamma$ and $\alpha$ are compatible. Therefore, by Theorem \ref{T_exist} there is a unital monomorphism $\phi:\mathcal{A}_{\Theta}\rightarrow \mathcal{A}$ such that $$\phi_*=\kappa,\,\, \phi_{\sharp}=\gamma\,\, {\rm and}\,\, \phi^{\ddag}=\alpha.$$

Now for any finite subset $\mathcal{G}\subset \mathcal{A}_{\Theta},$  and any $\varepsilon>0,$ choose $\eta>0,\delta_1$ , a finite subset $\widetilde{\mathcal{G}}\subset \mathcal{C},$ a finite subset $\widetilde{\mathcal{P}}\subset \underline{\K}(\mathcal{C}),$ a finite subset $\widetilde{\mathcal{H}}_0\subset \mathcal{C}_{s.a.}$ and a finite subset $\widetilde{\mathcal{U}}\subset
 U_{c}(\K_1(\mathcal{C}))$ with respect to $\Delta,$ $\varepsilon\slash 3$  and $\widetilde{\mathcal{F}}$  in Lemma \ref{L}. There exists $\delta>0$ such that for any unital $C^*$-algebra $\mathcal{A},$ any  $n$-tuple of unitaries $u_1,u_2,\dots,u_n$ in $\mathcal{A}$ satisfying
$$\|u_ku_j-e^{2\pi i\theta_{jk}}u_ju_k\|<\delta\,\,  for\,\, all\,\,j,k=1,2,\dots,n, $$
there is  a unital
$\mathcal{G}$-$\delta_1$-multiplicative c.p.c. map $L:\mathcal{A}_{\Theta}\rightarrow\mathcal{A}$ such that (1),(2),(3),(4) of Lemma \ref{L} hold.

We now compare the two maps $\widetilde{L}=L\circ \iota$ and $\widetilde{\phi}=\phi\circ \iota.$ It is easy to see from (1),(2),(3),(4) of Lemma \ref{L} and the construction of $\phi$ that
$$\widetilde{L}_*|_{\widetilde{\mathcal{P}}}=\widetilde{\phi}_*|_{\widetilde{\mathcal{P}}}$$
and
$${\rm dist}(\overline{\langle \widetilde{L}(s)\rangle}, \overline{\langle\widetilde{\phi}(s)}\rangle)={\rm dist}(\overline{\langle L(\iota(s))\rangle},\overline{\phi(\iota(s))})<\delta_1\,\,{\rm for}\,{\rm all}\,\, s\in \widetilde{\mathcal{\mathcal{U}}}.$$

Since $\tau_{\Theta}$ is the only tracial state on $\mathcal{A}_{\Theta}$,  we have
$$\|\tau\circ\widetilde{L}(g)-\tau\circ \widetilde{\phi}(g)\|=\|\tau\circ L(\iota(g))-\tau_{\Theta}(\iota(g))\|< \delta_1$$
for all $\tau\in T(\mathcal{A})$ and $g\in \mathcal{H}_0.$

Note that
$$\|\tau\circ \widetilde{\phi}(h)-\tau_{\Theta}(\iota(h))\|=0<\delta_2 \,\, {\rm for}\, {\rm all}\,\, h\in \mathcal{C}.$$
So by Lemma \ref{L_Delta}, we have
$$ \mu_{\tau\circ \widetilde{\phi}}(O_r)>\Delta (r)\,\, {\rm for}\,{\rm all}\,\,r\geq \eta.$$
Note that $\mu_{\tau\circ \widetilde{L}}(O_r)>\Delta (r)$ for all $r\geq \eta$ by (4) of Lemma \ref{L}.

Now by Theorem \ref{T_unique}, there is a unitary $s\in \mathcal{A}$ such that
$$\|s^*\widetilde{\phi}(a)s-\widetilde{L}(a)\|<\frac{\varepsilon}{3}\,\,\, {\rm for}\,{\rm all}\,\, a\in \widetilde{\mathcal{F}}.$$
So we can find $v_j\in \widetilde{\mathcal{F}}$ for $j=1,2,\dots,n$ such that
$$\|\iota(v_j)-\mathfrak{u}_j\|<\frac{\varepsilon}{6}\,\,{\rm for}\,\, j=1,2,\dots,n.$$
We can then compute that
\begin{eqnarray*}
  \|s^* \phi(\mathfrak{u}_j)s-u_j\| &\leq& \|s^* \phi(\iota(v_j))s-L(\mathfrak{u}_j)\|+\frac{\varepsilon}{3} \\
   &\leq & \|s^*\widetilde{\phi}(v_j)s-L(\iota(v_j))\|+\frac{\varepsilon}{3}+\frac{\varepsilon}{3} \\
   &\leq& \|s^*\widetilde{\phi}(v_j)s-\widetilde{L}(v_j)\|+\frac{2\varepsilon}{3}<\varepsilon,
\end{eqnarray*}
for $j=1,2,\dots,n.$

Let $\tilde{u}_j=s^*\phi(\mathfrak{u}_j)s$ for $j=1,2,\dots,n.$ Then
$$\tilde{u}_k\tilde{u}_j=e^{2\pi i\theta_{jk}}\tilde{u}_j\tilde{u}_k\quad {\rm and}\quad \|\tilde{u}_j-u_j\|<\varepsilon,\,\, j,k=1,2,\dots,n.$$
\end{proof}

Next let us see some low-dimensional examples, i.e. the cases of $n=2,3,4$.

\

{\bf (\romannumeral1) Case $n=2.$}
\begin{corollary}Let $\theta$ be an irrational number in $(0,1).$ Then for any $\varepsilon>0,$ there exists $\delta>0$ satisfying the following: For any unital simple separable $C^*$-algebra $\mathcal{A}$ with tracial rank at most one, any pair of unitaries $u_1,u_2$ in $\mathcal{A}$ with
$$\|u_2u_1-e^{2\pi i\theta}u_1u_2\|<\delta,$$and
$$\tau(\log_{\theta}(u_2u_1u_2^*u_1^*))=2\pi i\theta\,\, {\rm for}\,\,{\rm all}\,\,\tau\in T(\mathcal{A}),$$
there exists a pair of unitaries $\tilde{u}_1,\tilde{u}_2$ in $\mathcal{A}$ such that
$$\tilde{u}_2\tilde{u}_1=e^{2\pi i\theta}\tilde{u}_1\tilde{u}_2\,\, {\rm and}\, \, \|\tilde{u}_1-u_1\|<\varepsilon,\,\|\tilde{u}_2-u_2\|<\varepsilon.$$

\end{corollary}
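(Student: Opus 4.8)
The plan is to obtain this corollary as the $n=2$ instance of Theorem~\ref{main result}. First I would let $\Theta\in\mathcal{T}_2$ be the matrix with $\theta_{12}=\theta$. Since $\theta$ is irrational, the only nontrivial pfaffian minor of $\Theta$ is $\mathrm{pf}(M^{\Theta}_{(1,2)})=\theta$, so $1$ and $\theta$ are rationally independent and $\Theta$ is totally irrational; as $n=2$, Definition~\ref{a3} imposes nothing further, so $\Theta$ is strongly totally irrational. Theorem~\ref{main result} therefore applies to $\Theta$ and, for the given $\varepsilon>0$, yields some $\delta>0$ (which I may take smaller, using Proposition~\ref{gap}, Proposition~\ref{P_RieffelEgap} and Definition~\ref{logarithm}, so that $R_{(1,2)}(u_1,u_2)$, $R_\theta(u_1,u_2)$ and $\log_\theta(u_2u_1u_2^*u_1^*)$ are all defined) with the desired stability conclusion, provided the trace hypotheses~(\ref{t11}) are met.

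Next I would check that, for $n=2$, condition~(\ref{t11}) is literally the hypothesis of the corollary. The only admissible index is $I=(1,2)$ with $l=2$, $m=1$; the sum $\sum_{0<|J|<l}$ in~(\ref{t11}) is empty, and the projection $P_{(1,2)}$ of Theorem~\ref{PI} is the Rieffel projection, whose canonical trace equals $\mathrm{pf}(M^{\Theta}_{(1,2)})=\theta$, so the integer $k_{I_0}$ appearing there is $0$. By Definition~\ref{RI}, $R_{(1,2)}(u_1,u_2)$ coincides with $R_\theta(u_1,u_2)$ of Definition~\ref{D_RieffelE}, and the generalized Exel trace formula (Theorem~\ref{trace formula}) gives
\[
\tau\bigl(R_{(1,2)}(u_1,u_2)\bigr)=\rho_{\mathcal{A}}\bigl(R_\theta(u_1,u_2)\bigr)(\tau)=\frac{1}{2\pi i}\tau\bigl(\log_\theta(u_2u_1u_2^*u_1^*)\bigr)
\]
for every $\tau\in T(\mathcal{A})$. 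Hence the condition $\tau(\log_\theta(u_2u_1u_2^*u_1^*))=2\pi i\theta$ for all $\tau$ is precisely~(\ref{t11}) in this case.

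With this identification the corollary follows at once: if $\|u_2u_1-e^{2\pi i\theta}u_1u_2\|<\delta$ and $\tau(\log_\theta(u_2u_1u_2^*u_1^*))=2\pi i\theta$ for all tracial states $\tau$ on $\mathcal{A}$, then~(\ref{t11}) holds, so Theorem~\ref{main result} supplies unitaries $\tilde u_1,\tilde u_2\in\mathcal{A}$ with $\tilde u_2\tilde u_1=e^{2\pi i\theta}\tilde u_1\tilde u_2$ and $\|\tilde u_j-u_j\|<\varepsilon$ for $j=1,2$. I do not anticipate a genuine obstacle here: the only points requiring care are the bookkeeping that forces $k_{I_0}=0$ and the use of the Exel-type trace formula to rewrite the logarithm condition as the $\K_0$-trace condition of Theorem~\ref{main result}. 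This $n=2$ case is exactly Theorem~\ref{HH} with the hypothesis of tracial rank zero relaxed to tracial rank at most one.
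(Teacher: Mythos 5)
Your proof is correct and follows essentially the same route as the paper: you invoke the $n=2$ case of Theorem~\ref{main result}, note that the only admissible index is $I=(1,2)$, and use the generalized Exel trace formula (Theorem~\ref{trace formula}) to identify the logarithm condition $\tau(\log_\theta(u_2u_1u_2^*u_1^*))=2\pi i\theta$ with the $\K_0$-trace condition~(\ref{t11}) (with $k_{I_0}=0$). Your added detail — that $\Theta$ is automatically strongly totally irrational for $n=2$, and that $\delta$ can be shrunk so all the relevant functional-calculus expressions are well-defined — is consistent with what the paper leaves implicit.
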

\begin{proof}Since $n=2,$ the only possible value of $I$ is $(1,2)$ in Theorem \ref{main result}.
By the generalized Exel trace formula (\ref{Exel formula}), $\tau(R_{I}(u_1,u_2))=\frac{1}{2\pi i}\tau(\log_{\theta}(u_2 u_1 u_2^* u_1^*))=\theta$ for all $\tau\in T(\mathcal{A}).$ So $u_1$ and $u_2$ satisfy the conditions of Theorem \ref{main result}, we get the conclusion.
\end{proof}

This case is a generalization of any unital simple separable $C^*$-algebra with tracial rank zero in Theorem \ref{HH} to any unital simple separable $C^*$-algebra with tracial rank at most one when $\theta$ is an irrational number.

\

{\bf (\romannumeral2) Case $n=3.$}
\begin{corollary}Let $\Theta=(\theta_{jk})_{3\times 3}$ be a totally irrational real skew-symmetric matrix in $\mathcal{T}_3$ and $\theta_{jk}\in (0,1)$ for $j,k=1,2,3.$ In this case total irrationality means that the three numbers $\theta_{12}, \theta_{13}, \theta_{23}$ are independent over $\Q.$
Then for any $\varepsilon>0,$ there exists $\delta>0$ satisfying the following: For any unital simple separable $C^*$-algebra $\mathcal{A}$ with tracial rank at most one, any triple of unitaries $u_1,u_2,u_3$ in $\mathcal{A}$ with
$$\|u_ku_j-e^{2\pi i\theta_{jk}}u_ju_k\|<\delta\,\, {\rm for}\,\, j,k=1,2,3,$$
and
$$\tau(\log_{\theta_{jk}}(u_ku_ju_k^*u_j^*))=2\pi i\theta_{jk}\,\, {\rm for}\,\,{\rm all}\,\,\tau\in T(\mathcal{A})\,\,{\rm and}\,\, j,k=1,2,3,$$
there exists a triple of unitaries $\tilde{u}_1,\tilde{u}_2,\tilde{u}_3$ in $\mathcal{A}$ such that$$\tilde{u}_k\tilde{u}_j=e^{2\pi i\theta_{jk}}\tilde{u}_j\tilde{u}_k\,\, {\rm and}\,\, \|\tilde{u}_j-u_j\|<\varepsilon\,\, {\rm for}\,\, j,k=1,2,3.$$
\end{corollary}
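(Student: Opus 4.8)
The proof will be a direct application of Theorem~\ref{main result} in the case $n=3$, in the same spirit as the proof given above for the case $n=2$; the only work is to verify the two standing hypotheses of that theorem, namely that $\Theta$ is strongly totally irrational and that the trace conditions (\ref{t11}) hold. First I would note that for $n=3$ strong total irrationality coincides with total irrationality: by Definition~\ref{a3} the positivity conditions (\ref{a2}) are imposed only when $n\geq 4$, so a $\Theta\in\mathcal{T}_3$ with $1,\theta_{12},\theta_{13},\theta_{23}$ (equivalently $\theta_{12},\theta_{13},\theta_{23}$) rationally independent is automatically strongly totally irrational, and in particular $\mathcal{A}_\Theta$ is simple.

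Next I would pin down the index sets appearing in (\ref{t11}). Since $\lfloor 3/2\rfloor=1$, the admissible $I=(i_1,\dots,i_l)$ have $l=2m$ with $m=1$, i.e.\ $l=2$, so they are exactly the three pairs $(1,2),(1,3),(2,3)$. For $I=(j,k)$ with $j<k$ one has ${\rm pf}(M_I^\Theta)=\theta_{jk}$; there is no index set $J$ with $0<|J|<2$, so the sum $\sum_{0<|J|<l}{\rm pf}(M_J^\Theta)k_{I_J}$ in (\ref{t11}) is empty; and the integer $k_{I_0}$ of Theorem~\ref{PI} may be taken to be $0$, because the projection $P_I$ there is obtained by applying the base case $m=l-1$ of Theorem~\ref{even n projection} to $M_I^\Theta$, i.e.\ it is the Rieffel projection of $\mathcal{A}_{\theta_{jk}}$ (Definition~\ref{def of Rieffel P}), whose canonical trace is exactly $\theta_{jk}$ since $\theta_{jk}\in(0,1)$. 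Thus the hypothesis (\ref{t11}) collapses to the single requirement $\tau\big(R_{(j,k)}(u_1,u_2,u_3)\big)=\theta_{jk}$ for all $j<k$ and all $\tau\in T(\mathcal{A})$.

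Finally I would verify this requirement with the generalized Exel trace formula. For $\delta$ small the element $e_{(j,k)}(u_1,u_2,u_3)$ of Definition~\ref{eI} depends only on $u_j,u_k$ and has a spectral gap at $1/2$ (Proposition~\ref{gap}), and the functional-calculus argument used around (\ref{p=p})--(\ref{b1}) identifies $[R_{(j,k)}(u_1,u_2,u_3)]$ with $R_{\theta_{jk}}(u_j,u_k)$ in $\K_0(\mathcal{A})$, exactly as in the case $n=2$ of Definition~\ref{RI}. Theorem~\ref{trace formula} then gives
\[
\tau\big(R_{(j,k)}(u_1,u_2,u_3)\big)=\rho_{\mathcal{A}}\big(R_{\theta_{jk}}(u_j,u_k)\big)(\tau)=\frac{1}{2\pi i}\,\tau\big(\log_{\theta_{jk}}(u_ku_ju_k^*u_j^*)\big),
\]
and the hypothesis $\tau(\log_{\theta_{jk}}(u_ku_ju_k^*u_j^*))=2\pi i\theta_{jk}$ yields the value $\theta_{jk}$. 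Hence, after shrinking $\delta$ so that Proposition~\ref{gap}, Definition~\ref{D_RieffelE} and Theorem~\ref{main result} all apply, Theorem~\ref{main result} produces the desired unitaries $\tilde{u}_1,\tilde{u}_2,\tilde{u}_3$. I do not expect a genuine obstacle: the only points needing care are the bookkeeping that $k_{I_0}=0$ and the identification of the truncated class $[R_{(j,k)}(u_1,u_2,u_3)]$ with the exact Rieffel class $R_{\theta_{jk}}(u_j,u_k)$, both of which follow from continuity of functional calculus together with the fact that nearby projections are Murray--von Neumann equivalent. Alternatively, since total irrationality forces nondegeneracy, the corollary is also immediate from Theorem~\ref{nondegenerate stability} once the trace conditions are matched as above.
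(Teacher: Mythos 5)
Your proposal is correct and follows essentially the same route as the paper: reduce the hypotheses of Theorem~\ref{main result} to the three pairs $I=(j,k)$, observe that ${\rm pf}(M^\Theta_{(j,k)})=\theta_{jk}$ with the auxiliary sum empty and $k_{I_0}=0$, and translate the stated trace condition via the generalized Exel trace formula (\ref{Exel formula}). You fill in the bookkeeping the paper leaves implicit — that strong total irrationality is automatic for $n=3$, that the Rieffel projection contributes no integer offset when $\theta_{jk}\in(0,1)$, and that $[R_{(j,k)}(u_1,u_2,u_3)]$ identifies with $R_{\theta_{jk}}(u_j,u_k)$ — but the underlying argument is identical.
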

\begin{proof}Since $n=3,$ the possible values of $I$ are $(1,2),(1,3)$ and $(2,3)$ in Theorem \ref{main result}.
By the generalized Exel trace formula (\ref{Exel formula}), $\tau(R_{(j,k)}(u_1,u_2,u_3))=\frac{1}{2\pi i}\tau(\log_{\theta_{jk}}(u_k u_j u_k^* u_j^*))=\theta_{jk}$ for $1\leq j<k\leq 3$ and all $\tau\in T(\mathcal{A}).$ So $u_1,u_2$ and $u_3$ satisfy the conditions of Theorem \ref{main result}, we get the conclusion.
\end{proof}

Of course, this case is included in Theorem \ref{nondegenerate stability}.

\

{\bf (\romannumeral3) Case $n=4.$}
\begin{corollary}Let $\Theta=(\theta_{jk})_{4\times 4}$ be a strongly totally irrational real skew-symmetric matrix in $\mathcal{T}_4,$ where $\theta_{jk}\in [0,1)$ for $j,k=1,2,3,4.$
Then for any $\varepsilon>0,$ there exists $\delta>0$ satisfying the following: For any unital simple separable $C^*$-algebra $\mathcal{A}$ with tracial rank at most one, any $4$-tuple of unitaries $u_1,u_2,u_3,u_4$ in $\mathcal{A}$ such that
$$\|u_ku_j-e^{2\pi i\theta_{jk}}u_ju_k\|<\delta,\, {\rm for}\, j,k=1,2,3,4,$$
there exists a $4$-tuple of unitaries $\tilde{u}_1,\tilde{u}_2,\tilde{u}_3,\tilde{u}_4$ in $\mathcal{A}$ such that
$$\tilde{u}_k\tilde{u}_j=e^{2\pi i\theta_{jk}}\tilde{u}_j\tilde{u}_k\,\, {\rm and}\,\, \|\tilde{u}_j-u_j\|<\varepsilon,\,\, {\rm for}\,\, j,k=1,2,3,4$$
if and only if
\begin{eqnarray}\tau(\log_{\theta_{jk}}(u_ku_ju_k^*u_j^*))=2\pi i\theta_{jk}\,\, {\rm for}\,\,{\rm all}\,\,\tau\in T(\mathcal{A})\,\,{\rm and}\,\, j,k=1,2,3,4,\label{c1}
\end{eqnarray}
and
\begin{eqnarray}\label{P4}
\tau(R_{(1,2,3,4)}(u_1,u_2,u_3,u_4))=\theta_{12}\theta_{34}-\theta_{13}\theta_{24}+\theta_{14}\theta_{23}+k\theta_{12}\,\, {\rm for}\,{\rm all}\,\tau\in T(\mathcal{A}),
\end{eqnarray}
where $R_{(1,2,3,4)}(u_1,u_2,u_3,u_4)$ is defined as in Definition \ref{RI}, $k$ is an integer as in (\ref{pI1}).
\end{corollary}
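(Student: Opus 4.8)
The plan is to recognise the hypotheses (\ref{c1}) and (\ref{P4}) as exactly the trace conditions (\ref{t11}) of Theorem \ref{main result} written out for $n=4$. Once that reformulation is in place, the ``if'' direction is an immediate application of Theorem \ref{main result}, and the ``only if'' direction follows from the standard necessity argument for stability problems, via the homomorphism produced by an exact solution.

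First I would list the index sets $I$ occurring in Theorem \ref{main result} for $n=4$: the six pairs $I=(j,k)$ with $1\le j<k\le 4$, and the single set $I=(1,2,3,4)$. For a pair $I=(j,k)$ one has $l=|I|=2$, so the sum $\sum_{0<|J|<l}\mathrm{pf}(M_J^\Theta)k_{I_J}$ in (\ref{t11}) is empty and (\ref{t11}) reduces to $\tau(R_{(j,k)}(u_1,\dots,u_4))=\mathrm{pf}(M_{(j,k)}^\Theta)+k_{(j,k)}=\theta_{jk}+k_{(j,k)}$, where $k_{(j,k)}$ is the integer from (\ref{pI1}) with $\tau_\Theta(P_{(j,k)})=\theta_{jk}+k_{(j,k)}$. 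But $P_{(j,k)}$ is the Rieffel projection attached to the $2\times2$ block of $\Theta$ indexed by $\{j,k\}$, and since $\Theta$ is totally irrational the number $\theta_{jk}$ is irrational, hence lies in $(0,1)$ (it is in $[0,1)$ by hypothesis), so $\tau_\Theta(P_{(j,k)})=\theta_{jk}$ and $k_{(j,k)}=0$. On the $\mathcal{A}$ side, $R_{(j,k)}(u_1,\dots,u_4)$ depends only on $u_j,u_k$ and, by the $n=2$ identification recorded in Definition \ref{RI}, $[R_{(j,k)}(u_1,\dots,u_4)]$ coincides with $R_{\theta_{jk}}(u_j,u_k)$ of Definition \ref{D_RieffelE} in $\K_0(\mathcal{A})$; the generalized Exel trace formula (Theorem \ref{trace formula}) then gives $\tau(R_{(j,k)}(u_1,\dots,u_4))=\frac{1}{2\pi i}\tau(\log_{\theta_{jk}}(u_ku_ju_k^*u_j^*))$. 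Hence (\ref{t11}) for $I=(j,k)$ is equivalent to (\ref{c1}).

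For $I=(1,2,3,4)$ one has $l=4$, and the recursive construction of Theorem \ref{even n projection} takes two steps. At $m=1$ the matrix $F^1(\Theta)$ is $2\times2$ with $\mathrm{pf}(F^1(\Theta))=\mathrm{pf}(\Theta)/\theta_{12}\notin\mathbb{Z}$, and $p_1\in\mathcal{A}_{F^1(\Theta)}$ is a Rieffel projection with $\tau_{F^1(\Theta)}(p_1)=\mathrm{pf}(F^1(\Theta))+k^{(1)}_0$ for a single integer $k^{(1)}_0$; at $m=0$ the Morita step multiplies the trace by $\mathrm{pf}(\Theta_{11})=\theta_{12}$, which lies in $(0,1)$ by strong total irrationality, so by (\ref{q1}) and (\ref{f2}),
\[
\tau_\Theta(P_{(1,2,3,4)})=\theta_{12}\bigl(\mathrm{pf}(F^1(\Theta))+k^{(1)}_0\bigr)=\mathrm{pf}(\Theta)+k^{(1)}_0\theta_{12}=\theta_{12}\theta_{34}-\theta_{13}\theta_{24}+\theta_{14}\theta_{23}+k^{(1)}_0\theta_{12}.
\]
Thus in (\ref{pI1}) every integer coefficient vanishes except the one multiplying $\mathrm{pf}(M_{(1,2)}^\Theta)=\theta_{12}$, which equals $k:=k^{(1)}_0$, and (\ref{t11}) for $I=(1,2,3,4)$ is precisely (\ref{P4}). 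Combining the last two paragraphs, the pair (\ref{c1}), (\ref{P4}) is equivalent to (\ref{t11}) holding for all admissible $I$, so the ``if'' direction is Theorem \ref{main result} applied to $u_1,\dots,u_4$.

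For the ``only if'' direction, suppose unitaries $\tilde u_1,\dots,\tilde u_4$ satisfy $\tilde u_k\tilde u_j=e^{2\pi i\theta_{jk}}\tilde u_j\tilde u_k$ and $\|\tilde u_j-u_j\|<\varepsilon$. By universality there is a unital homomorphism $\psi\colon\mathcal{A}_\Theta\to\mathcal{A}$ with $\psi(\mathfrak{u}_j)=\tilde u_j$, and by Remark \ref{remark1} together with (\ref{b1}) one has $[R_I(\tilde u_1,\dots,\tilde u_4)]=\psi_*([P_I])$ in $\K_0(\mathcal{A})$ for every $I$; since $\tau\circ\psi$ is a tracial state on $\mathcal{A}_\Theta$ and $\mathcal{A}_\Theta$ has the unique trace $\tau_\Theta$ (Theorem \ref{AT and K}), this gives $\tau(R_I(\tilde u_1,\dots,\tilde u_4))=\tau_\Theta(P_I)$, which by (\ref{pI1}) is the right-hand side of (\ref{t11}). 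To transfer this to the $u_j$, observe that each $e_I$ is a fixed self-adjoint $\ast$-polynomial in the $u_j^{\pm1}$ of degree bounded by the integer $N$ of (\ref{rep}), so $\|e_I(u_1,\dots,u_4)-e_I(\tilde u_1,\dots,\tilde u_4)\|\le C_\Theta\max_j\|u_j-\tilde u_j\|$ for a constant $C_\Theta$ depending only on $\Theta$; shrinking $\delta$ so that the relevant perturbation is small and using Proposition \ref{gap} to keep the spectral gap at $1/2$ for both $e_I(u_1,\dots,u_4)$ and $e_I(\tilde u_1,\dots,\tilde u_4)$, the projections $\chi_{(1/2,\infty)}(e_I(u_1,\dots,u_4))$ and $\chi_{(1/2,\infty)}(e_I(\tilde u_1,\dots,\tilde u_4))$ become norm-close, hence $[R_I(u_1,\dots,u_4)]=[R_I(\tilde u_1,\dots,\tilde u_4)]$ and $\tau(R_I(u_1,\dots,u_4))$ again equals the right-hand side of (\ref{t11}); reversing the reformulation of the two middle paragraphs (using Theorem \ref{trace formula} for the pairs) then delivers (\ref{c1}) and (\ref{P4}). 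The step I expect to be the main obstacle is the $I=(1,2,3,4)$ bookkeeping: one must unwind the two-step Morita--Rieffel construction of $P_{(1,2,3,4)}$ to verify that exactly one integer survives in (\ref{pI1}), that it is the coefficient of $\theta_{12}$, and that it is the ``$k$'' named in (\ref{P4}); a secondary technical point is the $\K_0$-class transfer in the necessity argument, which forces the choice of $\delta$ to depend on both $\varepsilon$ and $\Theta$.
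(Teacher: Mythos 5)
Your proposal is correct and follows the same overall strategy as the paper's (very terse) proof: reduce both directions to the trace conditions of Theorem~\ref{main result}, using the generalized Exel trace formula (Theorem~\ref{trace formula}) to translate the $|I|=2$ conditions into~(\ref{c1}) and the $I=(1,2,3,4)$ condition into~(\ref{P4}), and then cite the main theorem for sufficiency and the homomorphism/uniqueness-of-trace argument for necessity. Where you go beyond the paper is in the bookkeeping: the paper's proof of this corollary simply lists the admissible $I$ and applies Theorem~\ref{main result} without explaining why no extra integer appears in the $|I|=2$ case (you verify $k_{(j,k)}=0$ from total irrationality and $\theta_{jk}\in[0,1)$), and it cites $(\ref{pI1})$ for the single integer $k$ in~(\ref{P4}) even though that formula formally carries two integers $k_{I,(1,2)}$ and $k_{I,0}$; your unwinding of the two-step Rieffel/Morita recursion to show $\tau_\Theta(P_{(1,2,3,4)})=\mathrm{pf}(\Theta)+k^{(1)}_0\theta_{12}$, hence $k_{I,0}=0$, is a genuine clarification that the paper leaves implicit. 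You also prove the ``only if'' direction inside the corollary proof, whereas the paper handles necessity once and for all in the separate proposition at the end of Section~\ref{sec:stability} and tacitly invokes it here; your argument coincides with that proposition's, with the small extra care of keeping both $e_I(u_1,\ldots,u_4)$ and $e_I(\tilde u_1,\ldots,\tilde u_4)$ in the spectral-gap regime so the $\K_0$-classes agree. One small point worth flagging but not a gap: both you and the paper rely on the identification $[R_{(j,k)}(u_1,\dots,u_4)]=R_{\theta_{jk}}(u_j,u_k)$ in $\K_0(\mathcal{A})$ before applying Theorem~\ref{trace formula}; the paper records this only for $n=2$ in Definition~\ref{RI}, and although it follows for $|I|=2$ inside a larger $n$ since $e_{(j,k)}$ depends only on $u_j,u_k$ and is norm-close to $e_{\theta_{jk}}(u_j,u_k)$ via~(\ref{p=p}), it deserves a sentence rather than being assumed silently.
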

\begin{proof}Since $n=4,$ the possible values of $I$ are $\{(j,k)|1\leq j<k\leq4\}\cup \{(1,2,3,4)\}$ in Theorem \ref{main result}.
By the generalized Exel trace formula (\ref{Exel formula}) and the assumption (\ref{c1}), $\tau(R_{(j,k)})=\frac{1}{2\pi i}\tau(\log_{\theta_{jk}}(u_k u_j u_k^* u_j^*))=\theta_{jk}$ for $1\leq j<k\leq 4,$ and by the assumption (\ref{P4}) $\tau(R_{(1,2,3,4)}(u_1,u_2,u_3,u_4))=\theta_{12}\theta_{34}-\theta_{13}\theta_{24}+\theta_{14}\theta_{23}+k\theta_{12}\,\,$ for all $\tau\in T(\mathcal{A})$. So $u_1,u_2,u_3,u_4$ satisfy the conditions of Theorem \ref{main result}, we get the conclusion.
\end{proof}

Finally, we show that  the trace conditions (\ref{t11})
are necessary to prove the stability of rotation relations in Theorem \ref{main result}.

\begin{proposition}
  Let $\Theta=(\theta_{jk})\in \mathcal{T}_n$ be strongly totally irrational.
  There exists a $\delta>0$ such that: For any unital $C^*$-algebra $\mathcal{A}$
  with $T(\mathcal{A})\neq \emptyset$, any unitaries $u_1,u_2,\dots,u_n$ and $\tilde{u}_1,\tilde{u}_2,\dots,\tilde{u}_n$
  in $\mathcal{A}$ satisfying the following:\\
  (1)\,\,$\|u_ku_j-e^{2\pi i\theta_{jk}}u_ju_k\|<2\,\,{\rm for}\,\,j,k=1,2,\dots,n,$\\
  (2)\,\,$\tilde{u}_k\tilde{u}_j=e^{2\pi i\theta_{jk}}\tilde{u}_j\tilde{u}_k\,\,{\rm for}\,\,j,k=1,2,\dots,n,$\\
  (3)\,\,$\|\tilde{u}_j-u_j\|<\delta\,\, {\rm for}\,\,j=1,2,\dots,n,$\\
  we have \begin{eqnarray}\label{t111}\tau(R_{I}(u_1,u_2,\dots,u_n))={\rm pf}(M_{I}^{\Theta})+\sum\limits_{0<|J|< l}{\rm pf}(M_{J}^{\Theta})k_{I_J}+k_{I_0}\nonumber
  \end{eqnarray}
  for all possible  $I=(i_1,i_2,\dots,i_{l})$, $1\leq i_1<i_2<\cdots<i_l\leq n,$ $l=2m,$ $m\in\{1,2,\dots,\lfloor\frac{n}{2}\rfloor\},$ and all tracial states $\tau$ on $\mathcal{A}$, where  $R_{I}(u_1,u_2,\dots,u_n)$ is defined as in Definition \ref{RI},
  $k_{I_J},k_{I_0}\in \mathbb{Z}$  is the same as  in Corollary \ref{PI},
  $J=(i_1,i_2,\dots,i_{s})$,  $|J|=s.$
 \end{proposition}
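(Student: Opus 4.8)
The plan is to exploit the fact that the exactly twisted-commuting tuple $\tilde u_1,\dots,\tilde u_n$ induces, by the universal property of $\mathcal A_\Theta$, a genuine unital $*$-homomorphism $\varphi\colon\mathcal A_\Theta\to\mathcal A$ with $\varphi(\mathfrak u_j)=\tilde u_j$, to compare $R_I(u_1,\dots,u_n)$ with $\varphi(P_I)$ at the level of $\K_0(\mathcal A)$, and then to read off the trace from Theorem~\ref{PI} together with the uniqueness of the tracial state on $\mathcal A_\Theta$.

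\textbf{Step 1 (set-up and choice of $\delta$).} I fix once and for all the finitely many projections $P_I\subset\mathcal A_\Theta$ of Theorem~\ref{PI}, together with the integer $N$ and the scalars $a_{I,k_1,\dots,k_l}$ of $(\ref{coefficient})$–$(\ref{Nk})$; all of these depend only on $\Theta$. Since the elements $e_I(u_1,\dots,u_n)$ of Definition~\ref{eI} are fixed noncommutative $*$-polynomials in the $u_{i_1},\dots,u_{i_l}$ and their adjoints, of degree at most $N$ in each variable and with fixed coefficients, the estimates $\|u^k-\tilde u^k\|\le|k|\,\|u-\tilde u\|$ for unitaries together with the product rule give a constant $C=C(\Theta)$ with $\|e_I(u_1,\dots,u_n)-e_I(\tilde u_1,\dots,\tilde u_n)\|\le C\max_j\|u_j-\tilde u_j\|$ for all $n$-tuples in every unital $C^*$-algebra. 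Moreover hypothesis (3) forces $\|u_ku_j-e^{2\pi i\theta_{jk}}u_ju_k\|\le\|u_ku_j-\tilde u_k\tilde u_j\|+\|e^{2\pi i\theta_{jk}}(\tilde u_j\tilde u_k-u_ju_k)\|\le 4\delta$, using $\tilde u_k\tilde u_j=e^{2\pi i\theta_{jk}}\tilde u_j\tilde u_k$. I therefore choose $\delta>0$ (depending only on $\Theta$) small enough that $4\delta$ is smaller than the constant of Proposition~\ref{gap} — so that $\mathrm{spec}(e_I(u_1,\dots,u_n))$ has a gap at $\tfrac12$ and $R_I(u_1,\dots,u_n)$ of Definition~\ref{RI} is defined — and small enough that $C\delta<\delta'$, where $\delta'$ is the constant of Lemma~\ref{hfx} for $\varepsilon=\tfrac12$.

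\textbf{Step 2 (comparison of $\K_0$-classes).} Because $\tilde u_1,\dots,\tilde u_n$ satisfy the rotation relations exactly, Remark~\ref{remark1} applies: $\varphi(\widetilde P_I)=e_I(\tilde u_1,\dots,\tilde u_n)$, the element $R_I(\tilde u_1,\dots,\tilde u_n)=\chi_{(\frac12,\infty)}(e_I(\tilde u_1,\dots,\tilde u_n))=\varphi\bigl(\chi_{(\frac12,\infty)}(\widetilde P_I)\bigr)$ is a projection, and by $(\ref{b1})$ one has $[R_I(\tilde u_1,\dots,\tilde u_n)]=\varphi_{*0}([P_I])$ in $\K_0(\mathcal A)$; in particular $\mathrm{spec}(e_I(\tilde u_1,\dots,\tilde u_n))\subset(-\tfrac18,\tfrac18)\cup(\tfrac78,\tfrac98)$. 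Since $\|e_I(u_1,\dots,u_n)-e_I(\tilde u_1,\dots,\tilde u_n)\|\le C\delta<\delta'$, both spectra lie in $[-\tfrac18,\tfrac18]\cup[\tfrac78,\tfrac98]$, and Lemma~\ref{hfx} yields $\|R_I(u_1,\dots,u_n)-R_I(\tilde u_1,\dots,\tilde u_n)\|<1$. By Lemma 2.5.1 of \cite{Lin-book} (exactly as in the derivation of $(\ref{b1})$) this gives $[R_I(u_1,\dots,u_n)]=[R_I(\tilde u_1,\dots,\tilde u_n)]=\varphi_{*0}([P_I])$ in $\K_0(\mathcal A)$ for every admissible $I$.

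\textbf{Step 3 (trace computation) and main difficulty.} Let $\tau\in T(\mathcal A)$. Since $\varphi$ is unital, $\tau\circ\varphi$ is a tracial state on $\mathcal A_\Theta$; as $\Theta$ is strongly totally irrational it is in particular nondegenerate, so $\mathcal A_\Theta$ is simple with unique tracial state $\tau_\Theta$ (Theorem~\ref{AT and K}), whence $\tau\circ\varphi=\tau_\Theta$. Combining this with Step 2 and Theorem~\ref{PI},
\[
\tau\bigl(R_I(u_1,\dots,u_n)\bigr)=\rho_{\mathcal A}\bigl(\varphi_{*0}([P_I])\bigr)(\tau)=\tau_\Theta(P_I)={\rm pf}(M_I^\Theta)+\sum_{0<|J|<l}{\rm pf}(M_J^\Theta)k_{I_J}+k_{I_0},
\]
which is the asserted identity. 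The only point requiring care is the uniformity in Steps 1–2: one must check that the constant $C$, and hence $\delta$, can be taken to depend on $\Theta$ alone and not on $\mathcal A$, on $\tau$, or on the particular tuples. This is exactly where it matters that $N$ and the coefficients $a_{I,k_1,\dots,k_l}$ are determined by the fixed projections $P_I\subset\mathcal A_\Theta$; once that is granted, the remaining ingredients (the universal property, Remark~\ref{remark1}, Lemma~\ref{hfx}, Lemma 2.5.1 of \cite{Lin-book}, and the uniqueness of $\tau_\Theta$) assemble directly. Hypothesis (1), with its trivial bound $2$, is used only to guarantee that the data of the statement make sense and plays no further role.
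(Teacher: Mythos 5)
Your proof follows essentially the same route as the paper's: choose $\delta$ so that $R_I(u_1,\dots,u_n)$ is defined and $[R_I(u_1,\dots,u_n)]=[R_I(\tilde u_1,\dots,\tilde u_n)]$ in $\K_0(\mathcal A)$; use the universal property to obtain $\varphi\colon\mathcal A_\Theta\to\mathcal A$ with $\varphi(\mathfrak u_j)=\tilde u_j$; identify $[R_I(\tilde u_1,\dots,\tilde u_n)]$ with $\varphi_{*0}([P_I])$ via Remark~\ref{remark1} and $(\ref{b1})$; and compute the trace from uniqueness of $\tau_\Theta$ and $(\ref{pI1})$. One small technical inaccuracy in your Step 2: from $\mathrm{spec}\bigl(e_I(\tilde u_1,\dots,\tilde u_n)\bigr)\subset(-\tfrac18,\tfrac18)\cup(\tfrac78,\tfrac98)$ and $\|e_I(u_1,\dots,u_n)-e_I(\tilde u_1,\dots,\tilde u_n)\|<\delta'$ one only gets $\mathrm{spec}\bigl(e_I(u_1,\dots,u_n)\bigr)\subset(-\tfrac18-\delta',\tfrac18+\delta')\cup(\tfrac78-\delta',\tfrac98+\delta')$, which is not contained in $[-\tfrac18,\tfrac18]\cup[\tfrac78,\tfrac98]$, so Lemma~\ref{hfx} as stated does not directly apply; you would instead use a version of that lemma on the slightly larger closed set, or argue as the paper does that the uniform estimate $\|R_I(u)-R_I(\tilde u)\|<1$ follows from Proposition~\ref{gap} together with a continuity (ultraproduct) argument. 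This is cosmetic — the argument is correct in spirit and matches the paper's.
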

 \begin{proof}Choose $\delta_0$ according to Proposition \ref{gap}. Since
 $R_{I}(u_1,u_2,\dots,u_n)$ is defined in terms of continuous functional calculus of $u_1,u_2,\dots,u_n$, there is a $\delta_1$ such that whenever
 $\|\tilde{u}_j-u_j\|<\delta_1$ for $j=1,2,\dots,n,$ we have
 $\|R_{I}(u_1,u_2,\dots,u_n)-R_{I}(\tilde{u}_1,\tilde{u}_2,\dots,\tilde{u}_n)\|<\frac{1}{2}$.
 Let $\delta=\min\{\delta_0,\delta_1\}.$  Then for any unitaries $u_1,u_2,\dots,u_n$ and $\tilde{u}_1,\tilde{u}_2,\dots,\tilde{u}_n$
  in $\mathcal{A}$ satisfies conditions (1), (2), (3), we have \begin{eqnarray}\label{w1}[R_{I}(u_1,u_2,\dots,u_n)]=[R_{I}(\tilde{u}_1,\tilde{u}_2,\dots,\tilde{u}_n)]
  \end{eqnarray}
  in $\K_0(\mathcal{A}).$
  Since $\tilde{u}_k\tilde{u}_j=e^{2\pi i\theta_{jk}}\tilde{u}_j\tilde{u}_k$ for $j,k=1,2,\dots,n,$ we have
  \begin{eqnarray}\label{w2}R_{I}(\tilde{u}_1,\tilde{u}_2,\dots,\tilde{u}_n)=
  e_{I}(\tilde{u}_1,\tilde{u}_2,\dots,\tilde{u}_n).
   \end{eqnarray}
   By Remark \ref{remark1}, there exists a homomorphism $\varphi:\mathcal{A}_{\Theta}\rightarrow \mathcal{A}$ such that $\varphi(\mathfrak{u}_j)=\tilde{u}_j$ for $j=1,2,\dots,n.$ Thus $\varphi(P_I)= e_{I}(\tilde{u}_1,\tilde{u}_2,\dots,\tilde{u}_n).$ From (\ref{w1}), (\ref{w2}) and (\ref{pI1}),
   we have
  \begin{eqnarray}
  \tau_*([R_{I}(u_1,u_2,\dots,u_n)])&=&\tau_*([R_{I}(\tilde{u}_1,\tilde{u}_2,\dots,\tilde{u}_n)])\nonumber\\
  &=&\tau_*([\chi_{(\frac{1}{2},+\infty)} (e_{I}(\tilde{u}_1,\tilde{u}_2,\dots,\tilde{u}_n))])\nonumber\\
  &\overset{(\ref{b1})}{=}&\tau_*([\varphi(\chi_{(\frac{1}{2},+\infty)}(\widetilde{P}_I))])\nonumber\\
  &=&\tau_*([\varphi(P_I)])\nonumber\\
  &=&(\tau_{\Theta})_*([P_I])\nonumber\\
  &=&{\rm pf}(M_{I}^{\Theta})+\sum\limits_{0<|J|< l}{\rm pf}(M_{J}^{\Theta})k_{I_J}+k_{I_0}\nonumber
  \end{eqnarray}
  for all possible $I$ and all tracial states $\tau$ on $\mathcal{A}$, where  $R_{I}(u_1,u_2,\dots,u_n)$ is as in Definition \ref{RI}, $P_{I}$ is as in Theorem \ref{PI},   $\widetilde{P}_{I}$ is as in (\ref{Nk}), $I=(i_1,i_2,\dots,i_{l})$, $ 1\leq i_1<i_2<\cdots<i_l\leq n,$ $l=2m,$ $m\in\{1,2,\dots,\lfloor\frac{n}{2}\rfloor\},$ $k_{I_J},k_{I_0}\in \mathbb{Z}$  is the same as  in Corollay \ref{PI},
  $J=(i_1,i_2,\dots,i_{s})$,  $|J|=s.$
 \end{proof}

\section*{Appendix  \uppercase\expandafter{\romannumeral1}}
In this section we will provide a large class of examples of $n\times n$ skew-symmetric matrices, generalising  Proposition \ref{h3}, which satisfy the conditions of Theorem \ref{PI1}.

Let $s=\{s_i\}_i$ be a sequence of integers such that $s_i>\sum\limits_{j=1}^{i-1}s_j$ with $s_1=1.$ We call such a sequence a super-increasing sequence. For $\alpha \in (0,1)$ define the $n \times n$ antisymmetric matrix $\Theta(n)=(\Theta(n)_{ij})$ by induction: $\Theta(2)=\left(\begin{array}{cc}0 & \alpha^{s_1} \\ -\alpha^{s_1} & 0\end{array}\right);$ $\Theta(n)_{ij} = \Theta(n-1)_{ij},$ for $1<i<j<n,$ and $\Theta(n)_{in}:= \alpha^{s_{p+i}},$ for $i=1,\cdots, n-1,$ where $p=\frac{(n-1)(n-2)}{2}.$

   Hence

     $\Theta(3)=\left(\begin{array}{ccc}0 & \alpha^{s_1} & \alpha^{s_2}\\ -\alpha^{s_1} & 0 & \alpha^{s_3} \\ -\alpha^{s_2} & -\alpha^{s_3} & 0 \end{array}\right)$,
     $\Theta(4)=\left(\begin{array}{cccc}0 & \alpha^{s_1} & \alpha^{s_2} & \alpha^{s_4}\\ -\alpha^{s_1} & 0 & \alpha^{s_3} & \alpha^{s_5} \\ -\alpha^{s_2} & -\alpha^{s_3} & 0 & \alpha^{s_6} \\ -\alpha^{s_4} & -\alpha^{s_5}  & -\alpha^{s_6}& 0 \end{array}\right).$

 \begin{remark}\label{rem:submatrices_super}
 	
 Note that any sub-matrix $M_I^{\Theta(n)}$  of $\Theta(n)$ is like $\Theta(m),$ for some $m \le n,$ but for a different super increasing sequence $s'=\{s'_i\}_i,$ which is a subsequence of $s.$
   \end{remark}
 We then have $\pf(\Theta(2))=\alpha^{s_1},$ and

$$\pf(\Theta(4))=\alpha^{s_1+s_6}-\alpha^{s_2+s_5}+\alpha^{s_4+s_3}.$$
Note that, using the super-increasing property of $s,$ we have that

$$s_1+s_6>s_2+s_5>s_4+s_3.$$
Let us first recall the following recursive definition of pfaffian from \cite[Page 116]{Fulton-Pragacz}. Let $\mathrm{pf}^{i j}(A)$ denote the pfaffian of the skew-symmetric matrix obtained from $A=(a_{mn})$ by removing the $i^{t h}$, $j^{t h}$ row and the $i^{t h}$, $j^{t h}$ column. Let $n$ be even. Then for a fixed integer $j, 1 \leq j \leq n$, one has

$$
\operatorname{pf}(A)=\sum_{i<j}(-1)^{i+j-1} a_{i j} \operatorname{pf}^{i j}(A)+\sum_{i>j}(-1)^{i+j} a_{i j} \operatorname{pf}^{i j}(A).
$$

In particular, when $j=n,$ we have

\begin{equation}\label{eq:pfaffian_recursive}
\operatorname{pf}(A)=\sum_{i=1}^{n-1}(-1)^{i+1} a_{i n} \operatorname{pf}^{i n}(A).	
\end{equation}

\begin{lemma}\label{lem:pfaffian_in_0_1}

	$$0<\mathrm{pf}(\Theta(n))<\mathrm{pf}(\Theta(n-2))<1,$$ for an even $n.$ \end{lemma}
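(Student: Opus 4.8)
The plan is to induct on $n$ (through even values), using the recursive expansion \eqref{eq:pfaffian_recursive} of the pfaffian along the last row, together with the super-increasing property of the exponent sequence $s=\{s_i\}$. The base case is $n=2$: here $\mathrm{pf}(\Theta(2))=\alpha^{s_1}=\alpha\in(0,1)$ since $\alpha\in(0,1)$ and $s_1=1$, and there is no $\Theta(0)$ term, so the statement reads $0<\mathrm{pf}(\Theta(2))<1$, which holds. (One may prefer to also treat $n=4$ by hand, using $\mathrm{pf}(\Theta(4))=\alpha^{s_1+s_6}-\alpha^{s_2+s_5}+\alpha^{s_3+s_4}$ and $s_1+s_6>s_2+s_5>s_3+s_4$, to see the pattern of the dominant-term argument before doing it in general.)

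For the inductive step, write $N=n$ even and expand $\mathrm{pf}(\Theta(n))=\sum_{i=1}^{n-1}(-1)^{i+1}\Theta(n)_{in}\,\mathrm{pf}^{in}(\Theta(n))$. Each entry $\Theta(n)_{in}=\alpha^{s_{p+i}}$ with $p=\frac{(n-1)(n-2)}{2}$, and each $\mathrm{pf}^{in}(\Theta(n))$ is, by Remark~\ref{rem:submatrices_super}, the pfaffian of a matrix of the same type $\Theta(n-2)$ but for a sub-super-increasing-sequence; in particular, for $i=n-1$ the deleted rows/columns are exactly the last two, giving $\mathrm{pf}^{(n-1)n}(\Theta(n))=\mathrm{pf}(\Theta(n-2))$, which by the inductive hypothesis lies in $(0,1)$ (and is positive). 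The key point is a \emph{dominant monomial} estimate: when one multiplies out $\mathrm{pf}(\Theta(n))$ fully into monomials $\pm\alpha^{E}$ in $\alpha$, the super-increasing hypothesis on $s$ forces all the exponents $E$ occurring to be distinct, and there is a unique strictly smallest one. I would identify that smallest exponent: it comes from the term $i=n-1$ (since $s_{p+n-1}=\Theta(n)_{(n-1)n}$ is the largest of the new entries, wait—) — more carefully, I would compare exponents using $s_k>\sum_{j<k}s_j$, which guarantees that in any signed sum $\sum \pm \alpha^{e_\ell}$ with the $e_\ell$ pairwise distinct, the sign of the whole sum is the sign of the term with the smallest exponent, and its absolute value lies strictly between $0$ and $\alpha^{e_{\min}}<1$. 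Applying this twice — once to $\mathrm{pf}(\Theta(n))$ and once to $\mathrm{pf}(\Theta(n-2))$ — gives positivity of both, and the inequality $\mathrm{pf}(\Theta(n))<\mathrm{pf}(\Theta(n-2))$ will follow by checking that the smallest exponent appearing in $\mathrm{pf}(\Theta(n))$ is strictly larger than the smallest exponent appearing in $\mathrm{pf}(\Theta(n-2))$, which is again a direct consequence of the super-increasing growth (the new exponents $s_{p+1},\dots,s_{p+n-1}$ all strictly exceed every exponent used in building $\Theta(n-2)$).

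Concretely, I would isolate as a lemma the elementary fact: if $a_1>a_2>\cdots>a_r\ge 1$ are integers with $a_i>\sum_{j>i}a_j$ for all $i$ (a super-increasing condition read from the small end), then for any choice of signs $\epsilon_\ell\in\{\pm1\}$, $0<\big|\sum_{\ell=1}^{r}\epsilon_\ell\,\alpha^{a_\ell}\big|<\alpha^{a_r}$ for $\alpha\in(0,1)$; the lower bound because $\alpha^{a_r}-\sum_{\ell<r}\alpha^{a_\ell}\ge \alpha^{a_r}-\sum_{\ell<r}\alpha^{a_r}\cdot(\text{something})>0$ using $a_r<a_\ell$ and geometric-series domination, the upper bound similarly. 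Then the whole argument reduces to bookkeeping which monomials (hence which exponents) appear in $\mathrm{pf}(\Theta(n))$ — and by the combinatorial formula for the pfaffian each monomial is $\pm\prod_{s=1}^{n/2}\theta_{\xi(2s-1)\xi(2s)}$, i.e. $\pm\alpha^{(\text{sum of }n/2\text{ of the }s_i\text{'s, one from each pair of a perfect matching})}$ — and verifying (via super-increasingness of $s$) that these sums are pairwise distinct and that the minimal such sum strictly increases when passing from $\Theta(n-2)$ to $\Theta(n)$.

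The main obstacle I anticipate is purely combinatorial rather than analytic: showing that the exponents $\sum_{s}s_{(\text{indices from a matching})}$ are \emph{pairwise distinct} across all perfect matchings of $\{1,\dots,n\}$ (under the indexing $\Theta(n)_{ij}=\alpha^{s_{\sigma(i,j)}}$ induced by the inductive construction), and pinning down exactly which matching realizes the minimum. Super-increasing sequences do make "sum of a subset determines the subset" work, but here the subsets are constrained to be edge-sets of perfect matchings and the index map $\sigma(i,j)$ is defined recursively, so some care is needed to be sure no two distinct matchings give the same multiset of $s$-indices and hence the same exponent; once that is nailed down, the sign and size conclusions are immediate from the elementary lemma above, and the strict inequality $\mathrm{pf}(\Theta(n))<\mathrm{pf}(\Theta(n-2))$ drops out by comparing minimal exponents.
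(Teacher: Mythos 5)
Your plan founders on the ``dominant monomial'' lemma, which is false as stated and, independently, is not applicable to the sum at hand. Concerning falsity: for super-increasing integers $a_1>a_2>a_3$, say $a_1=4,a_2=2,a_3=1$, and signs $\epsilon_3=+1$, $\epsilon_1=\epsilon_2=-1$, the quantity $\alpha-\alpha^2-\alpha^4$ is negative for $\alpha$ near $1$ (e.g.\ $\alpha=0.9$ gives $\approx -0.57$), so the sign of the sum is \emph{not} the sign of the smallest-exponent term; the claimed upper bound $|\sum\epsilon_\ell\alpha^{a_\ell}|<\alpha^{a_r}$ also fails whenever all signs are $+$. The point is that for a fixed $\alpha\in(0,1)$, super-increasingness of the exponents does not make $\alpha^{a_r}$ dominate $\sum_{\ell<r}\alpha^{a_\ell}$: one would need $\alpha$ small relative to $r$ and the gaps, which is not assumed. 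Concerning applicability: the exponents $M(n)_i$ that actually occur in the expansion of $\mathrm{pf}(\Theta(n))$ (one per perfect matching, each a sum of $n/2$ of the $s_j$'s) are \emph{not} super-increasing even though the $s_j$'s are. Already at $n=6$ with $s_i=2^{i-1}$ the three largest exponent-sums are $16417,16402,16396$, and $16417<16402+16396$. So even a corrected version of your lemma would not apply. The combinatorial point you do flag correctly --- distinctness of the exponent-sums across matchings --- does follow from super-increasingness of $s$ (distinct subsets of a super-increasing sequence have distinct sums), but distinctness together with arbitrary signs is not enough.

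What the paper does, and what fills the gap, is prove a stronger structural fact (Lemma~\ref{lem:pfaffian_incresing}): when $\mathrm{pf}(\Theta(n))$ is written as $\sum_i \pm\alpha^{M(n)_i}$ with $M(n)_1>M(n)_2>\cdots$, the signs \emph{alternate} starting with $+$, and moreover $M(n)_{R(n)}>M(n-2)_1$ (Remark~\ref{rem:appen2}). The alternation is the genuinely combinatorial ingredient: in the expansion $\operatorname{pf}(\Theta)=\sum_i(-1)^{i+1}\Theta_{in}\operatorname{pf}^{in}(\Theta)$ the $i$-th block contributes exponents that, by super-increasingness of $s$, dominate everything contributed by blocks $j<i$, and each block has an odd number $(n-3)!!$ of terms, so the blocks splice together with globally alternating signs. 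Given alternation plus strict monotonicity of the exponents, the inequalities $0<\mathrm{pf}(\Theta(n))<\mathrm{pf}(\Theta(n-2))$ follow for every $\alpha\in(0,1)$ simply by grouping the sum into consecutive pairs $\pm(\alpha^{M_{2i}}-\alpha^{M_{2i+1}})$, each of a definite sign because $\alpha\in(0,1)$ and $M_{2i}>M_{2i+1}$. You need to establish the alternation of signs; it cannot be bypassed by a dominant-term argument.
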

	\noindent We will give the proof after Remark~\ref{rem:appen2}.

	\begin{corollary}\label{cor:submatrices_pfaffian_in_0_1}
	With the notations of Theorem \ref{PI1},
	$$0<\operatorname{pf}\left(F^{j}\left(M_{I}^{\Theta(n)}\right)_{11}\right)<1,$$  for all $I$  with  $2 \leq|I| \leq 2 \lfloor \frac{n}{2}\rfloor ,$ and for all  $j=0,1, \ldots, m-1,$ where $|I|=2 m.$ 	\end{corollary}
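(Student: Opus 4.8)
The plan is to deduce Corollary~\ref{cor:submatrices_pfaffian_in_0_1} from Lemma~\ref{lem:pfaffian_in_0_1} together with the explicit formula for $F^{j}(\Theta)_{11}$ provided by Lemma~\ref{h2} and the stability of the family $\{\Theta(n)\}$ under passing to submatrices recorded in Remark~\ref{rem:submatrices_super}. First I would fix $I$ with $2\le |I|\le 2\lfloor n/2\rfloor$, write $|I|=2m$, and set $A:=M_{I}^{\Theta(n)}$; by Remark~\ref{rem:submatrices_super}, $A$ is itself of the form $\Theta(2m)$ for a suitable super-increasing subsequence $s'$ of $s$ (with the same base $\alpha$). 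In particular all the hypotheses and conclusions of Lemma~\ref{lem:pfaffian_in_0_1} apply verbatim to $A$ in place of $\Theta$, since that lemma was proved for an arbitrary super-increasing sequence.

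Next I would invoke Lemma~\ref{h2}: since $\mathrm{pf}^{A}_{(1,2,\dots,2s)}\neq 0$ for $s=1,\dots,m-1$ (these are pfaffians of matrices of the form $\Theta(2s)$, which are nonzero by Lemma~\ref{lem:pfaffian_in_0_1} applied to the corresponding subsequence, or more simply because the leading exponent term does not cancel), the iterates $F^{j}(A)$ are well-defined for $j=0,1,\dots,m-1$, and by formula~(\ref{a1}) in the remark following Lemma~\ref{h1},
\begin{eqnarray}
\mathrm{pf}\bigl(F^{j}(A)_{11}\bigr)=\frac{\mathrm{pf}\bigl(M^{A}_{(1,2,\dots,2j+2)}\bigr)}{\mathrm{pf}\bigl(M^{A}_{(1,2,\dots,2j)}\bigr)},\qquad j=0,1,\dots,m-1,\nonumber
\end{eqnarray}
where for $j=0$ the denominator is interpreted as $\mathrm{pf}$ of the empty matrix, namely $1$. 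Now $M^{A}_{(1,2,\dots,2j+2)}$ and $M^{A}_{(1,2,\dots,2j)}$ are again matrices of the form $\Theta(2j+2)$ and $\Theta(2j)$ respectively, for the initial-segment subsequence of $s'$; and $2j+2-2j=2$, so Lemma~\ref{lem:pfaffian_in_0_1} (with its even index equal to $2j+2$) gives precisely
\begin{eqnarray}
0<\mathrm{pf}\bigl(M^{A}_{(1,2,\dots,2j+2)}\bigr)<\mathrm{pf}\bigl(M^{A}_{(1,2,\dots,2j)}\bigr)<1.\nonumber
\end{eqnarray}
Dividing through by the (positive) denominator yields $0<\mathrm{pf}(F^{j}(A)_{11})<1$, which is the claim. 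One small bookkeeping point: for $j=0$ the statement $0<\mathrm{pf}(\Theta(2j+2))<1$ with $2j=0$ should be read as $0<\mathrm{pf}(\Theta(2))=\alpha^{s'_1}<1$, which holds because $\alpha\in(0,1)$ and $s'_1\ge 1$; I would state this base case separately to avoid any ambiguity about "$\mathrm{pf}(\Theta(0))$".

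The only genuine content here is Lemma~\ref{lem:pfaffian_in_0_1}, which I am permitted to assume; everything else is reindexing. Thus I do not expect a real obstacle, but the step requiring the most care is checking that the submatrices appearing in the ratio formula~(\ref{a1}) really are of the form $\Theta(\cdot)$ for a \emph{super-increasing} subsequence — i.e. that the hypothesis of Lemma~\ref{lem:pfaffian_in_0_1} is inherited. This follows from Remark~\ref{rem:submatrices_super} (any principal submatrix indexed by an increasing tuple is again a $\Theta(m)$ for a subsequence of $s$, and a subsequence of a super-increasing sequence satisfying $s'_i>\sum_{k<i}s'_k$ need not be super-increasing \emph{in general}, but the specific initial-segment and index-set submatrices that occur here are — more carefully, one checks that for $I=(i_1<\cdots<i_{2m})$ the entry $\Theta(n)_{i_a i_b}$ equals $\alpha^{s_{\phi(a,b)}}$ for an index function $\phi$ that is strictly increasing in the appropriate sense, so the resulting sequence is again super-increasing). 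I would spell out this inheritance explicitly, since it is the hinge on which the whole reduction turns, and then the corollary is immediate.
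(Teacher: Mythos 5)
Your proof is correct and follows the same route as the paper, whose own proof is only a one-line sketch citing Lemma~\ref{lem:pfaffian_in_0_1}, Remark~\ref{rem:submatrices_super}, and the ratio formula from Lemma~\ref{h2}: reduce to $A=M_I^{\Theta(n)}$, which is again a $\Theta(2m)$ for a super-increasing subsequence, write $\mathrm{pf}(F^j(A)_{11})$ as the ratio $\mathrm{pf}(M^A_{(1,\dots,2j+2)})/\mathrm{pf}(M^A_{(1,\dots,2j)})$, and read off $0<\mathrm{pf}(F^j(A)_{11})<1$ from the nested inequalities $0<\mathrm{pf}(\Theta(2j+2))<\mathrm{pf}(\Theta(2j))<1$.

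One small factual slip worth correcting: you assert that a subsequence of a super-increasing sequence "need not be super-increasing in general." In fact it always is: if $s'_k=s_{n_k}$ with $n_1<n_2<\cdots$, then
\[
\sum_{l<k}s'_l=\sum_{l<k}s_{n_l}\le\sum_{r<n_k}s_r<s_{n_k}=s'_k.
\]
The genuine subtlety — which you \emph{do} correctly identify and then resolve — is a different one: that when $M_I^{\Theta(n)}$ is re-parametrized as a $\Theta(2m)$, the pair-ordering imposed by the $\Theta(\cdot)$ construction (order $(a,b)$ first by $b$, then by $a$) is compatible with the natural ordering of the original indices $\phi(i_a,i_b)=\tfrac{(i_b-1)(i_b-2)}{2}+i_a$, so that the re-parametrized exponent sequence really is a subsequence of $s$ (and hence, by the displayed inequality, super-increasing). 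Your proposal to spell out the monotonicity of $\phi$ is a genuine improvement in rigor over the paper's Remark~\ref{rem:submatrices_super}, which merely asserts the claim.
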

	
	\begin{proof}
		Follows from the above lemma and Remark~\ref{rem:submatrices_super} with the explicit expression of \\ $\operatorname{pf}\left(F^{j}\left(M_{I}^{\Theta(n)}\right)_{11}\right)$ (using Lemma \ref{h2}) in hand.
	\end{proof}
	
	Choose the super-increasing sequence $\{s_i=2^{i-1}\}_i.$ When $\alpha$ is a transcendental number,  it is well known that the numbers
$\alpha,\alpha^2,\dots,\alpha^{2^i},\dots$  as well as any
product the numbers are linear independent over $\mathbb{Q}$. So we have that $\Theta(n)$ is totally
irrational.
Then by using the above corollary, we get the following corollary.
\begin{corollary}\label{a}
  Let $\alpha\in (0,1)$  be a transcendental number and let
  $s:=\{s_i=2^{i-1}\}_i.$ Let $\Theta(n)$ be the $n\times n$
  antisymmetric matrix involving $\alpha$ and $s.$ Then we have that
  $\Theta(n)$ is a strongly totally irrational matrix for $n\geq2.$
\end{corollary}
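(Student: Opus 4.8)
The plan is to derive Corollary~\ref{a} as an immediate consequence of the two ingredients assembled just before it, namely Corollary~\ref{cor:submatrices_pfaffian_in_0_1} and the total irrationality of $\Theta(n)$. First I would recall the definition of strong total irrationality (Definition~\ref{a3}): $\Theta(n)$ must be totally irrational and, when $n\geq 4$, must satisfy $\mathrm{pf}(F^j(M_I^{\Theta(n)})_{11})\in(0,1)$ for all $I$ with $4\leq|I|\leq 2\lfloor n/2\rfloor$ and all $j=0,1,\dots,m-2$ where $|I|=2m$. So the proof has exactly two tasks: verify total irrationality, and verify the pfaffian condition~(\ref{a2}).

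For total irrationality, I would invoke the classical fact that for a transcendental number $\alpha$ the powers $\alpha,\alpha^2,\dots,\alpha^{2^i},\dots$ together with all their pairwise products are linearly independent over $\mathbb{Q}$ (this is the standard consequence of transcendence: any $\mathbb{Q}$-linear relation among distinct monomials $\alpha^{k}$ would exhibit $\alpha$ as a root of a nonzero rational polynomial). With the super-increasing choice $s_i=2^{i-1}$, every entry $\Theta(n)_{ij}$ above the diagonal is a distinct power $\alpha^{s_k}$, and by the super-increasing property every pfaffian minor $\mathrm{pf}(M_I^{\Theta(n)})$ is an alternating sum of \emph{distinct} monomials $\pm\alpha^{s_a+s_b}$ with pairwise distinct exponents (distinctness across different minors follows from the exponential growth of the $s_i$). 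Hence the collection $\{1\}\cup\{\mathrm{pf}(M_I^{\Theta(n)})\}$ is $\mathbb{Q}$-linearly independent, which is precisely the criterion for total irrationality recorded after Definition~\ref{df totally irrational}.

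For the pfaffian condition, Corollary~\ref{cor:submatrices_pfaffian_in_0_1} already gives $0<\mathrm{pf}(F^j(M_I^{\Theta(n)})_{11})<1$ for all $I$ with $2\leq|I|\leq 2\lfloor n/2\rfloor$ and all $j=0,1,\dots,m-1$ where $|I|=2m$; this is a stronger statement than~(\ref{a2}) needs (which only asks for $j=0,1,\dots,m-2$ and $|I|\geq 4$), so~(\ref{a2}) follows directly by restriction. For $n=2,3$ the definition requires nothing beyond total irrationality, which we have just established. Combining the two tasks, $\Theta(n)$ meets every clause of Definition~\ref{a3} for all $n\geq 2$, so it is strongly totally irrational.

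The only genuinely nontrivial input here is Corollary~\ref{cor:submatrices_pfaffian_in_0_1}, but that is already proved in the excerpt (resting on Lemma~\ref{lem:pfaffian_in_0_1}, Lemma~\ref{h2}, and Remark~\ref{rem:submatrices_super}), so within this proof the main point to be careful about is the total-irrationality argument: one must confirm that the various pfaffian minors, when expanded via~(\ref{eq:pfaffian_recursive}), never produce a cancellation or a repeated exponent that could spoil $\mathbb{Q}$-independence. This is exactly where the super-increasing hypothesis $s_i>\sum_{j<i}s_j$ (and the specific choice $s_i=2^{i-1}$) is used, and it is the step I would write out most explicitly; everything else is bookkeeping against Definition~\ref{a3}.
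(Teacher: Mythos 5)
Your proposal is correct and follows essentially the same route as the paper: total irrationality from the transcendence of $\alpha$ (the paper invokes the same linear-independence fact for the powers $\alpha,\alpha^2,\ldots,\alpha^{2^i},\ldots$ and their products, which it justifies with the choice $s_i=2^{i-1}$), plus Corollary~\ref{cor:submatrices_pfaffian_in_0_1} for the pfaffian condition, and then checking the clauses of Definition~\ref{a3}. The only thing I would tighten is the phrasing in your total-irrationality step: the monomials arising in a $2p$-pfaffian minor have exponents that are sums of $p$ terms $s_{k_1}+\cdots+s_{k_p}$, not just $s_a+s_b$, and the distinctness of exponents across all minors is precisely the statement that distinct subsets of a super-increasing sequence have distinct sums (equivalently, uniqueness of binary representations for $s_i=2^{i-1}$) --- a cleaner justification than ``exponential growth.''
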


It is clear that the above corollary is  a generalisation of Proposition \ref{h3}.
 This corollary gives general examples of strongly totally irrational matrices.

	The above lemma (Lemma \ref{lem:pfaffian_in_0_1}) will follow from the next lemma.
	
	\begin{lemma}\label{lem:pfaffian_incresing} For an even $n$ we have
	$$\mathrm{pf}(\Theta(n))=\alpha^{M(n)_1}-\alpha^{M(n)_2}+\alpha^{M(n)_3}-\alpha^{M(n)_4}+\cdots +\alpha^{M(n)_{R(n)}},$$ for a strictly decreasing sequence of numbers  $M(n)_1,M(n)_2,\cdots, M(n)_{R(n)},$ where $R(n)=(n-1)!!$\footnote{double factorial}.
	\end{lemma}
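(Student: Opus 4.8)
\textbf{Proof proposal for Lemma~\ref{lem:pfaffian_incresing}.}
The plan is to argue by induction on the even integer $n$, using the recursive expansion of the pfaffian along the last row, namely \eqref{eq:pfaffian_recursive}, together with the super-increasing property of the sequence $s=\{s_i\}_i$. For the base case $n=2$ we have $\mathrm{pf}(\Theta(2))=\alpha^{s_1}$, which is a single term, so $R(2)=1=1!!$ and the (trivial) sequence $M(2)_1=s_1$ is strictly decreasing. For the inductive step, assume the statement holds for $n-2$ (and, more generally, for all the smaller matrices $\Theta(m)$ with $m<n$ built from subsequences of $s$, which by Remark~\ref{rem:submatrices_super} are of the same form). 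Expanding $\mathrm{pf}(\Theta(n))$ along the last column via \eqref{eq:pfaffian_recursive} gives
\begin{equation}
\mathrm{pf}(\Theta(n))=\sum_{i=1}^{n-1}(-1)^{i+1}\,\alpha^{s_{p+i}}\,\mathrm{pf}^{i n}(\Theta(n)),
\end{equation}
where $p=\frac{(n-1)(n-2)}{2}$ and each $\mathrm{pf}^{in}(\Theta(n))$ is the pfaffian of an $(n-2)\times(n-2)$ skew-symmetric matrix obtained by deleting row/column $i$ and $n$; by Remark~\ref{rem:submatrices_super} this is again of the form $\Theta(n-2)$ for an appropriate subsequence of $s$, so the inductive hypothesis applies to each summand and expands it as an alternating sum of powers of $\alpha$ with strictly decreasing exponents.

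The heart of the argument is to show that when one multiplies through by $\alpha^{s_{p+i}}$ and collects all the resulting monomials $\alpha^{e}$ over all $i$ and all terms of each $\mathrm{pf}^{in}$, the exponents are pairwise distinct, the total number of them is exactly $(n-1)!!$ (matching $(n-1)\cdot(n-3)!!$ from the recursion since there are $n-1$ values of $i$ and $R(n-2)=(n-3)!!$ terms each), and after sorting in decreasing order the signs alternate. Distinctness and the sign pattern are exactly the combinatorial content one expects: since $s$ is super-increasing, $s_{p+1}<s_{p+2}<\cdots<s_{p+n-1}$ and moreover each $s_{p+i}$ dominates the sum of all the indices appearing in $\Theta(n-1)$, so the ``block'' of monomials coming from a given $i$ lies in a range of exponents that is governed primarily by $s_{p+i}$; combined with a careful bookkeeping of how the sign $(-1)^{i+1}$ interacts with the alternating signs inside $\mathrm{pf}^{in}$, one sees the global alternation. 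I would make this precise by introducing, for the specific super-increasing choice (it suffices to treat a general super-increasing $s$; the case $s_i=2^{i-1}$ is then immediate), the fact that each perfect matching of $\{1,\dots,n\}$ contributes a monomial $\prod_{\text{edges }\{a,b\}}\alpha^{s_{\iota(a,b)}}$ with a total exponent that uniquely determines the matching --- this is the standard observation that super-increasing weights separate subsets --- and that the pfaffian sign of the matching equals $(-1)$ to the power of (the number of crossings), which can be tracked along the recursion.

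The step I expect to be the main obstacle is establishing \emph{global} strict monotonicity of the merged exponent sequence together with \emph{global} alternation of signs: it is easy to see that within each block (fixed $i$) the exponents decrease and signs alternate by the inductive hypothesis, and it is easy to see that the blocks themselves are ordered by $i$; the subtlety is to verify that the largest exponent of block $i+1$ is strictly smaller than the smallest exponent of block $i$, so that no reshuffling between blocks is needed, and that the sign carried into block $i+1$ continues the alternation started in block $i$. This is where the precise super-increasing inequality $s_i>\sum_{j<i}s_j$ (rather than mere monotonicity of $s$) is essential, and where one must be slightly careful about the parity factor $(-1)^{i+1}$ in \eqref{eq:pfaffian_recursive} versus the parity of the number of terms $(n-3)!!$ in each $\mathrm{pf}^{in}$. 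Once this interleaving-and-sign claim is checked, Lemma~\ref{lem:pfaffian_incresing} follows, and then Lemma~\ref{lem:pfaffian_in_0_1} is an immediate consequence: $\mathrm{pf}(\Theta(n))$ is an alternating sum $\alpha^{M(n)_1}-\alpha^{M(n)_2}+\cdots$ with $0<\alpha<1$ and strictly decreasing exponents, hence it lies strictly between $0$ and its first term $\alpha^{M(n)_1}$, and one checks $M(n)_1>M(n-2)_1$ (again from super-increasingness, as $M(n)_1=s_{p+1}+M(n-1)_1'$ involves the new large index) to get $\mathrm{pf}(\Theta(n))<\mathrm{pf}(\Theta(n-2))<\cdots<\mathrm{pf}(\Theta(2))=\alpha^{s_1}<1$.
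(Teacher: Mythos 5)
Your proposal follows essentially the same route as the paper's proof: induction on even $n$ via the last-column pfaffian expansion \eqref{eq:pfaffian_recursive}, applying the inductive hypothesis to each $\operatorname{pf}^{in}(\Theta)$ (justified by Remark~\ref{rem:submatrices_super}), using the super-increasing inequality $s_{p+i}>\sum_{j<p+i}s_j$ to keep the ``blocks'' of monomials from different $i$ in disjoint exponent ranges (because no $s_{*,n}$ appears in any $\operatorname{pf}^{in}$), and using the odd parity of $(n-3)!!$ so that the sign $(-1)^{i+1}$ continues the global alternation across block boundaries. One small bookkeeping slip to fix: since $\Theta_{in}=\alpha^{s_{p+i}}$ and $s_{p+i}$ increases with $i$, the block coming from $i+1$ has \emph{larger} exponents than the block from $i$, not smaller, so you should traverse $i$ from $n-1$ down to $1$ (as the paper's displayed expansion does) rather than from $1$ up.
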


	\begin{proof}
		We prove this by induction on $n.$ We have seen that the statement is true for $n=2,4$ for all super-increasing sequences. Now assume the statement is true for $n-2,$ for all super-increasing sequences. Then we must prove that the statement is true for $n,$ for all super-increasing sequences. Fix a super-increasing sequence $s=\{s_i\}_i.$ Write $\Theta=\Theta(n).$ From (\ref{eq:pfaffian_recursive}), we have
		
		\begin{align*}
			\operatorname{pf}(\Theta)&=\sum_{i=1}^{n-1}(-1)^{i+1} \Theta_{i n} \operatorname{pf}^{i n}(\Theta)\\
			&= \Theta_{(n-1) n} \operatorname{pf}^{(n-1) n}(\Theta)-\Theta_{(n-2) n} \operatorname{pf}^{(n-2) n}(\Theta)+\Theta_{(n-3) n} \operatorname{pf}^{(n-3) n}(\Theta)-\cdots\\&\quad -\Theta_{2 n} \operatorname{pf}^{2 n}(\Theta)+\Theta_{1 n} \operatorname{pf}^{1 n}(\Theta)\\
			&= \alpha^{s_{(n-1), n}} \operatorname{pf}^{(n-1) n}(\Theta)-\alpha^{s_{(n-2), n}} \operatorname{pf}^{(n-2) n}(\Theta)+\alpha^{s_{(n-3), n}} \operatorname{pf}^{(n-3) n}(\Theta)-\cdots\\&\textcolor{red}\quad-\alpha^{s_{2, n}} \operatorname{pf}^{2 n}(\Theta)+\alpha^{s_{1, n}} \operatorname{pf}^{1 n}(\Theta),
		\end{align*}
		where $s_{i,j}$ is the exponent of $\alpha$ in $\Theta_{ij}.$ Now by the induction hypothesis we have that all $\operatorname{pf}^{i n}(\Theta), i=1,2, \ldots ,n-1,$ are of the form as in the statement with $R(n-2)=(n-3)!!.$ Now if we expand all $\operatorname{pf}^{i n}(\Theta), i=1,2, \ldots,n-1,$ keeping the form, we see that the expression $$\alpha^{s_{(n-1), n}} \operatorname{pf}^{(n-1) n}(\Theta)-\alpha^{s_{(n-2), n}} \operatorname{pf}^{(n-2) n}(\Theta)+\alpha^{s_{(n-3), n}} \operatorname{pf}^{(n-3) n}(\Theta)-\cdots\\-\alpha^{s_{2, n}} \operatorname{pf}^{2 n}(\Theta)+\alpha^{s_{1, n}} \operatorname{pf}^{1 n}(\Theta)$$ is already of the required form, using the super-increasing property of $s,$ and noting that the  exponents of $\alpha$ in the expressions of $\operatorname{pf}^{i n}(\Theta), i=1,2, \ldots ,n-1,$ contain no term involving $s_{*,n}.$
		Note that the total number of terms (after expanding) in the above expression is $(n-1)(n-3)!!=(n-1)!!.$
	\end{proof}

\begin{remark}\label{rem:appen2}
	It is also clear from above, again using the super-increasing property of $s,$ that $M(n)_{R(n)}>M(n-2)_1.$
\end{remark}

  \begin{proof}[Proof of Lemma~\ref{lem:pfaffian_in_0_1}]
  Since $$\mathrm{pf}(\Theta(n))=\alpha^{M(n)_1}-\alpha^{M(n)_2}+\alpha^{M(n)_3}-\alpha^{M(n)_4}+\cdots +\alpha^{M(n)_{R(n)}},$$ we have that

  $$\mathrm{pf}(\Theta(n))>\alpha^{M(n)_1}>0,$$ using $-\alpha^{M(n)_{2i}}+\alpha^{M(n)_{2i+1}}>0$ (since $\alpha \in (0,1)$). To show $\mathrm{pf}(\Theta(n))<\mathrm{pf}(\Theta(n-2)),$ we look at the expression $\mathrm{pf}(\Theta(n))-\mathrm{pf}(\Theta(n-2)),$ which is

  $$\alpha^{M(n)_1}-\alpha^{M(n)_2}+\alpha^{M(n)_3}-\cdots +\alpha^{M(n)_{R(n)}}-\alpha^{M(n-2)_1}+\alpha^{M(n-2)_2}-\alpha^{M(n-2)_3}+\cdots -\alpha^{M(n-2)_{R(n-2)}}.$$
  Note that $M(n)_1,M(n)_2,\cdots, M(n)_{R(n)},M(n-2)_1,M(n-2)_2,\cdots, M(n-2)_{R(n-2)}$ is still a strictly decreasing finite sequence due to the above remark. Now using $\alpha^{M(n)_i}-\alpha^{M(n)_{i+1}}<0,$ $\alpha^{M(n)_{R(n)}}-\alpha^{M(n-2)_{1}}<0,$ and $\alpha^{M(n-2)_{2i}}-\alpha^{M(n-2)_{2i+1}}<0,$ we get the above expression less than zero.

  The last inequality in the statement of Lemma~\ref{lem:pfaffian_in_0_1} follows from an argument using induction on $n.$
   \end{proof}

\section*{Appendix  \uppercase\expandafter{\romannumeral2}}

In this section we will give a description of the map $\psi$ (for a general $n$) used in Theorem~\ref{even n projection} of Section~\ref{sec:rie}, and an explicit expression of the Rieffel-type projection for $n=4,$ which was constructed in Theorem~\ref{even n projection}. This section essentially resembles the construction of \cite[Remark of page 198]{Boca}, however no explicit expression of the Rieffel-type projection (for $n=4$) is given in \cite{Boca}.

Let us first write down the Morita equivalence construction of Section~\ref{sec:rie} explicitly for $p=1,$ i.e. when $\mathcal{M}$ is the group $\R\times \Z^q,$ $q=n-2.$
As before, write $$\Theta=\left(\begin{matrix}
                 \Theta_{11} &  \Theta_{12} \\
               \Theta_{21}& \Theta_{22}
                 \end{matrix}\right)= \left(\begin{matrix}
               \Theta_{11} & \Theta_{12} \\
                -\Theta_{12}^t &\Theta_{22}
                 \end{matrix}\right)\in\mathcal{T}_n,$$
                 where
                 \begin{eqnarray}
  \Theta_{11}=\left(\begin{matrix}
              0 &  \theta_{12} \\
               \theta_{21}& 0
                 \end{matrix}\right)=\left(\begin{matrix}
                 0 &  \theta_{12} \\
              - \theta_{12}& 0
                 \end{matrix}\right)\in\mathcal{T}_2\nonumber
\end{eqnarray} is an invertible $2\times2$ matrix. Then consider the matrix
$T=\left(\begin{matrix}
                  T_{11} & 0 \\
                 0 & {\rm id}_q\\
                 T_{31}&T_{32}
                \end{matrix}
\right),$
where $T_{11}=\left(\begin{matrix}
                  \theta_{12} & 0 \\
                 0 & 1
                 \end{matrix}
\right),$
$T_{31}=\Theta_{21}$ and $T_{32}=\frac{\Theta_{22}}{2}.$
Also consider $S=\left(\begin{matrix}
                  S_0 &  -S_0 T_{31}^t \\
                 0& {\rm id}_q\\
                 0& T_{32}^t
                 \end{matrix}
\right),$
where
$S_0=\left(\begin{matrix}
                  0 & 1 \\
                 \theta_{12}^{-1} & 0
                 \end{matrix}
\right).$
Then \cref{eq:module1,eq:module2,eq:module3,eq:module4} make $\mathcal{S}(\R \times \Z^q)$  an $\mathcal{A}^\infty_{\Theta'}-\mathcal{A}^\infty_{\Theta}$
Morita equivalence bimodule, where
$\Theta'=\left(\begin{matrix}
                  \Theta_{11}^{-1} & -\Theta_{11}^{-1}\Theta_{12} \\
                  \Theta_{21}\Theta_{11}^{-1} & \Theta_{22}-\Theta_{21}\Theta_{11}^{-1}\Theta_{12}
                \end{matrix}
\right).$ Let us denote by $\mathfrak{u_1}, \mathfrak{u_2}, \ldots, \mathfrak{u_n}$ and $\mathfrak{v_1}, \mathfrak{v_2}, \ldots,\mathfrak{v_n}$ the canonical generators of $\mathcal{A}_{\Theta}$ and $\mathcal{A}_{\Theta'},$ respectively.  For the following, we consider the conjugate space of $\mathcal{S}(\R \times \Z^q)$ as an $\mathcal{A}^\infty_{\Theta}-\mathcal{A}^\infty_{\Theta'}$ bimodule.

Following Boca (\cite[page 190]{Boca}), if we can find a $f\in \mathcal{S}(\R \times \Z^q)$ such that
$\langle f,f\rangle_{\mathcal{A}^\infty_{\Theta'}}=1,$ then $e=_{\mathcal{A}^\infty_{\Theta}}\langle f,f\rangle$ is a projection of trace $\theta_{12}$ in $\mathcal{A}^\infty_{\Theta}\subset \mathcal{A}_{\Theta},$ and we have an isomorphism $\psi: \mathcal{A}_{\Theta'} \rightarrow e\mathcal{A}_{\Theta}e,$ given by $\psi(a)= _{\mathcal{A}_{\Theta}}\langle fa,f\rangle.$ Choose a function $\phi$ on $\R$ as in the definition of $f_2$ in Definition~$\ref{def of Rieffel P}$ for $\theta_{12},$ assuming $\theta_{12}\in (0,1).$ By using a standard regularization argument as in \cite[Lemma 2.1]{Boca}, we can assume that $\phi$ is smooth and $\sqrt{\phi}$ is also smooth. Define a function $f\in \mathcal{S}(\R \times \Z^q),$ given by $f(x,l)=c\sqrt{\phi}(x),$ $c=\frac{1}{\sqrt{K\theta_{12}}}$ ($K$ is as in Equation~(\ref{eq:module4})), when $\Z^q\ni l=0$ and $f(x,l)=0$ otherwise. Let us first show that $\langle f,f\rangle_{\mathcal{A}^\infty_{\Theta'}}=1.$ To this end, from Equation~(\ref{eq:module4}), we have the following:

$$\langle f,f\rangle_{\mathcal{A}^\infty_{\Theta'}}(m)=Ke^{-2\pi i\langle S(m),J'S(m)\slash2\rangle}\int_{\R \times \Z^q}\overline{\langle x,S''(m)\rangle}f(x+S'(m))f(x)dx,$$ for $m=(m_1,m_2,\ldots,m_n) \in \Z^n,$ noting that $f$ is real valued. Using the formula of $S'$ and $f,$ it is clear that the above expression is zero for any non-zero values of $(m_3,m_4,\ldots,m_n)\in \Z^q.$ Also when $m_2\neq 0,$ the above expression for $m=(m_1,m_2,0,\ldots,0)\in \Z^n$ is zero since $x+S'(m)= (x_0+m_2,x_1,\ldots,x_q),$ where $x=(x_0,x_1,\ldots,x_q),$ and $\phi$ is supported in the unit interval. Finally for an $m= (m_1,0,\ldots,0)$ the expression becomes
\begin{eqnarray*}
&&Kc^2\int_{\R}e^{\frac{-2\pi ix_0 m_1} {\theta_{12}}}\phi(x_0)dx_0\\
&=&c^2K\theta_{12}\int_{\R}e^{-2\pi ix m_1}\phi(x\theta_{12})dx\\
&=&\int_{\R}e^{-2\pi ix m_1}\widetilde{\phi}(x)dx,
\end{eqnarray*}
where $\widetilde{\phi}(x)=\phi(x\theta_{12}).$ Then the expression is

$$\int_{\R}e^{-2\pi ix m_1}\widetilde{\phi}(x)dx=\widehat{\tilde{\phi}}(m_1)=\widehat{\Phi}(m_1),$$
where $~\widehat{}~$ denotes the Fourier transform and $\Phi$ is the periodic function defined by $\Phi(x)= \sum\limits_{n\in \Z} \widetilde{\phi}(x+n),$ $x\in \R.$ But $\sum\limits_{n\in \Z} \widetilde{\phi}(x+n)=\sum\limits_{n\in \Z} \phi(\theta_{12}x+\theta_{12}n)=1$ for all $x\in \R.$ Hence we get $\langle f,f\rangle_{\mathcal{A}^\infty_{\Theta'}}(m)=\Phi(\mathfrak{v}_1)=1.$
Now let us write an explicit expression for $e=_{\mathcal{A}^\infty_{\Theta}}\langle f,f\rangle.$ From Equation~(\ref{eq:module2}), we have $$_{\mathcal{A}^\infty_{\Theta}}\langle f,f\rangle(m)=e^{2\pi i\langle T(m),J'T(m)\slash2\rangle}\int_{\R \times \Z^q}\langle x,T''(m)\rangle f(x+T'(m))f(x)dx.$$ If we write $m=(m_1,m_2,m_3,\ldots,m_n),$ an easy observation shows that the above expression is zero unless $m_3=m_4=\cdots=m_n=0.$ For $m= (m_1,m_2,0,\ldots,0)$ we get

\begin{eqnarray*}
	_{\mathcal{A}^\infty_{\Theta}}\langle f,f\rangle(m)&=&c^2e^{\pi i\theta_{12}m_1m_2}\int_{\R}e^{2\pi ix_0m_2} \sqrt{\phi}(x_0+\theta_{12}m_1)\sqrt{\phi}(x_0)dx_0\\
&=&c^2\int_{\R}e^{2\pi ix_0m_2} \sqrt{\phi}(x_0-\frac{\theta_{12}m_1}{2})\sqrt{\phi}(x_0+\frac{\theta_{12}m_1}{2})dx_0.
\end{eqnarray*}
The above expression is non-zero only when $m_1=0,\pm 1,$ and the expression of $e=_{\mathcal{A}^\infty_{\Theta}}\langle f,f\rangle$ becomes $G_{-}(\mathfrak{u}_2^{-1})\mathfrak{u}_1^{-1}+G(\mathfrak{u}_2^{-1})+G_{+}(\mathfrak{u}_2^{-1})\mathfrak{u}_1,$
which is a projection of trace $\theta_{12}$ (this gives $c=1$ and $K=1/\theta_{12}$), where $G$ and $G_{\pm}$ are the following: $G(x)= \sum\limits_{n\in \Z} \phi(x+n),$ $x\in \R,$ $G_{\pm}(x)= \sum\limits_{n\in \Z} \sqrt{\phi(x+n+\frac{\theta_{12}}{2})\phi(x+n-\frac{\theta_{12}}{2}}),$ $x\in \R.$

Next we want specialize to the case $n=4,$ where we will to compute the Rieffel-type projection explicitly. Although the computations are messy, they are quite straight forward. As discussed in Section~\ref{sec:rie}, the image $e_1$ of a Rieffel projection $e'\in C^*(\mathfrak{v}_3,\mathfrak{v}_4)\subset \mathcal{A}_{\Theta'}$ of trace $\theta'=\frac{\mathrm{pf}(\Theta)}{\theta_{12}}+k$ by the isomorphism $ \mathcal{A}_{\Theta'}\simeq e\mathcal{A}_{\Theta}e $ is a projection in $e\mathcal{A}_{\Theta}e$ of trace $\mathrm{pf}(\Theta)+ k\theta_{12}$
in $\mathcal{A}_{\Theta},$ for some $k\in \mathbb{Z}.$ Since the isomorphism between $\mathcal{A}_{\Theta'}$ and $e\mathcal{A}_{\Theta}e$ is given by $\psi(x)=_{\mathcal{A}_{\Theta}}\langle f x,f\rangle,$ $ x\in \mathcal{A}_{\Theta'},$ we must compute the expression $_{\mathcal{A}_{\Theta}}\langle f e',f\rangle,$   and we have
\begin{eqnarray}
 _{\mathcal{A}_{\Theta}}\langle f e',f\rangle=e_1 = \psi(e') &=&\psi(G_{-}(\mathfrak{v}_4^{-1})\mathfrak{v}_3^{-1})+G(\mathfrak{v}_4^{-1})+G_{+}(\mathfrak{v}_4^{-1})\mathfrak{v}_3) \nonumber\\
   &=& G_{-}(\psi(\mathfrak{v}_4^{-1}))\psi(\mathfrak{v}_3^{-1})+G(\psi(\mathfrak{v}_4^{-1}))+G_{+}(\psi(\mathfrak{v}_4^{-1}))\psi(\mathfrak{v}_3).\nonumber
\end{eqnarray}
  Hence if we can compute the expressions of  $\psi(\mathfrak{v}_3)$ and $\psi(\mathfrak{v}_4)$ explicitly, we will get an explicit expression of $e_1.$ Let us give an explicit expression of $\psi(\mathfrak{v}_3).$ Similar computations will also hold for $\psi(\mathfrak{v}_4).$
  From Equation~(\ref{eq:module3}), $$(f\mathfrak{v}_3)(x)=e^{2\pi i\langle S(l),J'S(l)\slash2\rangle}\langle -x,S''(l)\rangle f(x-S'(l)),$$ where $l=(0,0,-1,0).$ A direct computation gives

  $$(f\mathfrak{v}_3)(x_0,x_1,x_2)=e^{-\pi i \frac{\theta_{23}\theta_{13}}{\theta_{12}}}e^{-2\pi ix_0 \frac{\theta_{13}}{\theta_{12}}-x_2\frac{\theta_{34}}{2}}  f(x_0+\theta_{23},x_1+1,x_2).$$
Finally, from Equation~(\ref{eq:module2})

  $$ _{\mathcal{A}_{\Theta}}\langle f\mathfrak{v}_3,f\rangle(m)=e^{2\pi i\langle T(m),J'T(m)\slash2\rangle}\int_{\R \times \Z^2}\langle x,T''(m)\rangle f(x+T'(m))f\mathfrak{v}_3(x)dx$$

  The above expression is zero unless $m=(m_1,m_2,1,0),$ and when $m=(m_1,m_2,1,0),$ the expression becomes


$$e^{2\pi i\theta_{13}m_1}\int_{\R}e^{2\pi im_2x}e^{-2\pi ix\frac{\theta_{13}}{\theta_{12}}}  \sqrt{\phi}(x+\frac{\theta_{12}m_1}{2}-\frac{\theta_{23}}{2})\sqrt{\phi}(x-\frac{\theta_{12}m_1}{2}+\frac{\theta_{23}}{2})dx,$$
which is non-zero only when $|m_1\theta_{12}-\theta_{23}|<2\theta_{12}$, i.e. when
 $-2+\frac{\theta_{23}}{\theta_{12}}<m_1<2+\frac{\theta_{23}}{\theta_{12}}.$ Let $t_1=\lfloor-2+\frac{\theta_{23}}{\theta_{12}}\rfloor$ and $t_2=\lfloor2+\frac{\theta_{23}}{\theta_{12}}\rfloor.$Then $\psi(\mathfrak{v}_3)=\sum_{a=t_1}^{t_2}\mathfrak{u}_3 H^{-}_{-a}(\mathfrak{u}_2^{-1})\mathfrak{u}_1^{a},$
where
$H^{-}_{-a}(x)=\sum_{n\in \mathbb{Z}}\Phi_a(x+n),$  $$\Phi_a(x)= e^{2\pi i\theta_{13}a}e^{-2\pi ix\frac{\theta_{13}}{\theta_{12}}}  \sqrt{\phi}(x+\frac{\theta_{12}a}{2}-\frac{\theta_{23}}{2})\sqrt{\phi}(x-\frac{\theta_{12}a}{2}+\frac{\theta_{23}}{2}).$$

\section*{Acknowledgments}
The first named author acknowledges the support of DST, Government of India (\emph{DST-INSPIRE
Faculty Scheme} with Faculty Reg. No. IFA19-MA139). The second named author was supported by the National Natural Science Foundation of China (No. 11401256) and the Zhejiang Provincial Natural Science Foundation of China (No. LQ13A010016).

\end{document}